\newtheorem{mainthA}{Theorem}
\newtheorem{Th}{Theorem}[section]
\newtheorem{Lem}[Th]{Lemma}
\newtheorem{Prob}[Th]{Problem}
\newtheorem{Cor}[Th]{Corollary}
\newtheorem{Que}[Th]{Question}
\newtheorem{Prop}[Th]{Proposition}
\newtheorem*{T1}{Theorem~\ref{theorem}}
\newtheorem*{T4}{Theorem~\ref{theoremGR}}
\theoremstyle{definition}
\newtheorem{Def}[Th]{Definition}
\newtheorem*{Def*}{Definition}
\newtheorem{example}[Th]{Example}
\newtheorem{Not}[Th]{Notation}
\theoremstyle{remark}
\newtheorem{Rem}[Th]{Remark}
\numberwithin{equation}{section}
\newcommand{\fpr}{{\mathrm{fpr}}}
\newcommand{\Sym}{\mathrm{Sym}}
\newcommand{\diag}{\mathrm{diag}}
\newcommand{\Stab}{\mathrm{Stab}}
\newcommand{\sgn}{\mathrm{sgn}}
\newcommand{\per}{\mathrm{perm}}
\newcommand{\Det}{\mathrm{Det}}
\newcommand{\Reg}{\mathrm{Reg}}
\newcommand{\Aut}{\mathrm{Aut}}
\newcommand{\GaL}{{\Gamma \mathrm{L}}}
\newcommand{\PGaL}{{\mathrm{P}\Gamma \mathrm{L}}}
\newcommand{\SL}{{\mathrm{SL}}}
\newcommand{\PSL}{{\mathrm{PSL}}}
\newcommand{\GL}{{\mathrm{GL}}}
\newcommand{\PGL}{{\mathrm{PGL}}}
\newcommand{\Sp}{{\mathrm{Sp}}}
\newcommand*{\Scale}[2][4]{\scalebox{#1}{$#2$}}
\setlist[enumerate,1]{label={(\arabic*)}}
\title{Base sizes for finite linear groups with solvable stabilisers}
\author{Anton A. Baykalov}
\email{a.a.baykalov@gmail.com}
\address{Department of Mathematics and Statistics, The University of Western Australia, Perth, Australia}
\begin{document}

\begin{abstract}
Let $G$ be a transitive permutation group on a finite set with solvable point stabiliser and assume that the  solvable radical of $G$ is trivial. In 2010, Vdovin  conjectured that the base size of $G$ is at most 5. Burness proved this conjecture in the case of primitive $G$. The problem was reduced by Vdovin in 2012 to the case when $G$ is an almost simple group. Now the problem is further reduced to groups of Lie type through work of Baykalov and Burness. In this paper, we prove the strong form of the conjecture for all almost simple groups with socle isomorphic to $\PSL_n(q),$ and the remaining classical groups will be handled in two forthcoming papers.   
\end{abstract}

\maketitle

\section{Introduction}
\label{intro}

\subsection{Main problem and results}

Consider a permutation group $G$ on a finite set $\Omega.$ A  \emph{base} of $G$ is a subset of $\Omega$ such that its pointwise stabiliser is trivial. The \emph{base size} of $G$ is the minimal size of a base. The study of bases and base sizes of permutation groups is an active research area with a rich history. These concepts have applications in abstract group theory and computational algebra; for a summary, see \cite{bailey} and  \cite{sfa}. In the case of a transitive group $G$ with point stabiliser $H$, we write $b_H(G)$ for the base size, noting that the action of $G$ on $\Omega$  is permutation isomorphic to the action of $G$ on right $H$-cosets by right multiplication.

In the past few decades, significant progress has been made in determining the base sizes of groups, with a particular focus on the primitive actions of almost simple groups. For example, Liebeck and Shalev \cite{lieb} proved the following conjecture of Cameron and Kantor \cite{Camer}:  if $G$ is an almost simple finite group and $H\le G$ is maximal and core-free, then there exists an absolute constant $c$ such that $b_H(G)$  is at most   $c$ unless $(G,H)$ lies in a prescribed list of exceptions. The exceptions arise when the action of $G$ on the set of right cosets of $H$  is \emph{standard} (see \cite[Definition 1.1]{burness}). Results specifying bounds for $b_H(G)$ include \cite{burness}, \cite{BuGuSa}, \cite{bls}, \cite{spor} and \cite{mon}. In particular, if $G$ is an almost simple group in a primitive non-standard action, then $b_H(G) \le 7$ with equality if and only if $G=\mathrm{M}_{24}$ and $H=\mathrm{M}_{23}.$

This paper is the first in a series of three devoted to the study of  base sizes of finite transitive  permutation groups with solvable point stabilisers.  In \cite[Problem 17.41(b)]{kt}, Vdovin conjectures (in a slightly different notation) that $b_S(G) \le 5$ for a transitive permutation group $G$ with a solvable point stabiliser $S$ and trivial solvable radical (the unique maximal solvable normal subgroup). It is easy to see that we obtain an equivalent statement if we omit the condition on the solvable radical and force $S$ to be a maximal solvable subgroup; that is, $S$ is not contained in any larger solvable subgroup of $G$.  Burness \cite{burPS} proved this conjecture in the case of primitive $G$, building on important earlier work of Seress \cite{seress}, who established the bound $b_S(G)\le 4$ for every solvable primitive group. Notice that the bound in the conjecture is the best possible since $b_S(G)=5$ if $G=\Sym(8)$ and $S=\Sym(4) \wr \Sym(2).$  In fact, there are infinitely many examples with $b_S(G)=5$, for example see \cite[Remark 8.3]{burPS}.

 Vdovin reduces the above conjecture to almost simple groups in \cite[Theorem 1]{vd}. In order to explain how exactly the reduction works, let us define $\Reg(G,k)$ to be the number of distinct regular orbits in the  action of a permutation group $G \le \Sym(\Omega)$ on the Cartesian product $\Omega^k$; here $G$ acts on $\Omega^k$ by $$(\alpha_1, \ldots,\alpha_k)g = (\alpha_1g,\ldots,\alpha_kg).$$ Analogous to $b_H(G)$, we write $\Reg_H(G,k)$ for $\Reg(G,k)$ in the case when $\Omega$ is the set of right cosets of a subgroup $H$. Since a regular point in $\Omega^k$ forms a base for $G$ acting on $\Omega$, $\Reg_H(G,k)=0$ for $k < b_H(G).$    Now we return to the reduction: \cite[Theorem 1]{vd} implies that, in order to prove Vdovin's conjecture,   
it is sufficient to show $$\Reg_S(G,5) \ge 5$$ for every almost simple group $G$ and each of its maximal solvable subgroups $S$. This inequality is established for almost simple groups with alternating and sporadic socle in \cite{bay} and \cite{burspor}, respectively. Therefore, Vdovin's conjecture is now reduced to the case of almost simple groups of Lie type.

 This paper is the first one to consider the general form of the conjecture for groups of Lie type. Here we  consider groups with socle isomorphic to $\PSL_n(q)$ and prove the following. 

  \begin{Th}
\label{fulltheoremPSL}
Let  $G\le \Sym(\Omega)$ be a transitive almost simple group with  solvable point stabiliser $S$ and socle isomorphic to $\PSL_n(q)$. Then $\Reg_S(G,5)\ge 5$ and thus $b_S(G) \le 5$.
\end{Th}

\begin{Rem}\label{remSleH}
\begin{enumerate}

\item Note that the statement of Theorem 1  in \cite{vd} is incorrect and this is fixed in the latest arXiv version (see the link in the Bibliography). 

\item The bound on $b_S(G)$ in Theorem \ref{fulltheoremPSL} is the best possible. It is achieved, for example, when $(2)$ of Theorem \ref{theoremGR} (see below) holds.

\item Even though $S$ is not stated to be maximal solvable in Theorem~\ref{fulltheoremPSL}, it is sufficient for the proof to consider only maximal solvable subgroups. Indeed, if $H$ is a core-free (so $\cap_{g\in G} H^g=1$) subgroup of $G$  and $S \le H$, then $b_S(G) \le b_H(G)$.

\item As already mentioned above, this paper is the first in a series of three where our goal is to prove Vdovin's conjecture for the almost simple classical groups.  In a forthcoming second paper (based on \cite{thesis}) we deal with unitary and symplectic groups. The final third paper dealing with orthogonal groups is being prepared. 
\end{enumerate}
\end{Rem}

The following elementary lemma will be a useful observation.
  \begin{Lem}[{\cite[Lemma 3]{bay}}] \label{base4}
Let $G$ be a finite permutation group with point stabiliser $H$. If  $b_H(G)\le 4$, then $\Reg_H(G,5)\ge 5.$
\end{Lem}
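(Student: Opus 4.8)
The plan is to produce five regular $G$-orbits on $\Omega^5$ explicitly, by inflating a minimal base. Put $b=b_H(G)\le 4$ and fix a base $\Lambda=(\lambda_1,\dots,\lambda_b)$ of minimal size; by minimality the $\lambda_i$ are pairwise distinct, so $|\Omega|\ge b$, and we may assume $|\Omega|\ge 2$ (the case $|\Omega|=1$, which forces $G=1$, being degenerate). The single property of $\Lambda$ to be used is the \emph{inflation principle}: if $\tau\in\Omega^5$ has every $\lambda_j$ among its coordinates, then $G_{(\tau)}\le G_{(\Lambda)}=1$, so $\tau$ is itself a base, that is, a regular point of $\Omega^5$.

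The second ingredient is that the \emph{shape} of a tuple $\tau=(\tau_1,\dots,\tau_5)$ --- the partition of $\{1,\dots,5\}$ whose blocks are the fibres of $i\mapsto\tau_i$ --- is a $G$-invariant: each $g\in G$ acts as a bijection of $\Omega$, hence preserves equality of coordinates, so $\tau$ and $\tau g$ have the same shape. Thus tuples with pairwise distinct shapes lie in pairwise distinct $G$-orbits, and it suffices to exhibit five elements of $\Omega^5$ of pairwise distinct shapes, each having all of $\lambda_1,\dots,\lambda_b$ among its coordinates.

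This is now pure bookkeeping with set partitions. Let $m=\max(b,2)\in\{2,3,4\}$; there are $S(5,m)$ partitions of $\{1,\dots,5\}$ into exactly $m$ nonempty blocks, and $S(5,2)=15$, $S(5,3)=25$, $S(5,4)=10$ are all at least $5$. Choose any five such partitions; for each, fill its $m$ blocks bijectively using the $m$ distinct points $\lambda_1,\dots,\lambda_b$ and --- only when $b=1$ --- one extra point of $\Omega$ (these $m\le 4$ points are available since $|\Omega|\ge m$). The five $5$-tuples so obtained have the five distinct chosen shapes, and each contains every $\lambda_j$, hence is regular by the inflation principle; so $\Reg_H(G,5)\ge 5$. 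When $b=1$ one may alternatively take the constant tuple $(\lambda_1,\dots,\lambda_1)$ together with four $2$-block tuples.

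I do not anticipate a genuine obstacle. The argument is entirely elementary, resting only on the two immediate remarks that repeating coordinates of a base yields a base and that distinct shapes force distinct orbits; the only mild care needed is the small-case bookkeeping, namely that a minimal base of size $b\le 4$ already furnishes $\max(b,2)\le 4$ distinct points of $\Omega$ whenever $|\Omega|\ge 2$, which is exactly what the construction consumes.
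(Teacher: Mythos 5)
Your argument is correct. The paper cites this lemma from the author's earlier work \cite{bay} and does not reprove it here, so there is no in-paper argument to compare against; I therefore evaluate the proposal on its own merits.

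The two ingredients you isolate are both sound: a $5$-tuple whose coordinate set contains a base is itself a base (so its orbit is regular), and the equality pattern (``shape'') of a tuple is preserved by the diagonal $G$-action, so tuples of different shape lie in different orbits. Choosing $m=\max(b,2)$ distinct points of $\Omega$, at least $b$ of which constitute a minimal base, and distributing them across five distinct set partitions of $\{1,\dots,5\}$ into $m$ parts then produces five regular points of $\Omega^5$ in five different orbits, since $S(5,2)=15$, $S(5,3)=25$, $S(5,4)=10$ are all at least $5$. The one genuine subtlety you rightly flag is the case $|\Omega|=1$: there $G=1$, $\Reg_H(G,5)=1$, and the stated inequality actually fails, so the lemma tacitly assumes a nondegenerate action. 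Everything else, including the small-$b$ bookkeeping and the availability of $m\le 4$ distinct points once $|\Omega|\ge 2$, is handled correctly, and the shape trick is robust enough to cover very small $|\Omega|$ (such as $2$ or $3$) where a crude count of regular points divided by $|G|$ would not obviously reach~$5$.
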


  By Remark \ref{remSleH}(3), if $G$ is almost simple, $S$ is a maximal solvable subgroup of $G$, and  $S \le H \le G$ (and $H$ does not contain the socle of $G$), then $b_S(G) \le b_H(G).$ Hence, results on the base size of primitive almost simple groups can provide useful information.
As we mention above, many results specifying bounds on base sizes of almost simple groups in primitive actions are available. 
In particular, Burness \cite{burness} proves the following.
\begin{Th}
\label{bernclass}
Let $G \le \Sym(\Omega)$ be a non-standard finite almost simple  primitive permutation   group with classical socle and  point stabiliser $H$. Then either $b_H(G) \le 4$, or $G = \mathrm{U}_6 (2) \cdot 2$, $H = \mathrm{U}_4 (3) \cdot 2^2$ and $b_H(G) = 5$.
\end{Th}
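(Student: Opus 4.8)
This is Burness's theorem, and the present paper uses it as a black box, citing \cite{burness}; were one to reprove it, the natural route is the probabilistic method of Liebeck and Shalev. Writing $\Omega = G/H$ and $\fpr(g) = |\Fix_\Omega(g)|/|\Omega|$ for the fixed point ratio, a $c$-tuple of points of $\Omega$ fails to be a base exactly when some element of prime order fixes all of its coordinates, so that
\[
Q(G,c) := \sum_{g} |g^G|\,\fpr(g)^c < 1,
\]
with the sum taken over representatives of the conjugacy classes of prime-order elements of $G$, forces $b_H(G) \le c$. The plan is therefore to establish $Q(G,4) < 1$ for every non-standard primitive almost simple group with classical socle, with the single exception recorded in the statement.

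First I would apply Aschbacher's theorem to partition the maximal subgroups $H$ into the geometric classes $\mathcal{C}_1, \dots, \mathcal{C}_8$ together with the class $\mathcal{S}$ of irreducible almost simple subgroups; the standard actions --- essentially $H$ a subspace stabiliser in $\mathcal{C}_1$, along with certain imprimitive and complementary-subspace configurations --- are excluded by hypothesis. For each of the remaining classes the two inputs needed are (i) Liebeck--Shalev-type upper bounds of the shape $\fpr(g) \le |g^G|^{-1/2+\epsilon}$, with sharper class-specific refinements where required, and (ii) zeta-function-style estimates for $\sum_g |g^G|^{-t}$ over the prime-order classes. Feeding (i) into $Q(G,4)$ leaves a sum dominated by $\sum_g |g^G|^{-1+4\epsilon}$, which by (ii) is less than $1$ once the dimension and the field size are not too small.

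The main obstacle is precisely the low-rank, small-field range, where the asymptotic fixed-point-ratio bounds are too weak to close the argument. There one proceeds case by case: for $\mathcal{S}$-subgroups one invokes the explicit classification of such subgroups in low dimension, while for the geometric classes one reads off $\fpr(g)$ directly from the subgroup structure; in addition a substantial finite list of individual pairs $(G,H)$ must be settled by direct computation, constructing $G$ and $H$ in \textsc{Magma} and either exhibiting a base of size at most $4$ or computing $b_H(G)$ exactly. It is this computational sweep that isolates the genuine exception $G = \mathrm{U}_6(2) \cdot 2$, $H = \mathrm{U}_4(3) \cdot 2^2$ with $b_H(G)=5$, and verifies $b_H(G) \le 4$ in all remaining non-standard cases; assembling the asymptotic estimate with this finite residue yields the stated dichotomy.
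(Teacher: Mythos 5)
Your proposal correctly identifies that the paper itself does not reprove this statement but quotes it directly from Burness \cite{burness}, and your sketch of the underlying argument — bounding $Q(G,4) = \sum_g |g^G|\,\fpr(g)^4 < 1$ via Liebeck--Shalev fixed-point-ratio estimates, split along Aschbacher's classes, with a finite computational sweep in low rank that isolates the single exception $G=\mathrm{U}_6(2)\cdot 2$, $H=\mathrm{U}_4(3)\cdot 2^2$ — is an accurate account of how Burness establishes it. This is the same probabilistic framework that the present paper recapitulates in Section \ref{fprsec} and reuses for its own irreducible cases, so your route matches the source in both substance and structure.
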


Roughly speaking, the non-standard hypothesis in Theorem \ref{bernclass} is equivalent to the condition $H \notin \mathcal{C}_1$. Here $\mathcal{C}_i$ for $i=1, \ldots, 8$ denote Aschbacher's subgroup collections introduced in \cite{asch}   and described in \cite[\S 2.1]{maxlow}  and \cite[Chapter 4]{kleidlieb}.  If $H \in \mathcal{C}_1,$ then it stabilises a subspace (or a pair of subspaces) of the natural module of $G$. Tables 2 and 3 in \cite{burness} contain detailed information on $b_H(G)$ for $n\le 5$, where $n$ is the dimension of the natural module.

If $G$ is classical and $H \in \mathcal{C}_1$, then $b_H(G)$ can be arbitrarily large since the order
of $G$ is not always bounded by a fixed polynomial function of the degree $|G:H|$ of the action.  In particular, by Lemma \ref{logd}, $b_H(G)$ is not bounded by a constant. For example, if $q\ge 3$ and $G=\PGL_n(q)$ acting on the set of $1$-dimensional subspaces of the natural module, then the point stabiliser $H$ is a parabolic subgroup lying in  $\mathcal{C}_1$ and $b_H(G)=n+1$.  

 Therefore,  if a maximal solvable subgroup $S$ lies only in a $\mathcal{C}_1$-subgroup of $G,$ then one cannot find a bound for $b_S(G)$ simply by studying the corresponding problem for maximal subgroups.
 
 \medskip
 
 Before we describe some techniques and ideas we use to prove  Theorem \ref{fulltheoremPSL},  
 let us briefly describe the structure of an almost simple group $G$ with socle $\PSL_n(q)$ and state more detailed results which also illustrate the cases that arise in the proof.
 
 Let $\GaL_n(q)$ be the group of invertible $\mathbb{F}_q$-semilinear transformations of $V=\mathbb{F}_q^n$, let  $\iota$ be the inverse-transpose map of $\GL_n(q)$, and let $Z$ be the group of all scalar matrices in $\GL_n(q)$ (see Section~\ref{secnot} for details). If $\PSL_n(q)$ is simple, so $n \ge 2$ and $(n,q)$ is neither $(2,2)$ nor $(2,3)$, then $$\Aut(\PSL_n(q)) \cong 
 \begin{cases}
 \PGaL_n(q) \rtimes \langle \iota \rangle =  ( \GaL_n(q) \rtimes \langle \iota \rangle)/Z & \text{ if } n \ge 3 \\
  \PGaL_n(q) =  \GaL_n(q) /Z & \text{ if } n = 2.
 \end{cases}$$
 
See \cite{Dieu} for details.  We achieve Theorem \ref{fulltheoremPSL} by proving the following two statements. 
\begin{mainthA}
\label{theorem}
Let $n \ge 2$ and $(n,q)$ is neither $(2,2)$ nor $(2,3).$ If $S$ is a maximal solvable subgroup of $\PGaL_n(q)$, 
 then $\Reg_S(S \cdot \PSL_n(q),5)\ge 5$. In particular $b_S(S \cdot \PSL_n(q)) \le 5.$
\end{mainthA}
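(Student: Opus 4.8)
The plan is to reduce the statement to a manageable collection of cases indexed by the Aschbacher class of a maximal overgroup of $S$, and in each case to produce $5$ pairwise "independent" points of $\Omega = [G:S]$ whose stabiliser chain collapses to the trivial group — equivalently, to show that the action on $\Omega^5$ has at least $5$ regular orbits. First I would recall, via Remark~\ref{remSleH} and Lemma~\ref{base4}, that it suffices to handle maximal solvable $S$, and that whenever we can locate a (necessarily core-free) overgroup $H \ge S$ in $\PGaL_n(q) \cdot \langle \iota\rangle$ with $b_H(G) \le 4$, Theorem~\ref{bernclass} together with Lemma~\ref{base4} immediately gives $\Reg_S(G,5) \ge \Reg_H(G,5) \ge 5$. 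So the genuine work concerns those $S$ that lie only inside $\mathcal{C}_1$-type overgroups (parabolic or reducible subgroups), together with a bounded list of small-rank or small-field exceptions where Theorem~\ref{bernclass} does not directly apply.

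The second step is to classify the maximal solvable subgroups $S$ of $\PGaL_n(q)$ up to conjugacy. A maximal solvable subgroup is contained in a maximal subgroup of $\PGaL_n(q)$; running through Aschbacher's classes, $S$ must be (the full solvable preimage of) a solvable maximal subgroup, or a maximal solvable subgroup of a non-solvable maximal subgroup $M$ — and in the latter case $M$ itself is almost simple classical of smaller dimension or a classical group over a subfield or an imprimitive/field-extension type group, so one recurses. The upshot is a short structured list: the solvable members of $\mathcal{C}_2$ (stabilisers of decompositions into $1$-spaces, i.e. monomial groups, and their field-extension analogues), $\mathcal{C}_3$ with prime index and solvable residue (Singer-type normalisers, $\GaL_1(q^n)$-type groups), $\mathcal{C}_6$ (normalisers of symplectic-type $r$-groups), $\mathcal{C}_8$ intersected down, and finally the $\mathcal{C}_1$ case, where $S$ is a maximal solvable subgroup of a parabolic or of a reducible-but-not-parabolic stabiliser. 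The point of this classification is to pin down $|S|$ and the index $|G:S|$ well enough to run the counting argument.

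The third and decisive step is the orbit-counting estimate. For the non-$\mathcal{C}_1$ cases one expects $|G|$ to be bounded by a fixed polynomial in $|\Omega|$, so the standard fixed-point-ratio machinery applies: using $\Reg_S(G,5) \ge |\Omega|^{-1}\bigl(|\Omega|^5 - \sum_{1\neq x\in G}|\Fix_\Omega(x)|^5\bigr)$-type bounds (or the sharper variant counting only regular orbits), and the estimate $\sum_{x\neq 1}\fpr(x)^5 \cdot |x^G|$ controlled by $|x^G|^{-1} \asymp$ a negative power of $q$, one gets $\Reg_S(G,5) \ge 5$ for all but finitely many $(n,q)$; the residual small cases are finished by direct computation (Magma). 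The hard part — and the reason this is a whole paper — is the $\mathcal{C}_1$ case: here $|G|$ is \emph{not} polynomially bounded in $|\Omega|$ (e.g. the parabolic action of $\PGL_n(q)$ on $1$-spaces has $b_H(G) = n+1$), so fixed-point ratios alone cannot work and one must instead argue constructively. The plan there is to exhibit explicit tuples: for $S$ inside a parabolic stabilising a flag, choose $5$ translates of the flag in "general position" (using that $\PSL_n(q)$ acts highly transitively enough on generic configurations of subspaces) so that their common stabiliser in $G$ already lies in $Z$ and then kill the residual scalars and field/graph automorphisms with the geometry of the configuration; then show distinct such configurations give distinct regular orbits, i.e. that there are at least $5$ of them, typically by a counting/inequality argument comparing the number of generic $5$-tuples to $|G|$. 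Controlling the graph automorphism $\iota$ and the field automorphisms simultaneously with the projective scalars, uniformly in $n$ and $q$, is the main obstacle; I would isolate small $n$ (say $n \le 6$ or so) and small $q$ for separate, partly computational treatment, and handle large $n$, large $q$ by the generic-position construction.
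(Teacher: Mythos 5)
Your outline gets the broad shape right — irreducible cases handled by fixed-point-ratio estimates (plus small computations), the $\mathcal{C}_1$ case needing an explicit construction — but the actual idea required for the $\mathcal{C}_1$ case is missing, and the step you describe would not work as stated.

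Your plan for the $\mathcal{C}_1$ case is to take $5$ translates of the invariant flag in ``general position'' so that the stabiliser of all five flags already lies in $Z$. This cannot succeed uniformly in $n$: for the parabolic $P_1$ of $\PGL_n(q)$ (point stabiliser on $1$-spaces), the common stabiliser of any $5$ points of $\mathbb{P}^{n-1}$ contains a large unipotent subgroup as soon as $n \ge 6$, and $b_{P_1}(\PGL_n(q)) = n+1 $ — this is exactly the obstruction the paper flags in the introduction. So ``kill the parabolic by general-position flags'' inevitably fails for large $n$. What actually makes the problem tractable is that $S$ is strictly smaller than the flag stabiliser: its projections $\gamma_i(S)$ to the Jordan–H\"older blocks $V_i/V_{i-1}$ are \emph{irreducible} maximal solvable subgroups, and for such groups one first proves the much stronger statement $b_{S_i}(S_i \cdot \SL_{n_i}(q)) = 2$ (with a short list of exceptions) — Theorem~\ref{irred} in the paper. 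That strength is essential: it produces, for each block, an element $x_i$ with $S_i \cap S_i^{x_i}$ upper-triangular, and the block-diagonal element $\diag[x_k,\ldots,x_1]$ together with a reversal matrix $y$ (Lemma~\ref{irrtog}) forces $(S\cap S^x)\cap(S\cap S^x)^y$ into the diagonal torus; one further carefully chosen $z$ (Lemma~\ref{diag}, using the shifted vectors $u_j = v_1+\cdots+v_{n-m}+v_{n-m+j}$) pins the torus down to scalars. Your proposal has no mechanism playing the role of Lemma~\ref{irrtog}/Lemma~\ref{diag}, and without some analogue the intersection of $5$ conjugates will not descend past the unipotent radical.

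Two smaller points. First, you lump all irreducible non-$\mathcal{C}_1$ cases into ``fixed-point ratios apply'': in fact the paper uses the probabilistic bound only for \emph{primitive} irreducible $S$; for imprimitive irreducible $S \le S_1 \wr \Gamma$ it again argues constructively via the explicit matrix $A(k)$ of Lemma~\ref{igrekl} and Lemma~\ref{prtoirr} — a wreath-product version of the same block trick — rather than by counting. Second, your proposal for showing the $5$ regular tuples lie in distinct orbits is ``counting generic $5$-tuples versus $|G|$''; in the $\mathcal{C}_1$ situation $|G|$ is not bounded by a polynomial in $|\Omega|$, which is precisely why such inequalities break, so the paper instead writes down five explicit tuples $(S, Sx, Sy, Sxy, Sz_i)$, $i=1,\ldots,5$, and shows directly (by examining which basis vectors occur in the decompositions $(u_{(i,j)})\varphi$) that no element of $G$ can carry one to another. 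Also, Theorem~A1 concerns $\PGaL_n(q)$ only; the graph automorphism $\iota$ that you discuss belongs to Theorem~A2, and Theorem~A1 is proved in the matrix group $\GaL_n(q)$ modulo scalars, so the constructions live in $\SL_n(q)$.
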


\begin{mainthA}
\label{theoremGR}
Let $n\ge 3$. If $S$ is a maximal solvable subgroup of $\PGaL_n(q) \rtimes \langle \iota \rangle$ not contained in $\PGaL_n(q),$ then one of the following holds:
\begin{enumerate}
\item[$(1)$] $b_S(S \cdot \PSL_n(q))\le 4$;
\item[$(2)$] $n=4,$ $q\in \{2,3\}$, $S=P_2$, $b_S(S \cdot \PSL_4(q))=5$ and $\Reg_S(S \cdot \PSL_4(q),5)\ge 5.$
\end{enumerate}
\end{mainthA}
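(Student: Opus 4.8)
The plan is to use Lemma~\ref{base4} to reduce the statement to identifying exactly which $S$ have $b_S(S\cdot\PSL_n(q))\ge 5$. Write $G=S\cdot\PSL_n(q)$. The first move is a dichotomy on maximal overgroups: if $S$ lies in a maximal subgroup $M\le G$ with $M\cap\PSL_n(q)\notin\mathcal{C}_1$, then $M$ is core-free, so by Remark~\ref{remSleH} and Theorem~\ref{bernclass} we get $b_S(G)\le b_M(G)\le 4$ and we are in case~$(1)$ — the sole exception in Theorem~\ref{bernclass} has socle $\mathrm{U}_6(2)$, not $\PSL_n(q)$, so it never intervenes. Hence we may assume every maximal overgroup of $S$ in $G$ lies in $\mathcal{C}_1$.

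Next I use $S\not\le\PGaL_n(q)$. Set $S_0=S\cap\PGaL_n(q)$, of index $2$ in $S$, and take $\sigma\in S\setminus S_0$; it induces the inverse-transpose automorphism $\iota$ and normalises $S_0$. As $\iota$ sends the stabiliser of a $k$-subspace of $V=\mathbb{F}_q^n$ to that of an $(n-k)$-subspace, the flag fixed by $S_0$ has type invariant under $k\mapsto n-k$, so $S$ lies in a self-dual parabolic subgroup $P$ of $G$. Solvability and maximality of $S$ force $S\supseteq O_p(P)$ (since $S\,O_p(P)$ is a solvable subgroup of $P$ containing $S$), whence $S=O_p(P)\rtimes(S\cap L)$ with $L$ a Levi complement and $S\cap L$ maximal solvable in $L$. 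If some direct factor of $L$ is a non-solvable $\GL_m(q)$, one argues — the subtle point — that $S$ is after all contained in a non-$\mathcal{C}_1$ maximal overgroup, returning us to the first case; so we may assume every direct factor of $L$ is solvable, i.e. $P$ has all flag-gaps of size at most $2$, with a gap of size $2$ occurring only when $q\in\{2,3\}$.

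For such $S$ the action of $G$ on cosets of $S$ is on (dual pairs of) flags of $V$ of a fixed self-dual fine type, and I would prove $b_S(G)\le 4$ by producing four such flags with trivial joint stabiliser in $G$. A general-position analysis does this once $q$ is not too small: two flags of the type already have common stabiliser $\cong L$, a third generic flag cuts this down drastically (to the scalars when $L$ is a torus), and a fourth generic flag trivialises it; this covers all but finitely many $(n,q)$, since it only needs $\mathbb{F}_q$ large enough to realise sufficiently general subspaces. The finitely many remaining cases — all with $n\in\{3,4\}$ and $q\in\{2,3\}$, plus a few small imprimitive or direct-sum-decomposition configurations that reduce to these or to non-$\mathcal{C}_1$ overgroups — are checked individually. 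Every one gives $b_S(G)\le 4$ except $n=4$, $q\in\{2,3\}$, $S=P_2$: here three $2$-subspaces of $\mathbb{F}_q^4$ leave a stabiliser isomorphic to $\GL_2(q)$ and $\mathbb{F}_q$ is too small for a fourth to trivialise it. In that case I would establish $b_S(G)\ge 5$ by a counting argument (or a direct enumeration over these two fields) excluding $4$-element bases, produce a quintuple to get $b_S(G)=5$, and finally verify $\Reg_S(G,5)\ge 5$ directly; for $q=2$ this is transparent, as $G=\PSL_4(2)\cdot 2\cong\Sym(8)$ and $S=P_2\cong\Sym(4)\wr\Sym(2)$ is exactly the extremal pair highlighted in the introduction, and $q=3$ is of the same nature.

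I expect the real obstacles to be, first, the case $n=4$, $q\in\{2,3\}$: since $b_S(G)=5$ is attained, neither $b_S(G)\ge 5$ nor $\Reg_S(G,5)\ge 5$ can come from a soft estimate — the former needs a genuine non-existence argument for $4$-element bases and the latter an explicit lower bound on the number of regular orbits on $5$-tuples, essentially a computation in $\Sym(8)$ and its $q=3$ analogue; and, second, both the general-position argument and the step flagged in the second paragraph — that a maximal solvable subgroup inside a parabolic with a non-solvable Levi factor is always contained in a non-$\mathcal{C}_1$ maximal overgroup — which rest on a careful comparison of the solvable overgroups of $S$ inside and outside the ambient parabolic.
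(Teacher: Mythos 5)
The central gap is the flagged ``subtle point'' in your second paragraph: the claim that a non-solvable $\GL_m(q)$ factor of the Levi forces $S$ into a non-$\mathcal{C}_1$ maximal overgroup. This is not true, and the paper makes no such reduction. Lemma~\ref{GRmmim2} shows only that the restriction of $S\cap\Gamma$ to each block is an \emph{irreducible} solvable subgroup of $\GL_{m_i}(q)$; for $m_i\geq 2$ and $q>3$ this is a proper subgroup (e.g.\ the normaliser of a Singer cycle), yet $S$ may well lie in no maximal subgroup outside class $\mathcal{C}_1^*$ --- a maximal solvable $S$ above the unipotent radical with Singer-normaliser blocks is the generic picture. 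Exactly these blocks account for the bulk of the proof: Section~\ref{ch2} (culminating in Theorem~\ref{irred}) develops, with fixed-point-ratio estimates and computation, the necessary control of conjugate intersections of irreducible maximal solvable subgroups of $\GL_m(q)$, and Lemma~\ref{GRdiag} assembles it block by block via Lemma~\ref{scfield}. Your proposal offers no substitute for this machinery, which is where most of the work lives.

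Secondly, the reduction to a self-dual parabolic is too narrow. The $\iota$-stable configuration is a pair $(U,W)$ with $\dim U+\dim W=n$, which can be nested (type $P_{m,n-m}$) or complementary (type $\GL_m(q)\oplus\GL_{n-m}(q)$). The latter is not a parabolic and has no unipotent radical, so your $S\supseteq O_p(P)$ step has no analogue there; the paper's class $\mathcal{C}_1^*$ (Definition~\ref{deftypepm}) and the pervasive two-branch case division in Lemma~\ref{GRdiag} and the proof of Theorem~\ref{theoremGR} exist exactly to cover both. Finally, the paper does not rely on a general-position argument: it writes down explicit $x,y,z\in\SL_n(q)$ (as in Proposition~\ref{ni1}, equations~\eqref{GRzdef1}, \eqref{GRzdefq23}, etc.) that work uniformly in $q$, which is essential because the troublesome values $q\in\{2,3,5\}$ are precisely the small ones where genericity is unavailable. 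Your treatment of the $n=4$, $q\in\{2,3\}$ endgame is correct in outline.
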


Here $P_2$ is the stabiliser in $\PGaL_n(q) \rtimes \langle \iota \rangle$ of a $2$-dimensional subspace of the natural module for $\PSL_n(q).$ Note that if $q=2$ in Theorem \ref{theoremGR}$(2)$, then $S \cdot \PSL_4(q) \cong \Sym(8)$ and $S \cong \Sym(4) \wr \Sym(2)$ which is the special case highlighted above.  The combination of the two theorems above establishes a stronger version of Theorem \ref{fulltheoremPSL}: if $\PSL_n(q)$ is simple and $S$ is a maximal solvable subgroup of $\Aut(\PSL_n(q))$, then $\Reg_S(S \cdot \PSL_n(q)) \ge 5.$ Indeed, Theorem \ref{fulltheoremPSL} follows from this statement via Lemma \ref{SdotG0}. 

\subsection{Methods and ideas.}\label{secmeth}

 It is more convenient for us to work with groups of matrices which makes the action  on the set of right cosets of a subgroup not faithful in most cases.  It is possible to extend the notion of $b_H(G)$ and $\Reg_H(G,k)$ to an arbitrary finite group $G$ by defining
  $$b_H(G):=b(G/H_G) \text{ and } \Reg_H(G,k):=\Reg(G/H_G,k).$$
  Here $H_G = \cap_{g \in G} H^g$ and the action of $G/H_G$ on the set $\Omega$ of right cosets of $H$ in $G$ is induced from the natural action of $G$. 
  
  Another straightforward but important observation is that the statement $b_H(G) \le k$ is equivalent to the existence of $k$ conjugates to $H$  such that their intersection is equal to $H_G.$ Indeed, $H^{g_1} \cap \ldots \cap H^{g_k}$ is the pointwise stabiliser of $(Hg_1, \ldots, Hg_k) \in~\Omega.$   In particular, in the proofs of Theorems \ref{theorem} and \ref{theoremGR}, we work with $\GaL_n(q)$ and $ \GaL_n(q) \rtimes \langle \iota \rangle $ rather than $\PGaL_n(q)$ and $\PGaL_n(q) \rtimes \langle \iota \rangle $.

 Now we are ready to outline the main techniques and ideas of this paper.  We use a combination of  probabilistic, constructive and computational methods to establish our results. The probabilistic approach is presented in Section \ref{fprsec}. It uses  fixed point ratio estimates to obtain an upper bound on the probability $Q(G,c)$ that a randomly chosen $c$-tuple of points in $\Omega$ is not a base. By showing $Q(G,c)<1$, one concludes that there exists a base of size $c$. This method was first introduced by Liebeck and Shalev in \cite{lieb}; it is used in most works related to base sizes of primitive permutation groups including  \cite{burness, sfa, bls, spor}. We use this method to obtain bounds for $b_S(S \cdot \SL_n(q))$ for irreducible maximal solvable subgroups of $\GL_n(q).$  Our results  are refinements of Theorem \ref{bernclass} in the sense that they provide better estimates for $b_H(G)$  for solvable $H$ not lying in a $\mathcal{C}_1$-subgroup of $G \le \PGL_n(q)$.  In particular, with an explicit list of exceptions, we show that $b_S(S \cdot \SL_n(q)) =2$ for an irreducible maximal solvable subgroup $S$ of $\GL_n(q)$  (see Theorem \ref{irred}).
 
  The probabilistic method does not work for us in the general case since the fixed points ratios are more difficult to estimate when $S$ is reducible and the corresponding bounds may be too large to be useful. Our reduction of the general case to  irreducible subgroups of $\GL_n(q)$ is constructive.
  
   We illustrate this for  a reducible maximal solvable subgroup $S$ of $\GL_n(q)$; the analysis for $S$ not contained in $\GL_n(q)$ is much more technical and splits into a number of smaller cases, although the main ideas are similar. So, let $S$ be a reducible maximal solvable subgroup of $\GL_n(q)$. 
    By Lemma \ref{supreduce}, in a suitable basis, matrices in $S$ have $k$ square $n_i \times n_i$ blocks ($i=1, \ldots, k$) on their diagonal and all the entries below these blocks are zero. For $g \in S$, each block $g_i$ lies in $\GL_{n_i}(q)$ and $\{g_i \mid g \in S\}$ forms an irreducible solvable subgroup of $\GL_{n_i}(q)$
     for which we can apply Theorem \ref{irred}.
           As a result, we obtain that the intersection of two (in most cases) conjugates of $S$ consists of upper triangular matrices. Further, using this ``anti-symmetry'' we explicitly construct elements of $\SL_n(q)$ as in Lemma \ref{irrtog}
      that, as a result of using them as conjugating elements, give us a subgroup of diagonal matrices in the intersection of four conjugates of $S$. Finally, we use Lemma \ref{diag} to construct a conjugate of $S$ whose intersection with the other four conjugates is the subgroup of scalar matrices, so $b_S(S \cdot \SL_n(q))\le 5$.  
       Five distinct regular orbits are then identified by  explicitly adjusting these conjugating elements (see the proof of Proposition \ref{case5prop}).
  
  Our approach requires detailed information of the structure of maximal solvable subgroups of $\Aut(\PSL_n(q)).$  Here we heavily rely on the classical work of Suprunenko \cite{sup} on solvable matrix groups. Many useful details also can be found in \cite{short}.  
  
  We use the  computer algebra systems {\sf GAP} \cite{GAP4} and {\sc Magma} \cite{magma} to compute  $b_S(S \cdot \SL_n(q))$ for some small values of $n$ and $q$ (usually $n \le 6$ and $q\le 19$) when theoretical methods are ineffective. In these cases, the index of an irreducible maximal solvable subgroup $S$ of $\GL_n(q)$ is small which makes it harder to obtain a suitable estimate for $Q(S \cdot \SL_n(q), c).$ For details see Section \ref{gapsec}.

\subsection{Further comments and implications}

Vdovin's conjecture is originally stated in terms of  subgroup intersections in \cite[Problem 17.41 (b)]{kt}:

\begin{Prob}\label{prob}
Let $S$ be a solvable subgroup of a finite group $G$ with trivial solvable radical.
 Do there always exist five conjugates of $S$ whose intersection is trivial?
\end{Prob}

As we mentioned above, since the stabiliser of $Sg$ in $G$ is $S^g$ for $g \in G$, the existence of five conjugates of $S$ whose intersection is trivial is equivalent to the statement that $b_S(G)\le 5.$ Given a subgroup with a certain property, the minimal number of conjugates one needs to obtain (by intersecting) a normal subgroup often  provides useful information. In particular, as we discuss below, it gives an estimate on the index of such a normal subgroup. For example, in a pioneering work  from 1966, Passman \cite{passman} proves that for a Sylow $p$-subgroup $P$ of a $p$-solvable finite group $G$ there exist $x,y \in G$ such that $P \cap P^x \cap P^y = O_p(G)$ where $O_p(G)$ is the unique maximal normal $p$-subgroup of $G$. In later work, Zenkov \cite{ZenP} extended this result to an arbitrary finite group $G.$  Similar results are achieved by Zenkov \cite{Zen93}, Vdovin \cite{vdovinReg} and Dolfi \cite{dolfi} for solvable Hall $\pi$-subgroups of a $\pi$-solvable group $G$.  We use the following well known result of Zenkov in our proofs on multiple occasions.   

\begin{Th}[\cite{zen}]
\label{zenab}
If $A$ and $B$ are abelian subgroups of a finite group $G$, then there exists $x \in G$ such that $A \cap B^x \le {\bf F}(G).$
\end{Th}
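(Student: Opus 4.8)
The plan is to prove the statement by induction on $|G|$; it is convenient to prove the slightly stronger assertion that if $x\in G$ is chosen so that $D:=A\cap B^{x}$ has \emph{minimal} order among all subgroups $A\cap B^{g}$, $g\in G$, then $D\le\mathbf{F}(G)$. We may assume $D\neq 1$. The elementary but crucial observation is that, since $A$ and $B^{x}$ are abelian and both contain $D$, each of them centralises $D$; hence $A,B^{x}\le C_{G}(D)\le N_{G}(D)$. Note also that one cannot simply pass to $G/\mathbf{F}(G)$, since $\mathbf{F}(G/\mathbf{F}(G))$ need not be trivial.

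First dispose of the easy cases. If $N_{G}(D)=G$, then $D$ is a normal abelian — hence normal nilpotent — subgroup of $G$, so $D\le\mathbf{F}(G)$ and we are done. So assume $N_{G}(D)<G$; then in particular $C:=C_{G}(D)\le N_{G}(D)<G$. Inside the proper subgroup $C$ the minimal order of an intersection $A\cap (B^{x})^{c}$, $c\in C$, equals $|D|$: it is at least $|D|$ because these intersections are among the $A\cap B^{g}$, and it is attained at $c=1$. By the inductive hypothesis applied to $C$ we get $D\le\mathbf{F}(C)=\mathbf{F}(C_{G}(D))$. Since $N_{G}(D)$ normalises $C_{G}(D)$ and $\mathbf{F}(C_{G}(D))$ is characteristic in $C_{G}(D)$, the subgroup $\mathbf{F}(C_{G}(D))$ is a nilpotent normal subgroup of $N_{G}(D)$, so $D\le\mathbf{F}(C_{G}(D))\le\mathbf{F}(N_{G}(D))$.

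Now I would push this up a chain of normalisers. Put $R_{0}=\mathbf{F}(N_{G}(D))$ and, having $R_{i}$, note that $R_{i}$ is nilpotent and characteristic in the group whose Fitting subgroup it is, so $N_{G}(D)\le N_{G}(R_{0})\le N_{G}(R_{1})\le\cdots$ and $D\le R_{0}\le R_{1}\le\cdots$. If $N_{G}(R_{i})=G$ for some $i$, then $R_{i}\trianglelefteq G$ is nilpotent and $D\le R_{i}\le\mathbf{F}(G)$, finishing the proof; otherwise the inductive hypothesis applied to the proper subgroup $N_{G}(R_{i})$ (once again the minimal intersection of $A$ and $B^{x}$ over conjugates inside $N_{G}(R_{i})$ is $D$, attained at $c=1$) gives $D\le\mathbf{F}(N_{G}(R_{i}))=:R_{i+1}\supseteq R_{i}$. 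As $G$ is finite this ascending chain stabilises, and the only way it can fail to deliver the conclusion is if it stabilises at a nilpotent subgroup $R$ with $D\le R$, $N_{G}(R)<G$ and $R=\mathbf{F}(N_{G}(R))$ — a "Fitting-self-normalising" nilpotent subgroup containing $D$.

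The hard part is this last case. Such pathological $R$ genuinely exist in general (for instance a Sylow $3$-subgroup of $\Sym(4)$ equals the Fitting subgroup of its own normaliser yet is not subnormal), so the conclusion here must exploit exactly what has not yet been used: that $D=A\cap B^{x}$ is an intersection of two \emph{abelian} subgroups and that $|D|$ is minimal — for the offending examples a conjugate of the abelian subgroup $B$ can always be slid off any fixed non-normal nilpotent subgroup, which would contradict minimality. Concretely, writing $M=N_{G}(R)$, I would note that for every $g\in G\setminus M$ one has $D^{g}\neq D$ (since $N_{G}(D)\le M$) while $D^{g}\le R^{g}=\mathbf{F}(N_{G}(R^{g}))$, and then play the conjugates $A$, $A^{g}$, $B^{x}$, $B^{xg}$ against minimality of $|D|$ — plausibly via Zenkov-type counting of the intersections $A\cap B^{g}$, or by replacing $A$ (respectively $B$) by a carefully chosen abelian subgroup of $\mathbf{F}(C_{G}(D))$ containing $D$ and inducting on a secondary parameter — to force either $N_{G}(R)=G$ or a contradiction. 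This endgame, extracting a genuine obstruction from the minimal choice of $D$, is where I expect the real difficulty to lie; the reductions leading up to it are routine.
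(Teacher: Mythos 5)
Note first that the paper does not prove Theorem~\ref{zenab}; it simply cites Zenkov, so there is no in-paper proof to compare against, and what follows assesses your argument on its own terms. Your reductions are sound: choosing $D = A\cap B^x$ of minimal order, noting that the abelian groups $A$ and $B^x$ both centralise $D$ so $A, B^x \le C_G(D)$, invoking the (correctly strengthened) inductive hypothesis inside $C_G(D)$ to obtain $D \le {\bf F}(C_G(D))$ and hence $D \le {\bf F}(N_G(D))$, and climbing the ascending chain $R_{i+1} = {\bf F}(N_G(R_i))$ of nilpotent subgroups containing $D$.

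The gap you flag at the end is genuine, and it is not a routine coda. When the chain stabilises at some $R$ with $N_G(R) < G$ and $R = {\bf F}(N_G(R))$, applying the inductive hypothesis inside $N_G(R)$ merely returns $D \le {\bf F}(N_G(R)) = R$, which you already know; it does not place $D$ inside ${\bf F}(G)$. As your own Sylow-$3$-in-$\Sym(4)$ example shows, such $R$ exist, so the chain does not terminate for free, and the remaining task --- showing that minimality of $|D|$ together with $A$ and $B$ being abelian forbids $D$ from sitting inside such an $R$ --- is precisely the heart of Zenkov's theorem, not an afterthought. The strategies you sketch (a Zenkov-type count of the intersections $A\cap B^g$, swapping in a subgroup of ${\bf F}(C_G(D))$, a secondary induction) are plausible headings, but none is carried out, so as written this is an incomplete argument rather than a proof with a fixable slip. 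One cheap observation that may help you restructure: since $A$ and $B^x$ are abelian and both contain $D$, in fact $D \le Z(\langle A, B^x\rangle)$, so the whole theorem is immediate when $\langle A, B^x\rangle = G$; all the difficulty lives in the case $\langle A, B^x\rangle < G$, and your normaliser chain, while natural, does not by itself bridge it.
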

Here ${\bf F}(G)$ is the Fitting subgroup of $G$ (the maximal normal nilpotent subgroup).
In a recent paper \cite{zen21}, Zenkov shows that for any nilpotent subgroups $A,B,C$ of a finite group $G$ there exist $x,y \in G$ such that $A \cap B^x \cap C^y \le {\bf F}(G)$.


The study of intersections of subgroups is connected directly to a more general and abstract question of interest. Consider some property $\Psi$ of a finite group inherited by all its subgroups.  Important examples of  such a property are  the following: cyclicity, commutativity, nilpotence, solvability.
A natural question arises: how large is a normal $\Psi$-subgroup in an arbitrary finite group $G$? A more precise formulation of this question is the following:

\begin{Que}\label{que1}
Given a finite group $G$ with a $\Psi$-subgroup $H$ of index $n$, is it true that $G$ has a normal $\Psi$-subgroup whose index is bounded by some function $f(n)?$
\end{Que}

Since $H_G$, which is a normal $\Psi$-subgroup, is the kernel of the action of $G$ on the set of right cosets of $H$, and such an action provides a homomorphism to the symmetric group $\Sym(n),$   it always suffices to take $f(n)=n!$ for every such $\Psi$. We are interested in  stronger bounds, in particular those of the form $f(n)=n^c$ for some constant $c.$

Babai, Goodman and Pyber \cite{bab} prove some related results and state several conjectures.
In particular, they prove that
if a finite group $G$ has a cyclic subgroup ${C}$ of index $n$, then $ \cap_{g \in G} {{C}}^g$ has index at most $n^7.$
They also conjectured that the bound $n^2-n$ holds  and showed that it is best possible. Lucchini \cite{luccini} and, independently,  Kazarin and  Strunkov \cite{kaz} proved this. It follows from 
results  by Chermak and  Delgado \cite{cher} that the bound $n^2$ suffices for commutativity, and from results  by Zenkov \cite{zen21} that the bound $n^3$ suffices for nilpotence. We are interested in the case when $\Psi$ is solvability.

Babai, Goodman and Pyber \cite{bab}   prove the following statement.
\begin{Th}\label{indexBab}
There is an absolute constant $c$ such that, if a finite group $G$ has a
solvable subgroup $S$ of index $n$, then $G$ has a solvable normal subgroup of index
at most $n^c$.
\end{Th}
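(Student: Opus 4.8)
The plan is to reduce, via the solvable radical, to an almost simple (or nearly so) situation and then invoke the Classification of Finite Simple Groups to control the possible solvable subgroups. Let $R = \mathbf{R}(G)$ be the solvable radical of $G$ and set $\bar G = G/R$, which has trivial solvable radical. The image $\bar S$ of $S$ in $\bar G$ is solvable of index at most $n$, so it suffices to bound the index of a solvable normal subgroup of $\bar G$ by a polynomial in $n$; pulling back to $G$ and multiplying by $|R|$ (which divides $|R \cap S| \cdot [\text{something}]$, and in any case $R$ itself is a solvable normal subgroup of $G$, so it is $\bar G$ that carries all the difficulty). Thus we may assume $\mathbf{R}(G) = 1$ from the start, and we must show that a group with trivial solvable radical and a solvable subgroup of index $n$ has order bounded by a polynomial in $n$ — equivalently, $|G| \le n^c$, since then the trivial subgroup works. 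So the theorem is really the assertion: \emph{if $G$ has trivial solvable radical and a solvable subgroup of index $n$, then $|G| \le n^c$ for an absolute constant $c$.}

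Next I would set up the induction on $|G|$ and analyse the socle. Since $\mathbf{R}(G)=1$, the socle $N = \mathrm{soc}(G)$ is a direct product $T_1 \times \cdots \times T_k$ of non-abelian simple groups, and $G$ embeds in $\mathrm{Aut}(N)$, which permutes the factors $T_i$. The group $G$ acts on the $k$ factors; let $K$ be the kernel of this action, so $G/K$ is a permutation group of degree $k$, hence $[G:K] \le k!$. The subgroup $S$ projects into each "coordinate" almost-simple group $\mathrm{Aut}(T_i)$-quotient, and solvability of $S$ forces, for each $i$, that the image of $S \cap (T_1\cdots T_k)$ in $T_i$ is a solvable subgroup of $T_i$. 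The key input is then a statement about a \emph{single} non-abelian simple (or almost simple) group: there is an absolute constant $d$ such that if $T$ is simple and $U \le \mathrm{Aut}(T)$ with $U$ solvable, then $[\mathrm{Aut}(T) : U] \ge |T|^{1/d}$, or more usefully the minimal index of a solvable subgroup of an almost simple group with socle $T$ grows like a fixed power of $|T|$. This follows from CFSG: for alternating groups one uses that a solvable subgroup of $\mathrm{Alt}(m)$ has order at most roughly $c^m$ (Dixon's theorem on solvable permutation groups, $|S| \le 24^{(m-1)/3}$), which is tiny compared to $m!/2$; for groups of Lie type one uses that a solvable subgroup is contained in finitely many conjugacy classes of maximal subgroups of bounded "solvable type" (Borel-type, or normalisers of tori, etc.), each of index a fixed power of the group order; sporadic groups are a finite check.

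Putting these pieces together: the index $n = [G:S]$ satisfies $n \ge [N : N\cap S] = \prod_i [T_i : (T_i \cap S)] \gg \prod_i |T_i|^{1/d}$, using that $N\cap S$ is subdirect-ish and its projection to each $T_i$ is solvable of index $\ge |T_i|^{1/d'}$ for some absolute $d'$. Hence $\prod_i |T_i| \le n^{d''}$, so $|N| \le n^{d''}$. Since $G/K \le \mathrm{Sym}(k)$ with $k \le \log_{60}|N|$ (each $T_i$ has order $\ge 60$), we get $[G:K] \le k! \le |N|^{\log|N|}$ — but this is superpolynomial, so I would instead bound $[G:N]$ directly: $[G:N]$ divides $|\mathrm{Out}(N)|$, and $|\mathrm{Out}(N)| \le |N|$ (indeed much smaller, $|\mathrm{Out}(T_i)| \le \log|T_i|$-ish for the "diagonal/field/graph" part, and the wreathing part is $k! \le \exp(O(\sqrt{|N|}))$...). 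This last point — controlling $[G:\mathrm{soc}(G)]$, i.e.\ the outer part including the factor-permutation action, by a \emph{polynomial} in $n$ — is the main obstacle. The resolution is that the permutation group $G/K$ on the $k$ socle factors has a solvable subgroup (the image of $S$) of index at most $n$, and a transitive permutation group of degree $k$ with a solvable subgroup of bounded index is itself "small": one shows $k \le C\log n$ and then that $G/K$, having a solvable subgroup of index $\le n$ and degree $O(\log n)$, has order polynomial in $n$ by the same Dixon-type bound applied recursively. Threading this recursion carefully — simultaneously over the simple-group input and the permutation-action quotient — and extracting a single absolute exponent $c$ is the technical heart of the argument.
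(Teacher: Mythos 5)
The paper does not prove Theorem~\ref{indexBab}: it is cited from Babai--Goodman--Pyber~\cite{bab}. What the surrounding text does contain is a different observation relating the constant to base sizes: for $S$ maximal solvable, if $b_S(G)\le k$ then $|G:S_G|<|G:S|^k$ with $S_G$ the solvable radical, so a positive answer to Problem~\ref{prob} gives $c\le 5$. Your proposal is a genuinely different route, namely a direct argument on the socle of $G/\mathbf{R}(G)$ using CFSG input about solvable subgroups of simple groups, and the outline is sound: reduce to $\mathbf{R}(G)=1$, get $|\mathrm{soc}(G)|\le n^{1/\epsilon}$ from a lower bound $|T|^{\epsilon}$ on indices of solvable subgroups of simple $T$, bound the outer part coordinate-by-coordinate, and handle the factor permutation separately. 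What your version buys is a self-contained proof of the existence of \emph{some} absolute $c$; what the base-size route buys is the sharp explicit constant.

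Two places need tightening. The equality $[N:N\cap S]=\prod_i[T_i:T_i\cap S]$ should be $[N:N\cap S]\ge\prod_i[T_i:\pi_i(N\cap S)]$: the subdirect product $N\cap S$ embeds in $\prod_i\pi_i(N\cap S)$, each factor solvable as a quotient of $N\cap S$, and this gives the inequality in the useful direction (the intersections $T_i\cap S$ give the reverse, since $\prod_i(T_i\cap S)\le N\cap S$); you acknowledge this with ``subdirect-ish,'' so it is a fixable imprecision. More substantively, your treatment of $G/K$ should not be a recursion. The correct argument is direct: $SK/K$ is a solvable subgroup of $G/K\le\mathrm{Sym}(k)$, so Dixon's bound gives $|SK/K|\le 24^{(k-1)/3}$ outright, and $60^k\le|N|\le n^{1/\epsilon}$ forces $k\le(1/\epsilon)\log_{60}n$, whence $|G/K|\le n\cdot 24^{(k-1)/3}=n^{O(1)}$. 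Applying the theorem inductively to $G/K$ with the same exponent $c$ and then estimating the solvable normal subgroup by Dixon yields $|G/K|\le n^{c}\cdot n^{O(1)}$, a strictly larger exponent, so that induction would not close; the direct bound is what makes $c$ absolute. Finally, the input that a solvable subgroup of a finite simple group $T$ has index $\ge|T|^{\epsilon}$ for an absolute $\epsilon>0$ is itself a substantial CFSG theorem; your case breakdown (Dixon for alternating groups, Borel-type index bounds for Lie type, finite check for sporadics) is the right skeleton, but it carries the real weight of the argument and would need a citation or detail comparable to the rest.
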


\noindent Although their proof does not yield an explicit value, they  conjectured  that $c \le 7$. It is straightforward to obtain a bound for $c$ from $b_S(G)$ for a maximal solvable subgroup $S$.
 Indeed, let  $G$ act on the set $\Omega$ of right $S$-cosets via right multiplication, so $|\Omega|=|G:S|.$  If $(\beta_1 , \ldots , \beta_k )\in \Omega^k$ is a base for the natural action of $G/S_G$ on $\Omega$, then $$|(\beta_1 , \ldots , \beta_k )^G| \le |\Omega| \cdot (|\Omega|-1) \ldots (|\Omega|-k+1)<|\Omega|^k=|G:S|^k.$$
Therefore,
$$|G:S_G|<|G:S|^k,$$
and if Problem \ref{prob} has a positive answer, then $c \le 5$ in Theorem \ref{indexBab} (we can assume that $S$ is maximal solvable, so $S_G$ is the solvable radical of $G$ and $G/S_G$ satisfies the condition of Problem \ref{prob}).

\bigskip

    The paper is organised as follows. In Section 2 we state necessary definitions and preliminary results. In Section 3 we obtain bounds on $b_S(S \cdot \SL_n(q))$ for irreducible maximal solvable subgroups of $\GL_n(q).$   Finally, we  use these results to prove Theorems \ref{theorem} and \ref{theoremGR} in Sections \ref{sec411} and \ref{sec412} respectively.  

\section{Definitions and preliminaries}\label{secnot}

All group actions are right actions. For example, the action of a linear transformation $g$ of a vector space $V$ on $v \in V$ is $(v)g \in V.$ For finite classical groups we follow the  notation of \cite{kleidlieb}. For algebraic groups our standard references are \cite[Chapter 1]{carter}, \cite[Chapter 1]{gorlyo} and \cite{hump}.

 Let $p$ be a prime and $q=p^f$,  $f \in \mathbb{N}$. Denote a finite field of size $q$ by $\mathbb{F}_q$, its algebraic closure by $K$ and the multiplicative group of a field $\mathbb{F}$ by $\mathbb{F}^*.$ 
Throughout, unless stated otherwise, $V=\mathbb{F}_{q}^n$ denotes a vector space of dimension $n$ over $\mathbb{F}_{q}$.

An $\mathbb{F}$-\emph{semilinear} transformation of a vector space $V$ over a field $\mathbb{F}$ is a map $g: V \to V$ such that there exists a field automorphism $\sigma(g) \in \Aut(\mathbb{F})$ with 
$$(v+u)g=(v)g+ (u)g \text{ and } (\lambda v)g= \lambda^{\sigma(g)}(v)g$$
for $v,u \in V$ and $\lambda \in \mathbb{F}.$
Such a map is \emph{non-singular} if $\{v \in V \mid (v)g=0\}=\{0\}.$ We define the group $\GaL(V,\mathbb{F})$ to be the set of all non-singular $\mathbb{F}$-semilinear transformations of $V$ with composition as the binary operation.   
We reserve the letter $\beta$ for a \emph{basis} of $V$, which we view as an ordered set. 
If $\alpha \in \Aut(\mathbb{F})$, then $\phi_{\beta}(\alpha)$ denotes the unique  $g \in \GaL(V, \mathbb{F})$ such that
\begin{equation}
\label{defphibet}
\left(\sum_{i=1}^n \lambda_i v_i \right) \phi_{\beta}(\alpha)= \sum_{i=1}^n\lambda_i^{\alpha} v_i
\end{equation}
 where $\beta =\{v_1, \ldots, v_n\}.$ If $\mathbb{F}= \mathbb{F}_{q}$ and $\alpha \in \Aut(\mathbb{F})$ is such that $\lambda^{\alpha}=\lambda^p$ for all $\lambda \in \mathbb{F}$,  then we denote $\phi_{\beta}(\alpha)$ by $\phi_{\beta}$ \index{$\phi_{\beta}$} or simply $\phi$ when $\beta$ is understood. It is routine to check that  $$\GaL(V, \mathbb{F}_q)= \GL(V, \mathbb{F}_q) \rtimes \langle \phi \rangle \cong \GL_n(q) \rtimes \langle \phi \rangle.$$

\medskip

\begin{Not}\label{not} We fix the following notation.
\begin{longtable}{p{1em} p{7em} p{27em} }
1. & $\delta_{i,j}$ & Kronecker delta\\
2. & $p'$ & set of all primes except $p$\\
3. & $(a,b)$ & greatest common divisor of integers $a$ and $b$\\
4. & $g^G$ & conjugacy class of $g \in G$\\
5. & $Z(G)$ &  center of a group $G$\\
6. & ${\bf F}(G)$ &  Fitting subgroup \index{Fitting subgroup} of a finite group $G$\\
7. & $O_{\pi}(G)$ & unique maximal normal $\pi$-subgroup for a set of primes $\pi$\\
8. & $A \rtimes B$ & semidirect product of groups $A$ and $B$ with $A$ normal\\
9. &  $\Sym(n)$ &  symmetric group of degree $n$ \\
10. & $\sgn(\pi)$ &   sign of a permutation $\pi$\\
11. & $M_n(\mathbb{F})$& algebra of all $n \times n$ matrices over $\mathbb{F}$\\
12. & $D(G)$ & subgroup of all diagonal matrices of a matrix group $G$\\
13. & $RT(G)$ & subgroup of all upper-triangular 
 matrices of a matrix group $G$\\
14. & $\Det(H)$ & $\{\det(h) \mid h \in H \}$ for $H \le \GL(V)$\\ 
15. & $\diag(\alpha_1, \ldots, \alpha_n)$ &   diagonal matrix with entries $\alpha_1, \ldots, \alpha_n$ on its diagonal  \\  
16. &  $\diag[g_1, \ldots, g_k]$ &   block-diagonal matrix with blocks $g_1, \ldots, g_k$ on its diagonal  \\
17. & $\per(\sigma)$ &  permutation matrix corresponding to $\sigma \in \Sym(n)$\\
18. & $g^{\top}$ &  transpose of a matrix $g$\\
19. & $g \otimes h$ &   Kronecker product \index{Kronecker product}   
 $\begin{pmatrix}
g \cdot h_{1,1}     &  \ldots & g \cdot h_{1,m}  \\
\ldots           &   \ldots   & \ldots   \\
g \cdot h_{m,1}     & \ldots     & g \cdot h_{m,m}      
\end{pmatrix} \in \GL_{nm}(q)$ \\ & & for $g \in \GL_n(q)$ and $h \in \GL_m(q)$ \\
\end{longtable}
\end{Not}

 An irreducible subgroup $G$ of $\GL(V)$ is \emph{imprimitive} if there exists a decomposition 
 $$V=V_1 \oplus \ldots \oplus V_k; \text{ } k>1$$  stabilised by $G$. So for each $i \in \{1, \ldots, k\}$ and $g \in G$ there exist   $j \in \{1, \ldots, k\}$ such that $(V_i)g=V_j.$ If such a decomposition does not exist, then $G$ is \emph{primitive}.

It is convenient to view  the symmetric group 
as a group of permutation matrices. We define the \emph{wreath product} \index{wreath product}  of  $X \le \GL_n(q)$ and a  group of permutation matrices $Y \le \GL_m(q)$ as the matrix group $X \wr Y \le \GL_{nm}(q)$ obtained
by replacing the entries 1 and 0  in every matrix in $Y$ by
arbitrary matrices in $X$ and by zero $(n \times n)$ matrices respectively. 

Let $A$ be an $(nm\times nm)$ matrix. We can view $A$ as the matrix
$$\begin{pmatrix}
A_{11}      &  \ldots & A_{1m}  \\
\ldots           &   \ldots   & \ldots   \\
A_{m1}     & \ldots     & A_{mm}      
\end{pmatrix}$$
where the $A_{ij}$ are $(n \times n)$ matrices. The vector $(A_{i1}, \ldots, A_{im})$ is the $i$-th $(n \times n)$-\emph{row} \index{$(n \times n)$-row} of $A.$

\subsection{General results and solvable linear groups}
\label{missec}

We begin by proving some general results which were mentioned in Section \ref{intro}.

\begin{Lem}\label{logd}
If $G\le \Sym(\Omega)$ and $d=|\Omega|,$ then $b(G) \ge \log_d|G|$.
\end{Lem}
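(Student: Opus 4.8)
The plan is to exhibit an injection from $G$ into $\Omega^{b}$, where $b=b(G)$, and then take logarithms. Fix a base $B=\{\beta_1,\ldots,\beta_b\}\subseteq\Omega$ of minimal size, so that the pointwise stabiliser $G_{(B)}=\{g\in G\mid \beta_i g=\beta_i \text{ for all }i\}$ is trivial. Define the ``orbit map''
$$\theta\colon G\to \Omega^{b},\qquad \theta(g)=(\beta_1 g,\ldots,\beta_b g).$$

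First I would check that $\theta$ is injective. If $\theta(g)=\theta(h)$ for $g,h\in G$, then $\beta_i (gh^{-1})=\beta_i$ for each $i$, so $gh^{-1}\in G_{(B)}=1$, whence $g=h$. Therefore $|G|=|\theta(G)|\le |\Omega^{b}|=d^{b}$. Taking logarithms to base $d$ (note $d\ge 1$; the statement is vacuous when $d=1$ since then $G$ is trivial) gives $\log_d|G|\le b=b(G)$, which is the claim.

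There is essentially no obstacle here: the only point requiring a word is the injectivity of $\theta$, and that is immediate from the defining property of a base. One could equally phrase the argument as: distinct elements of $G$ must act differently on $B$, for otherwise their quotient would be a non-trivial element of $G_{(B)}$; hence the number of distinct ``action patterns'' on the $b$ chosen points, which is at most $d^{b}$, bounds $|G|$ from above.
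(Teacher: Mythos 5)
Your proof is correct and takes the same approach as the paper: both define the evaluation map $g\mapsto(\beta_1 g,\ldots,\beta_b g)$ from $G$ to $\Omega^b$, use the defining property of a base to show it is injective, and conclude $|G|\le d^{b(G)}$. The only cosmetic difference is that you name the map $\theta$ explicitly and remark on the degenerate case $d=1$, neither of which changes the substance.
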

\begin{proof}
Let $B \in \Omega^{b(G)}$ be a base. Every element of $G$ is uniquely determined by its action on $B$. Indeed, if $Bx=By$ for $x,y \in G$, then $Bxy^{-1}=B$ and $x=y$ since $B$ is a regular point. Hence $|G|\le d^{b(G)}.$ 
\end{proof}

\begin{Lem}\label{SdotG0}
Let $G_0$ be a finite simple nonabelian group. Let $G_0 \le G \le \Aut (G_0)$. If  $\Reg_H(H \cdot G_0,5)\ge 5,$ for every maximal solvable subgroup $H$ of $\Aut(G_0),$ then $\Reg_S(G,5)\ge 5,$ for every solvable subgroup $S$ of $G$.
\end{Lem}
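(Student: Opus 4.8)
\medskip

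The plan is to reduce the statement about an arbitrary solvable $S \le G$ to the statement about a maximal solvable subgroup $H$ of $\Aut(G_0)$ by two observations: first that enlarging $S$ inside $G$ only decreases the base size, and second that the quantity $\Reg_S(G,5)$ is controlled by $\Reg_H(H \cdot G_0, 5)$ when $S \le H$ and $H \cdot G_0 \le G \cdot \text{(something)}$. More precisely, first I would extend $S$ to a maximal solvable subgroup $H$ of $\Aut(G_0)$ with $S \le H$; such $H$ exists because $\Aut(G_0)$ is finite. Then set $G^* = \langle G, H\rangle \le \Aut(G_0)$, a group with $G_0 \le G^* \le \Aut(G_0)$, and note $H \le G^*$. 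The key point is that it suffices to prove $\Reg_S(G^*,5) \ge 5$, because a regular $5$-tuple for the action of $G^*$ on cosets of $S$ restricts to a regular $5$-tuple for the subgroup $G$ acting on cosets of $S$ in $G$ — here one uses that the pointwise stabiliser in $G$ of a tuple is the intersection of the pointwise stabiliser in $G^*$ with $G$, so triviality modulo the relevant core is inherited; and distinct regular $G^*$-orbits lie in distinct $G$-orbits so the count does not drop. Thus without loss of generality we may replace $G$ by $G^*$ and assume $S \le H \le G$ with $H$ maximal solvable in $\Aut(G_0)$.

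\medskip

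Next I would compare $\Reg_S(G,5)$ with $\Reg_H(G,5)$. Since $S \le H$, every $H$-coset is a union of $S$-cosets, giving a $G$-equivariant surjection from the $G$-set of $S$-cosets onto the $G$-set of $H$-cosets; hence $b_S(G) \ge b_H(G)$ and, crucially, a regular $5$-tuple of $H$-cosets need not lift, so I instead argue in the other direction: it is enough to find $5$ elements $g_1,\dots,g_5 \in G$ with $\bigcap_i H^{g_i} = H_G$, for then (by the criterion recalled in the methods section) $b_H(G) \le 5$, but this controls $b_S$ only if $S = H$. So the honest route is to work with $H$ directly: I would show $\Reg_H(G,5) \ge 5$ and then deduce the statement for $S$ from Remark~\ref{remSleH}, \emph{but that remark only gives $b_S(G) \le b_H(G) \le 5$, not $\Reg_S(G,5)\ge 5$}. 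Therefore the real content must be extracted at the level of $H \cdot G_0$: apply the hypothesis to get $\Reg_H(H\cdot G_0,5)\ge 5$, i.e. $5$ regular $5$-tuples for $H\cdot G_0$ acting on $(H\cdot G_0)/H_{H\cdot G_0}$; then push these up along the chain $H\cdot G_0 \le G \le G^*$ and along $H \le S$ is impossible, so one pushes \emph{down} from $H$-cosets to... this is the delicate point.

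\medskip

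The correct mechanism, which I would make precise, is the following. Because $G_0$ is simple nonabelian and $H$ is solvable, $H$ does not contain $G_0$, so $H \cap G_0 \ne G_0$ and $H_{H\cdot G_0} = 1$ (the core is a solvable normal subgroup of $H\cdot G_0$, hence contained in $C_{H\cdot G_0}(G_0) \cap G_0 \cdot (\text{solvable part})$ — one checks it is trivial since $\soc(H\cdot G_0) = G_0$). Likewise $S_G$ is trivial or at least contained in the solvable radical, which is trivial because $\soc(G) = G_0$. So we are genuinely in the faithful situation on both sides. Now given $5$ regular $5$-tuples for $H\cdot G_0$, each is a $5$-tuple $(Hx_1,\dots,Hx_5)$ with $x_i \in H\cdot G_0$ and $\bigcap H^{x_i} = 1$. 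Since $S \le H$, we have $\bigcap S^{x_i} \le \bigcap H^{x_i} = 1$, so the same $5$-tuple, read in $(S\cdot G_0)/1 \subseteq G/1$, is a base for $S\cdot G_0$ acting on $S$-cosets, and five such tuples arising from five distinct $H\cdot G_0$-orbits remain in distinct $S\cdot G_0$-orbits (the $H\cdot G_0$-orbit map factors the $S\cdot G_0$-orbit map is false in this direction; instead the $S\cdot G_0$-orbits refine the $H\cdot G_0$-orbits, so distinct $H\cdot G_0$-orbits force distinct $S\cdot G_0$-orbits). Finally enlarge $S\cdot G_0$ to $G$: regular tuples and orbit-distinctness are preserved exactly as in the first paragraph. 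Hence $\Reg_S(G,5)\ge 5$. The main obstacle, and the step deserving the most care, is verifying that the cores vanish — i.e. that $H_{H\cdot G_0}=1$ and $S_G = 1$ — so that "regular tuple" on one side literally means "trivial intersection of conjugates" and transfers verbatim to the other; this uses $\soc(H\cdot G_0)=\soc(G)=G_0$ together with solvability of $H$ and $S$.
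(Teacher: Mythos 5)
Your core idea matches the paper's: extend $S$ to a maximal solvable $H\le\Aut(G_0)$, apply the hypothesis to get five regular $5$-tuples of $H$-cosets for $H\cdot G_0$, and reinterpret them as $S$-coset tuples for $G$. Your observation that $H_{H\cdot G_0}=1$ and $S_G=1$ (because $\soc(G)=G_0$ is simple nonabelian and $C_G(G_0)=1$, so any nontrivial solvable normal subgroup would contain $G_0$) is correct and underlies both versions of the argument. However, your writeup has two genuine gaps and one unnecessary detour.

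\textbf{Gap 1: representatives must be chosen in $G_0$.} The hypothesis hands you tuples $(Hx_1,\dots,Hx_5)$ with $x_i\in H\cdot G_0$. You then say ``the same $5$-tuple, read in $(S\cdot G_0)/1\subseteq G/1$'' --- but $Sx_i$ is a coset of $S$ in $G$ only if $x_i\in G$, and the $x_i$ coming from the hypothesis need not lie in $G$ or in $S\cdot G_0$. The paper's proof explicitly replaces each $x_i$ by a representative in $G_0$, which is possible because $H\cdot G_0/H\cong G_0/(H\cap G_0)$, so every $H$-coset in $H\cdot G_0$ has a representative in $G_0\le G$. Without this normalisation the transfer to $G$-cosets of $S$ doesn't parse, and the orbit-distinctness argument below also fails (because one needs $x_{(1,i)}^{-1}Hx_{(2,i)}\subseteq H\cdot G_0$, which holds precisely when $x_{(1,i)},x_{(2,i)}\in H\cdot G_0$ --- conveniently guaranteed when they lie in $G_0$).

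\textbf{Gap 2: orbit-distinctness when enlarging to $G$.} You assert this is ``exactly as in the first paragraph,'' but your first paragraph's argument concerns \emph{restricting} an action from $G^*$ to a subgroup $G$, which is the opposite direction from \emph{enlarging} $S\cdot G_0$ to $G$. Passing to a supergroup can a priori merge orbits, so this step needs an actual argument. The paper's version: if $\omega_1'g=\omega_2'$ for some $g\in G$, then $g\in\bigcap_i x_{(1,i)}^{-1}Sx_{(2,i)}\subseteq\bigcap_i x_{(1,i)}^{-1}Hx_{(2,i)}$; with $x_{(j,i)}\in G_0$ each factor lies in $H\cdot G_0$, so the intersection is a (possibly empty) subset of $H\cdot G_0$; since $\omega_1,\omega_2$ are in distinct $H\cdot G_0$-orbits, that intersection is in fact empty, so no such $g$ exists in $\Aut(G_0)$, let alone in $G$.

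\textbf{Unnecessary detour.} The construction of $G^*=\langle G,H\rangle$ and the attempted reduction to $G^*$ is never used in your final argument and introduces the additional (and false in general) implicit assumption that a regular tuple in $(G^*/S)^5$ restricts to one in $(G/S)^5$: the coset representatives may not lie in $G$. Once the representatives are normalised to $G_0$, $G^*$ plays no role and the whole reduction can be dropped, as the paper does.
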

\begin{proof} Let $S\le G$ be solvable. Let $H$ be a maximal solvable subgroup of $  \Aut(G_0)$ containing $S$. Since 
$$\Reg_H(H \cdot G_0,5)\ge 5,$$
 there exist $x_{(i,1)},x_{(i,2)},x_{(i,3)},x_{(i,4)},x_{(i,5)} \in G_0$ for $i \in\{1, \ldots, 5\}$ such that
$$\omega_i=(Hx_{(i,1)},Hx_{(i,2)},Hx_{(i,3)},Hx_{(i,4)},Hx_{(i,5)})$$ 
are $H \cdot G_0$-regular points   in $\Omega^5=\{Hx \mid x \in H \cdot G_0 \}^5$, and the $\omega_i$ lie in distinct orbits. 
We claim that $$\omega_i'=(Sx_{(i,1)},Sx_{(i,2)},Sx_{(i,3)},Sx_{(i,4)},Sx_{(i,4)}) \text{ for } i \in \{1, \ldots, 5\}$$ lie in distinct $G$-regular orbits in $(\Omega')^5=\{Sx \mid x \in G\}^5.$ Indeed, $$S^{x_{(i,1)}} \cap S^{x_{(i,2)}} \cap S^{x_{(i,3)}} \cap S^{x_{(i,4)}} \cap S^{x_{(i,5)}} \le H^{x_{(i,1)}} \cap H^{x_{(i,2)}} \cap H^{x_{(i,3)}} \cap H^{x_{(i,4)}} \cap H^{x_{(i,5)}}=1,$$ so $\omega_i'$ are regular. Assume that $\omega_1' g = \omega_2'$ for some $g \in G.$ Therefore,
$$(Sx_{(1,i)})g=Sx_{(2,i)} \text{ for } i \in \{1, \ldots, 5\}$$
and 
$$g \in \bigcap_{i=1}^5 (x_{(1,i)}^{-1}S x_{(2,i)}) \subseteq \bigcap_{i=1}^5 (x_{(1,i)}^{-1}H x_{(2,i)})= \emptyset$$
where the last equality holds since $\omega_1$ and $\omega_2$ lie in distinct $H \cdot G_0$-orbits. Hence $\omega_1'$ and $\omega_2'$ lie in distinct $G$-orbits. The same argument shows that all of the $\omega_i'$ lie in distinct $G$-orbits, so $\Reg_S(G,5)\ge 5$.
\end{proof}

  Further we state two classical results due to Suprunenko.
\begin{Lem}[{\cite[\S 18, Theorem 5]{sup}}] \label{supirr}
An irreducible solvable subgroup of $\GL_n(q)$ is either primitive, or conjugate in $\GL_n(q)$ to a subgroup of the wreath product
$S \wr \Gamma$
where $S$ is a primitive solvable subgroup of $\GL_m(q)$ and $\Gamma$ is a transitive solvable
subgroup of the symmetric group $\Sym(k)$ and $km = n$. In particular, an irreducible maximal solvable  subgroup of 
$\GL_n(q)$ is either primitive, or conjugate in $\GL_n(q)$
to  $S \wr \Gamma$, where $S$ is a  primitive maximal solvable subgroup of $\GL_m(q)$
and $\Gamma$ is a  transitive maximal solvable subgroup of $\Sym(k)$.
\end{Lem}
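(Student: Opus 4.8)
This is a classical structural fact (it is, after all, being quoted from Suprunenko's book), and the argument I would give is the standard reduction of an imprimitive group to a wreath product. Let $G \le \GL(V)$ be irreducible and solvable with $V = \mathbb{F}_q^n$. If $G$ is primitive there is nothing to do, so I would assume $G$ is imprimitive and fix, among all decompositions $V = V_1 \oplus \cdots \oplus V_k$ with $k > 1$ that are permuted by $G$, one for which $k$ is maximal. First I would record that $G$ permutes the blocks transitively: the direct sum of the blocks in any single $G$-orbit is a nonzero $G$-invariant subspace, hence equals $V$, so the orbit is all of them. Consequently $\dim V_i = m$ is independent of $i$ and $km = n$.

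Next I would set $H = \Stab_G(V_1)$ and let $S \le \GL(V_1) \cong \GL_m(q)$ be the image of $H$ in its action on $V_1$; being a section of $G$, the group $S$ is solvable. The heart of the matter is to show $S$ is \emph{primitive} on $V_1$, and here I would argue by contradiction. Suppose $V_1 = W_1 \oplus \cdots \oplus W_\ell$ with $\ell > 1$ is permuted by $S$, and fix $g_i \in G$ with $V_1 g_i = V_i$ and $g_1 = 1$. I claim the $k\ell$ subspaces $W_j g_i$ give a decomposition of $V$ permuted by $G$: for $g \in G$ one writes $g_i g = h_i g_{\sigma(i)}$ with $h_i \in H$ and $\sigma$ a permutation of $\{1, \ldots, k\}$, and then $W_j g_i g = (W_j h_i) g_{\sigma(i)}$ is again a member of the family, because $h_i$ acts on $V_1$ through an element of $S$ and $S$ permutes the $W_j$. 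Since $k\ell > k$, this contradicts the maximality of $k$, so $S$ is primitive.

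Having established this, I would transport a fixed basis $\beta_1$ of $V_1$ by the $g_i$ to bases $\beta_i = \beta_1 g_i$ of $V_i$ and pass to $\beta = \beta_1 \cup \cdots \cup \beta_k$, a basis of $V$. Writing $g \in G$ in the basis $\beta$ and using the same factorisation $g_i g = h_i g_{\sigma(i)}$ shows that the matrix of $g$ is block-monomial: exactly one nonzero $m \times m$ block in each block-row and block-column, equal to the matrix of $h_i$ in $\beta_1$ and hence lying in $S$. Therefore the $\GL_n(q)$-conjugate of $G$ by the corresponding change-of-basis matrix lies in $S \wr \Gamma$, where $\Gamma$ is the image of $G$ in $\Sym(k)$, viewed as permutation matrices as in Section~\ref{secnot}; this $\Gamma$ is transitive by the first paragraph and solvable as a quotient of $G$. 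That gives the first assertion.

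For the ``in particular'', I would observe that $S \wr \Gamma$ is solvable (an extension of $S^k$ by $\Gamma$) and irreducible on $V$ (as $S$ is irreducible, being primitive, and $\Gamma$ is transitive). Hence, if $G$ is a maximal solvable subgroup, the inclusion of a conjugate of $G$ into $S \wr \Gamma$ forces that conjugate to be $S \wr \Gamma$, i.e.\ $G$ is conjugate to $S \wr \Gamma$. Finally, any solvable overgroup $S \le S' \le \GL_m(q)$ with $S' \ne S$ produces a strictly larger solvable subgroup $S' \wr \Gamma$ of $\GL_n(q)$, and any solvable — hence transitive — overgroup $\Gamma \le \Gamma' \le \Sym(k)$ with $\Gamma' \ne \Gamma$ produces $S \wr \Gamma'$; by maximality neither can occur, so $S$ is maximal solvable in $\GL_m(q)$ and $\Gamma$ is maximal solvable in $\Sym(k)$. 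The step I expect to be the main obstacle is the primitivity of $S$ in the second paragraph: one must be sure a block system for $S$ on $V_1$ genuinely lifts to a \emph{strictly finer} $G$-block system on $V$, and the maximal choice of $k$ (rather than, say, an independent minimisation of $\dim V_1$) is precisely what makes this clean.
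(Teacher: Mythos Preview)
The paper cites this result from Suprunenko without proof, so there is no in-paper argument to compare against; your proposal is the standard reduction and is essentially what one finds in Suprunenko.

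One small gap worth flagging: in the second paragraph you show $S$ is primitive by contradicting a putative $S$-permuted decomposition $V_1 = W_1 \oplus \cdots \oplus W_\ell$, but primitivity (as defined in the paper, just after Notation~\ref{not}) is only a notion for \emph{irreducible} groups, and your contradiction does not cover the case where $S$ merely stabilises a proper nonzero $W < V_1$ without permuting any decomposition. This is a one-line fix and does not even need the maximality of $k$: if $0 < W < V_1$ is $H$-invariant then $G$ permutes the subspaces $Wg_1,\ldots,Wg_k$, so $\sum_i Wg_i$ is a nonzero $G$-invariant subspace of dimension at most $k\dim W < km = n$, contradicting irreducibility of $G$. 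With that inserted before your imprimitivity contradiction, the argument is complete.
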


\begin{Lem}[{\cite[\S 18, Theorem 3]{sup}}]
\label{supreduce}
Let  $H$ be a
subgroup of $\GL_n(q)$. In a suitable basis $\beta$ of $V$, the matrices $g\in H$ have the
shape
\begin{gather}\label{stup}
\begin{pmatrix}
g_k     & g_{k,(k-1)} & \ldots & g_{k,1}  \\
0          &    g_{k-1}  & \ldots & g_{(k-1),1}  \\
\ldots     & \ldots     & \ldots & \ldots     \\
      0    &   0        & \ldots & g_1 
\end{pmatrix}
\end{gather}
where the mapping $\gamma_i: H \to \GL_{n_i}(q)$, $g \mapsto g_i$ is an irreducible 
representation of $H$ of degree $n_i$, and $g_{i,j}$ is an $(n_i \times n_j)$ matrix over $\mathbb{F}_q$, and $n _1 + \ldots + n _k = n$.
 The group $H$  is solvable if and only if all the
groups
$$H_i ={\mathrm{Im} } (\gamma_i) \text{ for } i = 1, \ldots, k,$$
are solvable.
\end{Lem}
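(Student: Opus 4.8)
The plan is to recognise Lemma~\ref{supreduce} as, essentially, the existence of a composition series for $V$ viewed as an $\mathbb{F}_q H$-module, combined with a bookkeeping step that matches a flag-adapted basis to the prescribed block shape, plus a short extension argument for the solvability equivalence.

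First I would produce a chain of $H$-invariant subspaces
$$0 = V_0 < V_1 < \cdots < V_k = V$$
with each quotient $V_i/V_{i-1}$ an irreducible $\mathbb{F}_q H$-module; this is immediate by induction on $\dim V$, splitting $V$ at a proper nonzero invariant subspace when one exists and taking $k=1$ otherwise. Writing $n_i = \dim(V_i/V_{i-1})$, I would then choose a basis $\beta = \{v_1,\dots,v_n\}$ of $V$ adapted to this flag \emph{in reverse order}, i.e.\ with $\langle v_{n-n_1+1},\dots,v_n\rangle = V_1$, $\langle v_{n-n_1-n_2+1},\dots,v_n\rangle = V_2$, and so on. Since the action is on row vectors on the right and each $V_i$ is $H$-invariant, the matrix of any $g\in H$ with respect to $\beta$ has exactly the block upper triangular shape displayed in \eqref{stup}, with the diagonal blocks being the matrices of the actions of $g$ on the quotients $V_i/V_{i-1}$, arranged so that $g_1$ (the action on $V_1$) occupies the bottom-right corner and $g_k$ the top-left. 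Hence $\gamma_i\colon g\mapsto g_i$ is the representation of $H$ afforded by the irreducible module $V_i/V_{i-1}$, it has degree $n_i$, and $n_1+\cdots+n_k=n$, as required.

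For the solvability statement, the forward implication is immediate: $H_i = {\mathrm{Im}}(\gamma_i) \cong H/\ker\gamma_i$ is a quotient of $H$, so it is solvable if $H$ is. For the converse, assume each $H_i$ is solvable and put $N = \bigcap_{i=1}^k \ker\gamma_i$. The map $g\mapsto(\gamma_1(g),\dots,\gamma_k(g))$ embeds $H/N$ into the solvable group $H_1\times\cdots\times H_k$, so $H/N$ is solvable. On the other hand every element of $N$ acts as the identity on each $V_i/V_{i-1}$; refining the flag to a complete flag — which $N$ then still stabilises term by term, acting trivially on each one-dimensional quotient — shows that $N$ is conjugate in $\GL_n(q)$ to a subgroup of the group of upper unitriangular matrices, a Sylow $p$-subgroup of $\GL_n(q)$. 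Thus $N$ is a finite $p$-group, hence nilpotent and solvable, and $H$, being an extension of $H/N$ by $N$, is solvable.

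I expect the main difficulty to be purely notational rather than conceptual: one has to set up the flag-adapted basis so that the matrices land in precisely the shape \eqref{stup} with the stated indexing $g_k,\dots,g_1$ from top-left to bottom-right, while consistently using the right-action (row vector) convention of the paper. The only genuine point that deserves to be spelled out is that an element acting trivially on every composition factor of $V$ is unipotent, hence lies in a Sylow $p$-subgroup of $\GL_n(q)$ — this is what makes $N$ a $p$-group and therefore solvable.
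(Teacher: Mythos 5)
Your argument is correct, and it is the standard proof of this result: the block shape comes from a composition series $0 = V_0 < V_1 < \cdots < V_k = V$ with a flag-adapted basis written in reverse (so that $V_1$ occupies the last $n_1$ coordinates), and the solvability equivalence follows by considering $N = \bigcap_i \ker\gamma_i$, which acts trivially on every composition factor, hence (after refining to a complete flag, every term of which is automatically $N$-stable) embeds in the upper unitriangular group and is therefore a $p$-group and solvable, while $H/N$ embeds in $\prod_i H_i$. The paper does not give a proof of this lemma — it cites Suprunenko's book directly — so there is nothing in the source to compare against, but your reconstruction is the expected one and all steps hold; the only point worth being explicit about is precisely the one you flag in your final paragraph, namely that triviality on each $V_i/V_{i-1}$ makes \emph{every} intermediate subspace of a refinement $N$-invariant, which is what licenses passing to a complete flag.
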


 Now we state three technical lemmas about solvable linear groups. We use them and ideas from their proofs many times throughout  this paper. 

\begin{Lem}\label{supSL}
Let $H \le G \le \GL_n(q).$ Assume $\Det(H)=\Det(G).$ Then the following hold:
\begin{enumerate}[font=\normalfont]
\item $H \cdot (\SL_n(q) \cap G)=G$.
\item If $g \in G$, then there exists $g_1 \in \SL_n(q) \cap G$ such that $H^g=H^{g_1}.$ 
\end{enumerate}
\end{Lem}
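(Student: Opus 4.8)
The plan is to prove (1) first and then deduce (2) as an easy consequence. For (1): the inclusion $H \cdot (\SL_n(q) \cap G) \subseteq G$ is clear since $H \le G$ and $\SL_n(q) \cap G \le G$. For the reverse inclusion, take an arbitrary $g \in G$. Since $\det(g) \in \Det(G) = \Det(H)$, there exists $h \in H$ with $\det(h) = \det(g)$. Then $h^{-1}g \in G$ and $\det(h^{-1}g) = 1$, so $h^{-1}g \in \SL_n(q) \cap G$. Hence $g = h \cdot (h^{-1}g) \in H \cdot (\SL_n(q) \cap G)$, which gives $G \subseteq H \cdot (\SL_n(q) \cap G)$ and proves (1). (Note this does not even need solvability; it is a pure determinant-counting argument, which is presumably why the lemma is stated for arbitrary $H \le G \le \GL_n(q)$ with the determinant hypothesis.)

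For (2): let $g \in G$. By (1) we may write $g = h g_1$ with $h \in H$ and $g_1 \in \SL_n(q) \cap G$. Then for any $x \in H$ we have $x^g = g^{-1} x g = g_1^{-1} h^{-1} x h g_1 = (h^{-1} x h)^{g_1} = (x^h)^{g_1}$. Since $h \in H$, conjugation by $h$ permutes the elements of $H$, so $H^h = H$ and therefore $H^g = (H^h)^{g_1} = H^{g_1}$, with $g_1 \in \SL_n(q) \cap G$ as required.

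I do not anticipate any real obstacle here: both parts are short formal manipulations. The only point requiring a moment's care is making sure the determinant hypothesis is used in the right direction in (1) — we need every value $\det(g)$ for $g \in G$ to be realised already by some element of $H$, which is exactly $\Det(G) \subseteq \Det(H)$; the reverse inclusion $\Det(H) \subseteq \Det(G)$ is automatic from $H \le G$, so the stated equality $\Det(H) = \Det(G)$ is precisely what is needed. Everything else is routine.
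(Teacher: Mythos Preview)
Your proof is correct and follows essentially the same approach as the paper: for (1) you pick $h\in H$ with $\det(h)=\det(g)$ and factor $g=h\cdot(h^{-1}g)$, and for (2) you use this factorisation together with $H^h=H$ to get $H^g=H^{g_1}$. The paper's argument is identical, only more tersely written.
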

\begin{proof}
Let $g \in G$. Since $\Det(H)=\Det(G)$, there exists $h \in H$ such that $\det(h)=\det(g).$ Therefore, 
$h^{-1}g \in \SL_n(q) \cap G$, so $g=h \cdot (h^{-1}g)\in H \cdot (\SL_n(q) \cap G).$

Let $g_1=h^{-1}g,$ so $H^{g_1}=g^{-1}hHh^{-1}g=g^{-1}Hg=H^g.$
\end{proof}

\begin{Lem}\label{irrtog}
Let $H$ be a subgroup of $\GL_n(q)$ of shape \eqref{stup}. If for every $H_i$ there exists $x_i \in \GL_{n_i}(q)$ (respectively $\SL_{n_i}(q)$) such that the intersection $H_i \cap H_i^{x_i}$ consists of upper triangular matrices, then there exist $x,y \in \GL_n(q)$ (respectively $\SL_n(q)$) such that 
$$(H\cap H^x) \cap (H\cap H^x)^y \le D(\GL_n(q)).$$
\end{Lem}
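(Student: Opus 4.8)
The plan is to use the two conjugations for two different purposes: the first element $x$ will be chosen block-diagonal, so that $H\cap H^x$ already consists of upper-triangular matrices, and the second element $y$ will be (a determinant-one modification of) the antidiagonal permutation matrix, whose conjugation action turns upper-triangular matrices into lower-triangular ones; intersecting then leaves only diagonal matrices.

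For the first step I would set $x=\diag[x_k,x_{k-1},\dots,x_1]$, placing the $x_i$ from the hypothesis so that $x_i$ occupies the diagonal block on which $\gamma_i$ acts in the shape \eqref{stup}. In the $\SL$ case this still lies in $\SL_n(q)$ since $\det x=\prod_i\det x_i=1$. Conjugation by a block-diagonal matrix of this shape restricts on the $i$-th diagonal block to conjugation by $x_i$, so $\gamma_i(g^x)=\gamma_i(g)^{x_i}$ for all $g\in H$. Hence for $h\in H\cap H^x$ we get $\gamma_i(h)\in H_i$ (as $h\in H$) and, writing $h=g^x$ with $g\in H$, also $\gamma_i(h)=\gamma_i(g)^{x_i}\in H_i^{x_i}$; thus $\gamma_i(h)\in H_i\cap H_i^{x_i}$, which by hypothesis consists of upper-triangular matrices. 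Since $h\in H$ has the shape \eqref{stup}, it is block-upper-triangular with diagonal blocks $\gamma_i(h)$; as these are all upper-triangular, $h$ is upper-triangular. This gives $H\cap H^x\le RT(\GL_n(q))$.

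For the second step I would take $y=\per(\sigma_0)$, where $\sigma_0\in\Sym(n)$ is the reversal $i\mapsto n+1-i$; in the $\SL$ case, if $\sgn(\sigma_0)=-1$, replace one nonzero entry of $y$ by $-1$, so that $y$ remains an antidiagonal monomial matrix but now $\det y=1$. Conjugation by such a $y$ maps every upper-triangular matrix to a lower-triangular one, so $(H\cap H^x)^y\le RT(\GL_n(q))^y$, and the latter consists of lower-triangular matrices. Consequently
$$(H\cap H^x)\cap(H\cap H^x)^y\le RT(\GL_n(q))\cap\{\text{lower-triangular matrices}\}=D(\GL_n(q)),$$
as required.

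I do not expect a genuine obstacle in this argument; the only points that need care are the identity $\gamma_i(g^x)=\gamma_i(g)^{x_i}$, which is precisely what allows the hypothesis on $H_i\cap H_i^{x_i}$ to be invoked, and the determinant bookkeeping for the $\SL_n(q)$ version — the factor $\det x$ is automatically $1$ from $x_i\in\SL_{n_i}(q)$, and $y$ can be given determinant $1$ by the monomial modification of the reversal permutation matrix.
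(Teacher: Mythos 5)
Your proof is correct and follows the same strategy as the paper's: conjugate by a block-diagonal $x=\diag[x_k,\ldots,x_1]$ to force $H\cap H^x\le RT(\GL_n(q))$, then conjugate by a determinant-one modification of the reversal permutation matrix to turn upper-triangular into lower-triangular and intersect down to diagonal. The paper's proof is terser — it asserts $H\cap H^x\le RT(\GL_n(q))$ without spelling out the identity $\gamma_i(g^x)=\gamma_i(g)^{x_i}$ that you correctly supply — but the argument, the choices of $x$ and $y$, and the $\SL$ bookkeeping coincide.
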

\begin{proof}
Let $x=\diag(x_k, \ldots, x_1)$ and let  $$y=\diag(\sgn(\sigma),1 \ldots,1) \cdot \per(\sigma)$$ where  $$\sigma =(1,n)(2, n-1) \ldots ([n/2], [n/2+3/2]).$$ Here $[r]$ is the integer part of a positive number $r$. Note that $\det(y)=1$ since $\det(\per(\sigma))=\sgn(\sigma).$  Now
$H\cap H^x$ consists of upper triangular matrices and $(H\cap H^x)^y$ consists of lower triangular matrices, so
\begin{equation*}
(H\cap H^x) \cap (H\cap H^x)^y \le D(\GL_n(q)). 
\end{equation*} 
Finally, note that if $x_i \in \SL_{n_i}(q)$, then $x \in \SL_n(q).$ 
\end{proof}

\begin{Def}
Let $\{v_1, v_2, \ldots,  v_n \}$ be a basis of $V$. We refer to 
$$v=\alpha_1v_1+ \alpha_2v_2+  \ldots+ \alpha_nv_n $$ 
as the \emph{decomposition} \index{decomposition} of $v \in V$ with respect to this basis. And we say that the decomposition contains $v_i$ if $\alpha_i \ne 0.$
 \end{Def}

\begin{Lem}\label{diag}
Assume that matrices in $H\le \GL_n(q)$ have shape \eqref{stup} with respect to the  basis $$\{v_1, v_2, \ldots,  v_n \}$$ and $n_1 < n,$ so $H$ stabilises $$U=\langle v_{n-n_1+1}, \ldots, v_n \rangle.$$  Then there exists $z \in \SL_n(q)$ such that $D(\GL_n(q)) \cap H^z \le Z(\GL_n(q)).$
\end{Lem}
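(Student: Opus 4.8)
The goal is to find $z \in \SL_n(q)$ such that conjugating $H$ by $z$ moves it into a position where its intersection with the diagonal group consists only of scalars. The key structural fact to exploit is that $H$ has block-upper-triangular shape \eqref{stup}, so every $h \in H$ stabilises the subspace $U = \langle v_{n-n_1+1}, \ldots, v_n \rangle$ on which it acts irreducibly via $\gamma_1$. A diagonal matrix $d$ lying in $H^z$ corresponds to $z d z^{-1} \in H$; the plan is to choose $z$ so that no nonscalar diagonal matrix can be conjugated by $z$ into $H$.

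\textbf{Step 1: Reduce to controlling the action on $U$.} Suppose $d = \diag(\alpha_1, \ldots, \alpha_n) \in H^z$, i.e.\ $d^{z^{-1}} \in H$. Then $d^{z^{-1}}$ stabilises $U$, equivalently $d$ stabilises $Uz$. I would pick $z$ so that $Uz$ is a subspace in ``general position'' relative to the coordinate basis — specifically a subspace not contained in any coordinate hyperplane and containing no nonzero coordinate vector (this is possible as $0 < n_1 < n$, and the relevant Vandermonde-type nondegeneracy holds over $\mathbb{F}_q$ once $q$ is large enough; for the genuinely small cases one either enlarges the construction or checks by hand). For such $Uz$, the only diagonal matrices stabilising it are the scalars: a diagonal matrix preserving a subspace whose vectors all have full support must have all its diagonal entries equal. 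Hence $d$ is scalar.

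\textbf{Step 2: Realise $z$ in $\SL_n(q)$.} The subspace $Uz$ in general position can be produced by a concrete unimodular matrix — e.g.\ take $z$ to be (a determinant-$1$ adjustment of) a matrix whose rows are built from distinct powers of a primitive element of $\mathbb{F}_{q}$, so that $Uz$ is the row span of a generic $n_1 \times n$ block; scaling one row or multiplying by a suitable $\diag(\lambda, \lambda^{-1}, 1, \ldots, 1)$ fixes the determinant at $1$ without destroying the general-position property. Then $D(\GL_n(q)) \cap H^z \le Z(\GL_n(q))$ as required.

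\textbf{Main obstacle.} The delicate point is \emph{Step 1}: guaranteeing that a single explicit $z$ puts $Uz$ in sufficiently general position that \emph{every} diagonal stabiliser is forced to be scalar, uniformly over all $q$ and all admissible $n_1$. Over small fields $\mathbb{F}_q$ there may simply not be enough distinct field elements to form a genuinely generic $n_1$-subspace, so the clean Vandermonde argument can fail; I expect the proof to either (a) use the fact that $H$ is \emph{solvable} and that $\gamma_1$ is irreducible to get extra leverage on which diagonal matrices can normalise the $\gamma_1$-block, or (b) handle the finitely many small $(n,q)$ by the computational methods referenced in the paper. The rest — translating ``stabilises $Uz$'' into ``is scalar'' and trimming $z$ to lie in $\SL_n(q)$ — is routine linear algebra.
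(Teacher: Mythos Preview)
Your overall strategy is exactly the one the paper uses: show that a diagonal matrix $d \in D(\GL_n(q)) \cap H^z$ must stabilise $Uz$, and choose $z$ so that the only diagonal matrices stabilising $Uz$ are scalar. However, your execution has real gaps, and the ``main obstacle'' you flag is not an obstacle at all.

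First, your stated general-position conditions do not do what you claim. The condition ``not contained in any coordinate hyperplane and containing no nonzero coordinate vector'' is \emph{not} sufficient to force diagonal stabilisers to be scalar: for instance $\langle (1,1,1,1),(1,-1,1,-1)\rangle$ in $\mathbb{F}_q^4$ (odd $q$) satisfies both conditions but is stabilised by every $\diag(a,b,a,b)$. Your alternative justification, ``a subspace whose nonzero vectors all have full support,'' is vacuous for $\dim \ge 2$: if $u,v$ are independent with full support then $u - (u_1/v_1)v$ has zero first coordinate. So neither formulation gives you the conclusion you want.

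Second, the Vandermonde idea and the attendant worry about small $q$ are unnecessary. The paper avoids all of this with a completely explicit, field-independent choice. Writing $m=n_1$, set
\[
u_j \;=\; v_1 + v_2 + \cdots + v_{n-m} + v_{n-m+j}, \qquad j=1,\ldots,m,
\]
and take $z\in\SL_n(q)$ with $(v_{n-m+j})z=u_j$ (possible since $m<n$, leaving a free row to fix the determinant). Then $Uz=\langle u_1,\ldots,u_m\rangle$. If $g=\diag(\alpha_1,\ldots,\alpha_n)$ stabilises $Uz$, then $(u_j)g=\alpha_1 v_1+\cdots+\alpha_{n-m}v_{n-m}+\alpha_{n-m+j}v_{n-m+j}$ lies in $Uz$; but its $v_{n-m+i}$-coefficient vanishes for $i\ne j$, forcing $(u_j)g=\beta_j u_j$, whence $\alpha_1=\cdots=\alpha_{n-m}=\alpha_{n-m+j}$ for every $j$. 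So $g$ is scalar. Only $0$'s and $1$'s appear, so nothing fails for small $q$.

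Finally, note that neither solvability of $H$ nor irreducibility of $\gamma_1$ is used anywhere --- only the fact that $H$ stabilises $U$. Your proposed fallback (a) is therefore pointing in the wrong direction, and fallback (b) is unneeded.
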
 
\begin{proof}
Let $m=n_1.$ Define vectors 
\begin{equation*}
\begin{split}
u_1 & = v_1 + \ldots + v_{n-m} + v_{n-m+1}  \\
u_2 & = v_1 + \ldots + v_{n-m} + v_{n-m +2}\\
u_3 & = v_1 + \ldots + v_{n-m}  + v_{n-m +3}\\
\vdots \\
u_m & = v_1 + \ldots + v_{n-m} + v_{n}.
\end{split}
\end{equation*}
Let $z \in \GL_n(q)$ be such that 
\begin{equation}\label{ui}
(v_{n-m+i})z=u_i, \text{ } i=1, \ldots, m.
\end{equation}
 Such  $z$ exists since $u_1, \ldots, u_m$ are linearly independent and we can assume $z\in \SL_n(q)$ since $m<n.$ So \eqref{ui} implies that 
$$Uz = \langle u_1, \ldots, u_m \rangle $$ is $H^z$-invariant.

Let $g \in D(\GL_n(q)) \cap H^z,$ say $g$ is $\text{diag}(\alpha_1, \ldots , \alpha_n)$ with respect to the basis $\{v_1, v_2, \ldots, v_{n}\}$. Thus,
$$(u_j)g = \alpha_1 v_1 + \ldots + \alpha_{n-m} v_{n-m} + \alpha_{n-m+j} v_{n-m+j}. $$
Also, $(u_j)g$ must lie in $Uz,$ since $g \in H^z,$ so 
$$(u_j)g = \beta_1 u_1 + \beta_2 u_2 + \ldots + \beta_m u_m.$$
But the decomposition of $(u_j)g$ with respect to    $\{v_1, v_2, \ldots, v_{n}\}$ does not contain $v_{n-m +i}$ for $i \ne j$, so 
$$\beta_1 = \beta_2 = \ldots =\beta_{j-1} =\beta_{j+1}= \ldots = \beta_m =0$$ and $(u_j)g= \beta_j u_j$. Thus, 
$$\alpha_1 = \alpha_2 = \ldots =  \alpha_{n-m}=\alpha_{n-m+j}$$
for $0<j\le m.$
Therefore, $g$ is scalar and lies in $Z(\GL_n(q)).$
\end{proof}

The following lemma plays an important role in our proof of Theorems \ref{theorem} and \ref{theoremGR}. 

\begin{Lem}
\label{scfield}
Let  $\phi=\phi_{\beta}$ where $\beta$ is a basis of $V,$ and assume $n\ge 2$ with $(n,q) \ne (2,2),(2,3).$ Let $H \le \GaL_n(q) $ and assume  $H \cap \GL_n(q) \le Z(\GL_n(q))$. Then there exists $b \in \GL_n(q)$ such that every element of $H^b$ has the form $\phi^i g$ for some $i \in \{1, \ldots, f\}$ and $g \in Z(\GL_n(q)).$   
\end{Lem}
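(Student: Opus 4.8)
The plan is to reduce the statement to conjugating a \emph{single} semilinear element into $\langle\phi\rangle$ modulo scalars, and then to perform that conjugation by Galois descent. Write $\GaL_n(q)=\GL_n(q)\rtimes\langle\phi\rangle$ with $|\langle\phi\rangle|=f$, and set $H_0=H\cap\GL_n(q)$; by hypothesis $H_0\le Z(\GL_n(q))$, so $H_0$ consists of scalar matrices. The quotient $H/H_0$ embeds in $\GaL_n(q)/\GL_n(q)\cong\langle\phi\rangle$, hence is cyclic with image $\langle\phi^e\rangle$ for some divisor $e$ of $f$; choose $h_1\in H$ with $h_1=g_1\phi^e$ ($g_1\in\GL_n(q)$) mapping to a generator of $H/H_0$, so that $H=\langle h_1\rangle H_0$. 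Since scalar matrices are central in $\GL_n(q)$, conjugation by any $b\in\GL_n(q)$ fixes $H_0$ pointwise, whence $H^b=\langle h_1^b\rangle H_0$. A power $(\lambda I\cdot\phi^e)^j$ of a scalar times $\phi^e$ is again a scalar times a power of $\phi$, so it suffices to produce $b\in\GL_n(q)$ with $h_1^b\in Z(\GL_n(q))\,\phi^e$.

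Next I would normalise the order of $h_1$. The element $h_1^{f/e}$ maps to $1$ in $\langle\phi\rangle$, hence lies in $H_0$ and equals $\lambda I$ for some $\lambda\in\mathbb{F}_q^*$; conjugating $\lambda I$ by $g_1$ and using $\phi^f=1$ shows that $\lambda$ is fixed by the $p^e$-power map, i.e. $\lambda\in\mathbb{F}_{p^e}^*$. Since the field norm $N_{\mathbb{F}_q/\mathbb{F}_{p^e}}\colon\mathbb{F}_q^*\to\mathbb{F}_{p^e}^*$ is surjective, pick $\nu\in\mathbb{F}_q^*$ with $N_{\mathbb{F}_q/\mathbb{F}_{p^e}}(\nu)=\lambda^{-1}$ and set $h_1'':=(\nu I)h_1$. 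A short computation gives $(h_1'')^{f/e}=N_{\mathbb{F}_q/\mathbb{F}_{p^e}}(\nu)\cdot\lambda I=I$, so $\langle h_1''\rangle$ is cyclic of order $m:=f/e=[\mathbb{F}_q:\mathbb{F}_{p^e}]$ and maps isomorphically onto $\langle\phi^e\rangle$. If I can conjugate $h_1''$ onto $\phi^e$ by some $b_0\in\GL_n(q)$, then $h_1^{b_0}=\nu^{-1}\phi^e\in Z(\GL_n(q))\,\phi^e$, which is what is needed.

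The core step is a Galois descent argument. The map $h_1''$ is an additive bijection of $V$ with $(\lambda v)h_1''=\lambda^{p^e}(v)h_1''$ and $(h_1'')^m=\mathrm{id}$, so $V$ carries a semilinear action of $\Gal(\mathbb{F}_q/\mathbb{F}_{p^e})$ via $h_1''$. By Galois descent for vector spaces --- concretely, via the averaging map $v\mapsto\sum_{i=0}^{m-1}(v)(h_1'')^i$ together with nonvanishing of the trace $\mathbb{F}_q\to\mathbb{F}_{p^e}$ and linear independence of distinct field automorphisms --- the fixed set $W:=\{v\in V:(v)h_1''=v\}$ is an $\mathbb{F}_{p^e}$-subspace with $\dim_{\mathbb{F}_{p^e}}W=n$ and $W\otimes_{\mathbb{F}_{p^e}}\mathbb{F}_q=V$. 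Fix an $\mathbb{F}_{p^e}$-basis $\gamma$ of $W$; it is an $\mathbb{F}_q$-basis of $V$, and by construction $h_1''$ fixes $\gamma$ pointwise and raises $\gamma$-coordinates to the $p^e$-th power, so in the notation of \eqref{defphibet} we have $h_1''=\phi_\gamma^{e}$. Letting $b\in\GL_n(q)$ send $\beta$ to $\gamma$, a direct computation with the right-action conventions of Section~\ref{secnot} gives $b^{-1}\phi^e b=\phi_\gamma^{e}=h_1''$; hence $b_0:=b^{-1}\in\GL_n(q)$ satisfies $(h_1'')^{b_0}=\phi^e$ and $h_1^{b_0}=\nu^{-1}\phi^e$. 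Taking $b=b_0$, every element of $H^{b_0}=\langle\nu^{-1}\phi^e\rangle H_0$ is a scalar matrix times a power of $\phi$, as required.

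The only substantial ingredient is the descent step, i.e. exhibiting $n$ linearly independent $h_1''$-fixed vectors; this is where a genuine argument is unavoidable (the averaging map plus a normal-basis / independence-of-characters input), although it is entirely standard and could alternatively be phrased as an instance of the Lang--Steinberg theorem for $\GL_n$ over a finite field (equivalently, conjugacy of complements to $\GL_n(q)$ in $\GL_n(q)\rtimes\langle\phi^e\rangle$). Everything else is bookkeeping about the semidirect product $\GaL_n(q)=\GL_n(q)\rtimes\langle\phi\rangle$ and about scalars. I also note that the hypotheses $n\ge2$ and $(n,q)\notin\{(2,2),(2,3)\}$ do not appear to be used in this argument and are presumably inherited from the surrounding results; if one instead argued through $\SL_n(q)$ or the projective group $\PGaL_n(q)$, a few small cases might require separate treatment, and those exclusions would then become relevant.
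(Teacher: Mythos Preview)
Your proof is correct and takes a genuinely different route from the paper. The paper passes to the quotient $\PGaL_n(q)$, observes it is almost simple (this is precisely where the exclusions $(n,q)\ne(2,2),(2,3)$ enter), notes that $H/Z$ is cyclic consisting of field automorphisms of $\PSL_n(q)$, and then invokes the standard fact that two field automorphisms of the same order are conjugate in $\PGL_n(q)$ (citing \cite[(7-2)]{conjaut}). Your argument instead stays in $\GaL_n(q)$ and uses Galois descent / Hilbert~90 for $\GL_n$ directly: after normalising the generator to have order exactly $f/e$, you exhibit an $\mathbb{F}_{p^e}$-form of $V$ on which it acts as the standard Frobenius.

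What each approach buys: the paper's proof is a two-line citation once one accepts the quoted conjugacy result for field automorphisms, but it genuinely needs $\PSL_n(q)$ to be simple, hence the restrictions on $(n,q)$. Your argument is more self-contained and, as you correctly observe, does not use those restrictions at all --- so you have actually proved a slightly stronger statement. Your remark that the exclusions are ``inherited from the surrounding results'' and would become relevant if one worked projectively is exactly right: that is what the paper does.
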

\begin{proof}
 Let $Z=Z(\GL_n(q))$ and $\Gamma=\GaL_n(q).$ Notice that $\Gamma/Z$ is almost simple. Let $G_0$ and ${G}$ be the socle of $\Gamma/Z$ and the group of inner-diagonal automorphisms of $G_0$ respectively, so $G_0=\PSL_n(q)$ and ${G}= \PGL_n(q).$ Without loss of generality, we may assume $Z \le H.$ Observe $H\cap \GL_n(q) = Z,$ so $H/Z$ is cyclic and consists of field automorphisms of $G_0.$ Let $\varphi \in H$ be such that $\langle Z \varphi \rangle=H/Z.$   Since 
$$\Gamma=\GL_n(q) \rtimes \langle \phi \rangle,$$
we obtain that $\varphi \in \phi^i  \GL_n(q)$ for some  $i \in \{1, \ldots, f\}$ and $Z\varphi \in (Z\phi^i) {G}$.

 Since $Z \varphi$ and $Z \phi^i$ can be viewed as field automorphisms of $G_0$ of the same order, they are conjugate in ${G}$ by \cite[(7-2)]{conjaut}. So   there exists  $Zb \in \PGL_n(q)$ such that $(Z\varphi)^{Zb}=Z\phi^i$ for some $i \in \{1, \ldots,  f\}.$  Therefore, $H^b = Z\langle \varphi^b \rangle = Z\langle \phi^i \rangle = \langle\phi^i \rangle Z.$
\end{proof}

\subsection{Singer cycles}\label{singsec}

\begin{Def}\label{sindef} A  \emph{Singer cycle} \index{Singer cycle} of $\GL_n(q)$ is a cyclic subgroup of order $q^n - 1$.
\end{Def}

 A Singer cycle always exists and all Singer cycles are conjugate in $\GL_n(q)$ (see Lemma \ref{singconj}). Sometimes, it is convenient to identify a Singer cycle with the multiplicative group of a field of order $q^n$ as follows. A field $\mathbb{F}_{q^n}$ can be considered as an $n$-dimensional vector space $V=\mathbb{F}_q^n$ over $\mathbb{F}_q.$  Right multiplication by a generator of  $\mathbb{F}_{q^n}^*$ determines a bijective $\mathbb{F}_q$-linear map from $V$ to itself of order $q^n-1$. So  $\mathbb{F}_{q^n}^*$  is isomorphic to a cyclic subgroup of $\GL_n(q)$ of order $q^n-1$. It is well known, and is easy to deduce from the observation above, that such a subgroup $T$ is irreducible, and $T \cup \{0\}$ is a field under the usual matrix addition and multiplication.

\begin{Lem}\label{singconj} 
All Singer cycles are conjugate in $\GL_n(q).$ 
\end{Lem}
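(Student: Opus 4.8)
The plan is to show that every Singer cycle is conjugate to one fixed reference copy. Regard $\mathbb{F}_{q^n}$ as an $n$-dimensional $\mathbb{F}_q$-space, fix an $\mathbb{F}_q$-basis of it and a generator $\omega$ of $\mathbb{F}_{q^n}^{*}$, and let $T_0\le\GL_n(q)$ be the cyclic group generated by the multiplication-by-$\omega$ map; this is a Singer cycle, so it suffices to prove that an arbitrary Singer cycle $T$ is conjugate to $T_0$ in $\GL_n(q)$.

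First I would understand a generator $t$ of $T$ internally. Since $|t|=q^n-1$ is coprime to $p$, the element $t$ is semisimple, so its minimal polynomial $m$ is squarefree, say $m=f_1\cdots f_r$ with the $f_i$ distinct monic irreducibles of degrees $d_i$; then the commutative subalgebra $A:=\mathbb{F}_q[t]\subseteq M_n(\mathbb{F}_q)$ satisfies $A\cong\prod_{i=1}^{r}\mathbb{F}_{q^{d_i}}$ and $\dim_{\mathbb{F}_q}A=\sum_i d_i=\deg m\le n$. As $t$ lies in the unit group $A^{\times}$, whose exponent is $\operatorname{lcm}_i(q^{d_i}-1)$, the order $q^n-1$ divides $\operatorname{lcm}_i(q^{d_i}-1)$, and therefore
\[
q^n-1\ \le\ \operatorname{lcm}_i(q^{d_i}-1)\ \le\ \prod_i(q^{d_i}-1)\ \le\ q^{\sum_i d_i}-1\ \le\ q^n-1 .
\]
Hence equality holds throughout; but $\prod_i(q^{d_i}-1)<q^{\sum_i d_i}-1$ whenever $r\ge 2$ (an elementary estimate using only $q\ge 2$), so $r=1$, i.e.\ $m$ is irreducible and $A$ is a field. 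Finally $q^{\deg m}-1=|A^{\times}|\ge|t|=q^n-1$ together with $\deg m\le n$ forces $\deg m=n$; thus $A\cong\mathbb{F}_{q^n}$ and, comparing orders, $T=\langle t\rangle=A^{\times}$.

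The rest is transport of structure. Since $\deg m=n$, the rational canonical form of $t$ is a single companion matrix, so $V$ is a free $\mathbb{F}_q[t]$-module of rank $1$; fixing an $\mathbb{F}_q$-linear isomorphism $\Phi\colon V\to A$ of $A$-modules, the map $\Phi t\Phi^{-1}$ is multiplication by $t$ on $A$, so $\Phi T\Phi^{-1}$ is the group of multiplications by the elements of $A^{\times}$. Now choose a field isomorphism $\tau\colon A\to\mathbb{F}_{q^n}$: being $\mathbb{F}_q$-linear and multiplicative, it conjugates multiplication by $a$ to multiplication by $\tau(a)$; hence, writing $\tau\circ\Phi$ as a matrix with respect to the chosen $\mathbb{F}_q$-bases of $V$ and of $\mathbb{F}_{q^n}$, we obtain an element of $\GL_n(q)$ conjugating $T$ onto the group of multiplications by $\mathbb{F}_{q^n}^{*}=\langle\omega\rangle$, namely $T_0$.

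I expect the main obstacle to be the middle paragraph, i.e.\ showing that $A=\mathbb{F}_q[t]$ is forced to be a field isomorphic to $\mathbb{F}_{q^n}$ with $T=A^{\times}$; the crux there is the displayed chain of inequalities, in particular the strict bound $\prod_i(q^{d_i}-1)<q^{\sum_i d_i}-1$ for $r\ge 2$. An alternative to the last paragraph would be to invoke the Skolem--Noether theorem for the two embeddings of the simple algebra $\mathbb{F}_{q^n}$ into the central simple algebra $M_n(\mathbb{F}_q)$, but the explicit module argument keeps the proof self-contained and consistent with the field-theoretic picture of Singer cycles set up earlier.
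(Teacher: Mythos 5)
Your proof is correct, but it takes a genuinely different route from the paper's. The paper outsources the main work to Huppert's Satz~3.10 (an irreducible cyclic subgroup of $\GL_n(q)$ is conjugate into the standard Singer cycle), so all that remains there is to show that a Singer cycle $T$ is irreducible; this is done via Maschke's theorem and a Cayley--Hamilton bound showing $q^n-1$ is the maximal element order. Your argument never invokes Huppert: you analyse the commutative algebra $A=\mathbb{F}_q[t]$ directly, use the semisimplicity of $t$ to split $A$ as a product of fields, and then the chain $q^n-1\le\operatorname{lcm}_i(q^{d_i}-1)\le\prod_i(q^{d_i}-1)\le q^{\sum d_i}-1\le q^n-1$ pins $A$ down as a field of order $q^n$ with $T=A^{\times}$. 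That $\deg m=n$ makes $V$ a free rank-$1$ module over this field, and the transport-of-structure step then produces the conjugating element explicitly. The two arguments share a core idea --- both exploit the fact that $\mathbb{F}_q[x]$ has dimension at most $n$ to bound $|x|$ by $q^n-1$ --- but you extract much more from it: you recover the full ring structure and hence the conjugacy in one go, rather than stopping at irreducibility and citing a classification. The trade-off is that your proof is longer and must carefully justify the strictness $\prod_i(q^{d_i}-1)<q^{\sum d_i}-1$ for $r\ge2$ and the $m=\chi_t$ step; the paper's is shorter but less self-contained. Both are valid.
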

\begin{proof}
 By \cite[Chapter II, Satz 3.10]{hupp}, an irreducible cyclic subgroup of $\GL_n(q)$ is conjugate to a subgroup of the Singer cycle constructed below above. It remains to show that a Singer cycle is irreducible.

Let $T$ be a Singer cycle. By Maschke's Theorem, $T$ is completely reducible, so 
$V = V_1 \oplus \ldots \oplus V_k$ where $T$ stabilises the $V_i$ and each $V_i$ is $\mathbb{F}_q[T]$-irreducible.  Notice that $q^n-1$ is the largest possible order of an element of $\GL_n(q).$ Indeed,  let $\chi_x(t)$ be the characteristic polynomial of $x \in \GL_n(q)$. By the Cayley--Hamilton Theorem $\chi_x(x)=0$ and the dimension of the subalgebra $\mathbb{F}_q[\langle x \rangle]$ of $M_n(\mathbb{F}_q)$, generated by $x$, is at most 
$\deg \chi_x(t) \le n.$ So $|\mathbb{F}_q[\langle x \rangle]|\le|\mathbb{F}_q|^n=q^n$ and $|x|\le q^n-1.$ Therefore,  $|T| \le \prod_{i=1}^k (q^{n_i}-1)$ which is  possible only if $k=1$, so $T$ is irreducible. 
\end{proof} 

\begin{Th}[{\cite[Chapter II, \S 7]{hupp}}]\label{sin1} 
If $T$ is a Singer cycle of $\GL_n(q)$, then $$C_{\GL_n(q)}(T) = T$$
and $N_{\GL_n(q)} (T) /T$ is cyclic of order $n$. Moreover, $N_{\GL_n(q)} (T)=T \rtimes \langle \varphi \rangle$, where $t^{\varphi}=t^q$ for $t \in T.$ 
\end{Th}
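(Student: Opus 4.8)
The plan is to establish the three assertions in order: first the self-centralising property $C_{\GL_n(q)}(T)=T$, then the structure of the normaliser. Throughout I will use the identification, recalled just after Definition \ref{sindef}, of $T$ with $\mathbb{F}_{q^n}^*$ acting on $V=\mathbb{F}_{q^n}$ by right multiplication, and the key structural fact that $F:=T\cup\{0\}$ is a subfield of $M_n(\mathbb{F}_q)$ isomorphic to $\mathbb{F}_{q^n}$, with $\mathbb{F}_q$ sitting inside as the scalar matrices. The irreducibility of $T$ over $\mathbb{F}_q$, proved in Lemma \ref{singconj}, is available and will do most of the work.

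For $C_{\GL_n(q)}(T)=T$: let $x\in\GL_n(q)$ centralise $T$. Then $x$ commutes with every element of $F=\mathbb{F}_q[T]$, so $x$ lies in the centraliser of $F$ inside the full matrix algebra $M_n(\mathbb{F}_q)$. Since $T$ is irreducible, $V$ is a simple $F$-module; as $F$ is a field, $V$ is one-dimensional over $F$, so $\mathrm{End}_F(V)\cong F$. But an $\mathbb{F}_q$-linear map commuting with the action of $F$ is precisely an $F$-endomorphism of $V$ (here using that $F$ is commutative, so right and left multiplications by $F$ commute). Hence $x\in F$, and since $x$ is invertible, $x\in F\setminus\{0\}=T$. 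The reverse inclusion $T\le C_{\GL_n(q)}(T)$ is immediate because $F$ is commutative.

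For the normaliser: any $x\in N:=N_{\GL_n(q)}(T)$ induces by conjugation an automorphism of $T\cong\mathbb{F}_{q^n}^*$; in fact conjugation by $x$ extends to a ring automorphism of $F=T\cup\{0\}$ (it preserves $0$, is additive since it is linear, and is multiplicative on $T$, hence on $F$), and it fixes $\mathbb{F}_q$ pointwise because $\mathbb{F}_q=Z(\GL_n(q))$ is central. Thus conjugation gives a homomorphism $N\to\Gal(\mathbb{F}_{q^n}/\mathbb{F}_q)$, a cyclic group of order $n$ generated by the Frobenius $t\mapsto t^q$. Its kernel is exactly $C_{\GL_n(q)}(T)=T$, so $N/T$ embeds into a cyclic group of order $n$. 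To see the map is onto — equivalently that $|N/T|=n$ and the element realising Frobenius exists — I would argue that the Frobenius map $\varphi\colon t\mapsto t^q$ on $F$ is itself an $\mathbb{F}_q$-linear bijection of $V$ (it is additive and fixes $\mathbb{F}_q$), hence lies in $\GL_n(q)$, and by construction $t^{\varphi}=t^q$ for all $t\in T$, so $\varphi\in N$ and $\varphi$ maps onto a generator of $\Gal(\mathbb{F}_{q^n}/\mathbb{F}_q)$. This forces $N/T$ cyclic of order exactly $n$ and exhibits the semidirect decomposition $N=T\rtimes\langle\varphi\rangle$; one checks $\langle\varphi\rangle\cap T=1$ since a nonzero scalar-type element of $T$ acting as $t\mapsto t^q$ would force $q\equiv 1\pmod{q^n-1}$, impossible for $n\ge 2$.

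The main obstacle, and the step deserving the most care, is the surjectivity onto $\Gal(\mathbb{F}_{q^n}/\mathbb{F}_q)$ and the verification that the Frobenius really does arise from an element of $\GL_n(q)$ normalising $T$ — i.e. checking that $\varphi$ is $\mathbb{F}_q$-linear on $V$ when $V$ is viewed through the chosen field identification, and that conjugation by $\varphi$ genuinely induces $t\mapsto t^q$ on $T$ rather than some twist. Everything else (the kernel computation, the cyclicity of $N/T$, the self-centralising statement) follows cleanly from Schur's lemma applied to the simple $F$-module $V$ together with the centrality of the scalars; since this is quoted from \cite[Chapter II, \S 7]{hupp}, I would either reproduce Huppert's argument or simply cite it, noting that the only non-formal input is the classical description of $\mathbb{F}_{q^n}/\mathbb{F}_q$ as a cyclic Galois extension.
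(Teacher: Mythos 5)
The paper does not prove this theorem at all — it simply cites Huppert's \emph{Endliche Gruppen I}, Chapter II, \S 7, as a black box — so there is no ``paper's own proof'' to compare against; you are supplying the classical argument. Your argument is the standard one and is essentially correct: identifying $T\cup\{0\}$ with $\mathbb{F}_{q^n}$ acting on $V$ by right multiplication, Schur's lemma / the commutativity of that subalgebra gives $C_{\GL_n(q)}(T)=T$; conjugation then yields a homomorphism $N_{\GL_n(q)}(T)\to\Gal(\mathbb{F}_{q^n}/\mathbb{F}_q)$ with kernel $T$, and the Frobenius $v\mapsto v^q$ is an $\mathbb{F}_q$-linear bijection of $V$ realising the generator of the Galois group, which gives both surjectivity and the explicit complement $\langle\varphi\rangle$.

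One place where the sketch is slightly imprecise is the final verification that $\langle\varphi\rangle\cap T=1$. What you write (``a nonzero scalar-type element of $T$ acting as $t\mapsto t^q$ would force $q\equiv 1\pmod{q^n-1}$'') only rules out $\varphi\in T$, not $\varphi^k\in T$ for some $1<k<n$. The cleanest fix is to observe that $\varphi T$ generates $N/T$, which you have already shown is cyclic of order exactly $n$, while $\varphi^n=1$ (since $v^{q^n}=v$ for all $v\in\mathbb{F}_{q^n}$); so $|\varphi|=n$ and, since $N=\langle\varphi\rangle T$ with $|N|=n\cdot|T|$, the intersection $\langle\varphi\rangle\cap T$ is forced to be trivial by an order count. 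Alternatively, if $\varphi^k\in T$ with $0<k<n$ then conjugation by $\varphi^k$ is trivial on the abelian group $T$, giving $t^{q^k}=t$ for all $t\in T$ and hence $(q^n-1)\mid(q^k-1)$, a contradiction. With that small repair the proof is complete.
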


Let $t \in \GL_n(q)$ be a generator of a Singer cycle. By the identification of $\mathbb{F}_q^n$ and $\mathbb{F}_{q^n}$ above, the characteristic polynomial of $t$ is the characteristic polynomial $\prod_{i=0}^{n-1}(x-\lambda^{q^i})$ of a primitive element $\lambda \in \mathbb{F}_{q^n}$ over $\mathbb{F}_q.$ So $t$ is conjugate to $\diag(\lambda, \lambda^q, \ldots, \lambda^{q^{n-1}})$ in $\GL_n(K)$ where $K$ is the algebraic closure of $\mathbb{F}_q.$ In particular, $\det(t)$ generates $\mathbb{F}_q^*,$ so $\langle t \rangle \SL_n(q)=\GL_n(q)$ by Lemma \ref{supSL}.

\subsection{Fixed point ratios and elements of prime order}
\label{fprsec}

\begin{Def}
If a group $G$ acts on a set $\Omega$, then   $C_{\Omega}(x)$ is the set of points in $\Omega$  fixed
by   $x \in G$. If $G$ and $\Omega$ are finite, then  the \emph{fixed point ratio} \index{fixed point ratio} of $x$, 
denoted  by $\fpr(x)$, is the proportion of points in $\Omega$ fixed by $x$, i.e. $\fpr(x) = |C_{\Omega}(x)|/|\Omega|.$
\end{Def}

If  $G$ acts transitively on a set $\Omega$  and $H$ is a point stabiliser,  then it is easy to see that
\begin{equation}\label{fpr}
\fpr(x)= \frac{|x^{G} \cap H|}{|x^G|}
\end{equation}

 In \cite{fpr, fpr2, fpr3, fpr4} Burness studies   fixed point ratios for primitive actions of almost simple classical groups and we recall some  observations from \cite{burness}.
 
Let a group $G$ act faithfully   on the set $\Omega$ of right cosets of a subgroup $H$ of $G.$  
Let $Q(G, c)$ be the probability that
a randomly chosen $c$-tuple of points in $\Omega$ is not a base for $G$, so $G$ admits a base of size $c$ if and
only if $Q(G, c) < 1$. Of course, a $c$-tuple is not a  base if and only if it is fixed by some element  $x \in G$ of prime order, and the probability that a random $c$-tuple is fixed
by $x$ is equal to $\fpr(x)^c$. Let $\mathscr{P}$ be the set of elements of prime order in ${G}$, and let ${x}_1, \ldots, {x}_k$
be representatives for the ${G}$-classes of elements in $\mathscr{P}$. Since  fixed point
ratios are constant on conjugacy classes, 
\begin{equation}\label{ver}
Q(G,c) \le \sum_{{x} \in \mathscr{P}}\fpr({x})^c = \sum_{i=1}^{k}|{x_i}^{{G}}|\cdot\fpr({x}_i)^c=:\widehat{Q}(G,c).
\end{equation}

The following observation is useful for bounding $\widehat{Q}(G,c)$ in \eqref{ver}.
 
\begin{Lem}[{\cite[Lemma 2.1]{burness}}]
\label{fprAB}
Let $G$ act faithfully and transitively on $\Omega$ and let $H$ be a point stabiliser. If  $x_1,\ldots,x_k$ represent distinct $G$-classes such that $\sum_{i=1}^k |x_i^G \cap  H| \le A$ and $|x_i^G| \ge B$ for all $i \in \{1, \ldots, k\},$ then
$$\sum_{i=1}^k |x_i^G| \cdot \fpr (x_i)^c \le B \cdot (A/B)^c.$$ 
for all $c \in \mathbb{N}.$
\end{Lem}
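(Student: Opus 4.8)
The plan is to reduce the statement to two elementary inequalities for nonnegative reals. First I would introduce the shorthand $a_i := |x_i^G \cap H|$ and $n_i := |x_i^G|$ for $i = 1, \ldots, k$. Since $G$ acts transitively with point stabiliser $H$, formula \eqref{fpr} gives $\fpr(x_i) = a_i/n_i$, and therefore
$$|x_i^G| \cdot \fpr(x_i)^c = n_i \cdot \frac{a_i^c}{n_i^c} = \frac{a_i^c}{n_i^{c-1}}.$$
In this notation the hypotheses read $\sum_{i=1}^k a_i \le A$ and $n_i \ge B$ for all $i$, with $c \in \mathbb{N}$, and the desired conclusion is $\sum_{i=1}^k a_i^c/n_i^{c-1} \le A^c/B^{c-1}$.

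The second step is to use the lower bound on the class sizes: because $c \ge 1$ we have $n_i^{c-1} \ge B^{c-1}$ for every $i$, so each summand satisfies $a_i^c/n_i^{c-1} \le a_i^c/B^{c-1}$, and hence
$$\sum_{i=1}^k |x_i^G| \cdot \fpr(x_i)^c \ \le\ \frac{1}{B^{c-1}} \sum_{i=1}^k a_i^c.$$
The third step is the inequality $\sum_i a_i^c \le \bigl(\sum_i a_i\bigr)^c$, valid for nonnegative reals $a_i$ and exponent $c \ge 1$: from $a_i \le \sum_j a_j$ one gets $a_i^c \le a_i \bigl(\sum_j a_j\bigr)^{c-1}$, and summing over $i$ yields the claim. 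Combining this with $\sum_i a_i \le A$ gives $\sum_i a_i^c \le A^c$, and plugging back in produces
$$\sum_{i=1}^k |x_i^G| \cdot \fpr(x_i)^c \ \le\ \frac{A^c}{B^{c-1}} \ =\ B\left(\frac{A}{B}\right)^c,$$
which is exactly the assertion.

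There is no genuine obstacle in this argument; it is purely formal once \eqref{fpr} is in hand. The only things worth a remark are that the transitivity hypothesis is needed precisely to invoke \eqref{fpr}, that classes with $a_i = 0$ contribute $0$ to both sides and so cause no difficulty, and that the boundary case $c = 1$ is included automatically, since then $n_i^{c-1} = 1$ and the bound degenerates to $\sum_i a_i \le A$, which is assumed.
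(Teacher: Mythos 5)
Your argument is correct and is essentially the standard proof (as in the cited source \cite[Lemma 2.1]{burness}): rewrite $|x_i^G|\cdot\fpr(x_i)^c = a_i^c/n_i^{c-1}$, bound the denominators below by $B^{c-1}$, and bound $\sum_i a_i^c \le A^{c-1}\sum_i a_i \le A^c$ using $a_i \le A$. The paper itself only cites this lemma without reproducing the proof, so there is nothing to contrast against; your justification of the superadditivity step $\sum_i a_i^c \le \bigl(\sum_i a_i\bigr)^c$ via $a_i^c \le a_i\bigl(\sum_j a_j\bigr)^{c-1}$ is correct and cleanly handles the case $c=1$.
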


Notice that if there exists $\xi \in \mathbb{R}$ such that $\fpr({x})\le |{x}^{{G}}|^{-\xi}$ for every  ${x} \in \mathscr{P}$,  then  
$$\widehat{Q}(G,c)\le \sum_{i=1}^{k}|{x}_i^{{G}}|^{1-c\xi}.$$

\begin{Def}
Let $\mathscr{C}$ be the set of conjugacy classes of prime order elements in ${G}$.
For $t \in \mathbb{R},$ 
$$\eta_G(t):= \sum_{C \in \mathscr{C}}|C|^{-t}.$$
If $Z(G)=1,$ then there exists $T_G \in \mathbb{R}$ such that $\eta_G(T_G) = 1.$
\end{Def}

\begin{Lem}\label{11}
	If $G$ acts faithfully and transitively on $\Omega$ and  $\fpr({x})\le|{x}^{{G}}|^{-\xi}$ for all  ${x} \in \mathscr{P}$ and $T_G < c\xi - 1$, then $b(G) \le c$.
\end{Lem}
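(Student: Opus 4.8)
The plan is to combine the definition of $\eta_G$ with the estimate $\widehat{Q}(G,c) \le \sum_{i=1}^k |x_i^G|^{1-c\xi}$ derived just before the statement, and then exploit monotonicity of the map $t \mapsto \eta_G(t)$.

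First I would recall that, by hypothesis, $\fpr(x) \le |x^G|^{-\xi}$ for every $x \in \mathscr{P}$, so that by \eqref{ver},
\begin{equation*}
Q(G,c) \le \widehat{Q}(G,c) = \sum_{i=1}^k |x_i^G| \cdot \fpr(x_i)^c \le \sum_{i=1}^k |x_i^G|^{1-c\xi} = \sum_{C \in \mathscr{C}} |C|^{1 - c\xi} = \eta_G(c\xi - 1),
\end{equation*}
where the last equality is just the definition of $\eta_G(t)$ with $t = c\xi - 1$. So it suffices to show $\eta_G(c\xi - 1) < 1$. Since $G$ acts faithfully and transitively on $\Omega$ with $|\Omega| > 1$, the point stabiliser $H$ is proper, hence (as $G$ is essentially required to have trivial centre here for $T_G$ to be defined) $Z(G) = 1$ and $T_G$ exists with $\eta_G(T_G) = 1$. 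Now every conjugacy class $C$ of an element of prime order has $|C| > 1$ (a prime-order element is non-central since $Z(G)=1$), so each summand $|C|^{-t}$ is a strictly decreasing function of $t$; therefore $\eta_G$ is strictly decreasing on $\mathbb{R}$ (the sum being finite and nonempty — if $\mathscr{C}$ were empty then $G = 1$ and there is nothing to prove). Consequently the hypothesis $T_G < c\xi - 1$ gives $\eta_G(c\xi - 1) < \eta_G(T_G) = 1$.

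Putting these together, $Q(G,c) \le \eta_G(c\xi-1) < 1$, so a randomly chosen $c$-tuple of points of $\Omega$ is a base with positive probability; in particular $G$ admits a base of size $c$, i.e. $b(G) \le c$. The only point requiring a little care is the strict monotonicity of $\eta_G$: one must note that each class of a prime-order element is nontrivial, which uses $Z(G)=1$, so that $|C| \ge 2$ and $|C|^{-t}$ is genuinely strictly decreasing; I would also handle the degenerate case $\mathscr{C} = \emptyset$ separately (then $b(G) \le 1 \le c$ trivially, or rather $Q(G,c) = 0$). I do not anticipate a serious obstacle here — this lemma is essentially a repackaging of the fixed-point-ratio bound in terms of the invariant $T_G$.
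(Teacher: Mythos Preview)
Your proof is correct and follows essentially the same route as the paper: bound $Q(G,c)$ by $\eta_G(c\xi-1)$ via \eqref{ver} and the hypothesis on $\fpr$, then use $T_G < c\xi-1$ to conclude $\eta_G(c\xi-1)<1$. The paper simply asserts that $\eta_G(t)<1$ for all $t>T_G$ without spelling out the monotonicity argument you give; your extra justification (nontrivial classes since $Z(G)=1$, strict decrease of each summand) is sound and not a departure in method.
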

\begin{proof}
We follow the proof of \cite[Proposition 2.1]{burness}. Let ${x}_1, \ldots, {x}_k$ be representatives of the ${G}$-classes
of prime order elements in ${G}$. 
By \eqref{ver}, 
$$Q(G,c) \le  \sum_{i=1}^{k}|{x_i}^{{G}}|\cdot \fpr({x}_i)^c \le \eta_G(c\xi - 1).$$
The result follows since $\eta_G(t) < 1$ for all $t>T_G.$
\end{proof}

We fix the following notation for the rest of the section. Let $\overline{G}$ be an adjoint simple algebraic group of classical type over the algebraic closure $K$ of ${\mathbb{F}_p}$. Let $\overline{G}_{\sigma}=\{g \in \overline{G} \mid g^{\sigma}=g\}$ where $\sigma$ is a Frobenius morphism of $\overline{G}.$ Let $\overline{G}$ be such that $G_0=O^{p'}(\overline{G}_{\sigma})'$ is a finite simple classical group over $\mathbb{F}_q$ for some $p$-power $q$. Here  $O^{p'}(G)$ is the subgroup of a finite group $G$ generated by all $p$-elements of $G$.
 In particular, let $\overline{G}=\PSL_n(K)$ and let $\sigma$ be such that  $\overline{G}_{\sigma}=\PGL_n(q)$, so  $G_0=\PSL_n(q).$  Let  $G$ be a finite almost simple  group with  socle $G_0.$  

As proved in \cite[Proposition 2.2]{burness}, we have $T_G<1/3$ if  $n \ge 6$.
Thus, if for such $G$  
\begin{equation}\label{0} 
\fpr(x)<|x^G|^{-\frac{4}{3c}}
\end{equation}
 for all $x \in \mathscr{P} $, then $\xi \ge {4}/{(3c)}$ and $c\xi -1 \ge 1/3 >T_G$ and $G$ has a base of size $c$.

Therefore, Lemma \ref{11} allows us to estimate the base size by calculating bounds for $|x^G|$ and $|x^G \cap H|$ for elements $x$ of prime order.

\begin{Def}
\label{nudef}
Let $x \in \PGL(V)=\PGL_n(q)$. Let $K$ be the algebraic closure of $\mathbb{F}_q$, and let $\overline{V}=K \otimes V.$ Let $\hat{x}$ be a preimage of $x$ in $\GL_n(q).$ Define
$$ \nu_{V, K}(x):= \min\{\dim [\overline{V}, \lambda \hat{x}] : \lambda \in K^* \}. $$ Here $[V,g]$ for a vector space $V$ and $g \in \GL(V)$ is the commutator in $V \rtimes \GL(V)$.  Therefore, $\nu_{V, K}(x)$ is  the minimal codimension of an eigenspace of $\hat{x}$  on $\overline{V}$. In particular, $\nu_{V, K}(x)$ does not depend on the choice of the preimage $\hat{x}.$  Sometimes we denote this number by $\nu(x)$ and $\nu_{V, K}(\hat{x})$. 
\end{Def}

\begin{Lem}[{\cite[Lemma 3.11]{fpr2}}]\label{prost}  
Let $x \in \PGL_n(q)$ have  prime order $r.$ Then either 
\begin{enumerate}[font=\normalfont]
\item $x$ lifts to  $\hat{x} \in \GL_n(q)$ of order $r$ such that $|x^{\PGL_n(q)}|=|\hat{x}^{\GL_n(q)}|;$ or \label{prost1}
\item $r$ divides both $q-1$ and $n$, and $x$ is $\PGL_n(K)$-conjugate to the image of $$\diag[I_{n/r}, \omega I_{n/r}, \ldots , \omega^{r-1} I_{n/r} ],$$ where $\omega \in K$ is a primitive $r$-th root of unity. \label{prost2}
\end{enumerate} 
\end{Lem}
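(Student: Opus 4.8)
The plan is to fix an arbitrary preimage $\hat x_0\in\GL_n(q)$ of $x$; since $x$ has order $r$ in $\PGL_n(q)$ we have $\hat x_0^{\,r}=\lambda I$ for some $\lambda\in\mathbb{F}_q^*$, and the argument will hinge on two quantities: whether $\lambda\in(\mathbb{F}_q^*)^r$, and a subgroup $\Lambda\le\mathbb{F}_q^*$ measuring the discrepancy between the $\GL_n(q)$- and $\PGL_n(q)$-class sizes. Note that replacing $\hat x_0$ by $\mu\hat x_0$ multiplies $\lambda$ by $\mu^r$, so the image of $\lambda$ in $\mathbb{F}_q^*/(\mathbb{F}_q^*)^r$ is intrinsic to $x$, and $x$ has a preimage of order $r$ if and only if $\lambda\in(\mathbb{F}_q^*)^r$ (a preimage $\mu\hat x_0$ has order $r$ exactly when $\mu^r\lambda=1$, every preimage of $x$ being nontrivial).

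First I would pin down the relation between the class sizes. Put $\Lambda=\{\mu\in\mathbb{F}_q^*:\mu\hat x_0\text{ is }\GL_n(q)\text{-conjugate to }\hat x_0\}$, a subgroup of $\mathbb{F}_q^*$ that does not depend on the choice of preimage since scalar multiplication commutes with conjugation. Let $\widetilde C$ denote the full preimage in $\GL_n(q)$ of $C_{\PGL_n(q)}(x)$, namely $\widetilde C=\{g\in\GL_n(q):[g,\hat x_0]\in Z(\GL_n(q))\}$. The map $c\colon g\mapsto g\hat x_0 g^{-1}\hat x_0^{-1}$ is a homomorphism $\widetilde C\to Z(\GL_n(q))\cong\mathbb{F}_q^*$ with kernel $C_{\GL_n(q)}(\hat x_0)$ and image $\Lambda$, so $|\widetilde C|=|C_{\GL_n(q)}(\hat x_0)|\cdot|\Lambda|$; passing to $C_{\PGL_n(q)}(x)=\widetilde C/Z(\GL_n(q))$ and comparing indices yields $|x^{\PGL_n(q)}|=|\hat x_0^{\GL_n(q)}|/|\Lambda|$. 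Consequently conclusion~(1) holds precisely when $x$ has a preimage of order $r$ and $\Lambda=1$; and this is well posed, since any two order-$r$ preimages of $x$ differ by an $r$-th root of unity scalar and hence have equal centraliser and equal $\GL_n(q)$-class size.

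Next I would compute $\Lambda$ from eigenvalues. If $\hat x$ is any preimage of $x$ of order $r$, its eigenvalues over $K$ are $r$-th roots of unity, and comparing the characteristic polynomials of $\hat x$ and $\mu\hat x$ shows that every $\mu\in\Lambda$ is a quotient of two $r$-th roots of unity, hence an $r$-th root of unity; thus $\Lambda\le\mu_r(\mathbb{F}_q)$. If $r\nmid q-1$ (in particular if $r=p$), then $\mu_r(\mathbb{F}_q)=1$, so $\Lambda=1$, and moreover $(\mathbb{F}_q^*)^r=\mathbb{F}_q^*$ so a preimage of order $r$ exists: conclusion~(1). If instead $r\mid q-1$ and $\Lambda\ne1$, pick $\omega^k\in\Lambda$ with $k\not\equiv0\pmod r$; now $r\ne p$, so $\hat x$ is diagonalisable over $K$, and writing $m_j$ for the multiplicity of the eigenvalue $\omega^j$, the condition $\omega^k\hat x\sim_{\GL_n(q)}\hat x$ forces $m_j=m_{j+k}$ for all $j$, hence (as $r$ is prime) all $m_j$ equal $n/r$. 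So $r\mid n$ and $\hat x\sim_{\GL_n(K)}\diag[I_{n/r},\omega I_{n/r},\dots,\omega^{r-1}I_{n/r}]$, i.e. $x$ is $\PGL_n(K)$-conjugate to the image of this matrix: conclusion~(2).

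The remaining and most substantial case is $r\mid q-1$ with no preimage of order $r$, that is $\lambda\notin(\mathbb{F}_q^*)^r$. Here the classical irreducibility test for binomials (valid since $r$ is prime) shows $t^r-\lambda$ is irreducible over $\mathbb{F}_q$, so $\hat x_0$ has minimal polynomial $t^r-\lambda$ — in particular $r\le n$ — and is diagonalisable over $\mathbb{F}_{q^r}$ with the $r$ distinct roots $\rho,\rho\omega,\dots,\rho\omega^{r-1}$ of $t^r-\lambda$ as eigenvalues, each of positive multiplicity $m_j$. Since $\omega\in\mathbb{F}_q$, the Frobenius $\sigma\colon y\mapsto y^q$ of $\mathbb{F}_{q^r}/\mathbb{F}_q$ sends $\rho\omega^j$ to $\rho\omega^{\,j+k}$, where $\rho^q=\rho\omega^k$ and $k\not\equiv0\pmod r$ because $\rho\notin\mathbb{F}_q$; and the characteristic polynomial of $\hat x_0$ lies in $\mathbb{F}_q[t]$, hence is $\sigma$-invariant, forcing $m_j=m_{j+k}$ for all $j$ and so, again by primality of $r$, all $m_j=n/r$ with $r\mid n$. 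Therefore $\hat x_0\sim_{\GL_n(K)}\rho\cdot\diag[I_{n/r},\omega I_{n/r},\dots,\omega^{r-1}I_{n/r}]$, which projects to conclusion~(2). The hard part here is not any single idea but the bookkeeping: keeping straight the arbitrary preimage, the scalar $\lambda$ and its class modulo $r$-th powers, and the eigenvalue-multiplicity combinatorics, and recognising that equality of class sizes breaks exactly through the group $\Lambda$ of eigenvalue-rotating scalars; the potentially awkward degenerate cases ($r=p$; small $n$ with $r>n$) are dispatched cheaply, the latter because irreducibility of $t^r-\lambda$ already forces $r\le n$.
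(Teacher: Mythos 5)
The paper does not prove Lemma~\ref{prost}: it quotes it from \cite[Lemma 3.11]{fpr2} and merely remarks afterwards that the cited hypothesis $r\neq 2$ may be dropped in the $\PGL_n(q)$ case. Your argument is a correct, self-contained proof which makes no use of $r$ being odd, so it also substantiates that remark. The strategy --- computing $|x^{\PGL_n(q)}|=|\hat{x}_0^{\GL_n(q)}|/|\Lambda|$ via the commutator homomorphism $g\mapsto[g,\hat{x}_0]$ on the full preimage of $C_{\PGL_n(q)}(x)$, and then showing that either $\Lambda\neq 1$ or the nonexistence of an order-$r$ lift forces every eigenvalue multiplicity to equal $n/r$ --- is the standard one, and to my reading parallels the argument in \cite{fpr2}. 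One small expository wrinkle: you derive the containment of $\Lambda$ in the $r$-th roots of unity of $\mathbb{F}_q$ under the hypothesis that an order-$r$ preimage exists, so in the subcase $r\nmid q-1$ the observation that $(\mathbb{F}_q^*)^r=\mathbb{F}_q^*$ (hence an order-$r$ preimage exists) must logically come before the conclusion $\Lambda=1$; the current sentence presents the two facts in the opposite order. Every other step checks out, including the irreducibility of $t^r-\lambda$ when $\lambda\notin(\mathbb{F}_q^*)^r$ (the exceptional clause of the binomial criterion requires $4$ to divide the degree, which a prime degree never permits) and the Frobenius-invariance argument forcing $m_j=m_{j+k}$ with $k\not\equiv 0\pmod r$.
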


\begin{Rem}
Lemma 3.11 from \cite{fpr2} is formulated for   all classical groups, but only for $r \ne 2$. It is easy to see  from its proof  that the condition $r\ne 2$ is necessary only for orthogonal and symplectic cases; if $x \in \PGL_n(q)$ then the statement is true for all primes $r.$  
\end{Rem}

\begin{Lem}\label{xGoGs}
Let $x \in \overline{G}_{\sigma}$ have prime order.
\begin{enumerate}[font=\normalfont]
\item If $x$ is semisimple, then $x^{\overline{G}_{\sigma}}=x^{G_0}.$
\item If $x$ is unipotent and $G_0=\PSL_n(q)$, then $|x^{\overline{G}_{\sigma}}| \le n|x^{G_0}|.$
\end{enumerate} 
\end{Lem}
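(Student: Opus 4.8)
The plan is to work with the matrix groups, writing $\overline{G}_{\sigma}=\PGL_n(q)=X/Z$ with $X=\GL_n(q)$ and $Z=Z(X)$, so that $G_0=\PSL_n(q)=\SL_n(q)Z/Z$ and $d:=|\overline{G}_{\sigma}:G_0|=(n,q-1)\le n$. This is the only case required for the present paper; the remaining classical types are handled the same way, with $\det$ replaced by the appropriate similitude or spinor invariant.

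Part $(2)$ I would prove by an elementary counting argument valid for every $x\in\overline{G}_{\sigma}$, unipotent or not. Writing $\overline{G}_{\sigma}=\bigcup_{i=1}^{d}g_iG_0$, every conjugate of $x$ lies in some $(x^{g_i})^{G_0}$, so $x^{\overline{G}_{\sigma}}=\bigcup_{i=1}^{d}(x^{g_i})^{G_0}$; since $G_0\trianglelefteq\overline{G}_{\sigma}$ we have $C_{G_0}(x^{g_i})=C_{G_0}(x)^{g_i}$, hence each $(x^{g_i})^{G_0}$ has cardinality $|x^{G_0}|$, and therefore $|x^{\overline{G}_{\sigma}}|\le d\,|x^{G_0}|\le n\,|x^{G_0}|$.

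For part $(1)$ the goal is $C_{\overline{G}_{\sigma}}(x)\,G_0=\overline{G}_{\sigma}$; combined with $x^{G_0}\subseteq x^{\overline{G}_{\sigma}}$ and the computation above, this forces $x^{\overline{G}_{\sigma}}=x^{G_0}$. I would lift $x$ to a semisimple element $\hat x\in X$ — every lift is semisimple since $Z$ is central, and by Lemma~\ref{prost} one may take $\hat x$ of order $|x|$ — and note $\hat x$ is non-scalar because $x\ne 1$. The key step is then that $\Det(C_X(\hat x))=\Det(X)=\mathbb{F}_q^{*}$: by the standard description, $C_X(\hat x)$ is a direct product of groups $\GL_{m_i}(q^{k_i})$ with $\sum m_ik_i=n$, and $\hat x$ non-scalar rules out the single factor with $k_1=1,\ m_1=n$; on any factor the determinant of the corresponding block of $X$ is $N_{\mathbb{F}_{q^{k_i}}/\mathbb{F}_q}\circ\det$, and norm maps of finite fields are surjective. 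Lemma~\ref{supSL} then gives $C_X(\hat x)\,\SL_n(q)=X$; reducing modulo $Z$, and using that the image of $C_X(\hat x)$ in $\overline{G}_{\sigma}$ lies inside $C_{\overline{G}_{\sigma}}(x)$ while $\SL_n(q)$ maps onto $G_0$, we conclude $C_{\overline{G}_{\sigma}}(x)\,G_0=\overline{G}_{\sigma}$.

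The only real difficulty is in part $(1)$: it rests on the explicit block form of centralisers of semisimple elements together with surjectivity of field-norm maps on each block. One subtlety worth flagging is that $C_{\overline{G}_{\sigma}}(x)$ may strictly contain the image of $C_X(\hat x)$ — exactly when the eigenvalue multiset of $\hat x$ can be permuted by a nontrivial scalar, i.e.\ case $(2)$ of Lemma~\ref{prost} — but this only enlarges $C_{\overline{G}_{\sigma}}(x)$ and so does not affect the argument. Part $(2)$, by contrast, needs nothing beyond the index bound $|\overline{G}_{\sigma}:G_0|\le n$.
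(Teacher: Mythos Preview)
Your argument is correct for $G_0=\PSL_n(q)$, which as you note is all the paper needs; the paper itself does not prove the lemma but simply cites \cite[4.2.2(j)]{gorlyo} for (1) and \cite[Lemma 3.20]{fpr2} for (2), where the general classical case is handled. Your direct treatment is essentially the specialisation of those arguments to $\GL_n$. The key step in (1), that $\Det(C_{\GL_n(q)}(\hat x))=\mathbb{F}_q^{*}$ for semisimple $\hat x$, is the concrete shadow of the connectedness of centralisers of semisimple elements in $\GL_n$, which is what \cite{gorlyo} ultimately invokes. Two small points: the appeal to Lemma~\ref{prost} to obtain a lift of order $|x|$ is unnecessary, since all you use is that any lift through the $p'$-group $Z$ is semisimple and non-scalar; and ruling out the single factor $k_1=1,\ m_1=n$ is irrelevant, since $\Det(\GL_n(q))=\mathbb{F}_q^{*}$ holds trivially in that case anyway. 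Your coset argument for (2) works for arbitrary $x$ and yields the sharper factor $(n,q-1)$ in place of~$n$.
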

\begin{proof}
See \cite[4.2.2(j)]{gorlyo} for the proof of $(1)$ and \cite[Lemma 3.20]{fpr2} for $(2)$.
\end{proof}

\begin{Lem}\label{lemxGbound}
Let  $x \in G \cap\PGL_n(q)$ have prime order  and $s:=\nu(x).$ Then 
\begin{equation}\label{5uni}
 |x^G| > 
\begin{cases}
  \frac{1}{2n}  q^{ns} \ge \frac{1}{2n} q^{n^2/2}  & \text{ for } s \ge n/2;\\
  \frac{1}{2n} q^{2s(n-s)} \ge  
\frac{1}{2n}   q^{3n^2/8}  & \text{ for }  n/4 \le s < n/2.
\end{cases}
\end{equation}
\end{Lem}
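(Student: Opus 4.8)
The plan is to bound $|x^G|$ from below by estimating $|x^{G_0}|$ for $G_0 = \PSL_n(q)$ and then relating $|x^G|$ to $|x^{G_0}|$ via Lemma~\ref{xGoGs}. First I would reduce to the case $G = \overline{G}_\sigma = \PGL_n(q)$: since $G$ is almost simple with socle $G_0$, we have $|x^G| \ge |x^{G_0}|$ (the $G_0$-class of $x$ is contained in its $G$-class), so it suffices to prove the stated lower bounds with $|x^{G_0}|$ in place of $|x^G|$, up to the extra factor coming from $\overline{G}_\sigma$ versus $G_0$. By Lemma~\ref{xGoGs}, if $x$ is semisimple then $x^{\overline{G}_\sigma} = x^{G_0}$, and if $x$ is unipotent then $|x^{\overline{G}_\sigma}| \le n|x^{G_0}|$; since $x \in G$ has prime order $r$, it is semisimple when $r \ne p$ and unipotent when $r = p$. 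Either way, $|x^{G_0}| \ge \frac{1}{n}|x^{\overline{G}_\sigma}| \ge \frac{1}{n}|x^{\overline G_\sigma \cap \langle G_0, x\rangle\text{-class}}|$, so the loss is at most a factor $n$, which is already accounted for by the $\frac{1}{2n}$ prefactor if I can establish $|x^{\overline{G}_\sigma}| \ge \frac{1}{2} q^{ns}$ (resp.\ $\frac12 q^{2s(n-s)}$).

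Next I would estimate the centraliser of $\hat x$ in $\GL_n(q)$, where $\hat x$ is a lift of $x$ of prime order $r$ (using Lemma~\ref{prost}: case~\eqref{prost1} gives such a lift with $|x^{\PGL_n(q)}| = |\hat x^{\GL_n(q)}|$; case~\eqref{prost2} is scalar-type with $\nu(x)$ determined directly and handled as a special subcase). Writing $s = \nu(x)$, the eigenspace decomposition of $\hat x$ over $K$ has a $\lambda$-eigenspace of dimension $n - s$ for some $\lambda$, and the remaining eigenvalues span a space of dimension $s$. In the unipotent case $\hat x$ has a single Jordan block structure with $[\overline V, \hat x]$ of dimension $s$, and $C_{\GL_n(q)}(\hat x)$ has order at most roughly $q^{(n-s)^2 + \text{(stuff of size } \le s^2 + s(n-s))}$; the standard bound is $|C_{\GL_n(q)}(\hat x)| \le q^{n^2 - 2s(n-s) - s}$ or thereabouts, giving $|x^{\GL_n(q)}| = |\GL_n(q)|/|C| \ge q^{2s(n-s)}/(\text{small})$. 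Similarly for semisimple $\hat x$ one gets $|C_{\GL_n(q)}(\hat x)| \le |\GL_{n-s}(q)| \cdot |\GL_s(q^{?})| \le q^{(n-s)^2 + s^2}$-ish, yielding $|x^{\GL_n(q)}| \gtrsim q^{2s(n-s)}$, and when $s \ge n/2$ this is $\ge q^{ns}$ up to lower-order terms. Then I would pass from $\GL_n(q)$-classes to $\PGL_n(q)$-classes (dividing by $|Z| = q-1$, absorbed into constants) and finally to $G_0 = \PSL_n(q)$ via Lemma~\ref{xGoGs}, tracking that the cumulative constant loss is swallowed by $\frac{1}{2n}$.

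For the displayed numerical consequences: when $s \ge n/2$, monotonicity of $t \mapsto q^{nt}$ gives $q^{ns} \ge q^{n^2/2}$; when $n/4 \le s < n/2$, the function $s \mapsto 2s(n-s)$ is increasing on $[0, n/2]$, so $2s(n-s) \ge 2 \cdot (n/4)(3n/4) = (3/8)n^2$, giving the second inequality. These are elementary once the class-size lower bounds $|x^G| > \frac{1}{2n} q^{ns}$ and $|x^G| > \frac{1}{2n} q^{2s(n-s)}$ are in hand.

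The main obstacle I anticipate is getting the centraliser order bounds tight enough and uniform enough — in particular, handling the unipotent case (where the precise Jordan type affects $|C_{\GL_n(q)}(\hat x)|$ and one must bound over all unipotent classes with $\nu = s$), the semisimple case where eigenvalues may lie in extension fields (so the centraliser is a product of $\GL$'s over various $\mathbb{F}_{q^d}$, and one needs $\sum d_i m_i = $ appropriate dimensions with the multiplicities controlled by $s$), and the special scalar-type elements of Lemma~\ref{prost}\eqref{prost2} where $r \mid (q-1, n)$ and $s = \nu(x) = n(1 - 1/r) \ge n/2$, for which a direct computation of $|x^{\PGL_n(q)}| = |\PGL_n(q)| / |C|$ with $C \cong (\GL_{n/r}(q))^r \rtimes \ldots$ is needed. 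Keeping every constant under the $\frac{1}{2n}$ umbrella across all these subcases, rather than the cleaner $\frac{1}{2}$ or $1$ that appears in some references, is the bookkeeping-heavy part; I expect to cite \cite[Lemma 3.20]{fpr2} or analogous results from \cite{fpr2} for the uniform class-size estimates and then just verify the arithmetic of the two displayed inequalities.
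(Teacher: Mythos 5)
Your high-level structure is the same as the paper's, which simply combines Lemma~\ref{xGoGs} (to compare $\overline{G}_\sigma$- and $G_0$-classes) with the class-size lower bounds established in \cite[Propositions 3.22 and 3.36, Lemmas 3.34 and 3.38]{fpr2}; your closing arithmetic verifying the two displayed inequalities is also fine. But your centraliser sketch has a real gap in the regime $s \ge n/2$, which is precisely where the lemma asserts the \emph{stronger} bound $q^{ns}$. You estimate $|C_{\GL_n(q)}(\hat x)| \lesssim q^{(n-s)^2 + s^2}$, yielding class size $\gtrsim q^{2s(n-s)}$, and then write that for $s \ge n/2$ ``this is $\ge q^{ns}$ up to lower-order terms''. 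That direction is wrong: $2s(n-s) - ns = -s(2s-n) \le 0$ when $s \ge n/2$, so $q^{2s(n-s)} \le q^{ns}$, and the crude two-block estimate undersells the result rather than delivering it.

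The missing observation is that when $s > n/2$ there cannot be a ``second block'' of dimension $s$, since every eigenspace multiplicity $m_i$ is at most $n - s < s$; instead there are at least $\lceil n/(n-s) \rceil \ge 3$ eigenspaces, and then $\sum_i m_i^2 \le (\max_i m_i)\sum_i m_i = (n-s)n$, which gives $|x^{\GL_n(q)}| \gtrsim q^{n^2 - n(n-s)} = q^{ns}$, the bound you actually need. An analogous refinement is required in the unipotent case, where a Jordan structure with $\nu = s \ge n/2$ likewise forces a centraliser smaller than the two-block heuristic suggests. This sharpening is exactly the content of the cited results in \cite{fpr2}; note also that Lemma 3.20 of \cite{fpr2}, which you propose to cite, is the unipotent comparison behind Lemma~\ref{xGoGs}(2), not a class-size lower bound --- the class-size estimates you want are the four results named above, and once those are invoked the rest of your reduction and arithmetic goes through.
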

\begin{proof}
The statement follows by combining Lemma \ref{xGoGs} with \cite[Propositions 3.22 and 3.36, Lemmas 3.34 and 3.38]{fpr2}.
\end{proof}

\subsection{Computations using {\sf GAP} and {\sc Magma}}
\label{gapsec}

We use the  computer algebra systems {\sf GAP} \cite{GAP4} and {\sc Magma} \cite{magma}  to check the inequality $b_S(S \cdot \SL_n(q))\le c$ for  particular cases where $S$ is a maximal solvable subgroup of $\GL_n(q)$ and $q$ and $n$ are small. In this section we discuss how the statement $b_S(G)\le c$ can be checked for given  $S \le \GL_n(q)$ and integer $c$.

In general, one can use the following algorithm to compute $b_S(G)$ in {\sc Magma} precisely: 
\begin{enumerate}
\item use \texttt{CosetAction} function to obtain the permutation representation of $G$ induced by the action of $G$ on the set of cosets of $S$;
\item compute the orbits of the point stabiliser $\overline{S}$ (the image of $S$ under the coset action homomorphism) using \texttt{Orbits} function;
\item take orbit representatives and their corresponding stabilisers (2-point stabilisers) in $\overline{S}$ (using \texttt{Representative} and \texttt{Stabiliser});
\item obtain the orbits for these 2-point stabilisers and repeat the above procedure until a $(c-1)$-point stabiliser has a regular orbit.
\end{enumerate}
The minimal $c$ such that a $(c-1)$-point stabiliser has a regular orbit is the base size $b_S(G).$  This procedure is effective for $|G:S|$ up to $5 \times 10^6$ when this code executes in {\sc Magma} V2.28-4 on a machine with a 2.6 GHz processor.
 In practice, we typically use this method to derive a lower bound on $b_S(G)$ and we only need to use it in a handful of cases with $|G:S|$ very small. And to obtain an upper bound on $b_S(G)$, we often use random search. That is to say, we seek elements $a_i \in G$  such that 
\begin{equation}
\label{gapint}
S \cap S^{a_1} \cap \ldots \cap S^{a_{c-1}} =S_G=\cap_{g \in G} S^g.
\end{equation}
Randomised search allows us to find such $a_i$ in all cases we consider.

A more difficult  task is to construct a set of representatives of the conjugacy classes of maximal solvable subgroups of $G$.  Generating sets for primitive maximal solvable subgroups of $\GL_n(q)$ for small $n$ (and for subgroups containing primitive maximal solvable subgroups of $\GL_n(q)$ in the general case) can be found in \cite{short} and \cite[\S 21]{sup}. The function \texttt{IrreducibleSolvableGroupMS}$(n,p,i)$ in the {\sf GAP} package \texttt{PrimGrp} \cite{PrimGrp} realises the results of \cite{short}. It returns a representative of the $i$-th conjugacy class of irreducible solvable subgroups of $\GL_n(p)$, where $n>1$, $p$ is a prime, and $p^n < 256$.   While constructing a specific subgroup can be difficult, it is usually not necessary. Indeed, \eqref{gapint} clearly holds if  
\begin{equation}
\label{gapintM}
M \cap M^{a_1} \cap \ldots \cap M^{a_{c-1}} =S_G,
\end{equation}
for any overgroup $M>S$, so it is enough to construct an overgroup $M$ of $S$ such that \eqref{gapintM} holds.   The function \texttt{ClassicalMaximals} in  {\sc Magma} realises the results of \cite{maxlow}. It  constructs all maximal subgroups of a classical group ($\GL_n(q)$ for example) up to conjugation for $n \le 17$. Since this function works with explicit generators of these subgroups, there are no practical restrictions on the value of $q$.  If a maximal subgroup $M$ is too big and \eqref{gapintM} does not hold, then we use the function \texttt{MaximalSubgroups} to obtain all maximal subgroups of $M$ up to conjugation and check if \eqref{gapintM} holds for them. In practice, at most three iterations are needed to obtain \eqref{gapintM} for some overgroup of the solvable subgroup under investigation.

When $S\le \GaL_n(q)$ and $S \not\le \GL_n(q)$, we consider $S$ as a subgroup of $\GL_{nf}(p)$ where $q=p^f$ and the methods above apply. We only use computations for $S \not\le \GaL_n(q)$ when $n \in \{3,4\}$ and $q \in \{3,4,5\}$ (see {Case 2} of the proof of Theorem \ref{theoremGR} in Section \ref{sec412}). Here we construct $\Aut(\PSL_n(q))$ in  {\sc Magma} using \texttt{AutomorphismGroupSimpleGroup} and then apply the method above using   \texttt{MaximalSubgroups}.

We often write ``$b_S(G) \le c$ is verified by computation''\index{computation} to imply that the statement is verified using one of the procedures described above.   
Sometimes we carry out similar calculations to show  that the intersection of a specific number of conjugates of $S$ satisfies particular order bounds.  We often write ``computation shows'' to summarise such routine calculations.

\section{Irreducible solvable subgroups}
\label{ch2}

Recall from the introduction that $b_S(G)$ is the minimal number such that there exist $$x_1, \ldots, x_{b_S(G)} \in G  \text{ with } S^{x_1} \cap \ldots \cap S^{x_{b_S(G)}}=S_G$$ where $S_G= \cap _{g \in G}S^g$. Let $S$ be an irreducible maximal solvable subgroup of $\GL_n(q)$.  The goal of this section is to obtain upper bounds for $b_S(S \cdot \SL_n(q))$. These bounds play an important role in the proof of Theorems \ref{theorem}, \ref{theoremGR}.  Although, with some exceptions, the bound  $b_S(S \cdot \SL_n(q)) \le 4$ follows from Theorem \ref{bernclass}, it is not sufficient for our purposes. In this section we prove that  $b_S(S \cdot \SL_n(q)) \le 2$ with a short list of exceptions.

\subsection{Primitive subgroups} \label{sec31}

    We start our study with a special case:  $S$ is a primitive maximal solvable subgroup  of $\GL_n(q)$ (recall the definition of a primitive linear group after Notation \ref{not}). In Section \ref{impsec} we use these results to obtain bounds for $b_S(S \cdot \SL_n(q))$ where $S$ is irreducible.  

To prove results about $b_S(S \cdot \SL_n(q))$, we need  information about  primitive solvable groups $S$ including an upper bounds on $|S|$ and lower bound for $\nu(x)$ (the codimension of a largest eigenspace of $x$, see Definition \ref{nudef}), where $x$ is a prime order element of the image of $S$ in $\PGL_n(q).$  This information is needed to apply the probabilistic method described in Section \ref{fprsec}.

The following theorem collects properties of primitive maximal solvable subgroups from \cite[\S \S 19 -- 20]{sup} and \cite[\S 2.5]{short}.  For $A \le \GL_m(q)$ and $B \le \GL_n(q)$, $A \otimes B = \{a \otimes b \mid a \in A, b \in B\}$ where $a \otimes b$ is the Kronecker product defined  in Notation \ref{not}.  

\begin{Th}\label{suplem}
Let $S \le \GL_n(q)$ be a primitive maximal solvable subgroup. Then $S$
admits a unique chain of subgroups 
\begin{equation}\label{ryad}
S \trianglerighteq C \trianglerighteq F \trianglerighteq A
\end{equation}
 where $A$ is the unique maximal abelian normal subgroup of $S$,  $C =C_S(A)$, and  $F$ is the full preimage in $C$ of the maximal  abelian normal subgroup of $S/A$ contained in $C/A.$ The following hold: 
\begin{enumerate}[font=\normalfont]
\item[$(a)$] $A$ is the multiplicative group of a field extension $K$ of the field of scalar matrices $ \Delta = \{\alpha I_n: \alpha \in \mathbb{F}_q\}$ and $m:=|K : \Delta|$ divides  $n$.
\item[$(b)$]  $S/C$ is isomorphic to a subgroup of the Galois group of the extension $K:\Delta,$ so 
$$S=C.\sigma,$$
where $\sigma$ is cyclic of order dividing $m$. 
\item[$(c)$] $C \le \GL_e(K)$ where $e=n/m$.
\item[$(d)$] If
\begin{equation}\label{r}
 e=p_1^{l_1} \cdot \ldots \cdot p_t^{l_t},
\end{equation}
where $t \in \mathbb{N},$ and the $p_i$ are distinct primes, then each $p_i$ divides $|A|=q^m-1.$
\item[$(e)$] $|F/A|=e^2$ and $F/A$ is the direct product of elementary abelian groups. 
\item[$(f)$] If $Q_i$ is the full preimage in $F$ of the Sylow $p_i$-subgroup of $F/A,$ then
$$Q_i= \langle u_1 \rangle \langle v_1 \rangle \ldots \langle u_{l_i} \rangle \langle v_{l_i} \rangle A,$$
where 
\begin{equation}\label{quv}
[u_j,v_j]=\eta_j, \text{ } \eta_j^{p_i}=1, \text{ } \eta_j \ne 1, \text{ } \eta_j \in A; \text{ } u_j^{p_i},v_j^{p_i} \in A
\end{equation}
and elements from distinct pairs $(u_j,v_j)$ commute.
\item[$(g)$] In a suitable $K$-basis of $K^e$, 
$$F=\tilde{Q_1} \otimes \ldots \otimes \tilde{Q_t},$$
where $\tilde{Q_i}$ is an absolutely irreducible subgroup of $\GL_{p_i^{l_i}}(K)$ isomorphic to $Q_i.$
\item[$(h)$] If $N=N_{\GL_e(K)}(F)$ and $N_i=N_{G_i}(\tilde{Q_i}),$ then
 $$N=N_1 \otimes \ldots \otimes N_t.$$
\item[$(i)$] $N_i/\tilde{Q_i}$ is isomorphic to a completely reducible subgroup of $\Sp_{2l_i}(p_i).$
\item[$(j)$] $F/A=C_{C/A}(F/A)$.

\end{enumerate}
\end{Th}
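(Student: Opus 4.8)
The plan is to assemble the statement from the classical structure theory of primitive solvable linear groups in \cite[\S\S 19--20]{sup}, supplemented by the explicit descriptions in \cite[\S 2.5]{short}; the real content of the argument is tracking how the hypothesis that $S$ is \emph{maximal} solvable forces the uniqueness assertions and the exact numerical data in \eqref{ryad}, rather than merely inclusions and existence.

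First I would build the chain. Let $A$ be a maximal member of the set of abelian normal subgroups of $S$. Since $S$ is primitive and $A\trianglelefteq S$, Clifford's theorem shows that $V$ is a homogeneous $A$-module, so the enveloping algebra $\mathbb{F}_q[A]$ is a field $K$, $A$ lies inside $K^*$, and maximality of $A$ forces $A=K^*$ with $\Delta\le K$ and $m:=|K:\Delta|$ dividing $n$; this is $(a)$. Any abelian normal subgroup of $S$ centralises $A$, hence is $K$-linear, so a maximal such subgroup must equal $K^*$: this gives uniqueness of $A$, and then $C=C_S(A)$ is determined. Conjugation defines a homomorphism $S\to\Aut(K)$ fixing $\Delta$ pointwise, with kernel $C$ and image in $\Gal(K/\Delta)$, which is cyclic of order $m$; this is $(b)$, and $(c)$ is immediate since $C$ commutes with $K^*$ and so lies in $\GL_e(K)$ with $e=n/m$. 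Uniqueness of $F$ then follows from the analysis of the abelian normal subgroups of $S/A$ contained in $C/A$ carried out in \cite[\S 20]{sup}.

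Next I would analyse $F$ and its normaliser. By construction $F/A$ is abelian normal in $C/A$; maximality of $S$ forces it to be elementary abelian (otherwise $S$ could be enlarged inside $\GL_n(q)$), so it splits as a product over the primes $p_i$ dividing $e$. The commutator map $F/A\times F/A\to A$, followed by projection onto the $p_i$-torsion, induces an alternating $\mathbb{F}_{p_i}$-bilinear form on the Sylow piece $Q_i/A$; maximality of $S$ forces this form to be nondegenerate, whence $|Q_i/A|=p_i^{2l_i}$ and $|F/A|=\prod_i p_i^{2l_i}=e^2$, giving $(e)$, and each $p_i$ must divide $|A|=q^m-1$, giving $(d)$. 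Each $Q_i$ is then a group of symplectic type with centre containing $\langle\eta_i\rangle$, which yields the presentation $(f)$; the standard representation theory of groups of symplectic type produces the absolutely irreducible tensor factors of $(g)$, and correspondingly the decomposition $(h)$ of $N=N_{\GL_e(K)}(F)$. For $(i)$, the conjugation action of $N_i$ on $Q_i/A$ preserves the symplectic form up to scalars and fixes the centre, embedding $N_i/\tilde{Q_i}$ into $\Sp_{2l_i}(p_i)$, and complete reducibility is Maschke's theorem applied to this module. Finally $(j)$ follows from maximality of $A$ together with maximality of $S$: an element of $C$ centralising $F$ modulo $A$ but lying outside $F$ would, with $A$, enlarge the abelian normal structure available in a suitable overgroup, a contradiction.

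The step I expect to be the main obstacle is precisely this maximality bookkeeping. Suprunenko works over general ground fields and with his own conventions on what "maximal" means, so one must check carefully that "maximal solvable subgroup of $\GL_n(q)$'' in our sense yields exactly the equalities $|F/A|=e^2$, the nondegeneracy of the commutator forms, and the uniqueness of the chain \eqref{ryad}, rather than just the weaker inclusions one gets for an arbitrary primitive solvable group. Once those points are pinned down, parts $(a)$--$(j)$ are a direct translation of \cite[\S\S 19--20]{sup} and \cite[\S 2.5]{short}.
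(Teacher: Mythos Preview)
The paper does not supply its own proof of this theorem: it is introduced as ``the following theorem collects properties of primitive maximal solvable subgroups from \cite[\S\S 19--20]{sup} and \cite[\S 2.5]{short}'' and no further argument is given. Your proposal is precisely a sketch of the arguments in those references, so in that sense your approach and the paper's coincide.

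Two places in your sketch deserve tightening if you intend it to stand on its own. First, the one-line justification for uniqueness of $A$ (``any abelian normal subgroup of $S$ centralises $A$'') is not immediate: two abelian normal subgroups of a group need not commute. What one actually shows (following \cite[\S 19]{sup}) is that if $B\trianglelefteq S$ is abelian then $(B\cap C)A$ is abelian normal, forcing $B\cap C\le A$, and then one rules out $B\not\le C$ using the cyclicity of $S/C$ together with primitivity. Second, your appeal to Maschke for part~(i) fails as stated: the module $Q_i/A\cong\mathbb{F}_{p_i}^{2l_i}$ has characteristic $p_i$, which certainly divides $|\Sp_{2l_i}(p_i)|$, so Maschke's theorem does not apply. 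The complete reducibility in (i) comes instead from the identification of $N_i/\tilde{Q_i}$ with (essentially all of) $\Sp_{2l_i}(p_i)$, which acts irreducibly on its natural module; see \cite[\S 20]{sup} and \cite[\S 2.5]{short}.
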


The following result is due to Gluck and Manz {\cite[Theorem 3.5]{manz}}.

\begin{Th}  \label{lowboundsol}
If $S$ is a completely reducible solvable  subgroup of $\GL_n(q)$, then $|S| < q^{9n/4}/2.8.$
\end{Th}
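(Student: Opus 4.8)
Since this is the Gluck--Manz bound, the plan is to follow their approach, which refines the earlier estimates of P\'alfy and Wolf and proceeds in three layers: reduce from completely reducible to irreducible, then from irreducible to primitive, and finally bound the order of a primitive solvable linear group from its structure. For the first reduction, write $V = V_1 \oplus \cdots \oplus V_k$ with each $V_i$ an irreducible $\mathbb{F}_q S$-module of dimension $n_i$, so $n = n_1 + \cdots + n_k$. The restriction maps combine into an injection $S \hookrightarrow \prod_{i=1}^k S/C_S(V_i)$ (since $\bigcap_i C_S(V_i) = 1$), and each $S_i := S/C_S(V_i)$ is a solvable irreducible subgroup of $\GL_{n_i}(q)$; if the bound holds for each $S_i$ then $|S| \le \prod_i |S_i| < \prod_i q^{9n_i/4}/2.8 \le q^{9n/4}/2.8$, so it suffices to treat irreducible $S$.

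For the second reduction, by Lemma \ref{supirr} an irreducible solvable $S \le \GL_n(q)$ is conjugate to a subgroup of $S_0 \wr \Gamma$ with $S_0 \le \GL_m(q)$ primitive solvable, $\Gamma \le \Sym(k)$ transitive solvable, and $mk = n$; hence $|S| \le |S_0|^k \cdot |\Gamma|$, and one invokes the bound $|\Gamma| \le 24^{(k-1)/3}$ for solvable subgroups of $\Sym(k)$ (due to Dixon). The subtlety is that $|S_0|^k$ together with $24^{(k-1)/3}$ must still fit below $q^{9mk/4}/2.8$ for every $k$: this is precisely where the exponent $9/4$ and the constant $24^{1/3} \approx 2.88$ appear, and it dictates that the primitive bound proved in the last step must be slightly sharper than $q^{9m/4}$, with enough surplus to absorb the permutational factor.

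For the primitive case, use the structure chain $S \trianglerighteq C \trianglerighteq F \trianglerighteq A$ of Theorem \ref{suplem}. Writing $n = me$ with $A \cong K^*$ and $|K:\Delta| = m$, one has $|A| \le q^m - 1$, $|F/A| = e^2$, $S/C$ cyclic of order dividing $m$, and, by parts $(h)$ and $(i)$, $C/F$ embeds into $\prod_i \Sp_{2l_i}(p_i)$ where $e = \prod_i p_i^{l_i}$. Combining $|\Sp_{2l}(p)| < p^{2l^2+l}$, the inequality $\sum_i l_i \le \log_2 e$, and the divisibility $p_i \mid q^m - 1$ from part $(d)$, one obtains an explicit upper bound for $|S| = |A|\cdot|F/A|\cdot|C/F|\cdot|S/C|$ in terms of $q$, $m$, and $e$.

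The main obstacle is the final arithmetic: checking that this explicit bound lies below $q^{9me/4}/2.8$. The inequality is comfortable once $e$ or $m$ is moderately large, but it is genuinely tight for small configurations --- most strikingly $\GL_2(3)$, where $|\GL_2(3)| = 48$ while $3^{9/2}/2.8 \approx 50.1$ --- so the residual small cases (finitely many for each fixed $e$, as the estimate is eventually polynomial in $q^m$ and in $m$) must be inspected individually. Carrying the constant $2.8$ consistently through all three reductions, and in particular through the wreath step against the $24^{(k-1)/3}$ estimate, is the delicate point of the argument.
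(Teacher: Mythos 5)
The paper offers no argument here at all: Theorem~\ref{lowboundsol} is stated with a bare citation to \cite[Theorem~3.5]{manz}, so there is no ``paper's proof'' to compare against — the result is used as a black box. Your sketch is a plausible reconstruction of how the cited bound is actually established in the literature. The three-layer reduction (completely reducible $\to$ irreducible $\to$ primitive), the embedding $S \hookrightarrow \prod_i S/C_S(V_i)$, Suprunenko's wreath decomposition (Lemma~\ref{supirr}), Dixon's $24^{(k-1)/3}$ bound for solvable subgroups of $\Sym(k)$, and the normal chain $S \trianglerighteq C \trianglerighteq F \trianglerighteq A$ of Theorem~\ref{suplem} are all the right ingredients. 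You also correctly diagnose the two delicate points: the primitive bound must be sharper than $q^{9m/4}/2.8$ — it has to lie below $q^{9m/4}/24^{1/3}$ so the wreath step can absorb Dixon's $24^{(k-1)/3}$ and still leave the constant $2.8 < 24^{1/3}$ in the end — and $\GL_2(3)$ is the genuinely critical case: against the needed primitive threshold of $3^{9/2}/24^{1/3} \approx 48.6$, its order $48$ barely fits. What the sketch does not do is actually close the argument: the primitive bound itself is not derived (you describe the inputs and declare that ``one obtains an explicit upper bound''), and the residual boundary cases are explicitly deferred rather than checked. As an outline of strategy it is sound and well informed, but it is not a proof, whereas the paper deliberately makes no attempt to be one here.
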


Let $S$ be a primitive maximal solvable  subgroup of $\GL_n(q)$. If $A,$ $F$ and $C$ are as in  \eqref{ryad}, then by Theorem \ref{suplem}
\begin{equation*}
\begin{split}
|A|&=q^m-1;\\
|F:A|&=(n/m)^2=e^2;\\
|S:C|&\le m.\\
\end{split}
\end{equation*}
By  Theorem \ref{suplem} (i) and Theorem \ref{lowboundsol}
\begin{equation}
\label{CFbound}
|C:F| \le 
\begin{cases}
\prod_{i=1}^t |\Sp_{2l_i}(p_i)|;\\
\prod_{i=1}^t ((p_i^{2l_i})^{9/4}/2.8)<e^{9/2},
\end{cases}
\end{equation}
where $e=n/m$, $p_i$ and $l_i$ are as in \eqref{r}. Notice that 
$$|\Sp_{2l_i}(p_i)|=p_i^{l_i^2} \prod_{j=1}^{l_i}(p_i^{2j}-1)\le p_i^{l_i^2} \prod_{j=1}^{l_i}p_i^{2j}=p_i^{l_i(2l_i+1)}. $$
Denote $\log_2(e)$ by $l$, so $l_i \le l$ for $i=1, \ldots, t.$ Therefore,
$$|C:F| \le \prod_{i=1}^t p_i^{l_i(2l+1)}=e^{2l+1} $$ 
and
\begin{equation*}
|S|\le (q^m-1)\frac{n^2}{m^2}m \cdot \min \{ e^{2l+1}, e^{9/2}\}=(q^m-1)m \cdot \min \{e^{2l+1}, e^{13/2}\},
\end{equation*}
so
\begin{equation}\label{H}
|S/Z(\GL_n(q))|\le \left( \frac{q^m-1}{q-1} \right) m\cdot \min \{e^{2l+1}, e^{13/2}\}.
\end{equation}

\begin{Lem}\label{6}
 Let $S$  be a primitive maximal  solvable subgroup of $\GL_n(q)$, $Z=Z(\GL_n(q))$ and let $H=S/Z \le \PGL_n(q).$ Let $x \in H$ and let $\hat{x}$ be a preimage of $x$ in $S$. If $x$ has prime order $r$, so  $\hat{x}$ lies in $S \backslash Z$, then 
the following hold:
\begin{enumerate}[font=\normalfont]
\item if $\hat{x}  \in F $, then $\nu(x) \ge n/2; $ \label{6it2}
\item if $\hat{x}  \in C \backslash F$, then $\nu(x) \ge n/4; $ \label{6it3}
\item if $\hat{x}  \in S \backslash C$, then $\nu(x)=n-n/k \ge n/2. $ \label{6it4}
\end{enumerate}
\end{Lem}
\begin{proof}
The proof follows the beginning of the proof of \cite[Proposition 4]{gluck}.  Let ${\mathbb{F}}$ be the algebraic closure of $\mathbb{F}_q.$

If $\hat{x} \in A$, then, since $A$ is cyclic and $(|A|,q)=1$, $\hat{x}$ is conjugate in $\GL_n({\mathbb{F}})$ to 
$$\diag(\lambda, \lambda^q, \ldots, \lambda^{q^{m_1-1}}, \ldots, \lambda, \lambda^q, \ldots, \lambda^{q^{m_1-1}}); \text{ } \lambda \in {\mathbb{F}^*}$$
by \cite[Lemma 1.3]{buturl}. Here $m_1$ is the smallest possible integer such that  $\lambda^{q^{m_1}}=\lambda.$ Therefore, $\nu(\hat{x})=n-n/m_1\ge n/2.$ Moreover, if $z \in A$ is nontrivial, then $C_V(z)=\{0\}.$

Let $\lambda \in \mathbb{F}^*.$ If $\hat{x} \in S \backslash C,$ then 
$$[\lambda \hat{x}, z]=[\hat{x},z] \in A \backslash \{1\}$$ for some $z \in A.$ Notice 
$$C_V((\lambda \hat{x})^{-1}) \cap C_V(z^{-1} \lambda \hat{x} z) \subseteq C_V([\lambda \hat{x}, z])=\{0\}$$
and $\dim C_V((\lambda \hat{x})^{-1})= \dim C_V((\lambda \hat{x})^{z}),$ so $ \dim  C_V((\lambda \hat{x})) \le n/2$ for every $\lambda \in \mathbb{F}^*$. Hence $\nu(\hat{x})\ge n/2.$
Moreover, if  \ref{prost2} holds in Lemma \ref{prost}, then $\nu(x)=n-n/r.$ 
Suppose \ref{prost1} holds in Lemma \ref{prost}, so $\hat{x} \in S$ has order $r.$
 If $\hat{x}  \in S \backslash C$ then, by $(b)$ of Theorem \ref{suplem} and \cite[7-2]{conjaut}, $\hat{x}$ is conjugate to a field automorphism $\sigma_1 \le \langle \sigma \rangle$ of $\GL_e(K)$ of order $r$ where $K$ is as in Theorem \ref{suplem}$(a)$.
Such an element acts as a permutation with $(n/r)$ $r$-cycles on a suitable basis of $V$, so  $\nu(x)=n-n/r$.

If $\hat{x} \in F \backslash A,$ then, by $(f)$ of Theorem \ref{suplem}, there exists $h \in F \backslash A$ such that $$[ \lambda \hat{x}, h]= [\hat{x},h] \in Z \backslash \{1\}$$ and $\nu(\hat{x}) \ge n/2$ as above.

If $\hat{x} \in C \backslash F$, then $[\lambda \hat{x}, h] \in F \backslash A$ for some $h \in F$ by $(j)$ of Theorem \ref{suplem}. Therefore, 
$$\dim C_V([\lambda \hat{x}, h]) \le n/2 \text{ and } \dim C_V(\lambda \hat{x}) \le 3n/4$$
for every $\lambda \in K^*$,
so $\nu(\hat{x}) \ge n/4.$
\end{proof}

\begin{Lem}\label{sinbase}
If $S$ is the normaliser of a Singer cycle of $\GL_n(q)$ with $n\ge 3$, then $b_{S}(S \cdot \SL_n(q)) \le 3$ with equality if and only if $(n,q)=(3,2).$
\end{Lem}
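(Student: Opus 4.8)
The plan is to use the explicit description of $S = N_{\GL_n(q)}(T) = T \rtimes \langle \varphi \rangle$ from Theorem~\ref{sin1}, where $T$ is a Singer cycle and $\varphi$ has order $n$ with $t^\varphi = t^q$, together with the probabilistic machinery of Section~\ref{fprsec}. Since $\langle t \rangle \SL_n(q) = \GL_n(q)$ for a generator $t$ of $T$, we have $\Det(S) = \Det(\GL_n(q))$, so by Lemma~\ref{supSL} we may replace conjugating elements of $\GL_n(q)$ by elements of $\SL_n(q)$; thus $b_S(S\cdot\SL_n(q))$ is the same whether we conjugate inside $\SL_n(q)$ or $\GL_n(q)$. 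Set $H = S Z / Z \le \PGL_n(q)$ where $Z = Z(\GL_n(q))$; note $|H| \le \frac{q^n-1}{q-1}\cdot n$. The target is $b_H(H \cdot \PSL_n(q)) \le 3$, i.e.\ the existence of three conjugates of $H$ (equivalently, of $S$) with intersection in $Z$.

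First I would handle the generic case via Lemma~\ref{11} (or a direct $\widehat{Q}(G,3) < 1$ estimate). The key input is a lower bound for $\nu(x)$ for elements $x \in H$ of prime order. If $\hat x$ lies in the image of $T$, then $\hat x$ is (over $K$) conjugate to $\diag(\mu, \mu^q, \ldots, \mu^{q^{n_1-1}}, \ldots)$ for the minimal $n_1$ with $\mu^{q^{n_1}} = \mu$, and for a \emph{nontrivial} element of the (cyclic, coprime-to-$p$) group $T$ one gets $C_V(\hat x) = 0$ unless $\hat x$ is scalar; hence $\nu(x) = n - n/n_1 \ge n/2$, exactly as in case~\eqref{6it1} of Lemma~\ref{6}. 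If $\hat x$ maps onto a nontrivial power of $\varphi$ of prime order $k \mid n$, then (as in case~\eqref{6it4}) $\hat x$ is conjugate to a field-type element permuting a basis in $n/k$ cycles of length $k$, so $\nu(x) = n - n/k \ge n/2$. So in all cases $\nu(x) \ge n/2$, and Lemma~\ref{lemxGbound} gives $|x^G| > \frac{1}{2n} q^{n^2/2}$ when $G_0 = \PSL_n(q)$ (with the usual care that $x$ may be a semisimple or unipotent element, covered by Lemma~\ref{xGoGs}). Using $\fpr(x) \le |x^G \cap H| / |x^G| \le |H|/|x^G|$ and the crude bound $|H| \le n\,\frac{q^n-1}{q-1} < n q^{n-1}$, together with the number of prime-order classes being at most $|H|$, one obtains $\widehat Q(G,3) \le |H| \cdot (|H|/|x^G|)^3$ which is $< 1$ once $q^{n^2/2}$ dominates a fixed power of $n q^{n-1}$; this holds for all $n \ge 3$ except a short explicit list of small $(n,q)$. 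Alternatively one invokes that for $n\ge 6$, $T_G < 1/3$ and \eqref{0} applies with $c=3$, $\xi = 4/9 > 1/3$ once $\fpr(x) < |x^G|^{-4/9}$, which the bounds above give.

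Second, the finitely many remaining small cases $(n,q)$ — those where the inequality above is not decisive — I would dispatch by direct computation using the methods of Section~\ref{gapsec}: either by finding $a_1, a_2 \in \SL_n(q)$ with $S \cap S^{a_1} \cap S^{a_2} \le Z$ (giving $b_S \le 3$), or, for $(n,q) = (3,2)$, by the explicit verification that no two conjugates of $S$ intersect in $Z$ (so $b_S = 3$ there, and $T = \langle t\rangle$ has order $7$, $S = 7{:}3$ inside $\GL_3(2) = \PSL_3(2)$, a maximal subgroup of order $21$). This pins down the equality case. To rule out equality for all other $n, q$, I would exhibit two conjugates with intersection in $Z$: take $x \in \SL_n(q)$ generic; since $H$ is a ``small'' subgroup (of order roughly $q^{n-1}$) of $\PSL_n(q)$ (of order roughly $q^{n^2-1}$), a counting argument shows $H \cap H^x = 1$ for some $x$ whenever $|\bigcup_{y \in G} (H \cap H^y \setminus 1)|$ fails to cover all of $G$, i.e.\ whenever $\sum_{1 \ne z \in H} |z^G \cap H| \cdot |C_G(z)| < |G|$ — a bound that holds comfortably once $q^{n^2/2}$ beats $q^{2(n-1)}$, i.e.\ for all $(n,q) \ne (3,2)$ with $n \ge 3$ apart from possibly $(3,3)$ or $(3,4)$, handled computationally.

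\textbf{Main obstacle.} The hard part will be the bookkeeping in the probabilistic estimate for small $n$ — especially $n = 3$ and $n = 4$ — where $q^{n^2/2}$ is not yet large compared to $|H|^3 \approx (n q^{n-1})^3$, so the naive $\widehat Q(G,3) < 1$ fails and one must either sharpen $|x^G \cap H|$ (using that $H$ has very few elements of each prime order, e.g.\ $T$ contributes a bounded number of classes and $\varphi$-type elements are few) or fall back on \textsc{Magma}/\textsf{GAP}. A secondary subtlety is correctly tracking the distinction between $\PGL_n(q)$-classes and $\GL_n(q)$-classes (Lemma~\ref{prost}) and between $\overline G_\sigma$- and $G_0$-classes (Lemma~\ref{xGoGs}) so that the bound $|x^G| \ge \frac{1}{2n} q^{n^2/2}$ is applied to the right object, but this is routine given the lemmas already in hand.
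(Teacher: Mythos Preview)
Your proposal is correct and uses the same core ingredients as the paper: the bound $\nu(x)\ge n/2$ for all prime-order $x\in H$ (which the paper obtains by observing $A=F=C$ in Theorem~\ref{suplem} and quoting Lemma~\ref{6}), the class-size bound $|x^G|\ge \frac{1}{2n}q^{n^2/2}$ from Lemma~\ref{lemxGbound}, and computation for the leftover small cases.

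The only real difference is organisational. You run the probabilistic argument with $c=3$ to get $b_S\le 3$, and then run a second counting argument to show $b_S=2$ away from $(3,2)$ in order to pin down the equality case. The paper does both in one stroke: it bounds $\widehat{Q}(G,2)\le a^2/b$ via Lemma~\ref{fprAB} with $a=|H|=n(q^n-1)/(q-1)$ and $b=\frac{1}{2n}q^{n^2/2}$, so that $b_S=2$ (and hence both $b_S\le 3$ and the equality characterisation) follows immediately whenever $a^2<b$. This is cleaner and avoids the duplicated effort. Two minor points: your displayed bound $\widehat{Q}(G,3)\le |H|\cdot(|H|/|x^G|)^3$ is not quite the shape Lemma~\ref{fprAB} gives (it yields $a^3/b^2$ for $c=3$), though any of these crude variants suffices; and your guess that only $(3,3),(3,4)$ survive the $c=2$ estimate is too optimistic---the paper's residual list for $n\ge 4$ is $(6,2),(5,3),(5,2)$ and all $n=4$, $q<13$, while $n=3$ is read off from \cite[Lemma~6.4]{burPS}.
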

\begin{proof}
We follow \cite[Lemma 6.4]{burPS} which proves the statement of the lemma in the case of prime $n$. Let $\hat{G}$ be $S \cdot \SL_n(q),$ let  $G$ be $\hat{G}/Z(\hat{G})\le \PGL_n(q)$ and let $H$ be $S/Z(\hat{G})\le G.$   Obviously, 
$$b_S(\hat{G})=b_H(G).$$

Let $x\in H$ be an element of prime order. Since $S$ is the normaliser of a Singer cycle $A$, $A=F=C$ in the notation of Theorem \ref{suplem}, so $\nu(x) \ge n/2$ for all such $x \in H$ by Lemma \ref{6}.  Hence
$$|x^G| \ge \frac{1}{2n}q^{n^2/2}=b$$ 
for all $x \in H$ of prime order  by Lemma \ref{lemxGbound}. Let $$a:=|H|=n(q^n-1)/(q-1).$$
Recall $\widehat{Q}(G,c)$ from \eqref{ver} and that $b_H(G)\le c$ if  $\widehat{Q}(G,c)<1.$ By Lemma \ref{fprAB}, 
$$\widehat{Q}(G,2) \le  a^2/b.$$
 It is routine to check that, if $n \ge 4,$ then this upper bound is less than $1$ unless  either $(n,q)=(6,2),(5,3), (5,2)$, or $n=4$ and  $q<13.$ For these cases the lemma is verified by computations. If $n=3$, then the lemma follows directly from \cite[Lemma 6.4]{burPS}.
\end{proof}

\begin{Lem}\label{sch} 
 Let $n \ge 6.$ If $S$ is a primitive maximal solvable  subgroup of $\GL_n(q)$, then $$b_S(S \cdot \SL_n(q)) = 2.$$ 
\end{Lem}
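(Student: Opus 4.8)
The plan is to apply the probabilistic method of Section \ref{fprsec}, specifically Lemma \ref{11}, using the structural information for primitive maximal solvable subgroups from Theorem \ref{suplem}, the order bound \eqref{H}, and the eigenspace codimension bounds of Lemma \ref{6}. Write $\hat{G}=S\cdot\SL_n(q)$, $G=\hat{G}/Z(\hat G)\le\PGL_n(q)$ and $H=S/Z(\GL_n(q))\le G$, so $b_S(\hat G)=b_H(G)$. The case $S$ equal to the normaliser of a Singer cycle is already covered by Lemma \ref{sinbase} (and gives $b=2$ once $n\ge 6$), so assume $S$ is not of that form; then in the chain \eqref{ryad} we have $e=n/m>1$, which is exactly the extra leverage we need to make the estimates work.

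First I would bound $\nu(x)$ from below for every element $x\in H$ of prime order: by Lemma \ref{6}, $\nu(x)\ge n/4$ in all four cases (and $\ge n/2$ except possibly when $\hat x\in C\setminus F$). Combined with Lemma \ref{lemxGbound}, this yields $|x^G|> \frac{1}{2n}q^{(3/8)n^2}=:B$ for all prime-order $x$. Next I would bound $|H|$ from above using \eqref{H}: since $m\le n/2$ (because $e>1$ forces $m\le n/2$, using $m\mid n$) and $e\le n$, we get a bound of the shape $|H|\le \bigl(\frac{q^m-1}{q-1}\bigr)\,m\cdot\min\{e^{2l+1},e^{13/2}\}$ with $m\le n/2$, $l=\log_2 e$. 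The key numerical point is that $(q^m-1)/(q-1)\le q^{m-1}\le q^{n/2-1}$, so $|H|^2$ is roughly $q^{n-2}$ times a polynomial-in-$n$ factor, which is vastly smaller than $B\approx q^{(3/8)n^2}$ once $n\ge 6$.

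With $A:=|H|$ and $B$ as above, Lemma \ref{fprAB} gives $\widehat Q(G,2)\le A^2/B$ (taking $\sum|x_i^G\cap H|\le |H|=A$ and $|x_i^G|\ge B$); alternatively one can feed the bound $\fpr(x)\le |x^G|^{-\xi}$ with a suitable $\xi$ into Lemma \ref{11} together with $T_G<1/3$ for $n\ge 6$. Either way, the task reduces to checking the single inequality $A^2<B$, i.e. roughly
\begin{equation*}
\left(\frac{q^m-1}{q-1}\right)^2 m^2\,e^{13}<\frac{1}{2n}\,q^{(3/8)n^2},
\end{equation*}
which, after substituting $m\le n/2$ and $e\le n$ and taking logarithms, is a comparison between an $O(n\log q+\log n)$ quantity and a $(3/8)n^2\log q$ quantity. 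This holds comfortably for all $n\ge 6$ and all prime powers $q$, with no exceptional cases, because the quadratic-in-$n$ exponent on the right dominates. Finally $b_H(G)\ge 2$ since $H\ne 1$, giving equality.

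The main obstacle I anticipate is purely bookkeeping: making the order bound \eqref{H} and the $\nu(x)\ge n/4$ bound line up cleanly across all four cases of Lemma \ref{6} — in particular confirming that the weakest bound $\nu(x)\ge n/4$ (the $C\setminus F$ case), which forces us into the $q^{(3/8)n^2}$ branch of Lemma \ref{lemxGbound} rather than the stronger $q^{n^2/2}$ branch, is still more than enough. One should also double-check the small-$n$ boundary $n=6$ (and small $q$ there) by hand or by computation, since that is where the polynomial factors $1/(2n)$, $m^2$, $e^{13}$ are proportionally largest relative to the exponential term; but even there the gap between $(3/8)\cdot 36=13.5$ and the linear-in-$n$ competing exponent is wide, so no genuine exceptions should arise.
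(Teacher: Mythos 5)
Your overall strategy matches the paper's: estimate $\widehat Q(G,2)$ from above using the structural information in Theorem \ref{suplem}, the order bound \eqref{H}, the $\nu(x)$ bounds of Lemma \ref{6}, and the class-size lower bounds of Lemma \ref{lemxGbound}. The reduction to $e>1$ via Lemma \ref{sinbase} is also fine; for $n\ge 6$ the Singer normaliser indeed has base size $2$.

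The flaw is the final numerical claim that $|H|^2<B$ holds for all $n\ge 6$ and all $q$ with ``no exceptional cases.'' That is false, and your own proposed safeguard (``double-check $n=6$ and small $q$ there'') already signals the weak spot. Take $n=6$, $m=3$, $e=2$, $q=3$ (so $p_1=2$ divides $q^3-1=26$, as required by Theorem \ref{suplem}(d)). Then $|A/Z|=13$, $|F/A|=4$, $|C:F|\le|\Sp_2(2)|=6$, $|S:C|\le 3$, giving $|H|\le 936$ and $|H|^2\approx 8.8\times 10^5$. On the other side $B=\tfrac{1}{12}\,3^{13.5}\approx 2.3\times 10^5$. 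So $|H|^2>B$ and $\widehat Q(G,2)\le A^2/B$ gives a bound larger than $1$ — the argument does not close. This is precisely why the paper keeps sharpening (using the exact orders $|\Sp_{2l_i}(p_i)|$ and the divisibility constraint $p_i\mid q^m-1$) and still ends up with a short list of genuine exceptional triples $(n,e,q)$ for $n\in\{6,7,8\}$ (Table~\ref{tab}), which it settles by computation.

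To repair your proof you would need one of two things: either (i) a finer decomposition of the sum $\widehat Q(G,2)$ according to where the preimage $\hat x$ sits in the chain $A\trianglelefteq F\trianglelefteq C\trianglelefteq S$, applying the strong bound $\nu(x)\ge n/2$ — hence $B\ge \tfrac1{2n}q^{n^2/2}$ — to the elements outside $C\setminus F$ and reserving the weaker $\tfrac1{2n}q^{(3/8)n^2}$ bound only for the few elements in $C\setminus F$; or (ii) an explicit acknowledgement, as in the paper, that a finite list of small $(n,q)$ configurations must be checked by machine. As written, the proof does not establish the result for those small cases, which is exactly the regime where the quadratic exponent $(3/8)n^2$ has not yet pulled decisively ahead.
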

\begin{proof}

Let   $G$ be $(S \cdot \SL_n(q))/Z(\GL_n(q))\le \PGL_n(q)$ and let $H$ be $S/Z(\GL_n(q))\le G$, so 
$$b_S(S \cdot \SL_n(q))=b_H(G).$$

If $n \ge 6$ and for all $x \in G$ of prime order   
$$|x^G \cap H| < |x^G|^{(3c-4)/(3c)},$$
then $b_H(G) \le c$ by  \eqref{fpr}  and \eqref{0}. Therefore,  it suffices to show this inequality for $c=2$.

Let $s:=\nu(x).$ We use the bounds in \eqref{5uni} for $|x^G|.$
In most cases the trivial bound $|x^G \cap H| \le |H|$ is sufficient. 

For $n \ge 16$, the inequality $|x^G \cap H| < |x^G|^{1/3}$ follows for all $x \in H$ or prime order by combining  Lemma \ref{6} and the bounds   in \eqref{5uni} for $|x^G|$ and \eqref{H} for $|x^G \cap H|$ respectively. And there is a finite list of cases with $6 <n \le 15$ where these bounds  are insufficient. By  Theorem \ref{suplem} $(d)$,   $(q^m-1)$ must be divisible by $p_i$ for all $p_i,$ $i=1, \ldots, t.$ Using this statement and the bound for $|C:F|$ obtained in \eqref{CFbound} by using the precise orders of the $\Sp_{2l_i}(p_i)$, we reduce this list to  cases 1--6 in Table~\ref{tab}. Here $e$ is as in Theorem \ref{suplem}. 
The lemma follows by Lemma \ref{sinbase} in case $1$ and it is verified by computation  in cases 2--6. 
\begin{table}[h]
\centering
\caption{Exceptional cases in proof of Lemma \ref{sch}}
\label{tab}
\begin{tabular}{|l|l|l|l|}
\hline
\textbf{Case} & $n$ & $e$ & $q$   \\ \hline
\textbf{1}  & 6   & 1   & any   \\ \hline
\textbf{2}  & 6   & 2   & 3     \\ \hline
\textbf{3}  & 6   & 3   & 2,4 \\ \hline
\textbf{4}  & 7   & 1   & 2,3,4 \\ \hline
\textbf{5}  & 8   & 1   & 2     \\ \hline
\textbf{6}  & 8   & 8   & 3,5    \\ \hline
\end{tabular}
\end{table} 
\end{proof}

Before the next lemma, we remind the reader of the following definitions.
A subgroup of $\GL_n(q)$ is \emph{absolutely irreducible} if it is irreducible as a subgroup of $\GL_n(\mathbb{F})$, where $\mathbb{F}$ is the algebraic closure of $\mathbb{F}_q.$ For a prime $r$, a finite $r$-group $R$ is of \emph{symplectic-type} if every characteristic abelian subgroup of $R$ is cyclic. Symplectic-type groups are closely related to extra-special groups, see \cite[\S 4.6]{kleidlieb} and \cite[\S 2.4]{short} for a summary and details.

\begin{Lem}
\label{c6small}
 Let $S$ be a primitive maximal solvable subgroup of $\GL_n(q).$  Recall that $q=p^f.$ If $e=n=r^l$ for some integer $l$ and prime $r$ in Theorem \ref{suplem}, then  $S=S_1 \cdot Z(\GL_n(q))$ where $S_1=S \cap \GL_n(p^t)$,   $t$ divides $f$, and $S_1$ lies in the normaliser $M$ in $\GL_n(p^t)$ of an absolutely irreducible symplectic-type subgroup $F_1$ of $\GL_n(p^t)$. Moreover, $t$ may be chosen so that $\mathbb{F}_{p^t}$ is the smallest field over which such a representation of $F_1$ can be realised.
\end{Lem}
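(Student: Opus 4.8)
\medskip

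The plan is to work with the chain $S \trianglerighteq C \trianglerighteq F \trianglerighteq A$ supplied by Theorem \ref{suplem} and exploit the hypothesis $e = n = r^l$, which forces $m = 1$ (since $m \mid n$ and $e = n/m = n$), so $A$ is exactly the group $\Delta = Z(\GL_n(q))$ of scalars and $K = \mathbb{F}_q$. In particular $C = C_S(A) = S$ because $A$ is central, and $S/C$ is trivial, so the whole of $S$ already lies in $\GL_e(K) = \GL_n(q)$ with the "Galois" part absent. By Theorem \ref{suplem}(d) the single prime $r$ appearing in $e = r^l$ divides $|A| = q - 1$, so $q \equiv 1 \pmod r$. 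Then by Theorem \ref{suplem}(f)--(g), $F = \tilde{Q}_1$ where $\tilde{Q}_1$ is an absolutely irreducible $r$-subgroup of $\GL_n(K)$ of symplectic type: indeed the relations \eqref{quv} say $F/A$ is an elementary abelian $r$-group carrying a nondegenerate symplectic form via commutators, and $A$ is cyclic, which is the defining feature of a symplectic-type group modulo scalars. First I would record that $S \le N = N_{\GL_n(K)}(F)$ by maximality of $S$ together with Theorem \ref{suplem}(h) (the normaliser of $F$ is solvable here since $N/\tilde{Q}_1$ embeds in $\Sp_{2l}(r)$, and $\Sp_{2l}(r)$ need not be solvable — so in fact $S$ is the full preimage in $N$ of a maximal solvable subgroup of $N/\tilde Q_1$, but the inclusion $S \le N$ is all we need).

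\medskip

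The substantive part is the \emph{descent to a subfield}. The generators $u_j, v_j$ of $F$ can be chosen, in the basis furnished by Theorem \ref{suplem}(g), to be monomial matrices whose nonzero entries are powers of a fixed primitive $r$-th root of unity $\omega \in \mathbb{F}_q^*$; since $q \equiv 1 \pmod r$, such $\omega$ lies in the prime subfield extension $\mathbb{F}_p(\omega) = \mathbb{F}_{p^{t_0}}$ where $t_0 = \mathrm{ord}_r(p)$ is the multiplicative order of $p$ mod $r$, and $t_0 \mid f$. Thus $F \le \GL_n(p^{t_0})$. The normaliser $N$ of $F$ is then defined over the same field: conjugation by any $g \in \GL_n(K)$ normalising $F$ permutes the finite set $F$, and the entries of the elements of $N_{\GL_n(K)}(F)$ modulo scalars are again roots of unity of $r$-power order (this is the standard description of $\mathrm{Sp}_{2l}(r)$ acting on the extraspecial group), so $N = N_0 \cdot Z(\GL_n(K))$ with $N_0 \le \GL_n(p^{t_0})$. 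Hence every element $s \in S \le N$ has the form $s = s_0 \zeta$ with $s_0 \in \GL_n(p^{t_0})$ and $\zeta \in Z(\GL_n(q))$ scalar. Setting $t$ to be a suitable multiple of $t_0$ dividing $f$ and $S_1 := S \cap \GL_n(p^t)$, one then checks $S = S_1 \cdot Z(\GL_n(q))$: the containment $\supseteq$ is clear, and for $\subseteq$ one writes $s = s_0 \zeta$ as above and absorbs the $p^t$-part of the scalar $\zeta$ into $s_0$, using that $\Det(S) \supseteq$ the relevant scalars forces compatibility — I would fix $t$ precisely so that $\det(s) \in (\mathbb{F}_{p^t}^*)$ for all $s \in S$, which is possible since $\Det(S)$ is a subgroup of the cyclic group $\mathbb{F}_q^*$ and therefore equals $\mathbb{F}_{p^t}^*$ for some $t \mid f$. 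Finally $M := N_{\GL_n(p^t)}(\tilde Q_1)$ is the normaliser in $\GL_n(p^t)$ of the absolutely irreducible symplectic-type group $\tilde Q_1$, and $S_1 \le M$.

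\medskip

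I expect the main obstacle to be the bookkeeping around the \emph{scalars and the field of definition}: one must pin down exactly which field $\mathbb{F}_{p^t}$ the symplectic-type group and its normaliser live over, and argue that $S$ differs from $S_1$ only by central elements rather than acquiring genuinely new Galois or field-automorphism content — this is where the hypothesis $e = n$ (equivalently $m = 1$, no Galois part) is essential, since it removes the $\sigma$ factor of Theorem \ref{suplem}(b). A secondary technical point is verifying that $F/A$ is genuinely elementary abelian of exponent $r$ (not $r^2$) under our hypotheses, i.e.\ that one can choose the $u_j, v_j$ with $u_j^r = v_j^r = 1$ and not merely $u_j^r, v_j^r \in A$; this is automatic when $r$ is odd and $q \equiv 1 \pmod r$, while for $r = 2$ one appeals to the symplectic-type (rather than merely extraspecial) classification in \cite[\S 4.6]{kleidlieb}, which already allows the relevant $2$-groups of exponent $4$. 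Everything else is a matter of assembling Theorem \ref{suplem}(a)--(h) with the specialisation $m = 1$, $K = \mathbb{F}_q$, $C = S$.
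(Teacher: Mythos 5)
Your plan follows the same overall route as the paper (extract the symplectic-type piece of $F$, descend to the smallest field over which it and its normaliser are realised, then split off the central scalars), and you correctly locate the key input in \cite[\S 4.6]{kleidlieb}, and you are right that $m=1$ kills the Galois part. But two of the steps, as written, are imprecise or unsupported.

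First, $F$ is \emph{not} an $r$-group, so the assertion that $F=\tilde Q_1$ is ``an absolutely irreducible $r$-subgroup of symplectic type'' cannot stand as written. Under $e=n$ we have $m=1$ and $A=Z(\GL_n(q))$, so $A$ is cyclic of order $q-1$, and by Theorem~\ref{suplem}(e)--(f) the group $F$ contains \emph{all} of $A$; in particular $|F|=(q-1)r^{2l}$. The symplectic-type group required by the lemma's conclusion must be an actual $r$-group. What one needs is the decomposition $F=A\cdot F_1$ with $F_1$ an extraspecial (or, for $r=2$, more generally symplectic-type) $r$-group of order $r^{2l+1}$ (or $2^{2l+2}$), extracted from the generators $u_j,v_j$ of Theorem~\ref{suplem}(f); you implicitly use this when you claim the generators are monomial with $r$-power roots of unity as entries, but you never state or justify the passage from $F$ to $F_1$. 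Your parenthetical ``modulo scalars'' shows you sense the issue, but the lemma's normaliser $M$ is the normaliser of a genuine $r$-subgroup, so the extraction of $F_1$ has to be made explicit.

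Second, and more seriously, the descent ``$N=N_0\cdot Z(\GL_n(K))$ with $N_0\le\GL_n(p^{t_0})$'' is the heart of the lemma and your justification for it is a heuristic. Knowing that $F_1$ can be written over $\mathbb{F}_{p^{t}}$ (with $t$ minimal such that $p^t\equiv 1\bmod|Z(F_1)|$) does \emph{not} by itself say that every element of $N_{\GL_n(K)}(F_1)$ is a scalar multiple of a matrix over $\mathbb{F}_{p^t}$; the quotient $N/F_1 Z$ is a symplectic (or orthogonal, for $r=2$) group over $\mathbb{F}_r$, and it is a genuine fact, not an observation about roots of unity, that this whole normaliser is realised over the same minimal field. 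This is exactly what the paper imports from \cite[Theorem 2.4.12]{short}, together with \cite[Corollary 2.4.12]{short} to pin down $|Z(F_1)|\in\{r,4\}$. Without citing such a result (or reproving it), the final identity $S=S_1\cdot Z(\GL_n(q))$ with $S_1=S\cap\GL_n(p^t)$ is not established: your closing paragraph about choosing $t$ with $\Det(S)=\mathbb{F}_{p^t}^*$ and ``absorbing'' the scalar would need $S\le \GL_n(p^t)\cdot Z(\GL_n(q))$ already, which is precisely the point at issue. Fixing the proof amounts to replacing that heuristic with the citation the paper uses.
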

\begin{proof}
Let $F$  be as in Theorem \ref{suplem}. Let $W \le V$ be an irreducible $\mathbb{F}_{q}[F]$-submodule. By  Theorem \ref{suplem} $(f)$, $F=A \cdot F_1$ where $F_1$ is extra-special of order $r^{2l+1},$ so $W$ is a faithful irreducible $\mathbb{F}_{q}[F_1]$-module. Therefore, by \cite[Propositions 4.6.2 and 4.6.3]{kleidlieb}, $\dim W=r^{l}$, and $F_1$ is an absolutely irreducible symplectic-type subgroup of $ \GL_n(p^t)$  where $\mathbb{F}_{p^t}$ is the smallest field (of characteristic $p$) over which such a representation of $F_1$ can be realised. In particular, $t$ is the smallest positive integer for which $p^t \equiv 1 \mod |Z(F_1)|.$ Moreover, by \cite[Corollary 2.4.12]{short}, $|Z(F_1)|$ is either $r$ or $4$.  By  \cite[Theorem 2.4.12]{short}, $S=S_1 \cdot Z(\GL_n(q))$ where $S_1 \le N_{\GL_n(p^t)}(F_1)=M.$      
\end{proof}

Let $S$ be a  primitive maximal solvable subgroup of $\GL_n(q)$ with $2 \le n \le 5.$ The equation  $b_S(S \cdot \SL_n(q))=2$ does not always hold for such $n$. However, in view of Lemmas 
\ref{irrtog}, \ref{diag}   and \ref{prtoirr} (see below), if for every primitive maximal  solvable subgroup $S$ there exists $x \in \SL_n(q)$  such that
\begin{equation*}
 S \cap S^x \le RT(\GL_n(q)),
\end{equation*}
 then $b_M(M \cdot \SL_n(q)) \le 5$ for every maximal solvable subgroup $M$.  Recall that $D(\GL_n(q))$ and $RT(\GL_n(q))$ denote the subgroups of all diagonal and all upper-triangular matrices in $\GL_n(q)$ respectively. Therefore, if $b_S(S \cdot \SL_n(q))>2$, then we decide if there exists $x \in \SL_n(q)$ such that  $S \cap S^x$ lies in $D(\GL_n(q))$ or $RT(\GL_n(q)).$ In particular, we prove in this section that if $q>7$, then such an $x$ always exists, so  $$b_M(M \cdot \SL_n(q)) \le 5$$
for every maximal solvable subgroup $M$ of $\GL_n(q)$, for every $n \ge 2$; see Corollary \ref{corir} and Theorem \ref{irred}. We also use this observation in the proof of Theorem \ref{theorem} in Section \ref{sec411}.

\medskip

The following theorem is the main result of this section.

\begin{Th}
\label{sec311lem}
 Let $S$ be a primitive maximal solvable subgroup of $\GL_n(q)$ where $n \ge 2$ and  $(n,q) \ne (2,2), (2,3).$
Then either $b_S(S \cdot \SL_n(q))=2$, or one of the following holds:
\begin{enumerate}[font=\normalfont]
\item $n =2$,  $q>3$ is odd,  $S$ is the normaliser of a Singer cycle and  $b_S(S \cdot \SL_2(q))=3$. If $q>5$, then there exists $x\in \SL_2(q)$ such that  $S \cap S^x \le D(\GL_2(q));$ \label{thirr2odd}
\item $n =2$,  $q\ge 4$ is even,  $S$ is the normaliser of a Singer cycle and  $b_S(S \cdot \SL_2(q))=3$. In this case there exists $x\in \SL_2(q)$ such that  $S \cap S^x \le RT(\GL_2(q));$ \label{thirr2even}
\item $n=2,$ $q=9,$ $S$ is generated by all matrices with entries in $\mathbb{F}_3$ and scalar matrices, so $S=\GL_2(3) \cdot Z(\GL_2(9)),$ and  $b_S(S \cdot \SL_2(9))=3$.  In this case there exists $x\in \SL_2(9)$ such that  $S \cap S^x \le RT(\GL_2(9));$ \label{thirr29}
\item $n=2$, $q \in \{5,7\}$,  $S$  is an absolutely irreducible subgroup such that $S/Z(\GL_2(q))$ is isomorphic to $2^2.\Sp_2(2)$, and  $b_S(S \cdot \SL_2(q))=3+\delta_{5,q}$;
\item $n=3$, $q=2$,   $S$ is the normaliser of a Singer cycle and  $b_S(S \cdot \SL_3(2))=3$. \label{thirr32} 
\end{enumerate}  
\end{Th}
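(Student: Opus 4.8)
The plan is to reduce the statement to the probabilistic estimate of Section~\ref{fprsec} together with a bounded amount of machine computation, organising the argument by the dimension $n$ and the parameter $e=n/m$ of Theorem~\ref{suplem}. Throughout set $G=(S\cdot\SL_n(q))/Z(\GL_n(q))$ and $H=S/Z(\GL_n(q))$, so that $b_S(S\cdot\SL_n(q))=b_H(G)$ and, since $S\le\GL_n(q)$, the group $G\le\PGL_n(q)$ is almost simple with socle $\PSL_n(q)$; as $H\ne 1$ and $G$ has no nontrivial solvable normal subgroup, $b_H(G)\ge 2$, so it suffices to decide exactly when $b_H(G)\le 2$.

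First I would invoke Lemma~\ref{sch}: if $n\ge 6$ then $b_H(G)=2$, so I may assume $2\le n\le 5$. By Theorem~\ref{suplem}, $S$ carries the chain $S\trianglerighteq C\trianglerighteq F\trianglerighteq A$ with $|A|=q^m-1$, $m\mid n$, $|F:A|=e^2$ and $|S:C|\le m$. If $e=1$ then $C=F=A$ and $S$ is precisely the normaliser of a Singer cycle: for $n\in\{3,4,5\}$ Lemma~\ref{sinbase} gives $b_H(G)\le 3$, with equality only for $(n,q)=(3,2)$, which is conclusion~(5); for $n=2$ the group $H$ is dihedral of order $2(q+1)$ inside $\PGL_2(q)$ and $b_H(G)=3$ for all $q>3$ by \cite[Lemma 6.4]{burPS} (the case of prime dimension), giving conclusions~(1) and~(2) once the diagonal/triangular assertions are settled.

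Next assume $e>1$, so that $(n,e)\in\{(2,2),(3,3),(4,2),(4,4),(5,5)\}$; by Theorem~\ref{suplem}(d) the value of $q$ is then constrained (e.g.\ $q$ odd when $2\mid e$), and by Lemma~\ref{c6small} $S$ is, up to scalars, the normaliser of an absolutely irreducible symplectic-type subgroup whenever $e=n$. Here I would run the probabilistic method of Section~\ref{fprsec}: by Lemma~\ref{6} every prime-order $x\in H$ satisfies $\nu(x)\ge n/4$, so Lemma~\ref{lemxGbound} bounds $|x^G|$ from below by a fixed positive power of $q$, while \eqref{H} bounds $|H|$ (a constant when $m=1$, and $O(q)$ when $m=2$). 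Feeding these into Lemma~\ref{fprAB} yields $\widehat Q(G,2)\le |H|^2/\min_x|x^G|<1$ for every $q$ exceeding a bound depending only on $(n,e)$, so $b_H(G)=2$ there; sharpening the inputs---using $\nu(x)\ge n/2$ unless $\hat x\in C\setminus F$, and the exact conjugacy-class data of the symplectic-type normaliser in place of $|H|$---keeps the remaining list of $q$ short. For those finitely many small $q$ I would construct the relevant $S$ from \cite{short} and \cite[\S 21]{sup} and check $b_H(G)=2$ directly in {\sf GAP}/{\sc Magma} as in Section~\ref{gapsec}; the only cases that survive are $n=2$ with $q\in\{5,7,9\}$, which are conclusions~(3) and~(4).

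Finally there are the diagonal/triangular refinements of conclusions~(1)--(3), which I would establish by producing an explicit $x\in\SL_n(q)$: for the $n=2$ Singer normaliser one chooses $x$ so that $A\cap A^x$ is scalar and the only further elements of $S\cap S^x$ are ``reflections'' of the dihedral group $H$, which are $\mathbb{F}_q$-diagonalisable when $q$ is odd (so $S\cap S^x\le D(\GL_2(q))$ for $q>5$) and $\mathbb{F}_q$-triangularisable when $q$ is even; the single case $q=9$ of conclusion~(3), with $S=\GL_2(3)\cdot Z(\GL_2(9))$, is verified by a short computation. I expect the principal difficulty to lie not in any one step but in the bookkeeping between them: lowering the probabilistic thresholds far enough, and---above all---pinning down exactly which small primitive maximal solvable subgroups occur for each small $q$, since the isomorphism type of the symplectic-type normaliser (and hence the value of $b_H(G)$) is sensitive to $q\bmod 8$ and to subfield realisations, so that the finite computer search is provably exhaustive and the exceptional list is exactly~(1)--(5).
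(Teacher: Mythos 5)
Your proposal takes essentially the same route as the paper: reduce to $n\le 5$ via Lemma~\ref{sch}, split on $e=n/m$, dispose of $e=1$ (Singer normaliser) through Lemma~\ref{sinbase} and \cite{burPS}, run the probabilistic machinery of Lemmas~\ref{6}, \ref{lemxGbound} and \ref{fprAB} for $e>1$ with a finite computer check on the residual small~$q$, and produce explicit $x\in\SL_n(q)$ for the diagonal/triangular refinements. The one point where the paper must be (and is) more concrete is your last paragraph: ``$\mathbb{F}_q$-diagonalisable'' alone does not place $S\cap S^x$ inside $D(\GL_2(q))$ for a fixed basis; the paper writes down two explicit Singer cycles $S_a,S_b$ (equations \eqref{thesin}, \eqref{thesineven}) whose normalisers both contain the same fixed reflection $\diag(-1,1)$ (resp.\ a fixed transvection) in that basis, and the threshold $q>5$ comes precisely from needing distinct non-squares $a\neq\pm b$.
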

\begin{proof}
For $n \ge 6$ the statement holds by Lemma \ref{sch}. Assume $2 \le n \le 5$. First, we prove the base size results, and then we establish the additional assertions in \ref{thirr2odd} and \ref{thirr2even}. Parts \ref{thirr29}--\ref{thirr32} can be checked computationally (see Section \ref{gapsec}).

Let $m$ and $e$ be as in Theorem \ref{suplem}. Notice that, for $n=2,3,5$, either $m=n$ or $m=1,$ since $n$ is prime. If $n=4,$ then there is the additional  case $m=e=2.$

\medskip

{\bf Case 1:} $m=n.$

 If $m=n$, then $S$ is the normaliser of a Singer cycle and the statement of the theorem on $b_S(S \cdot \SL_n(q))$ follows by Lemma \ref{sinbase} for $n \ge 3$ and by \cite[Proposition 4.1]{burPS} for $n=2$.

\medskip

{\bf Case 2:} $m=1.$ 

Here  Lemma \ref{c6small} holds, since $n$ is either prime or $4=2^2$. Hence  $S= S_1 \cdot Z(\GL_2(q))$ where $S_1$ lies in the normaliser  $M$ of an absolutely irreducible symplectic-type subgroup of $\GL_n(p^t)$ where $t$ is minimal such that $p^t \equiv 1 \mod |Z(F_1)|$ and $F_1$ is as in the proof of Lemma \ref{c6small}. If  $t=1$, then $M$ (up to scalars) is a maximal subgroup of either $\GL_n(p)$(or $\SL_n(p)$) or $\Sp_{n}(p)$ (if $n$ is even) from the Aschbacher class $\mathcal{C}_6$ by \cite[Table 4.6.B]{kleidlieb}. Bounds on base sizes for primitive actions of  classical groups with $n \le 5$ are listed in \cite[Tables 2 and 3]{burness}. In particular, it follows that there exists $g \in \SL_n(p)$ such that $M \cap M^g \le Z(\GL_n(q))$ unless $n=2,4$ and $p \le 7.$ Hence  $S \cap S^g \le Z(\GL_n(q))$ and  $b_S(S \cdot \SL_n(q))=2$.  The results for $n=2,4$ and $q \le 7$ are verified by computation.

For $t >1$ we prove the statement directly using the probabilistic method. Let   $G$  be $(S \cdot \SL_n(q))/Z(\GL_n(q))\le \PGL_n(q)$ and let $H$ be $S/Z(\GL_n(q))\le G$, so 
$$b_S(S \cdot \SL_n(q))=b_H(G).$$ 
 By Theorem \ref{suplem}, $H$ lies in a subgroup of $\PGL_n(q)$ isomorphic to $e^{2}.\Sp_{2l}(p_1^l)$, where $e=p_1^l$ for a prime $p_1$. The number of elements of order equal to each  prime dividing this subgroup can be computed directly. Further,
  using bounds on $|x^G|$ for $x$ of prime order from Lemma \ref{lemxGbound} (where we use bounds on $\nu(x)$ from Lemma \ref{6}), we obtain an upper bound for $\widehat{Q}(G,2)$ via Lemma \ref{fprAB}. For example, if $n=5$, then $H \le 5^2:\Sp_2(5)=\mathrm{ASL}_2(5)$ and 
  there are   $25$ elements of order $2$, $500$ elements of order $3$ and $624$ elements of order $5$, so 
$$\widehat{Q}(G,2)\le \frac{(25^2+500^2+624^2)}{(1/10)q^{12}}.$$ 
Here we use the fact that $\nu(x)\ge 2$ for all $x \in H$ of prime order by Lemma \ref{6}. This upper bound is less than $1$ for $q$ greater than $17,$ $17,$ $19$ and $7$ for $n=2,3,4$ and $5$ respectively. For smaller $q$, the value of $b_H(G)$ is verified using computations. In particular, if $(n,q)=(2,9)$ then \ref{thirr29} holds.   
 
\medskip 
 
{\bf Case 3:} $n=4,m=2.$

 Since $e=n/m=2$, $q$ is odd by Theorem \ref{suplem}$(d)$. Recall from Theorem \ref{suplem} that $S$ has a chain of normal subgroups:
$$S \ge C \ge F \ge A,$$
where $C \le \GL_2(q^2),$ so $S \le \GL_2(q^2).2,$ and $C/A$ is isomorphic to a subgroup of $2^2.\Sp_2(2).$ 
Therefore, $|S| $ divides $ 48(q^2-1)$ and $|H|$ divides $a=48(q+1).$ 
 
 For all elements of prime order in $H$,
 $$|x^G| \ge (1/8)q^6=b$$ by Lemma \ref{lemxGbound}. Hence, by Lemma \ref{fprAB},
 $\widehat{Q}(G,2) \le  a^2/b$.  This upper bound is less than $1$ if  $q \ge 13.$
If $q \le 11$, then  $b_S(S \cdot \SL_4(q))=2$  is verified  by computation.

This completes the base size computations.

\medskip



 Let us prove the remaining claims in parts $(1)$ and $(2)$.
 First assume $q$ is odd and fix a non-square $a \in \mathbb{F}_q$ (there are $(q-1)/2$ such elements). Consider 
\begin{equation}\label{thesin}
 S_a= \left\{
\alpha \begin{pmatrix}
1    & 0  \\
      0    &   1       
\end{pmatrix} + 
\beta \begin{pmatrix}
0    & 1  \\
a    &  0       
\end{pmatrix}
\mid \alpha, \beta \in \mathbb{F}_q \right\} \backslash \{0\}.
\end{equation}
 Notice that for an element of $S_a$
$$\det \begin{pmatrix}
\alpha    & \beta  \\
    a  \beta    & \alpha       
\end{pmatrix} =0$$ if, and only if, $a=(\alpha/ \beta)^2,$ so all matrices in $S_a$ are invertible. 
 Calculations show that $S_a$ is an abelian subgroup of $\GL_2(q)$ of order $q^2-1$ and $S_a \cup \{0\}$ under usual matrix addition and multiplication is a field, so $S_a$ is a Singer cycle. Notice that 
$$\varphi = \begin{pmatrix}
-1    & 0  \\
0    &  1       
\end{pmatrix}$$
normalises $S_a.$ Therefore, we can view $S$ as   $N_{\GL_2(q)}(S_a)=S_a \rtimes \langle \varphi \rangle$. 

 It is easy to see that if there exist $a, b \in \mathbb{F}_q$ such that $a \ne b$
and neither $a$ nor $b$ has square roots in $\mathbb{F}_q$, then $$S_a \cap S_b \le Z(\GL_2(q)).$$ 
If, in addition, $a \ne -b$, then the direct comparison of the matrices shows that 
\begin{equation}
\label{2sindiagodd}
 N_{\GL_2(q)}(S_a) \cap N_{\GL_2(q)}(S_b) =\langle \varphi \rangle Z(\GL_2(q)) \le D(\GL_2(q)).
\end{equation} 
It is possible to find such $a$ and $b$ if $q>5,$ so in this case there exists $x \in \SL_2(q)$ such that $$S \cap S^x \le D(\GL_2(q)),$$ since all Singer cycles are conjugate in $\GL_2(q)$ and $\Det(S_a)=\Det(\GL_2(q)).$

Finally, suppose $q$ is even and let $a \in \mathbb{F}_q $ be such that there are no roots of $x^2+x+a$ in $\mathbb{F}_q$ (there are $q/2$ such elements in $\mathbb{F}_q$). 
 Consider  
\begin{equation} \label{thesineven}
S_a= \left\{
\alpha \begin{pmatrix}
1    & 0  \\
      0    &   1       
\end{pmatrix} + 
\beta \begin{pmatrix}
0    & 1  \\
a    &  1       
\end{pmatrix} \mid \alpha, \beta \in \mathbb{F}_q
\right\} \backslash \{0\}.
\end{equation} Notice that  for an element of $S_a$
$$\det \begin{pmatrix}
\alpha    & \beta  \\
    a  \beta    & \alpha +\beta      
\end{pmatrix} =0$$ if, and only if, $a=(\alpha/ \beta)^2 +(\alpha/\beta),$ so all matrices in $S_a$ are invertible. 
 Calculations show that $S_a$ is an abelian subgroup of $\GL_2(q)$ of order $q^2-1$ and $S_a \cup \{0\}$ under usual matrix addition and multiplication is a field, so $S_a$ is a Singer cycle. Notice that 
$$\varphi = \begin{pmatrix}
1    & 0  \\
1    &  1       
\end{pmatrix}$$
normalises $S_a.$  Therefore, we can view $S$ as   $N_{\GL_2(q)}(S_a)=S_a \rtimes \langle \varphi \rangle$.

 It is easy to see that if there exist $a, b \in \mathbb{F}_q$ such that $a \ne b$
and neither $x^2+x+a$ nor $x^2+x+b$ has roots in $\mathbb{F}_q$, then $$S_a \cap S_b \le Z(\GL_2(q)).$$
Calculations show that  
\begin{equation}
\label{2sindiageven}
N_{\GL_2(q)}(S_a) \cap N_{\GL_2(q)}(S_b)=\langle \varphi \rangle Z(\GL_2(q))
\end{equation} consists of  lower triangular matrices. It is possible to find such $a$ and $b$ if $q \ge 4,$ so in this case there exists $x \in \SL_2(q)$ such that all matrices in $S \cap S^x$ are  lower triangular,   since all Singer cycles are conjugate in $\GL_2(q)$ and $\Det(S_a)=\Det(\GL_2(q)).$
\end{proof}

\subsection{Imprimitive irreducible subgroups}\label{impsec}

We commence by obtaining  results about the  subgroups of wreath products in $\GL_n(q).$ We combine these results with those of the previous section to obtain 
an upper bound on $b_S(S \cdot \SL_n(q))$ for imprimitive maximal solvable subgroups $S$ of $\GL_n(q).$   

\medskip

Let $A(n)$  be the following $n \times n$ matrix:
\begin{equation}\label{igrek}
\begin{aligned}
A(n) & = \begin{pmatrix}
1      & -1     & 1      & -1     & \ldots & (-1)^{n+1}         \\
0      &   1    & -1     & 1      & \ldots &  (-1)^{n}         \\
0      &    0   & 1      & -1     & \ldots &  (-1)^{n-1}        \\
       &        &        & \ddots &\ddots  &          \\
       &        &        &        & 1      & -1        \\
       &        &        &        & 0      &  1      
\end{pmatrix}, \text{ so } \\
A(n)^{-1} & =
\begin{pmatrix}
1      & 1      & 0      & 0      &        &          \\
0      &   1    & 1      & 0      &        &           \\
0      &    0   & 1      & 1      &        &          \\
       &        &        & \ddots &\ddots  &          \\
       &        &        &        & 1      & 1        \\
       &        &        &        & 0      &  1      
\end{pmatrix}.
\end{aligned}
\end{equation}

\begin{Lem}\label{igrekl}
Let $H \le X \wr Y,$ where $X \le \GL_m(q),$ $Y \le \Sym(k).$  
Let $A(k)=(y_{ij})$ be as in \eqref{igrek} and let $x_i$ for $i =1, \ldots, k$ be arbitrary elements of $X$.
Define $x \in \GL_{mk}(q)$ to be
\begin{gather*}
\diag(x_1, \ldots, x_k) (I_m \otimes A(k))=
\begin{pmatrix}
y_{11}x_1      & y_{12}x_1      & \ldots     & y_{1k}x_1            \\
y_{21}x_2      &  y_{22}x_2     & \ldots     & y_{2k}x_2             \\
\vdots      &             &            & \vdots               \\
y_{k 1}x_{k}   & y_{k 2}x_{k}   &  \ldots    & y_{k k}x_{k}               
\end{pmatrix}.
\end{gather*} 
 Let $h=\diag[D_1, \ldots, D_k] \cdot s  \in H$, where $D_i \in X$ and $s \in Y,$ so $h$ is obtained from the permutation matrix $s$ by replacing $1$ in the $j$-th row by the $(m \times m)$ matrix $D_j$ for $j =1, \ldots, k$ and replacing each zero by an $(m\times m)$ zero matrix. 
If $h^x \in H$, then $s$ is trivial and $D_j^{x_j}=D_{j+1}^{x_{j+1}}$ for $j=1, \ldots, k-1.$ 
\end{Lem}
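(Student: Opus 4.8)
The plan is to compute the product $h^x = x^{-1} h x$ explicitly using the block structure and compare it, $(m\times m)$-block row by block row, with a generic element of $X \wr Y$. Write $x = \diag(x_1,\dots,x_k)(I_m \otimes A(k))$, so $x^{-1} = (I_m \otimes A(k)^{-1})\diag(x_1^{-1},\dots,x_k^{-1})$, and recall from \eqref{igrek} that $A(k)^{-1}$ is upper triangular with $1$'s on the diagonal and on the first superdiagonal and $0$'s elsewhere. The element $h = \diag[D_1,\dots,D_k]\cdot s$ has exactly one nonzero $(m\times m)$-block in each block row and each block column; in block row $i$ the nonzero block $D_i$ sits in block column $i^{s^{-1}}$ (or $i^s$, depending on the convention fixed after Notation~\ref{not} — I would pin this down at the start). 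Carrying out the triple product, the $(i,j)$-block of $h^x$ is a sum of terms of the form $(\text{entry of }A(k)^{-1})\, x_i^{-1} D_i\, (\text{entry of }A(k))\, x_j$, and the key point is that the sparsity of $A(k)^{-1}$ (only diagonal and superdiagonal) together with the sparsity of $h$ (one block per row) forces most of these to vanish.

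The first main step is to record which block rows of $h^x$ can be nonzero in more than one block column. Because $h^x \in H \le X \wr Y$ by hypothesis, each block row of $h^x$ must have exactly one nonzero $(m\times m)$-block. Comparing with the formula for $h^x$, the superdiagonal entries of $A(k)^{-1}$ produce, in block row $i$, a contribution in block column $(i+1)^{?}$ coming from $D_i$ and also, via the interaction of the $i$-th and $(i-1)$-st rows of $A(k)^{-1}$, potential extra contributions; requiring a single nonzero block in each row is what kills the permutation part. I would first show that $s$ cannot move the "last" coordinate (the one where $A(k)^{-1}$ has no superdiagonal entry), then propagate this upward by induction on the coordinate, concluding $s = 1$. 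This is the part I expect to be the main obstacle: organizing the bookkeeping of indices so that the cancellation is transparent, rather than a mess of case distinctions, and making sure the chosen conjugating matrix $A(k)$ is exactly rigid enough — any symmetry left in $A(k)$ would allow a nontrivial $s$, so the argument must use the specific shape of \eqref{igrek}.

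Once $s = 1$ is established, $h = \diag[D_1,\dots,D_k]$ is block-diagonal, and the computation of $h^x$ simplifies drastically: $h^x = (I_m\otimes A(k)^{-1})\diag(x_1^{-1}D_1 x_1,\dots,x_k^{-1}D_k x_k)(I_m\otimes A(k))$. Setting $E_j := x_j^{-1}D_j x_j = D_j^{x_j}$, this is $I_m \otimes$-conjugation combined with a block-diagonal matrix, and one computes the $(i,j)$-block of $h^x$ to be $\sum_\ell (A(k)^{-1})_{i\ell} E_\ell (A(k))_{\ell j}$. For $h^x$ to lie in $X\wr Y$ with trivial permutation part it must be block-diagonal, so the off-diagonal blocks must vanish; writing out the $(i,i+1)$-block using that $(A(k)^{-1})_{ii}=(A(k)^{-1})_{i,i+1}=1$ and the corresponding entries of $A(k)$ gives a relation that forces $E_i = E_{i+1}$ for $i = 1,\dots,k-1$, which is exactly the claim $D_j^{x_j} = D_{j+1}^{x_{j+1}}$. (The diagonal blocks then automatically lie in $X$, consistent with membership in $X\wr Y$, so no further condition arises.) I would present the off-diagonal vanishing as the one short explicit matrix identity worth writing out, and leave the rest as routine.
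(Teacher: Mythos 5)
Your proposal is correct and is essentially the same argument as the paper's: conjugate in block form, exploit the bidiagonal sparsity of $A(k)^{-1}$ together with the full upper-triangular $\pm 1$ pattern of $A(k)$ to force each block row of $h^x$ to have a single nonzero block, and induct from the bottom row upward to conclude $s=1$ and $D_j^{x_j}=D_{j+1}^{x_{j+1}}$. The only difference is organizational — the paper extracts both conclusions in a single inductive sweep (at step $j$ it shows $(j)s=j$ from the blocks in columns $\le j$ and then $D_j^{x_j}=D_{j+1}^{x_{j+1}}$ from the block in column $j+1$), whereas you first establish $s=1$ and then read off the block-diagonal conditions; both are sound.
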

\begin{proof}
   If $s$ does not stabilise the point $k$, then there is more than one non-zero $(m \times m)$-block in the last $(m \times m)$-row of $h^{x}$ and, thus, $h^{x}$ does not lie in   $ X \wr Y.$
Assume that $s$ stabilises the last ${k-j}$ points and $(j)s=i \le j.$ The  $j$-th $(m \times m)$-row of $h^{x}$ is 
\begin{equation*}	
\begin{split}
\bigl( \overbrace{0, \ldots, 0}^{i-1},  \overbrace{{x_j}^{-1}(D_{j}){x_i},-{x_j}^{-1}(D_{j}){x_i},  \ldots,(-1)^{j-i} {x_j}^{-1}(D_{j}){x_i}}^{j+1-i}, \ldots \bigr).
\end{split}
\end{equation*}
Therefore, $h^x$ does not lie in $X \wr Y$ if $i \ne j,$ since the $j$-th  $(m \times m)$-row contains more than one non-zero $(m \times m)$-block in that case. So $i=j$ and the  $j$-th $(m \times m)$-row of $h^{x}$ is
\begin{equation*}
\begin{split}
\bigl( \overbrace{0, \ldots, 0}^{i-1},  (D_{j})^{x_j},   
( ( D_{j+1})^{x_{j+1}} -(D_{j})^{x_j}  ) ,  \ldots, (-1)^{k-j+1}((D_{j+1})^{x_{j+1}} - (D_{j})^{x_j}  ) \bigr).
\end{split}
\end{equation*}
 So, if $h^x \in H$, then $s$ stabilises $j$ and $D_{j}^{{x_j}}=D_{j+1}^{x_{j+1}}.$
\end{proof}


\begin{Lem}\label{prtoirr}
Let $T(\GL_n(q))$ be one of the following subgroups: $Z(\GL_n(q))$, $D(\GL_n(q))$ or $RT(\GL_n(q))$. Let $H$ be a subgroup of $\GL_n(q)$ such that
$$H \le H_1 \wr \Sym(k), $$
where $n=mk$, $H_1 \le \GL_m(q)$. If there exist $g_1, \ldots, g_b \in \GL_m(q)$ ({respectively } $\SL_m(q))$ such that 
$$H_1 \cap H_1^{g_1} \cap \ldots \cap H_1^{g_b} \le T(\GL_m(q)),$$
then there exist   $x_1, \ldots, x_b \in \GL_n(q)$ ({respectively} $\SL_n(q))$ such that 
\begin{equation}\label{irreq}
H \cap H^{x_1} \cap \ldots \cap H^{x_b} \le T(\GL_n(q)).
\end{equation}
\end{Lem}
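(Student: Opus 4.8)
The plan is to use Lemma~\ref{igrekl} to strip off the imprimitive (wreath) structure and reduce \eqref{irreq} to the corresponding statement for $H_1$. Work inside $\GL_m(q) \wr \Sym(k) \ge H_1 \wr \Sym(k) \ge H$. Let $\psi\colon \GL_m(q) \to \GL_n(q)$ be the homomorphism $D \mapsto \diag[D,\ldots,D]$ ($k$ blocks), and set
\[
\tilde H_1 := \psi^{-1}(H) = \{D \in \GL_m(q) : \diag[D,\ldots,D] \in H\},
\]
a subgroup of $H_1$, since the blocks of any element of $H \le H_1 \wr \Sym(k)$ lie in $H_1$. Two elementary observations are used throughout: $\psi$ sends $T(\GL_m(q))$ into $T(\GL_n(q))$ in each of the three cases (a block-diagonal matrix all of whose blocks are scalar, resp.\ diagonal, resp.\ upper triangular, is itself scalar, resp.\ diagonal, resp.\ upper triangular); and $\psi(D)$ commutes with $I_m \otimes A(k)$, because the $(i,j)$ block of $I_m \otimes A(k)$ is $A(k)_{ij}I_m$ and such a matrix commutes with $\diag[D,\ldots,D]$.

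For $i = 1,\ldots,b$ put
\[
x_i := (I_m \otimes A(k))^{-1}\,\diag[g_i,\ldots,g_i] \in \GL_n(q).
\]
As $A(k)$ is unitriangular, $\det(I_m \otimes A(k)) = 1$, so $\det(x_i) = \det(g_i)^k$; hence $x_i \in \SL_n(q)$ whenever $g_i \in \SL_m(q)$, which disposes of both versions of the statement at once. The key point is to compute $H \cap H^{x_i}$. Its inverse $x_i^{-1} = \diag[g_i^{-1},\ldots,g_i^{-1}](I_m \otimes A(k))$ is exactly of the form of the matrix $x$ in Lemma~\ref{igrekl} (with $X = \GL_m(q)$ and all block entries equal to $g_i^{-1}$). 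Hence, for $h = \diag[D_1,\ldots,D_k]\cdot s \in H$, the condition $h \in H^{x_i}$ — that is, $x_i h x_i^{-1} = h^{x_i^{-1}} \in H$ — forces $s$ to be trivial and $D_1^{g_i^{-1}} = \cdots = D_k^{g_i^{-1}}$, so $D_1 = \cdots = D_k =: D$ and $h = \psi(D)$. Using that $\psi(D)$ commutes with $I_m \otimes A(k)$ one gets $x_i\psi(D)x_i^{-1} = \psi(g_iDg_i^{-1})$, so $\psi(D) \in H^{x_i}$ iff $g_iDg_i^{-1} \in \tilde H_1$, i.e.\ $D \in \tilde H_1^{g_i}$; together with $\psi(D) \in H \iff D \in \tilde H_1$ this yields $H \cap H^{x_i} = \psi(\tilde H_1 \cap \tilde H_1^{g_i})$, and the same computation shows that a block-diagonal $\psi(D)$ lies in $H^{x_j}$ precisely when $D \in \tilde H_1^{g_j}$.

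Assembling these: any element of $H \cap H^{x_1} \cap \cdots \cap H^{x_b}$ lies in $H \cap H^{x_1} = \psi(\tilde H_1 \cap \tilde H_1^{g_1})$, hence equals $\psi(D)$ for some $D$, and lying in each further $H^{x_i}$ imposes $D \in \tilde H_1^{g_i}$; therefore
\[
H \cap H^{x_1} \cap \cdots \cap H^{x_b} = \psi\big(\tilde H_1 \cap \tilde H_1^{g_1} \cap \cdots \cap \tilde H_1^{g_b}\big).
\]
Since $\tilde H_1 \le H_1$, the right-hand side is contained in $\psi(H_1 \cap H_1^{g_1} \cap \cdots \cap H_1^{g_b}) \le \psi(T(\GL_m(q))) \le T(\GL_n(q))$ by the hypothesis, which is \eqref{irreq}. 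The one genuinely delicate point — the place where a careless argument would need $b+1$ conjugates rather than $b$ — is that each $x_i$ must do double duty: the factor $(I_m \otimes A(k))^{-1}$ kills the permutation part via Lemma~\ref{igrekl}, while the factor $\diag[g_i,\ldots,g_i]$ simultaneously cuts $\tilde H_1$ down to $\tilde H_1 \cap \tilde H_1^{g_i}$; in particular this makes the case $b = 1$ (the one actually invoked later in the paper) work. Everything else is bookkeeping: tracking the convention $H^g = g^{-1}Hg$ so that Lemma~\ref{igrekl} is applied to $x_i^{-1}$ rather than to $x_i$, and the observation that $\psi$ sends scalar, diagonal, resp.\ upper-triangular matrices to matrices of the same type.
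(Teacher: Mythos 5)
Your proof is correct and follows essentially the same route as the paper: both build $x_i$ from $\diag[g_i,\ldots,g_i]$ and $I_m\otimes A(k)$ (which commute), invoke Lemma~\ref{igrekl} to force the permutation part of any $h\in H\cap H^{x_1}\cap\cdots\cap H^{x_b}$ to be trivial with equal diagonal blocks, and then observe that the common block lands in $H_1\cap H_1^{g_1}\cap\cdots\cap H_1^{g_b}$. The only difference is cosmetic but welcome: by taking $x_i=(I_m\otimes A(k))^{-1}\diag[g_i,\ldots,g_i]$ rather than the paper's $\diag[g_i,\ldots,g_i](I_m\otimes A(k))$, your $x_i^{-1}$ is literally in the form required by Lemma~\ref{igrekl}, so the convention $H^g=g^{-1}Hg$ is handled without having to pass to an auxiliary conjugate $h^{x_i^{-1}}$ and back, a small bookkeeping point the paper leaves implicit.
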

\begin{proof}
Define $x_i$ to be 
\begin{gather*}
\begin{pmatrix}
y_{11}g_{i}      & y_{1 2}g_{i }      & \ldots     & y_{1 k}g_{i }            \\
y_{21}g_{i}      &  y_{22}g_{i}     & \ldots     & y_{2k}g_{i}             \\
\vdots      &             &            & \vdots               \\
y_{k 1}g_{i }   & y_{k 2}g_{i}   &  \ldots    & y_{kk}g_{i}               
\end{pmatrix},
\end{gather*} 
 where $y=(y_{ij})=A(n).$ Let us show that \eqref{irreq} holds for such $x_i.$  Let  $h=\diag[D_1, \ldots, D_k] \cdot s  \in H$, where $D_i \in H_1$ and $s \in \Sym(k).$
If $$h \in H \cap H^{x_1} \cap \ldots \cap H^{x_b}$$  then, by  Lemma \ref{igrekl}, $s$ is trivial and $D_i=D_j$ for all $1 \le i, j \le {k}$. Thus, $$D_i \in H_1 \cap H_1^{g_1} \cap \ldots \cap H_1^{g_b} \le T(\GL_m(q))$$
and $h \in  T(\GL_n(q)).$ 

Notice that $\det(y)=1$ and if $\det(g_i)=1,$ then 
\begin{equation*}
\det(x_i)=\det(g_i \otimes y)=\det(g_i)^k \cdot \det({y})^m=1. \qedhere
\end{equation*} 
\end{proof}

\begin{Cor}\label{corir}
Let $S$ be a maximal solvable subgroup of $\GL_n(q)$ and assume that matrices in $S$ have shape \eqref{stup}; so $S_i =\gamma_i(S)= P_{i} \wr \Gamma_i$,
where $P_{i}$ is a primitive solvable subgroup of $\GL_{m_i}(q)$, $\Gamma_i$ is a transitive solvable
subgroup of the symmetric group $\Sym(k_i)$, and $k_im_i = n_i$. If, for every $P_{i}$, there exist $x_i \in \SL_{m_i}(q)$ such that
\begin{equation*}
P_{i} \cap P_{i}^{x_i}\le RT(\GL_{m_i}(q)),
\end{equation*}
then $b_S(S \cdot \SL_n(q))\le5.$
\end{Cor}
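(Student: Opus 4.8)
The plan is to follow the constructive strategy sketched in Section~\ref{secmeth}: first push the block-wise hypothesis up to the ambient dimension, then use the block structure \eqref{stup} to land inside the diagonal torus after four conjugates, and finally remove the torus with a fifth conjugate.

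\emph{Step 1 (lift each block).} For every $i$ we are given $x_i\in\SL_{m_i}(q)$ with $P_i\cap P_i^{x_i}\le RT(\GL_{m_i}(q))$. Since $S_i=P_i\wr\Gamma_i\le P_i\wr\Sym(k_i)$, Lemma~\ref{prtoirr} applied with $T=RT$ and the single conjugating element $x_i$ yields $z_i\in\SL_{n_i}(q)$ with $S_i\cap S_i^{z_i}\le RT(\GL_{n_i}(q))$.

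\emph{Step 2 (reach the torus).} The group $S$ itself has shape \eqref{stup} with diagonal blocks $S_1,\dots,S_k$, and Step~1 verifies precisely the hypothesis of Lemma~\ref{irrtog} for $H=S$, with $z_i$ playing the role of the $x_i$ there. Hence there exist $x,y\in\SL_n(q)$ with
$$S\cap S^x\cap S^y\cap S^{xy}=(S\cap S^x)\cap(S\cap S^x)^y\le D(\GL_n(q)),$$
so four conjugates of $S$ by elements of $\SL_n(q)\le S\cdot\SL_n(q)$ already meet inside the diagonal torus.

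\emph{Step 3 (remove the torus).} If $k\ge2$ then $n_1<n$, so $S$ stabilises a proper subspace and Lemma~\ref{diag} supplies $z\in\SL_n(q)$ with $D(\GL_n(q))\cap S^z\le Z(\GL_n(q))$; intersecting with the conclusion of Step~2 gives five conjugates of $S$ whose intersection lies in $Z(\GL_n(q))$. As $S$ is maximal solvable it contains the scalar subgroup, so $Z(\GL_n(q))\le S_G$, and therefore the five-fold intersection equals $S_G$; thus $b_S(S\cdot\SL_n(q))\le5$. If $k=1$ then $S$ is irreducible, so $S=P_1\wr\Gamma_1$ with $P_1$ a primitive maximal solvable subgroup of $\GL_{m_1}(q)$ by Lemma~\ref{supirr}; Theorem~\ref{sec311lem} gives $b_{P_1}(P_1\cdot\SL_{m_1}(q))\le4$, so (using Lemma~\ref{supSL} to make the conjugating elements unimodular) there are $g_1,g_2,g_3\in\SL_{m_1}(q)$ with $P_1\cap P_1^{g_1}\cap P_1^{g_2}\cap P_1^{g_3}\le Z(\GL_{m_1}(q))$, and feeding these into Lemma~\ref{prtoirr} with $T=Z$ gives $b_S(S\cdot\SL_n(q))\le4$.

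\emph{Main obstacle.} The substantive point is the compatibility of Steps~1 and~2: one must check that the single conjugating elements produced for the blocks are unimodular and assemble, via the block-diagonal $x$ and the anti-diagonal $y$ of Lemma~\ref{irrtog}, into genuine elements of $\SL_n(q)$, and then that the resulting finite intersection of conjugates is not merely contained in $Z(\GL_n(q))$ but equals $S_G$ — which is where maximality of $S$ is used (it forces $Z(\GL_n(q))\le S_G$, and in fact $S_G=Z(\GL_n(q))$ when $\PSL_n(q)$ is simple). The only genuine case distinction is reducible versus irreducible $S$; the irreducible case is not self-contained but is absorbed by the primitive analysis of Section~\ref{sec31}, with the boundary instances where $\PSL_n(q)$ fails to be simple being immediate since then $S=\GL_n(q)$.
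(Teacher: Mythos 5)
Your Steps 1–3 for $k\ge 2$ reproduce exactly the chain Lemma~\ref{prtoirr} $\to$ Lemma~\ref{irrtog} $\to$ Lemma~\ref{diag} that the paper's one-line proof cites, and your bookkeeping (five conjugates $S,S^x,S^y,S^{xy},S^z$, unimodularity of all conjugating elements, and identification of $S_G$ with $Z(\GL_n(q))$) is correct. Your separate handling of $k=1$ is a useful clarification: Lemma~\ref{diag} requires $n_1<n$ and so cannot furnish the fifth conjugate when $S$ is irreducible, a point the paper's one-liner leaves implicit (elsewhere the paper routes the irreducible case through Theorem~\ref{irred} rather than through this corollary).

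Two small caveats to close in your $k=1$ branch. Theorem~\ref{sec311lem} requires $m_1\ge 2$ and $(m_1,q)\ne(2,2),(2,3)$. When $m_1=1$ you do not need it: $P_1\le\GL_1(q)=Z(\GL_1(q))$, so Lemma~\ref{prtoirr} with $T=Z$ and $b=1$ already yields $x_1\in\SL_n(q)$ with $S\cap S^{x_1}\le Z(\GL_n(q))$, hence $b_S\le 2$. When $(m_1,q)\in\{(2,2),(2,3)\}$, the whole group $\GL_{m_1}(q)$ is solvable and is the unique primitive maximal solvable subgroup, so $P_1\cap P_1^{x_1}=\GL_{m_1}(q)\not\le RT(\GL_{m_1}(q))$ for any $x_1$ and the corollary's hypothesis is never met. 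With these observations filled in, your argument is sound and matches the paper's intended proof.
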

\begin{proof}
The statement follows  from Lemmas \ref{irrtog},  \ref{prtoirr} and  \ref{diag}.
\end{proof}


\begin{Th}\label{irred}
Let $S$ be an irreducible maximal solvable  subgroup of $\GL_n(q)$ with $n \ge 2$, and $(n,q)$ is neither  $(2,2)$ nor $(2,3)$. Then either $b_S(S \cdot \SL_n(q))=2$ or one of the following holds: 
\begin{enumerate}[font=\normalfont]
\item $n =2$,  $q>3$ is odd, $S$ is the normaliser of a Singer cycle and $b_S(S \cdot \SL_2(q))=3$. If $q>5$, then there exists $x\in \SL_2(q)$ such that  $S \cap S^x \le D(\GL_2(q));$ \label{irred11}
\item $n =2$,  $q\ge 4$ is even,  $S$ is the normaliser of a Singer cycle and $b_S(S \cdot \SL_2(q))=3$. In this case there exists $x\in \SL_2(q)$ such that  $S \cap S^x \le RT(\GL_2(q));$ \label{irred12}
\item $n=2,$ $q=9,$ $S$ (up to conjugacy) is generated by all matrices with entries in $\mathbb{F}_3$ and scalar matrices, so $S=\GL_2(3) \cdot Z(\GL_2(9)),$ and $b_S(S \cdot \SL_2(9))=3$. In this case there exists $x \in \SL_2(9)$ such that $S \cap S^x \le RT(\GL_2(9));$ \label{irred135}
\item $n=2$, $q \in \{5,7\}$,  $S$  is an absolutely irreducible subgroup such that $S/Z(\GL_2(q))$ is isomorphic to $2^2.\Sp_2(2)$ and    $b_S(S \cdot \SL_2(q))=3+\delta_{5,q}$;\label{irred13}
\item $n=3$, $q=2$,  $S$ is the normaliser of a Singer cycle and $b_S(S \cdot \SL_3(2))=3$; \label{irred14} 
\item $n=4$, $q=3$, $S=\GL_2(3) \wr \Sym(2)$ and $b_S(S \cdot \SL_4(3))=3$. In this case there exists $x \in \SL_4(3)$ such that $S \cap S^x \le RT(\GL_4(3)).$ \label{irred15} 
\end{enumerate} 
\end{Th}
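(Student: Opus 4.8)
The plan is to run the argument through Suprunenko's dichotomy for irreducible solvable linear groups (Lemma~\ref{supirr}): either $S$ is primitive, or $S$ is $\GL_n(q)$-conjugate to $S_0 \wr \Gamma$ with $S_0$ a primitive maximal solvable subgroup of $\GL_m(q)$, $\Gamma$ a transitive maximal solvable subgroup of $\Sym(k)$ and $n = mk$, $k \ge 2$. In the primitive case Theorem~\ref{sec311lem} applies directly and items (1)--(5) of the present theorem are exactly items (1)--(5) there, so there is nothing more to do. In the imprimitive case I would, since $b_S$ and the stated exceptions are $\GL_n(q)$-conjugacy invariants, assume outright that $S = S_0 \wr \Gamma$; and it suffices to prove $b_S(S \cdot \SL_n(q)) \le 2$ off the exception list, because the reverse inequality always holds ($S$ is a proper subgroup and $S \cdot \SL_n(q)$ acts nontrivially modulo its core, which lies in $Z(\GL_n(q))$).

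The main tool for the imprimitive case is Lemma~\ref{igrekl}. Set $b := b_{S_0}(S_0 \cdot \SL_m(q))$. When $k \ge b$ I claim a single conjugate suffices, giving $b_S(S \cdot \SL_n(q)) = 2$. Indeed, by Lemma~\ref{supSL}(2) there are $y_1, \ldots, y_b \in \SL_m(q)$ with $\bigcap_{i=1}^{b} S_0^{y_i}$ equal to the core of the action of $S_0 \cdot \SL_m(q)$ on the cosets of $S_0$; when $(m,q) \ne (2,2),(2,3)$ this core centralises $\SL_m(q)$ (because $\SL_m(q)$ is perfect) and hence lies in $Z(\GL_m(q))$, while for $m = 1$ one simply has $S_0 = Z(\GL_1(q))$. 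Padding with repetitions of $y_1$, choose $x_1, \ldots, x_k \in \SL_m(q)$ with $\bigcap_{j=1}^{k} S_0^{x_j} \le Z(\GL_m(q))$ and put $x = \diag(x_1, \ldots, x_k)\,(I_m \otimes A(k))$, which has determinant $1$, so $x \in \SL_n(q)$. By Lemma~\ref{igrekl}, every $h = \diag[D_1, \ldots, D_k]\cdot s \in S \cap S^x$ has $s$ trivial and $D_1^{x_1} = \cdots = D_k^{x_k} =: E$, whence $E \in \bigcap_{j} S_0^{x_j} \le Z(\GL_m(q))$; so each $D_j$ equals the same scalar $E$ and $h \in Z(\GL_n(q))$. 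Thus $S \cap S^x \le Z(\GL_n(q))$ and $b_S(S \cdot \SL_n(q)) = 2$. By Theorem~\ref{sec311lem} this settles every imprimitive $S$ for which $S_0$ has $b \le 2$, as well as all those with $k \ge 3$; what is left is the residue $k = 2$ (so $n = 2m$) with $S_0$ among the finitely many possibilities for which $b \ge 3$, which forces $m \in \{2,3\}$.

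For the residue I would argue case by case. The only infinite family is $m = 2$, $k = 2$: here $S = S_0 \wr \Sym(2) \le \GL_4(q)$ with $S_0$ the normaliser of a Singer cycle of $\GL_2(q)$ (so $S$ is the normaliser in $\GL_4(q)$ of a torus of order $(q^2-1)^2$), plus for $q \in \{5,7,9\}$ the exceptional $\mathcal{C}_6$-type blocks, plus for $q \in \{2,3\}$ the blocks $\GL_2(q)$ themselves. For these I would apply the probabilistic method of Section~\ref{fprsec} to $G = (S \cdot \SL_4(q))/Z(\GL_4(q))$, bounding $\widehat{Q}(G,2)$ by splitting the prime-order classes according to $\nu(x)$: the elements supported on one block have $\nu(x) = 1 = n/4$ and are few, all others have $\nu(x) \ge 2 = n/2$, and Lemmas~\ref{fprAB} and~\ref{lemxGbound} then yield $\widehat{Q}(G,2) < 1$ once $q$ exceeds a small bound. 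The finitely many remaining $q$, together with the sporadic residual cases $(m,k,q) = (2,3,5)$ and $(3,2,2)$ and the $\GL_2(2)$-block cases, I would finish with the computations of Section~\ref{gapsec}; these give $b_S(S \cdot \SL_n(q)) = 2$ in every case but one, namely $(n,q) = (4,3)$ with $S = \GL_2(3) \wr \Sym(2)$, where they return $b_S = 3$ and also produce an $x \in \SL_4(3)$ with $S \cap S^x \le RT(\GL_4(3))$ --- this is item (6).

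The step I expect to be the real obstacle is this residue $k = 2 < b_{S_0}$. There the clean one-conjugate wreath argument collapses --- two blocks are not enough to absorb a full base of $S_0$ --- and for an imprimitive group the dominant terms of $\widehat{Q}$ come from elements concentrated on a single block, whose $\nu$ is as small as $n/4$, so the probabilistic estimates only barely close; pinning down the correct threshold and certifying that $(4,3)$ is the lone exception (rather than, say, one of a handful of small $q$) is where most of the care, and the computer algebra, will be needed.
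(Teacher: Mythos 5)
Your proposal is correct in outline and reaches the right conclusion, but it takes a genuinely different route from the paper in two places, both of them interesting.

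First, for the imprimitive case with $k \ge b_{S_0}(S_0 \cdot \SL_m(q))$ you distribute a full base of $S_0$ across the $k$ blocks, applying Lemma~\ref{igrekl} with $k$ \emph{distinct} conjugators $x_1,\dots,x_k$, and conclude from $E := D_1^{x_1}=\cdots=D_k^{x_k} \in \bigcap_j S_0^{x_j} \le Z(\GL_m(q))$ that a \emph{single} conjugate of $S$ already scalarises the intersection. The paper instead uses Lemma~\ref{prtoirr}, which takes all blocks of each conjugator equal and therefore only transfers the bound as $b_{S_1}\le 2 \Rightarrow b_S\le 2$; the paper then cites \cite[Theorem 3.1]{james} to dispose of $l\ge 3$ when $b_{S_1}\ge 3$. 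Your argument subsumes both of those steps, eliminates the external reference, and reduces the residue to $k<b_{S_0}$, i.e. $k=2$ together with the single case $(m,k,q)=(2,3,5)$. (One caution: Lemma~\ref{igrekl} as written has ``$x_i$ arbitrary elements of $X$'', but its proof — and indeed the paper's own application in Lemma~\ref{prtoirr} — only needs $x_i\in\GL_m(q)$, which is what you use; it would be worth noting this.) Second, for the one infinite residual family ($m=2$, $k=2$, $S_0$ a Singer-cycle normaliser) you propose the probabilistic method of Section~\ref{fprsec}, whereas the paper produces an explicit pair of Singer tori $S_a,S_b$ as in \eqref{thesin}, \eqref{thesineven} and a combinatorial matrix $y=A(n)$, and shows by direct block computation that $S\cap S^y$ is scalar for every $q>3$. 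The explicit route is cleaner: your fixed-point-ratio estimate with $|H|\approx 8q^3$, contributions $\lesssim 128/q^4$ from $\nu=1$ classes and $\lesssim 512/q^2$ from $\nu\ge 2$ classes, only closes around $q\gtrsim 22$, so you would need to push a couple of dozen values of $q$ through {\sc Magma}, versus the paper's handful. Finally, a small repair: the parenthetical ``this core centralises $\SL_m(q)$ because $\SL_m(q)$ is perfect'' is too terse. The correct chain is that the core $N=(S_0)_G$ is a normal solvable subgroup of $G=S_0\cdot\SL_m(q)$, so $[N,\SL_m(q)]\le N\cap\SL_m(q)$ is a normal solvable subgroup of $\SL_m(q)$, hence (using simplicity of $\PSL_m(q)$ for $(m,q)\ne(2,2),(2,3)$) lies in $Z(\SL_m(q))$; \emph{then} perfection of $\SL_m(q)$ forces the induced action on $N$ to be trivial, whence $N\le C_{\GL_m(q)}(\SL_m(q))=Z(\GL_m(q))$. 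The simplicity input is essential, not just perfection. None of this affects the correctness of your conclusion.
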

\begin{proof}
Let $S$ be an  irreducible maximal solvable subgroup of $\GL_n(q).$ The statement  follows by Lemmas \ref{supirr} and \ref{prtoirr} and Theorem \ref{sec311lem}
 for all cases except the case when $S$ is  conjugate to $S_1 \wr \Gamma$, where 
$n=kl$, $\Gamma$ is a transitive maximal  solvable subgroup of $\Sym(l)$, and $S_1$ is  one of the following groups:
\begin{enumerate}[label=\alph*)]
\item $k=3, q=2$ and $S_1$ is the normaliser of a Singer cycle of $\GL_3(2);$ \label{irred1}
\item $k=2$, $q=2,3$ and $S_1=\GL_2(q);$ 
\item $k=2$, $q \in \{5,7\}$ and $S_1/Z(\GL_2(q))$ is an absolutely irreducible subgroup such that $S_1/Z(\GL_2(q))$ is isomorphic to $2^2.\Sp_2(2)$;
\item $k=2$, $q=9$ and $S_1$ is $\GL_2(3) \cdot Z(\GL_2(9));$ \label{irred4}
\item $k=2$, $q>3$  and $S_1$ is the normaliser of a Singer cycle of $\GL_2(q);$ \label{irred6}

\end{enumerate}
If $l \ge 3$, then $b_S(S \cdot \SL_n(q))=2$ by \cite[Theorem 3.1]{james}. 

We verify by computation that $b_S(S \cdot \SL_n(q))=2$ for $l=2$ for all cases \ref{irred1} -- \ref{irred4} except  $S_1=\GL_2(3)$. If $S_1=\GL_2(3)$, then there exists $x \in \SL_4(3)$ such that $S \cap S^x \le RT(\GL_4(3))$. 

Consider  case \ref{irred6} with $q$ odd: so we assume $S=S_1 \wr \Sym(2)\ \le \GL_4(q)$, where $S_1$ is the normaliser of a Singer cycle $S_a$ in $\GL_2(q)$  as in \eqref{thesin}. Hence $S \cdot \SL_4(q)=\GL_4(q)$ by Lemma \ref{supSL} since the determinant of a generator of a Singer cycle generates $\mathbb{F}_q^*$. 

Let $$s=
\begin{pmatrix}
0 & 0& 1 & 0\\
0 & 0& 0 & 1\\
1 & 0& 0 & 0\\
0 & 1& 0 & 0
\end{pmatrix},
$$  
so  $g \in S$ has shape 
$$
\begin{pmatrix}
\alpha_1' & \beta_1& 0 & 0\\
a \beta_1' &  \alpha_1 & 0 & 0 \\
0 & 0& \alpha_2' & \beta_2\\
0 & 0& a \beta_2' & \alpha_2
\end{pmatrix} \cdot s^i,
$$
where $i =0,1,$ $\alpha_j' = \pm \alpha_j$, $\beta_j' = \pm \beta_j$ and $\alpha_j'\beta_j' = \alpha_j \beta_j.$ Consider $g^y$ where $y=A(n)$ is as in \eqref{igrek}. First let $i=0,$ so 
$$g^y = \Scale[0.95]{
\begin{pmatrix}
\alpha_1' +a \beta_1' &\alpha_1+ \beta_1 - \alpha_1' -a\beta_1' & \alpha_1' +a\beta_1'- \alpha_1 - \beta_1 & \alpha_1 + \beta_1 -\alpha_1' -a\beta_1'\\
a \beta_1' &  \alpha_1 -a\beta_1' & a\beta_1' - \alpha_1 + \alpha_2' & \alpha_1 + \beta_2 -a\beta_1' - \alpha_2' \\
0 & 0& \alpha_2' +a\beta_2' & \alpha_2+\beta_2-\alpha_2' -a\beta_2'\\
0 & 0& a \beta_2' & \alpha_2 - a\beta_2'
\end{pmatrix}}.
$$ 
Assume that $g^y \in S,$ so ${g^y}_{1,3}={g^y}_{1,4}={g^y}_{2,3}={g^y}_{2,4}=0.$ Thus, $0={g^y}_{2,3}+{g^y}_{2,4}=\beta_2.$ Also, ${g^y}_{1,2}= - {g^y}_{1,3} = 0$, but, since the left upper $(2 \times 2)$ block must lie in $N_{\GL_2(q)}(S_a)$, 
$$({g^y}_{1,2})a = \pm {g^y}_{2,1},$$
so $0 ={g^y}_{2,1} = a\beta_1'$ and $\beta_1=0.$ Therefore, ${g^y}_{1,2}= \alpha_1 -\alpha_1',$ so
$$\alpha_1 =\alpha_1'.$$ Now, since $\beta_2=0,$ ${g^y}_{3,4} = \pm ({g^y}_{4,3}) a^{-1}=0,$  
$$\alpha_2=\alpha_2'$$
and since ${g^y}_{2,4}=0$ we obtain $\alpha_1=\alpha_2,$ so $g^y$ is a scalar.  

Now let $i=1$, so 
$$g^y = \Scale[0.95]{
\begin{pmatrix}
0 & 0 & \alpha_1' + a\beta_1'& \alpha_1 +\beta_1 - \alpha_1' - a\beta_1'\\
\alpha_2' &  \beta_2'-\alpha_2' & a\beta_1' - \beta_2' + \alpha_2' & \alpha_1 - a\beta_1' +a\beta_2 - \alpha_2' \\
a \beta_2' + \alpha_2' & \alpha_2 + \beta_2 -a\beta_2' -\alpha_2'& a\beta_2' + \alpha_2 - \alpha_2 -\beta_2 & \alpha_2+ \beta_2 - a\beta_2' \alpha_2'\\
a\beta_2' & \alpha_2 -a \beta_2'& a\beta_2' - \alpha_2 & \alpha_2 - a\beta_2'
\end{pmatrix}}.
$$ 
If $g^y \in  S$, then ${g^y}_{2,1}={g^y}_{2,2}=0.$ So $\alpha_2 = \beta_2= 0$ which  contradicts the invertibility of $g.$
Therefore, $S \cap S^y  \le Z(\GL_4(q)).$ 

Consider  case \ref{irred6} with $q$ even:  so we  assume $S=S_1 \wr \Sym(2)\ \le \GL_4(q)$, where $S_1$ is the normaliser of a Singer cycle $S_a$ in $\GL_2(q)$ as in \eqref{thesineven}. Since $q>2$, we can choose $a\ne 1$ and arguments similar to those in case \ref{irred6} show that \[S \cap S^y \le Z(\GL_n(q)). \qedhere\] 
\end{proof}

We conclude the section with a technical lemma about the normalisers in $\GaL_n(q)$ of some specific solvable subgroups of $\GL_n(q)$. This will be useful later.

\begin{Lem} \label{lemRTdiagfield} \phantom{sdfs}
\begin{enumerate}[font=\normalfont]
\item Let $q=p^f$ with $f>1$ and let $S$ be the normaliser in $\GaL_2(q)$ of the Singer cycle $S_a$ defined in \eqref{thesin} and \eqref{thesineven} for odd and even $q$ respectively. Then there exists $x \in \SL_2(q)$ such that $$S \cap S^x \le RT(\GL_2(q)).$$ \label{lrtdf1}
\item Let $(n,q)=(2,9)$ and let $S$ be $\GL_2(3) \cdot Z(\GL_2(9)) \rtimes \langle \phi \rangle$ where $\phi$ is the standard field automorphism as in \eqref{defphibet}. Then there exist $x, y \in \SL_2(9)$ such that 
$$S \cap S^x \le RT(\GL_2(9))\rtimes \langle \phi \rangle \text{ and } (S \cap S^x) \cap (S \cap S^x)^y \le Z(\GL_2(9)) \rtimes \langle \phi \rangle.$$ \label{lrtdf2}
\item Let $S$ be the normaliser of the Singer cycle of $\GL_3(2)$ generated by the matrix 
$$ \begin{pmatrix}
0    & 0 & 1  \\
1 &0    & 0  \\
  0&    1    &   1      
\end{pmatrix}.$$ Then there exists $x \in \SL_3(2)$ such that 
$$ S\cap S^x = \left\langle
 \begin{pmatrix}
1    & 1 & 1  \\
0 &1    & 0  \\
  1&    0    &   0      
\end{pmatrix} 
\right\rangle. $$ \label{lrtdf3}
\end{enumerate}
\end{Lem}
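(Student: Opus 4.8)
The plan is to prove each of the three parts by producing an explicit conjugating element $x$ (and, in part (2), also $y$) and then checking the intersection condition by a direct comparison of matrices, exactly in the style of Theorem~\ref{sec311lem} and the cases \ref{irred6}, \ref{irred5} in the proof of Theorem~\ref{irred}. In each case the key point is that an element of the intersection $S\cap S^x$ must simultaneously lie in $S$ and in $S^x$, which forces many of its off-diagonal entries (or, in the semilinear case, the entries of its linear part) to vanish; the field automorphism part of a semilinear element does not interfere with these vanishing conditions because conjugation by an element of $\SL$ does not change the field-automorphism label. So throughout, I would reduce to analysing the $\GL$-part of a general element and treat $\phi$-components separately.

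For part \ref{lrtdf1}, $S=N_{\GL_2(q)}(S_a)\rtimes\langle\phi\rangle$ where $q=p^f$, $f>1$. I would first note that $S\cap\GL_2(q)=N_{\GL_2(q)}(S_a)$ and that conjugating by $x\in\SL_2(q)$ sends $S$ to $N_{\GL_2(q)}(S_a^x)\rtimes\langle\phi^x\rangle$, where $\phi^x$ differs from $\phi$ by a $\GL_2(q)$-element. Since all Singer cycles are conjugate in $\GL_2(q)$ (Lemma~\ref{singconj}) and $\Det(S_a)=\Det(\GL_2(q))$, I can realise a conjugate by a second ``parameter'' $b$ as in the proof of Theorem~\ref{sec311lem}: by \eqref{2sindiagodd} and \eqref{2sindiageven}, for suitable $a\neq b$ one has $N_{\GL_2(q)}(S_a)\cap N_{\GL_2(q)}(S_b)$ contained in $D(\GL_2(q))$ (odd $q$) or in the lower-triangular matrices (even $q$); here $q\geq 4$ in both parities because $f>1$ gives $q\geq 4$, so such $b$ exists. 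The remaining issue is the field-automorphism coset: an element of the intersection that is not in $\GL_2(q)$ has the form $\psi g$ with $\psi$ a nontrivial power of $\phi$ and $g$ linear, and I would argue that its linear part is constrained by the same argument applied to $\psi g$ and its $x$-conjugate, so the whole intersection lands in $RT(\GL_2(q))$ (after possibly transposing the basis, i.e. replacing lower-triangular by upper-triangular via an $\SL$-conjugation, which is absorbed into $x$). This last bookkeeping — keeping track of the semilinear coset and the upper-vs-lower triangular convention simultaneously — is the main obstacle for part (1).

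For part \ref{lrtdf2}, $S=\GL_2(3)\cdot Z(\GL_2(9))\rtimes\langle\phi\rangle$ with $\phi$ the standard field automorphism of order $2$. Here $(S\cap\GL_2(9))=\GL_2(3)\cdot Z(\GL_2(9))$, which up to scalars is the $\mathcal{C}_5$-type subgroup already handled in case \ref{thirr29}/\ref{irred135} of the previous theorems, where an $x\in\SL_2(9)$ with $S_1\cap S_1^x\le RT(\GL_2(9))$ was produced. I would take that same $x$; conjugation by $x\in\SL_2(9)$ fixes the $\phi$-coset label, so $S\cap S^x\le RT(\GL_2(9))\rtimes\langle\phi\rangle$ follows from the linear analysis together with the observation that a semilinear element $\phi g$ in the intersection has $\phi g\in\phi\,(S_1\cap S_1^x\cdot\text{scalars})$, hence $g$ upper-triangular. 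Then, applying Lemma~\ref{irrtog}-style reasoning in dimension $2$ — i.e.\ conjugating the upper-triangular group by the antidiagonal $y=\diag(\sgn(\sigma),1)\per(\sigma)\in\SL_2(9)$ with $\sigma=(1\,2)$, which turns upper-triangular into lower-triangular — intersects down to diagonal matrices; and one more scalarisation argument (as in Lemma~\ref{diag}, or a direct two-by-two check) gives $(S\cap S^x)\cap(S\cap S^x)^y\le Z(\GL_2(9))\rtimes\langle\phi\rangle$. I would double-check computationally that the two specified $\SL_2(9)$-elements work, since the claim is stated with explicit groups.

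For part \ref{lrtdf3}, the group $S$ is the normaliser in $\GL_3(2)$ of the Singer cycle $T=\langle t\rangle$ with $t$ the given companion matrix of $x^3+x^2+1$ (or $x^3+x+1$); by Theorem~\ref{sin1}, $S=T\rtimes\langle\varphi\rangle$ with $|\varphi|=3$ and $t^\varphi=t^2$, so $|S|=21$. Since $\GL_3(2)=\SL_3(2)$, any conjugating element is automatically in $\SL_3(2)$, so I just need to exhibit one $x$ with $S\cap S^x$ equal to the specified order-$2$ subgroup $\langle g_0\rangle$, where $g_0$ is the displayed involution. The natural strategy: $S\cap S^x$ is a subgroup of $S$ of order dividing $21$; a random $x$ generically gives $S\cap S^x$ of order $1$, $3$, or $7$, but we want exactly the (unique, since $|S|=21$ has a unique subgroup of each order dividing $21$, namely $T$ of order $7$ is the unique Sylow $7$) — wait, order $2$ is impossible inside a group of order $21$. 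So in fact the claim must be read with $S$ being a slightly larger group than I wrote: re-reading, $S$ here is the normaliser of the Singer cycle inside $\GaL_3(2)=\GL_3(2)$ (there are no field automorphisms since $q=2$), and $g_0$ has order $2$ — but $g_0$ cannot lie in a group of order $21$. Therefore the intended $S$ must include the transpose-inverse graph automorphism or $S$ is taken in a context where $g_0\in S$; I would reconcile this by recomputing $|g_0|$ (it may actually be $3$ or $7$, not $2$, after checking $g_0^2$, $g_0^3$, $g_0^7$ over $\mathbb{F}_2$) and then identify $\langle g_0\rangle$ with the correct cyclic subgroup of $S$. Once the orders are pinned down, the proof is: pick $x$ so that $S^x$ meets $S$ exactly in $\langle g_0\rangle$, verified by a short computation (there are only $168$ choices of $x$ up to the relevant cosets, so this is finite and mechanical). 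The main obstacle here is purely bookkeeping: correctly identifying the orders and the unique subgroup of $S$ of that order, after which the explicit $x$ is found by the randomised search described in Section~\ref{gapsec} and the equality is checked directly.
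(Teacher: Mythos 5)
Your plan for parts (2) and (3) matches the paper in the end --- the paper's proof of (2) simply writes down explicit $x,y$ and says ``routine to check,'' and (3) is a single short computation --- and your worry in (3) about the order of $g_0$ is a miscalculation: over $\mathbb{F}_2$ one finds $g_0^3 = I$, so $|g_0| = 3$ divides $|S| = 21$ and no reinterpretation of $S$ is needed. The real problem is in part (1), where what you dismiss as ``bookkeeping'' is in fact the whole content of the claim, and your approach has a genuine gap.

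The decomposition $S = N_{\GL_2(q)}(S_a) \rtimes \langle \phi \rangle$ that you start from is false. Writing $M_a = \left(\begin{smallmatrix} 0 & 1 \\ a & 0 \end{smallmatrix}\right)$, one has $\phi M_a \phi^{-1} = M_{a^p}$, so $\phi S_a \phi^{-1} = S_{a^p} \ne S_a$ unless $a \in \mathbb{F}_p$ --- and for even $f$ no non-square of $\mathbb{F}_q$ lies in $\mathbb{F}_p$. The paper's computation gives $N_{\GaL_2(q)}(S_a) = S_a \rtimes \langle \phi D_a \rangle$ with $D_a$ a diagonal matrix depending on $a$, and that $a$-dependence is doing all the work: for the second Singer cycle $S_b$ the field-automorphism coset is generated by $\phi D_b$ with a different $D_b$, and a direct comparison of elements shows the two semilinear cosets are disjoint, so that $S \cap S^x$ lies entirely inside $\GL_2(q)$. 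Your proposal never rules out semilinear elements. The claim that ``the same argument constrains the linear part'' of a semilinear intersection element $\phi^j g$ is wrong: the condition on $g$ arising from $\phi^j g \in S \cap S^x$ is not the condition defining $N_{\GL_2(q)}(S_a) \cap N_{\GL_2(q)}(S_b)$, so the known containment in $D(\GL_2(q))$ or $RT(\GL_2(q))$ tells you nothing about it; and even if you could force $g$ to be upper-triangular, $\phi^j g$ would still be semilinear and would not lie in $RT(\GL_2(q))$, so the conclusion would simply fail. (The same misdescription recurs in your sketch for (2), since the field coset of $S^x$ is generated by $\phi^x = x^{-1}\phi x$, which is $\phi$ times a nontrivial linear factor, not $\phi$ itself; there you are saved only by the fallback to machine verification.)
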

\begin{proof}
\ref{lrtdf1} Let $q$ be odd. It is routine to check that 
$$N_{\GaL_2(q)}(S_a) = S_a \rtimes \left\langle \phi \begin{pmatrix}
a^{-(p-1)/2}    & 0  \\
0    &  1       
\end{pmatrix} \right\rangle$$ 
where $\phi : \lambda v_i \mapsto \lambda^p v_i$ for $\lambda \in \mathbb{F}_q$ and $\{v_1, v_2\}$ is the basis of $V$ with respect to which matrices from $S_a$ have shape \eqref{thesin}. Since $f>1$, we have $q>5$ and there exist non-squares $a, b \in \mathbb{F}_q$ such that $a \ne \pm b$. For such $a,b$, the direct comparison of the elements shows that  
$$N_{\GaL_2(q)}(S_a) \cap N_{\GaL_2(q)}(S_b)=\langle \varphi \rangle Z(\GL_2(q)) \le D(\GL_2(q)).$$

\medskip

Now assume $q$ is even.  It is routine to check that  $N_{\GaL_2(q)}(S_a)$ is solvable and 
$$N_{\GaL_2(q)}(S_a)= (S_a) \rtimes \left\langle \phi \begin{pmatrix}
a^{-1}    & 0  \\
1    &  a^1       
\end{pmatrix} \right\rangle$$ 
where $\phi : \lambda v_i \mapsto \lambda^p v_i$ for $\lambda \in \mathbb{F}_q$ and $\{v_1, v_2\}$ is the basis of $V$ with respect  to which matrices from $S_a$ have shape \eqref{thesineven}. Since $q \ge 4,$ there exist $a, b \in \mathbb{F}_q$ such that $a \ne b$
and neither $x^2+x+a$ nor $x^2+x+b$ has roots in $\mathbb{F}_q$. 
For such $a,b$, it is easy to check that   $$N_{\GaL_2(q)}(S_a) \cap N_{\GaL_2(q)}(S_b)=\langle \varphi \rangle Z(\GL_2(q))\le RT(\GL_2(q)).$$

\medskip

\ref{lrtdf2}
Let $\omega$ be a primitive element of $\mathbb{F}_9$ such that $\omega^2-\omega-1=0.$ Let
$$x = \begin{pmatrix}
\omega^{-1} & \omega^2 \\
0 & \omega
\end{pmatrix} \text{ and } y=\begin{pmatrix}
0 & -1 \\
1 & 0
\end{pmatrix}.$$ It is routine to check the statement of the lemma directly, by hand or computationally. 

\medskip

\ref{lrtdf3} 
Computation shows that 
$$ S\cap S^x = \left\langle
 \begin{pmatrix}
1    & 1 & 1  \\
0 &1    & 0  \\
  1&    0    &   0      
\end{pmatrix} 
\right\rangle $$
has order 3 where $$
x= \begin{pmatrix}
1    & 0 & 1  \\
1 &1    & 1  \\
  0&    0    &   1      
\end{pmatrix}. $$ 
\end{proof}

\section{Solvable subgroups of $\GaL_n(q)$}
\label{sec411}

In this section, $S$ is a maximal solvable subgroup of $\GaL_n(q),$ $G=S \cdot \SL_n(q)$, $H=S/Z(\GL_n(q))$ and $\overline{G}=G/Z(\GL_n(q)).$ Our goal in this section is to prove the following theorem.

\begin{T1}
Let  $n \ge 2$ and $(n,q)$ is neither $(2,2)$ nor $(2,3).$ If $S$ is a maximal solvable subgroup of $\GaL_n(q)$, 
 then $\Reg_S(S \cdot \SL_n(q),5)\ge 5$. In particular $b_S(S \cdot \SL_n(q)) \le 5.$
\end{T1}

Before we start the proof, let us discuss the structure of a maximal solvable subgroup $S \le \GaL_n(q)$ and fix some notation. 

Let $0< V_1 \le V$ be such that $(V_1)S=V_1$ and $V_1$ has no non-zero proper $S$-invariant subspace. It is easy to see that $S$ acts semilinearly on $V_1$. Set  $n_1=\dim V_1$ and let
 $$\gamma_1:S \to \GaL_{n_1}(q)$$  be the homomorphism defined by $\gamma_1:g \mapsto g|_{_{V_1}}.$  Since $V_1$ is $S$-invariant, $S$ acts (semilinearly) on $V/V_1.$ Let $V_1 < V_2 \le V$ be such that $(V_2/V_1)S=V_2/V_1$ and $V_2/V_1$ has no non-zero proper $S$-invariant subspace.  Observe that $S$ acts semilinearly on $V_2/V_1$. Setting $n_2=\dim (V_2/V_1),$ let $\gamma_2:S \to \GaL_{n_2}(q)$  be the map $\gamma_2:g \mapsto g|_{_{(V_2/V_1)}}.$ 
Continuing this procedure we obtain the chain of subspaces 
\begin{equation}
\label{Gchain}
0=V_0<V_1< \ldots < V_k=V
\end{equation}
 and a sequence of homomorphisms $\gamma_1, \ldots, \gamma_k$ such that $V_{i}/V_{i-1}$ is $S$-invariant and has no  non-zero proper $S$-invariant  subspaces and $\gamma_i(S) \le \GaL_{n_i}(q)$ is the restriction of $S$ to $V_{i}/V_{i-1}$ for $i \in \{1, \ldots, k\}.$

 \begin{Lem}
 \label{GammairGL}
 If $k=1$, then either $S \cap \GL_n(q)$   is an irreducible solvable subgroup of $\GL_n(q)$, or there exists $x \in \SL_n(q)$ such that $S \cap S^x \cap \GL_n(q) \le Z(\GL_n(q)).$
 \end{Lem}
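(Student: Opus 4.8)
The plan is to study $L := S \cap \GL_n(q)$, which is solvable (being a subgroup of $S$) and normal in $S$, with $S/L$ cyclic since it embeds in $\GaL_n(q)/\GL_n(q) = \langle \phi \rangle$. If $L$ is irreducible as a subgroup of $\GL_n(q)$, then the first alternative of the lemma holds and there is nothing to do. So assume $L$ is reducible; the goal then is to produce $x \in \SL_n(q)$ with $L \cap L^x \le Z(\GL_n(q))$. This suffices: since $\GL_n(q) \trianglelefteq \GaL_n(q)$ and $x \in \GL_n(q)$ we have $S^x \cap \GL_n(q) = (S \cap \GL_n(q))^x = L^x$, and hence $S \cap S^x \cap \GL_n(q) = L \cap L^x$.

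First I would record the structure of $L$. Since $k = 1$, $V$ has no proper nonzero $S$-invariant subspace. Given any irreducible $\mathbb{F}_q[L]$-submodule $W \le V$, each $(W)s$ (for $s \in S$) is again an irreducible $\mathbb{F}_q[L]$-submodule because $L \trianglelefteq S$, so $\sum_{s \in S}(W)s$ is a nonzero $S$-invariant subspace and therefore equals $V$; thus $V|_L$ is completely reducible. Hence in a suitable basis $L$ is block-diagonal, and after refining to an $L$-invariant decomposition of $V$ into irreducibles we get $L \le \GL_{n_1}(q) \times \cdots \times \GL_{n_k}(q)$ with $k \ge 2$ and each projection $L_i$ irreducible in $\GL_{n_i}(q)$; moreover, normalising $L$ and acting irreducibly, $S$ permutes the homogeneous components of $V|_L$ transitively. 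As $L$ is reducible and $V|_L$ is semisimple, the commutant $C := C_{M_n(\mathbb{F}_q)}(L) = \mathrm{End}_{\mathbb{F}_q[L]}(V)$ strictly contains $\mathbb{F}_q I_n$, and by the double centralizer theorem $\mathbb{F}_q[L] = C_{M_n(\mathbb{F}_q)}(C)$ is a proper subalgebra of $M_n(\mathbb{F}_q)$ containing $L$.

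The heart of the argument is to find $x \in \SL_n(q)$ with $\langle C, C^x \rangle = M_n(\mathbb{F}_q)$: once this holds, $L \cap L^x \le \mathbb{F}_q[L] \cap \mathbb{F}_q[L]^x = C_{M_n(\mathbb{F}_q)}(\langle C, C^x\rangle) = \mathbb{F}_q I_n$, so $L \cap L^x \le Z(\GL_n(q))$, as wanted. When $V|_L$ is $L$-homogeneous, $C$ is a full matrix algebra over a finite extension of $\mathbb{F}_q$, and a sufficiently general $x$ separates the $C$- and $C^x$-invariant subspaces enough to force $\langle C, C^x \rangle = M_n(\mathbb{F}_q)$ (and $\det x = 1$ is easy to arrange since $n \ge 2$). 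The main obstacle is the complementary case, where $S$ permutes several pairwise non-isomorphic homogeneous components: then $C$ is only a product of fields and two of its conjugates need not generate $M_n(\mathbb{F}_q)$, so instead one constructs $x$ directly, mixing the components by a matrix built from the $A(k)$ of \eqref{igrek} (as in Lemma \ref{igrekl}) together with block data obtained by applying Theorem \ref{irred} to the projections $L_i$; a short list of small configurations, precisely those coming from the exceptional cases of Theorem \ref{irred}, is then settled by direct computation.
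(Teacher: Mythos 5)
Your opening reduction is correct and matches the paper: $L = S\cap\GL_n(q)$ is normal in $S$ with cyclic quotient, and when $k=1$ the module $V|_L$ is completely reducible by the Clifford-style argument (the paper makes this explicit, obtaining $V=U_1\oplus\cdots\oplus U_{n/m}$ with $U_i=U_1\varphi^{i-1}$ of common dimension $m$). Reducing the statement to $L\cap L^x\le Z(\GL_n(q))$ for $x\in\SL_n(q)$ is also right.

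The gap is the homogeneous case. You propose to find $x$ with $\langle C,C^x\rangle=M_n(\mathbb{F}_q)$, where $C=C_{M_n(\mathbb{F}_q)}(L)$, and assert that ``a sufficiently general $x$ separates the $C$- and $C^x$-invariant subspaces enough'' to force this. That is not a proof. By the double centraliser theorem (applicable since $V|_L$ is semisimple), $\langle C,C^x\rangle=M_n(\mathbb{F}_q)$ implies $\mathbb{F}_q[L]\cap\mathbb{F}_q[L]^x=\mathbb{F}_qI_n$, which is essentially the statement you are trying to prove; so the commutant reformulation does not simplify the problem, and no mechanism is given for producing such an $x$, for verifying it can be taken in $\SL_n(q)$, or — crucially — for excluding bad configurations. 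Such configurations exist: Theorem~\ref{irred} shows that for $m=2$ and small $q$ one genuinely cannot achieve $M_i\cap M_i^{x_i}\le Z(\GL_2(q))$ for the irreducible block restrictions $M_i$ (e.g.\ Singer normalisers), and when $L$ sits inside a product of such blocks the intersection $L\cap L^x$ can retain non-scalar diagonal elements. Your sketch silently assumes these obstructions are absent in the homogeneous case, which they need not be.

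The homogeneous/non-homogeneous split also buys you nothing. The paper's argument — decompose $V$ into $L$-irreducibles, apply Theorem~\ref{irred} to each block restriction, then conjugate by the explicit matrix $\diag[x_1,\ldots,x_{n/m}]\bigl(I_m\otimes A(n/m)\bigr)$ from Lemma~\ref{igrekl} — works uniformly, whether or not the constituents are pairwise isomorphic. In the exceptional block cases the paper makes a further move you do not: all the $N_i$ are $\GL_m(q)$-conjugate, so $M$ sits inside an irreducible solvable wreath product $N_1\wr P\le\GL_n(q)$, and Theorem~\ref{irred} is re-applied in dimension $n$; the hypothesis $k=1$ forces $q$ non-prime and $n\ge 4$, which rules out the new exceptional cases. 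Your proposal only invokes the $A(k)$-mixing and Theorem~\ref{irred} in the ``non-homogeneous'' branch and leaves the homogeneous branch resting on the unproven genericity claim, so the argument as written does not establish the lemma.
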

 \begin{proof}
 Since $k=1,$  $V$ has no  non-zero proper $S$-invariant subspace. Let $G= S \cdot \SL_n(q)$ and $M=S \cap \GL_n(q).$ Assume that $M$ is reducible, so there exists $0<U_1 <V$ of dimension $m$ such that $(U_1)M=U_1$ and $U_1$ is $\mathbb{F}_q[M]$-irreducible.
 
 Since $M \trianglelefteq S,$ $U_1 \varphi$ is an irreducible $\mathbb{F}_q[M]$- submodule of dimension $m$ for all $\varphi \in S.$    Let $\varphi \in S$ be such that $M \varphi$ is a generator of $S/M$, so $|\varphi|=|S:M|=r.$ Let $U_i=U_1 \varphi^{i-1}$ for $i \in \{1, \ldots, r\},$ so $V=\sum_{i=1}^r U_i$ since $k=1$ and there are no non-zero proper $S$-invariant subspaces in $V$.

We claim that $$V=U_1 \oplus \ldots \oplus U_{n/m},$$
so $M$ is completely reducible. The argument  is similar to the proof of Clifford's Theorem. Let $W_i=\sum_{j=1}^i U_i.$ If  $W_{i+1}=W_i$ for some $i$, then $W_{i}$ is $G$-invariant. Indeed,
$$(W_i)\varphi=(U_1 \varphi + \ldots + U_i \varphi)=(U_2 + \ldots + U_{i+1})\subseteq W_{i+1}=W_i$$
 and $(W_i)\varphi=W_i$ since $\dim (W_i)\varphi = \dim W_i.$ On the other hand, if $W_{i+1}>W_i$, then  $W_i \cap U_{i+1}$ is a proper $\mathbb{F}_q[M]$-submodule of $U_{i+1},$ so it must be zero. Hence  $W_{i+1}=W_i\oplus U_{i+1}$. Since $k=1,$ $W_{i+1}>W_i$ for $1 \le i \le n/m -1.$ So, by induction, 
 $$V=W_{n/m}=U_1 \oplus \ldots \oplus U_{n/m}.$$
 
  Let $M_i$ be the restriction of $M$ on $U_i$, so $M_i$ is an irreducible solvable subgroup of $\GL_m(q)$ for all $i \in \{1, \ldots, n/m\}.$ Notice that $q$ is not a prime since $k=1$ and $m<n$. Therefore, by Theorem \ref{irred}, one of the following holds:
  \begin{enumerate}
  \item  there exists $x_i \in \SL_m(q)$ such that $M_i \cap M_i^{x_i} \le Z(\GL_m(q))$; \label{clif1}
  \item  $m=2$ and $M_i$ lies in the normaliser $N_i$ of a Singer cycle of $\GL_2(q);$ \label{clif2}
  \item $m=2$, $q=9$ and $M_i$ lies in a subgroup $N_i$ of $\GL_2(9)$ that is conjugate in $\GL_2(9)$ to  $\GL_2(3) \cdot Z(\GL_2(9)).$ \label{clif3} 
\end{enumerate}  
In  case \ref{clif1}, define $x \in \SL_n(q)$ to be $\diag(x_1, \ldots, x_{n/m}) (I_m \otimes A(n/m))$ as in Lemma  \ref{igrekl}. It follows directly from Lemma \ref{igrekl} that $S \cap S^x \cap \GL_n(q) \le Z(\GL_n(q)).$

 In cases \ref{clif2} and \ref{clif3}, all $N_i$ are conjugate in $\GL_2(q)$ by  Lemma \ref{singconj} and computations respectively. Hence, in a suitable basis, $M$ is a subgroup of the group
$$\{\diag[g_1, \ldots, g_{n/m}] \mid g_i \in N_1\}.$$ Hence $M \le N_1 \wr P$ for any solvable transitive subgroup $P$ of $\Sym(n/m).$ By \cite[Lemma 4,\S15]{sup}, $N_1 \wr P$ is irreducible. Now there exists  $x \in \SL_n(q)$ such that $S \cap S^x \cap \GL_n(q)=M \cap M^x \le Z(\GL_n(q))$ by Theorem \ref{irred} since $n\ge 4$ and $q$ is not prime.  
 \end{proof}
 
 We start the proof of Theorem \ref{theorem} with the case $k=1.$

\begin{Th}
Theorem {\rm \ref{theorem}} holds for $k=1.$ 
\end{Th}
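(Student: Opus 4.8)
The plan is to reduce the case $k=1$ to the results of Section~\ref{ch2} via Lemma~\ref{GammairGL}, Lemma~\ref{scfield} and Lemma~\ref{base4}. Recall $S\le\GaL_n(q)$ is maximal solvable, $G=S\cdot\SL_n(q)$, $H=S/Z(\GL_n(q))$, and in the case $k=1$ the group $V$ has no non-zero proper $S$-invariant subspace. Write $M=S\cap\GL_n(q)$, so $M\trianglelefteq S$ and $S/M$ is cyclic (a quotient of $\langle\phi\rangle$). By Remark~\ref{remSleH} and the discussion in Section~\ref{secmeth}, it suffices to produce, in most cases, four conjugates of $S$ whose intersection lies in $Z(\GL_n(q))\cdot\langle\phi^i\rangle$-type scalars-plus-field subgroup, and then to show $b_S(G)\le 4$ so that Lemma~\ref{base4} gives $\Reg_S(G,5)\ge 5$; in the residual cases where $b_S(G)=5$ one exhibits the five regular orbits directly by perturbing the conjugating elements as in the constructive method described in Section~\ref{secmeth}.

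First I would split according to Lemma~\ref{GammairGL}. If there exists $x\in\SL_n(q)$ with $S\cap S^x\cap\GL_n(q)\le Z(\GL_n(q))$, set $H_1=S\cap S^x$; then $H_1\cap\GL_n(q)\le Z(\GL_n(q))$, so Lemma~\ref{scfield} applies (using $n\ge 2$, $(n,q)\ne(2,2),(2,3)$): after conjugating by a suitable $b\in\GL_n(q)$ (which by Lemma~\ref{supSL}, since $\Det$ of the relevant groups is all of $\mathbb{F}_q^*$ whenever $\SL_n(q)\le G$, can be taken in $\SL_n(q)$), every element of $H_1^b$ has the shape $\phi^i g$ with $g\in Z(\GL_n(q))$. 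Such a group is cyclic of order dividing $f\cdot|Z(\GL_n(q))|$, hence its image in $\overline G$ is cyclic; intersecting with one or two further conjugates — using Theorem~\ref{zenab} applied to the abelian groups $\langle\phi^i\rangle Z$ and a conjugate — kills the field part and then the scalar part, yielding $b_S(G)\le 4$. Then Lemma~\ref{base4} finishes.

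The other branch of Lemma~\ref{GammairGL} is when $M=S\cap\GL_n(q)$ is an irreducible solvable subgroup of $\GL_n(q)$. Here I would pass to a maximal solvable overgroup $\widetilde M$ of $M$ in $\GL_n(q)$ and invoke Theorem~\ref{irred}: either $b_{\widetilde M}(\widetilde M\cdot\SL_n(q))=2$, giving $x_1\in\SL_n(q)$ with $M\cap M^{x_1}\le Z(\GL_n(q))$, and we are back in the previous situation after one conjugation (so $b_S(G)\le 5$ and one verifies $\Reg_S(G,5)\ge 5$ by the explicit-perturbation argument); or we are in one of the short list of exceptions of Theorem~\ref{irred}, where in each case Theorem~\ref{irred}, Theorem~\ref{sec311lem} or Lemma~\ref{lemRTdiagfield} provides $x\in\SL_n(q)$ with $M\cap M^x$ inside $D(\GL_n(q))$ or $RT(\GL_n(q))$; combining with Lemmas~\ref{irrtog} and~\ref{diag} (and Lemma~\ref{prtoirr} in the wreathed exceptions) one drives the intersection of several conjugates of $M$ down to $Z(\GL_n(q))$, after which Lemma~\ref{scfield} handles the field part as above. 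Throughout, one must track that $S/M$ is a cyclic group of field automorphisms normalising $M$, so the conjugating elements must be chosen to also control the $\phi$-part; this is exactly where Lemma~\ref{lemRTdiagfield}, which computes $N_{\GaL_n(q)}$ of the relevant Singer-type and $\GL_2(3)\cdot Z$-type subgroups, is used.

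The main obstacle I expect is the bookkeeping in the exceptional cases of Theorem~\ref{irred} when $q=p^f$ with $f>1$: there $M$ need not satisfy $b_M(M\cdot\SL_n(q))=2$, one only controls $M\cap M^x$ up to $RT$ or $D$, and simultaneously $S$ properly contains $M$ with a non-trivial field part, so the conjugating elements must be engineered to respect both the triangular structure (for Lemmas~\ref{irrtog},~\ref{diag}) and the semilinear part (via Lemma~\ref{lemRTdiagfield} and Lemma~\ref{scfield}). Verifying that a uniform choice of five conjugates yields five distinct regular orbits, rather than merely a trivial intersection, in the genuinely tight subcases (e.g.\ $n=4$, small $q$, wreathed $\GL_2(3)$) is the delicate point and may require either the perturbation argument of Proposition~\ref{case5prop}-type or a direct computation as described in Section~\ref{gapsec}.
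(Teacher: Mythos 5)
Your main branch is, in substance, the paper's argument. You use Lemma~\ref{GammairGL} and Theorem~\ref{irred} to produce $x\in\SL_n(q)$ with $(S\cap S^x)\cap\GL_n(q)\le Z(\GL_n(q))$, observe that $H\cap H^{\overline{x}}$ is therefore cyclic in $\overline{G}$, apply Theorem~\ref{zenab} (with $\mathbf{F}(\overline{G})=1$) to kill it with one more conjugate, and conclude $b_S(G)\le 4$, after which Lemma~\ref{base4} finishes. The detour through Lemma~\ref{scfield} is unnecessary here --- cyclicity of $H\cap H^{\overline{x}}$ already follows from the fact that it meets $\PGL_n(q)$ trivially and so embeds in the cyclic outer field-automorphism group --- and your parenthetical claim that Lemma~\ref{supSL} lets you take the conjugator $b$ in $\SL_n(q)$ is not justified (you would need $\Det(S\cap S^x)=\mathbb{F}_q^*$, which has not been established), but since Zenkov's theorem is applied in $\overline{G}$ this step can simply be dropped. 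Your ``$b_S(G)\le 5$'' in the irreducible non-exceptional subcase should read $\le 4$, for the same reason.

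Your handling of the exceptional cases of Theorem~\ref{irred} has a genuine gap. You propose to drive $M\cap M^x$ through $RT$ or $D$ via Lemmas~\ref{irrtog}, \ref{diag} and~\ref{prtoirr}, and then invoke Lemma~\ref{scfield}. But Lemma~\ref{diag} requires $n_1<n$, i.e.\ a proper non-zero $S$-invariant subspace, which is exactly what $k=1$ forbids; and Lemma~\ref{scfield} requires $H\cap\GL_n(q)\le Z(\GL_n(q))$, which you do not yet have when you have only forced the intersection into $D(\GL_n(q))$ or $RT(\GL_n(q))$. The chain of reductions you describe therefore does not close. The paper's treatment of the exceptions is instead a two-line observation that you did not spot: in items $(4)$--$(6)$ of Theorem~\ref{irred} the field size $q$ is prime, so $f=1$, $\GaL_n(q)=\GL_n(q)$, hence $S=M$ and $b_S(G)\le 4$ follows outright from Theorem~\ref{irred}; in items $(1)$--$(3)$ one has $n=2$ and $b_S(G)\le 3$ by \cite[Table 3]{burness} for $q>4$, and by computation for $q=4$. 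In every subcase one lands at $b_S(G)\le 4$, so Lemma~\ref{base4} gives $\Reg_S(G,5)\ge 5$ immediately; there is no residual $b_S(G)=5$ situation for $k=1$, and the orbit-perturbation machinery you allude to is not needed here.
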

\begin{proof}
Let $M = S \cap \GL_n(q)$ and recall that $H=S/Z(\GL_n(q))$. By Lemma \ref{GammairGL}, if $M$ is not a subgroup of one of the groups listed in  \ref{irred11} -- \ref{irred15} of Theorem \ref{irred}, then there exists $x \in \SL_n(q)$ such that $M \cap M^x \le Z(\GL_n(q)).$ So $H \cap H^{\overline{x}}$ is a cyclic subgroup of $\overline{G}$ and by Theorem \ref{zenab} there exists $\overline{y} \in \overline{G}$ such that 
$$(H \cap H^{\overline{x}}) \cap (H \cap H^{\overline{x}})^{\overline{y}}=1.$$ Hence $b_S(G) \le 4$ and $\Reg_S(G,5) \ge 5$ by Lemma \ref{base4}.

And if  $M$ is a subgroup of one of the groups in \ref{irred13} -- \ref{irred15} of Theorem \ref{irred}, then $S=M$ and $b_S(G) \le 4.$ If  $M$ is a subgroup of one of the groups in \ref{irred11} -- \ref{irred135}, then $n=2$ and $b_S(G) \le 3$ by \cite[Table 3]{burness}  for $q>4$ and by computation for $q=4$.  So $\Reg_S(G,5) \ge 5$ by Lemma \ref{base4}.
\end{proof}

 For the rest of the section we assume that $k>1.$ Our proof for $q=2$ naturally splits into two cases.  If $q=2$ and $S_0$ is the normaliser of a Singer cycle of $\GL_3(2)$, then there is no $x, y \in \GL_3(2)$ such that $S_0^{x} \cap S_0^{y}$ is contained in $RT(\GL_3(2))$ (see Theorem \ref{irred}). So, in Theorem \ref{lem41}, we assume that if $q=2$, then there is no  $i \in \{1, \ldots, k\}$ such that $\gamma_i(S)$ is the normaliser of a Singer cycle of $\GL_3(2).$ In Theorem \ref{lem42} we address the case where there exists such an $i$.

\begin{Th}
\label{lem41}
Let $k>1.$ Theorem {\rm \ref{theorem}} holds if
 $q \ge 3$, or
if  $q=2$ and there is no  $i \in \{1, \ldots, k\}$ such that $\gamma_i(S)$ is the normaliser of a Singer cycle of $\GL_3(2).$

\end{Th}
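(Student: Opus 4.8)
\textbf{Proof proposal for Theorem~\ref{lem41}.}

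The plan is to run the constructive scheme sketched in \S\ref{secmeth}: pass to the irreducible sections $\gamma_i(S)$ of the flag \eqref{Gchain}, reduce each section with the results of Section~\ref{ch2}, and then reassemble global conjugators in $\SL_n(q)$ that force a short intersection of conjugates of $S$ down to the scalar subgroup $Z(\GL_n(q))$, which (by maximality of $S$ and since $G/Z(\GL_n(q))=\overline G$ is almost simple) equals $S_G$; this gives $b_S(G)\le 5$, and the stronger $\Reg_S(G,5)\ge 5$ is obtained afterwards. First I would fix a basis $\beta$ of $V$ adapted to \eqref{Gchain}, so that $M:=S\cap\GL_n(q)$ has shape \eqref{stup} with diagonal blocks $M_i:=\gamma_i(S)\cap\GL_{n_i}(q)$ and every semilinear element of $S$ has the form $g_0\phi_\beta^{\,j}$ with $g_0$ block upper triangular.

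The per-section step is the heart of the argument: for each $i$ I claim there is a \emph{single} $x_i\in\SL_{n_i}(q)$ with $\gamma_i(S)\cap\gamma_i(S)^{x_i}\le RT(\GL_{n_i}(q))\rtimes\langle\phi\rangle$. To produce $x_i$ I would distinguish cases on $M_i$. If $M_i$ is reducible, I run the Clifford-type argument from the proof of Lemma~\ref{GammairGL} to realise $M_i$ (and then $\gamma_i(S)$, which permutes the homogeneous components) inside a wreath product $N\wr P$, apply Theorem~\ref{irred} to a maximal solvable overgroup of $N$, and lift the resulting conjugator through Lemma~\ref{prtoirr}; if $M_i$ is irreducible I apply Theorem~\ref{irred} directly. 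Inspecting the conclusions of Theorem~\ref{irred}, the only outputs that do not already yield $\gamma_i(S)\cap\gamma_i(S)^{x_i}\le RT(\GL_{n_i}(q))$ with one conjugator are: the normaliser of a Singer cycle of $\GL_3(2)$, which is excluded by hypothesis when $q=2$; and the small cases $q\in\{5,7\}$ with $S/Z\cong 2^2.\Sp_2(2)$ and $q=3$ with sections built from $\GL_2(3)$, which occur only over prime fields, in low dimension, and are dispatched by enlarging to an explicit overgroup and checking \eqref{gapintM} computationally as in \S\ref{gapsec}. When $q=p^f$ with $f>1$ so a field automorphism is present, I would use Lemma~\ref{lemRTdiagfield} for the semilinear Singer normalisers and for the $q=9$ group $\GL_2(3)\cdot Z(\GL_2(9))\rtimes\langle\phi\rangle$ (part~\ref{lrtdf1} and part~\ref{lrtdf2} each provide a \emph{single} $\SL$-conjugator landing in $RT\rtimes\langle\phi\rangle$), and Lemma~\ref{scfield} to absorb any leftover field-automorphism part once the linear part of a section-intersection is central; for a one-dimensional section the linear part is automatically $\le Z(\GL_1(q))$ and Lemma~\ref{scfield} applies at once.

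Next I would assemble and descend. Setting $x:=\diag[x_k,\dots,x_1]\in\SL_n(q)$, the sectionwise conclusions force $S\cap S^x\le RT(\GL_n(q))\rtimes\langle\phi\rangle$ (the off-diagonal blocks are already above the diagonal, the wreath bookkeeping inside a block is exactly Lemma~\ref{igrekl}, and the presence of $\phi_\beta$ is harmless since $\phi_\beta$ normalises each $\GL_{n_i}(q)$ and commutes with the relevant sign matrices). Applying the flip element of Lemma~\ref{irrtog} gives $S\cap S^x\cap S^y\cap S^{xy}\le D(\GL_n(q))\rtimes\langle\phi\rangle$ (four conjugates); here the usual computation extends to $\phi$-parts because an element that is simultaneously upper and lower triangular up to a field automorphism is diagonal up to a field automorphism. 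Finally I invoke Lemma~\ref{diag} --- legitimately, since $k>1$ means $S$ stabilises a proper non-zero subspace, and whose proof applies verbatim in the semilinear setting because the auxiliary vectors $u_j$ have all coordinates equal to $1$ and hence are fixed by $\phi$ --- to obtain $z\in\SL_n(q)$ with $(D(\GL_n(q))\rtimes\langle\phi\rangle)\cap S^z\le Z(\GL_n(q))$. Then $S\cap S^x\cap S^y\cap S^{xy}\cap S^z=S_G$, so $b_S(G)\le 5$.

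For the refinement $\Reg_S(G,5)\ge 5$: in the generic situation a single sectionwise conjugation already drives each section-intersection into $Z(\GL_{n_i}(q))$, so one of the first four conjugates can be dropped and $b_S(G)\le 4$, whence Lemma~\ref{base4} applies; in the residual cases where five conjugates are genuinely needed I would perturb $x,y,z$ by right-multiplying with representatives $h_1,h_2,h_3$ of suitable cosets inside $\SL_n(q)$-subgroups and verify, as in the analogous computations elsewhere in the series, that the five resulting $5$-tuples are regular and pairwise non-equivalent. The main obstacle I anticipate is the bookkeeping of the field-automorphism factors when $f>1$: the whole descent stays within five conjugates only if \emph{every} section can be pushed into $RT\rtimes\langle\phi\rangle$ by one $\SL$-conjugation, so the delicate point is to make the ``single conjugator'' content of Lemma~\ref{lemRTdiagfield} (especially part~\ref{lrtdf2}) interlock with the block-diagonal and flip constructions without spending an extra conjugate. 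A secondary difficulty is the finite list of small exceptional sections over $\mathbb F_3,\mathbb F_5,\mathbb F_7$, where Theorem~\ref{irred} gives $b>2$ with no $RT$-statement and one must embed into a concrete overgroup and compute; and, as always, the explicit construction of the five distinct regular orbits needs its own careful, if routine, verification.
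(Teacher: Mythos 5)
There is a genuine gap: your ``per-section'' claim --- that for every $i$ there is a \emph{single} $x_i\in\SL_{n_i}(q)$ with $\gamma_i(S)\cap\gamma_i(S)^{x_i}\le RT(\GL_{n_i}(q))\rtimes\langle\phi\rangle$ --- fails precisely in the cases that make this theorem hard, and the fix you propose does not work at that level. When $f=1$ ($q$ prime) and some $n_i=2$, Theorem~\ref{irred} gives \emph{no} $RT$- or $D$-statement for the following sections: $\gamma_i(S)=\GL_2(2)$ or $\GL_2(3)$ (these are the unique irreducible maximal solvable subgroups, but $(n,q)\in\{(2,2),(2,3)\}$ is excluded from Theorem~\ref{irred}, and in any case $\GL_2(q)\cap\GL_2(q)^{x}=\GL_2(q)$ for every $x$, so no single conjugate shrinks anything); the Singer normaliser over $q=5$ (part~(1) requires $q>5$ for the $D$-conclusion); and the $2^2.\Sp_2(2)$-type groups over $q\in\{5,7\}$ (part~(4) has no $RT$-statement at all). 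Your list omits $q=2$ and the $q=5$ Singer case entirely, and more fundamentally ``enlarging to an overgroup and checking \eqref{gapintM} computationally'' cannot resolve these: the obstruction is a property of a $2$-dimensional block that occurs inside $\GL_n(q)$ for arbitrarily large $n$, so there is no finite computation to outsource to. The paper's actual fix is structural and global, not per-section: when at least two $n_i$ equal $2$ it conjugates by a permutation matrix $\tilde x$ with $\sigma=(j_1,j_1\!+\!1,\ldots,j_t,j_t\!+\!1)(j_1,j_1\!+\!1)$ that shifts the $(2\times2)$ blocks against one another so the off-block constraint forces them to intersect in diagonals ({\bf Case~1} of Proposition~\ref{case1prop}); when exactly one $n_i=2$ and $f=1$, the proof splits into {\bf Cases~2}--{\bf 5} depending on the row-position of that block, chooses $U$ accordingly, and builds bespoke $z$'s with rescaling vectors tailored to that position (Proposition~\ref{case5prop}). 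None of this survives your simplification.

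A secondary but real flaw is your use of Lemma~\ref{diag} in the semilinear setting. You write that the auxiliary vectors $u_j$ in Lemma~\ref{diag} ``have all coordinates equal to $1$ and hence are fixed by $\phi$'', and cite this as the reason the argument ``applies verbatim''. In fact this is exactly why it does \emph{not}: if $\phi$ fixes each $u_j$, then $\phi$ stabilises $Uz=\langle u_1,\ldots,u_m\rangle$, so $\phi^j g$ with $g$ scalar still lies in the stabiliser of $Uz$, and Lemma~\ref{diag} as stated only forces $(D(\GL_n(q))\rtimes\langle\phi\rangle)\cap H^{z}\le Z(\GL_n(q))\rtimes\langle\phi\rangle$, not down to $Z(\GL_n(q))$. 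Eliminating the $\phi$-part requires building a coefficient $\theta\in\mathbb{F}_q^*$ with $\theta^{p}\ne\theta$ into one of the vectors --- this is what the paper does in Proposition~\ref{ni1} (vector $(v_n)z=\theta v_1+\sum_{i\ge2}v_i$) and in the $u_{(i,j)}$ of {\bf Case~(1.1)} (taking $u_{(1,1)}=\theta v_1+v_2$). Without this adjustment, your chain of five conjugates can end at $Z(\GL_n(q))\rtimes\langle\phi\rangle$, not at the scalars, and the conclusion fails.
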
 
\begin{proof}

Since $k>1,$ there exists a nontrivial $S$-invariant subspace $U<V$ of dimension $m<n$. Let $M$ be $S \cap \GL_n(q).$ We fix  
\begin{equation}
\label{basGLG}
\beta=\{v_1, v_2, \ldots, v_{n-m+1}, v_{n-m+2}, \ldots, v_n \} 
\end{equation}
such that the last $ \left( \sum_{j=1}^i n_j \right)$  
 vectors in $\beta$ form a basis of $V_i$. So  $g \in M$ has  shape 
\begin{equation}
\label{stupG}
\begin{pmatrix}
\gamma_k(g)     & * & \ldots & * & * \\
0          &    \gamma_{k-1}(g)  & \ldots & * & * \\
     &     & \ddots & \ddots  &    \\
   0    &    \ldots& 0 & \gamma_{2}(g) & * \\
      0    &   \ldots        & \ldots& 0 & \gamma_{1}(g) 
\end{pmatrix}
\end{equation}
 with respect to $\beta.$ Recall that $q=p^f$, so if $f=1,$ then $\GaL_n(q)=\GL_n(q)$ and $S=M.$ Let  $\phi \in \GaL_n(q)$ be such that \begin{equation}
\label{phidef}
\phi : \lambda v_i \mapsto \lambda^p v_i \text{ for all } i \in \{1, \ldots, n\} \text{ and } \lambda \in \mathbb{F}_q.
\end{equation}

Assume that $f>1.$  Let $x= \diag[x_k, \ldots, x_1]$, where $x_i\in \SL_{n_i}(q)$ is such that $\gamma_i(M) \cap (\gamma_i(M))^{x_i} \le RT(\GL_{n_i}(q))$. Such an $x_i$ exists by Lemma \ref{GammairGL} and Theorem \ref{irred}. Let $y$ be as in the proof of Lemma \ref{irrtog}. Recall that  we defined $\gamma_i$ only on $S$, but it is easy to see that $\gamma_i$ can be extended to $\Stab_{\GaL_n(q)}(V_i, V_{i-1})$ via $\gamma_i:g \mapsto g|_{_{(V_i/V_{i-1})}}$ for $g \in \Stab_{\GaL_n(q)}(V_i, V_{i-1})$.

  We present the following piece of the proof as a proposition for easy reference. 
\begin{Prop} 
\label{clm44}
Let $f>1.$ There exists $\beta$ as in \eqref{basGLG} such that for every $\varphi \in (S \cap S^x) \cap (S \cap S^x)^y$, 
$$\varphi=\phi^j g \text{ for } j \in \{0,1, \ldots,f-1\}$$
where $g=\diag[g_k, \ldots, g_1] \in \GL_n(q)$ is diagonal with $g_i=\gamma_i(g) \in D(\GL_{n_i}(q))$.
 Moreover, either $g_i \in Z(\GL_{n_i}(q))$ or $n_i=2$,  $\gamma_i(S)$ normalises a Singer cycle of $\GL_2(q)$ and $(S \cap S^x) \cap (S \cap S^x)^y \le D(\GL_n(q)).$
\end{Prop}
\begin{proof}
Consider $\varphi \in S \cap S^x.$
Notice that, by Theorem \ref{irred}, one of the following holds:
\begin{enumerate}
\item[$(a)$] $\gamma_i(M) \cap \gamma_i(M)^{x_i} \le Z(\GL_{n_i}(q))$; 
\item[$(b)$]  $n_i=2$ and $\gamma_i(M)$ lies  in the normaliser $N$ of a Singer cycle $T$ in $\GL_2(q)$;
\item[$(c)$] $n_i=2$, $q=9$ and $\gamma_i(M)$ lies in $\GL_2(3) \cdot Z(\GL_2(9)).$
\end{enumerate}
  Let us show that we can assume that $S$ lies in the normaliser of $T$ in $\GaL_2(q)$ in $(b)$, and in the normaliser of $\GL_2(3)$ in $\GaL_2(9)$ in $(c)$. In $(b)$, if $\gamma_i(M)$ is imprimitive, so lies in a conjugate of $\GL_1(q)\wr \Sym(2),$ then  $\gamma_i(M) \cap \gamma_i(M)^{x_i} \le Z(\GL_{n_i}(q))$ by Theorem \ref{irred} and $(a)$ holds. If $\gamma_i(M)$ is primitive, then $R=\gamma_i(M) \cap T$ is the unique maximal normal irreducible abelian subgroup, so $S$ normalises $C_{\GL_2(q)}(R)=T$ and $S \le N_{\GaL_2(q)}(T)$ as claimed. In $(c)$, the claim is verified by computation.    

 If $n_i=2$ and $\gamma_i(S)$ lies in the normaliser of a Singer cycle of $\GL_2(q)$ in $\GaL_2(q)$, then, by Lemma \ref{lemRTdiagfield}, we can choose $x_i \in \SL_2(q)$ such that $\gamma_i(S) \cap \gamma_i(S)^{x_i}\le RT(\GL_2(q))$.  If $\varphi$ acts linearly on $V_i/V_{i-1},$ then $\varphi$ acts linearly on $V$, so $S \cap S^x \le \GL_n(q)$.
 
  If $n_i=2,$ $q=9$ and $\gamma_i(S)$ lies in the normaliser of $\GL_2(3)$ in $\GaL_2(9),$ then
  $$\gamma_i((S \cap S^x) \cap (S \cap S^x)^y) \le Z(\GL_2(9))\rtimes \langle \phi \rangle$$  by  Lemma \ref{lemRTdiagfield}.

     Otherwise $\gamma_i(M) \cap \gamma_i(M)^{x_i} \le Z(\GL_{n_i}(q))$ and, by Lemma \ref{scfield}, there exists a basis of $V_i/V_{i-1}$ such that $\gamma_i(\varphi)= \phi^{j_i} g_i$ with scalar $g_i$ and $j_i \in \{0,1, \ldots , f-1 \}.$ Hence we can choose $\beta$ in \eqref{basGLG} such that if $\varphi \in S \cap S^x,$ then $\gamma_i(\varphi)= \phi^{j_i} g_i$ with scalar $g_i.$ Since $(\phi^{j_1})^{-1} \varphi$ acts on $V_1\ne 0$ linearly,   $(\phi^{j_1})^{-1} \varphi \in \GL_n(q),$ so $j_i=j_l$ for all $i,l \in \{1, \ldots, k\}$
and 
\begin{equation}
\label{phitri}
\varphi = \phi^j 
\begin{pmatrix}
g_k     & * & \ldots & * & * \\
0          &    g_{k-1}  & \ldots & * & * \\
     &     & \ddots & \ddots  &    \\
   0    &    \ldots& 0 & g_{2} & * \\
      0    &   \ldots        & \ldots& 0 & g_{1}
\end{pmatrix}
\end{equation}
 with $g_i \in Z(\GL_{n_i}(q))$ unless $\gamma_i(S)$ normalises a Singer cycle of $\GL_2(q)$  where $g_i \in RT(\GL_2(q))$ and $j=0$.


Since $\varphi \in (S \cap S^x)^y,$
$$
\varphi = \phi^j 
\begin{pmatrix}
g_1'     & 0 & \ldots & 0 & 0 \\
*          &    g_{k-1}'  & \ldots & 0 & 0 \\
     &     & \ddots & \ddots  &    \\
   *    &    \ldots& * & g_{2}' & 0 \\
      *    &   \ldots        & \ldots& * & g_{1}'
\end{pmatrix}
$$ with $g_i$ either scalar or lower-triangular. So 
\begin{equation}
\label{phidiagG}
\varphi =\phi^j g \text{ where } g=\diag[g_k, \ldots, g_1]
\end{equation}
 with $g_i$ scalar unless $\gamma_i(S)$ normalises a Singer cycle of $\GL_2(q)$  where $g_i \in D(\GL_2(q))$ and $j=0$.  
\end{proof}

\medskip

We now resume our proof of Theorem \ref{lem41}.
Let $\Omega$ be the set of right $S$-cosets in $G$ and let $\overline{\Omega}$ be the set of right $H$-cosets in $\overline{G}$, where the action is given by  right multiplication.  Since $S$ is maximal solvable and $$S_{G}:=\cap_{g \in G} S^g=Z(\GL_n(q)),$$ $\Reg_S(G,5)$ is the number of $\overline{G}$-regular orbits on  $\overline{\Omega}^5$.   Therefore, $$\overline{\omega}=(H\overline{g_1},H\overline{g_2},H\overline{g_3},H\overline{g_4},H\overline{g_5}) \in \overline{\Omega}^5$$ is regular under the action of $\overline{G}$ if and only if the stabiliser of $$ \omega =(Sg_1, Sg_2, Sg_3,Sg_4, Sg_5) \in \Omega^5$$ under the action of $G$ is equal to $Z(\GL_n(q))$, where $g_i$ is a preimage in $\GaL_n(q)$ of $\overline{g_i}.$  
In our proof we say that   $\omega \in \Omega^5$ is regular if it is stabilised only by elements from $Z(\GL_n(q))$, so $\overline{\omega}$ is regular in terms of the standard definition; that is, the stabiliser of $\overline{\omega}$ in $\overline{G}$ is trivial.  

The proof of Theorem \ref{lem41} splits into five cases. To show $\Reg_S(G,5) \ge 5$, in each case we find five regular orbits in $\Omega^5$ or show that $b_S(G)\le 4,$ so $\Reg_S(G,5) \ge 5$ by Lemma \ref{base4}. Recall that $n_i = \dim V_i/V_{i-1}$ for $i \in \{1, \ldots, k\}.$  Different cases arise according to the number of $n_i$ for $i \in \{1, \ldots, k\}$ which equal 2  and in what rows of  $g \in M=S\cap \GL_n(q)$ the submatrix $\gamma_i(g)$ is located for $n_i=2.$ We now specify the cases.
\begin{description}[before={\renewcommand\makelabel[1]{\bfseries ##1}}]
\item[{\bf Case 1.}] Either $f>1$, or $f=1$ and the number of $i \in \{1, \ldots, k\}$ with $n_i=2$ is not one;
\item[{\bf Case 2.}] $f=1, $ $n$ is even, there exist exactly one $i \in \{1, \ldots, k\}$ with $n_i=2$ and $\gamma_i(g)$ appears in rows  $\{n/2,n/2+1\}$ of $g \in M$ for such $i$;
\item[{\bf Case 3.}] $f=1, $ $n$ is odd, there exist exactly one $i \in \{1, \ldots, k\}$ with $n_i=2$ and $\gamma_i(g)$ appears in rows  $\{(n+1)/2,(n+1)/2+1\}$ of $g \in M$ for such $i$;
\item[{\bf Case 4.}] $f=1, $ $n$ is odd, there exist exactly one $i \in \{1, \ldots, k\}$ with $n_i=2$ and $\gamma_i(g)$ appears in rows  $\{(n-1)/2,(n-1)/2+1\}$ of $g \in M$ for such $i$;
\item[{\bf Case 5.}] $f=1,$ there exist exactly one $i \in \{1, \ldots, k\}$ with $n_i=2$ and none of {\bf Cases 2 -- 4} holds. 
\end{description}


Our proof of Theorem \ref{lem41} in {\bf Cases 2 -- 5} is analogous and repetitive, so we only demonstrate a proof for {\bf Cases 1} and {\bf 5}. A detailed proof for {\bf Cases 2 -- 4} can be found in \cite{thesis}.

\medskip

Before we proceed with the proof of Theorem \ref{lem41}, let us resolve two situations which both arise often in our analysis of these cases and can be readily settled.

\begin{Prop}
\label{ni1}
Let $S$ be a maximal solvable subgroup of $\GaL_n(q),$ let $\beta$ be as in \eqref{basGLG},  $n \ge 2$ and $(n,q)$ is neither $(2,2)$ nor $(2,3).$ If  all $V_i$ in \eqref{Gchain} are such that $n_i=1,$ then there exist $x,y,z \in \SL_n(q)$ such that $S \cap S^x \cap S^y \cap S^z \le Z(\GL_n(q)).$ 
\end{Prop}
\begin{proof}
Let $\sigma =(1, n)(2, n - 1) \ldots ([n/2], [n/2 + 3/2]) \in \Sym(n).$ We define $x,y,z \in \SL_n(q)$ as follows:
\begin{itemize}
 \item $x = \diag(\sgn(\sigma), 1\ldots, 1) \cdot \per (\sigma);$ 
\item $(v_i)y=v_i$ for $i \in \{1, \ldots, n-1\}$ and $(v_n)y= \sum_{i=1}^n v_i$;
\item $(v_i)z=v_i$ for $i \in \{1, \ldots, n-1\}$ and $(v_n)z= \theta v_1 + \sum_{i=2}^n v_i$, 
\end{itemize}
where $\theta$ is a generator of $\mathbb{F}_q^*.$ Let $\varphi \in S \cap S^x \cap S^y \cap S^z.$ Since $\varphi \in S \cap S^x,$ it stabilises $\langle v_i \rangle $ for all $i \in \{1, \ldots, n\}.$ So $\varphi =\phi^j \diag(\alpha_1, \ldots, \alpha_n)$ for $j \in \{0,1, \ldots, f-1\}$ and $\alpha_i \in \mathbb{F}_q^*.$ Since $\varphi \in S^y,$ it stabilises $(V_1)y =\langle v_n \rangle y= \langle v_1 + \ldots +v_n \rangle.$ So $\alpha_1=\alpha_i$ for $i \in \{2, \ldots, n\}.$  Since $\varphi \in S^z,$ it stabilises $(V_1)z =\langle v_n \rangle z= \langle \theta v_1 + v_2 + \ldots +v_n \rangle.$ So $\theta^{p^j}=\theta$ and $j=0.$ Hence $\varphi \in Z(\GL_n(q)).$ 
\end{proof}

\begin{Prop}
\label{n3GL}
Fix a basis $\beta$ as in \eqref{basGLG}. Let $n \in \{2,3\}$ and $(n,q)$ is neither $(2,2)$ nor $(2,3)$. Let $S$ be a maximal solvable subgroup of $\GaL_n(q).$ If $S$ stabilises a  non-zero proper subspace $V_1$ of $V$, then there exist $x,y,z \in \SL_n(q)$ such that 
 $$S \cap S^x \cap S^y \cap S^z \le Z(\GL_n(q)).$$
\end{Prop}
\begin{proof}
We can assume that $V_1$ has no non-zero proper $S$-invariant subspaces. Let $V_i$ be as in \eqref{Gchain}.

If $n=2$, then $\dim V_1=1$ and $n_1=n_2=1$, so the statement follows by Proposition \ref{ni1}.

Assume $n=3$.  If $k=3,$ so $n_i=1$ for all $i$, then the statement follows by Proposition \ref{ni1}, so we assume $k=2.$

Let $n_1=1,$ so $n_2=2.$ Let $f>1.$ If $q=9$, the proposition is verified using computations, so we assume $q\ne 9.$ Assume that $\gamma_2(S)$ is a subgroup of the normaliser of a Singer cycle of $\GL_2(q)$ in $\GaL_2(q).$ Therefore, by \eqref{2sindiagodd} and \eqref{2sindiageven}, there exists $x_1 \in \SL_2(q)$ such that $\gamma_2(S) \cap (\gamma_2(S))^{x_1} \le \langle \varphi \rangle Z(\GL_2(q))$ where $\varphi = \diag(-1,1)$ if $q$ is odd and $\varphi = \begin{pmatrix}
1 & 0\\
1 & 1
\end{pmatrix}$ if $q$ is even. Hence, if $x=\diag[x_1,1],$ then $S \cap S^x \le \GL_3(q)$ and matrices in $S \cap S^x$ have shape 
$$
 \begin{pmatrix}
\alpha_1 \varphi & *\\
0 & \alpha_2
\end{pmatrix}
$$ with $\alpha_1, \alpha_2 \in \mathbb{F}_q^*.$ Let $y=\per((1,2,3))$ and let $z=z_1$ if q is odd and $z=z_2$ if $q$ is even where
$$
z_1=
\begin{pmatrix}
1&0&0\\
0&1&0\\
1&1&1
\end{pmatrix}, \text{ }
z_2=\begin{pmatrix}
0&1&0\\
0&0&1\\
1&0&0
\end{pmatrix}.
$$
Calculations show that $S \cap S^x \cap S^y \cap S^z \le Z(\GL_n(q)).$

If $\gamma_2(S)$ does not normalise a Singer cycle of $\GL_2(q),$ then, by Theorem \ref{irred}, 
   there exists $x_1 \in \SL_2(q)$ such that $\gamma_2(S) \cap (\gamma_2(S))^{x_1} \cap \GL_2(q) \le Z(\GL_2(q)).$ Let $x=\diag[x_1,1]$ so $\varphi \in S \cap S^x$ has shape \eqref{phitri} with $g_2 \in Z(\GL_2(q))$ and $g_1 \in \mathbb{F}_q^*.$ In particular, let 
$$
\varphi =\phi^j
\begin{pmatrix}
\alpha_1 & 0 & \delta_1\\
 0&     \alpha_1 & \delta_2\\
0 & 0& \alpha_2  
\end{pmatrix}.
$$
 Let $y=
\begin{pmatrix}
0&1&0\\
1&0&0\\
\theta&1&1
\end{pmatrix}$ and consider $\varphi \in (S \cap S^x) \cap (S \cap S^x)^y.$ Since $\varphi \in S^y,$ it stabilises $$(V_1)y=\langle v_3 \rangle y= \langle \theta v_1 +v_2 +v_3 \rangle.$$ Therefore,
$$(\theta v_1 +v_2 +v_3)\varphi = \theta^{p^j} \alpha_1 v_1 + \alpha_1 v_2 + (\alpha_2 +\delta_1 +\delta_2)v_3 \in \langle \theta v_1 +v_2 +v_3 \rangle,$$
so $\theta^{p^j} \alpha_1= \theta \alpha_1$ and $j=0$. Thus, $(S \cap S^x) \cap (S \cap S^x)^y \le \GL_3(q).$ Now calculations  show that $(S \cap S^x) \cap (S \cap S^x)^y \le Z(\GL_3(q)).$

Now assume $f=1$ (we  continue to assume that $n_1=1$). Let $\sigma=(1,3) \in \Sym(3)$ and $x=\diag(\sgn(\sigma),1,1) \cdot {\rm perm}(\sigma) \in \SL_3(q)$. Matrices from $S \cap S^x$ have shape  
  \begin{equation*} 
  \begin{pmatrix}
*    & 0 & 0  \\
* &*    & *  \\
  0&    0    &   *      
\end{pmatrix}.
\end{equation*}
Let   
  \begin{equation*} 
y=  \begin{pmatrix}
0    & -1 & 0  \\
1 &0    & 0  \\
  1&    1    &   1      
\end{pmatrix} \in \SL_3(q).
\end{equation*}
It is easy to check that $$(S\cap S^x) \cap (S\cap S^x)^y \le Z(\GL_3(q)).$$

To complete the proof, we assume  $n_1=2,$ so $n_2=1.$ Let $\iota$ be the inverse-transpose map on $\GL_n(q)$. As mentioned in the introduction, $\iota$ induces an automorphism on $\SL_3(q)$ and we can consider the group $\GaL_3(q) \rtimes \langle \iota \rangle$ (here $\iota$ and $\phi$ commute). It is easy to see that $S^{\iota}$ is a maximal solvable subgroup of $\GaL_3(q)$ stabilising $1$-dimensional subspace of $V$. Hence, by taking $x,y,z \in \SL_3(q)$ for $S^{\iota}$  as in the case $n_1=1,$ we obtain
\[(S\cap S^{x^{\iota}}) \cap (S\cap S^{x^{\iota}})^{y^{\iota}} \le Z(\GL_3(q)). \qedhere\]
\end{proof} 

We now consider {\bf Cases 1} and {\bf 5} in the following two propositions.

\begin{Prop}
\label{case1prop}
Theorem $\ref{lem41}$ holds in {\bf Case 1}.
\end{Prop} 
\begin{proof}
In this case, there are four conjugates of $M=S \cap \GL_n(q)$ whose intersection lies in $D(\GL_n(q)).$ Indeed, if $f>1,$ or $f=1$ and there is no $i \in \{1, \ldots, k\}$ (here $k$ is as in \eqref{Gchain}) such that $n_i=2$, then 
 there exist $x,y \in \SL_n(q)$ such that 
$$(M \cap M^x) \cap (M \cap M^x)^y \le D(\GL_n(q))$$
by Proposition \ref{clm44},  or by Theorem \ref{irred}  and Lemma \ref{irrtog}.

Assume $f=1$ (so $S=M$) and $t=|\{i \in \{1, \ldots, k\} \mid n_i=2\}|\ge 2.$ Let $j_1, \ldots, j_t \in \{1, \ldots, n\}$ be such that    $(2 \times 2)$ blocks (corresponding to $V_i/V_{i-1}$ of dimension $2$) on the diagonal in matrices of $M$ occur in the rows $$(j_1, j_1+1), (j_2, j_2 +1), \ldots, (j_t, j_t+1).$$ 
Let $\tilde{x}=\diag(\sgn(\sigma), 1, \ldots, 1) \cdot {\rm perm}(\sigma) \in \SL_n(q),$ where  $$\sigma=(j_1, j_1+1, j_2, j_2 +1, \ldots, j_t, j_t+1)(j_1, j_1+1).$$  Let $x= \diag[x_k, \ldots, x_1]\tilde{x}$, where $x_i\in \SL_{n_i}(q)$ is such that $\gamma_i(M) \cap (\gamma_i(M))^{x_i} \le RT(\GL_{n_i}(q))$ if $n_i \ne 2$ and $x_i$ is the identity matrix if $n_i=2.$ Such $x_i$ exist by Theorem \ref{irred}. If $y$ is as in Lemma \ref{irrtog}, then calculations show that  
\begin{equation}\label{m4diag}
(M \cap M^x) \cap (M \cap M^x)^y \le D(\GL_n(q)).
\end{equation}


Let $0<U <V$ be an $S$-invariant subspace and $\dim U=m.$ In general, we take $U=V_i$ for some $i<k$ (in most cases it is sufficient to take $U=V_1$). We fix $\beta$ to be as in \eqref{basGLG} such that Proposition \ref{clm44} holds for $f>1.$ So 
$$U = \langle v_{n-m+1}, v_{n-m+2}, \ldots, v_n \rangle.$$ Since in the proof we reorder $\beta,$ let us fix a second notation for  $$v_{n-m+1}, v_{n-m+2}, \ldots, v_n,$$ namely 
$w_1, \ldots, w_m$
respectively. So $U = \langle w_1, \ldots, w_m \rangle.$

  Our proof splits into the following four  subcases:
\begin{description}[before={\renewcommand\makelabel[1]{\bfseries ##1}}]
\item[{\bf Case (1.1)}] $n-m\ge 2 $ and $m \ge 2$;
\item[{\bf Case (1.2)}] $n-m=m=1$;
\item[{\bf Case (1.3)}] $n-m \ge 2$ and $m=1$;
\item[{\bf Case (1.4)}] $n-m=1$  and $m \ge 2$ (dual to {\bf Case (1.3)} via the inverse-transpose automorphism).
\end{description}

{\bf Case (1.1).} Let $n-m\ge 2 $ and $m \ge 2$, so $n \ge 4$. We claim that there exist $z_i \in \SL_n(q)$ for $i=1, \ldots, 5$ such that the points $\omega_i=(S,Sx,Sy,Sxy,Sz_i)$ lie in distinct regular orbits. 

 Recall from \eqref{m4diag} and Proposition \ref{clm44} that, if either $f=1$ or  $f>1$ and  $\gamma_i(S)$ normalises a Singer cycle of $\GL_2(q)$ for some $i \in \{1, \ldots, k\}$, then $$(S \cap S^x) \cap (S \cap S^x)^y \le D(\GL_n(q)).$$ Also, we can assume that $n_l \ge 2$ for some $l \in \{1, \ldots, k\}$ in \eqref{Gchain}: otherwise Theorem~\ref{lem41} follows by Proposition \ref{ni1}. Let $s \in \{1, \ldots, n\}$ be such that $$V_l = \langle v_s, v_{s+1}, \ldots, v_{s+n_l-1}, V_{l-1} \rangle.$$

 If $(S \cap S^x) \cap (S \cap S^x)^y$ does not lie in $\GL_n(q)$, then   $\varphi \in (S \cap S^x) \cap (S \cap S^x)^y$ stabilises $W=\langle v_s, v_{s+1}, \ldots, v_{s+n_l-1} \rangle$ and the restriction $\varphi|_{_{W}}$ is $\gamma_l(\varphi)=\phi^j g_l$ where $g_l =\lambda I_{n_l}$ for some $\lambda \in \mathbb{F}_q^*$ by \eqref{phidiagG}. We now relabel vectors in $\beta$ as follows: $v_s, v_{s+1}, \ldots, v_{s+n_l-1}$ becomes $v_1, v_{2}, \ldots, v_{n_l}$ respectively; $v_1, v_{2}, \ldots, v_{n_l}$  becomes $v_s, v_{s+1}, \ldots, v_{s+n_l-1}$  respectively;  the remaining labels are unchanged.

  Let $z_1, \ldots, z_5 \in \SL_n(q)$ be such that $$(w_{j})z_i=u_{(i,j)}$$ for $i=1, \ldots, 5$ and $j=1, \ldots, m,$ where

\begingroup
\allowdisplaybreaks
\begin{align*}
& \left\{ 
\begin{aligned}
u_{(1,1)} & =\theta v_1+v_2;\\
u_{(1,2)} & =\theta v_1+v_3+v_4;\\
u_{(1,2+r)} & =\theta v_1+v_{4+r}\phantom{;} \text{ for } r \in \{1, \ldots, m-3\};\\
u_{(1,m)} & =\theta v_1 +\sum_{r=2+m}^{n}v_r;
\end{aligned}
\right. \\
& \left\{ 
\begin{aligned}
u_{(2,1)} & =\theta v_1+v_3;\\
u_{(2,2)} & =\theta v_1+v_2+v_4;\\
u_{(2,2+r)} & =\theta v_1+v_{4+r}\phantom{;} \text{ for } r \in \{1, \ldots, m-3\};\\
u_{(2,m)} & =\theta v_1 +\sum_{r=2+m}^{n}v_r;\\
\end{aligned}
\right. \\
& \left\{ 
\begin{aligned}
u_{(3,1)} & =\theta v_1+v_4;\\
u_{(3,2)} & =\theta v_1+v_2+v_3;\\
u_{(3,2+r)} & =\theta v_1+v_{4+r}\phantom{;} \text{ for } r \in \{1, \ldots, m-3\}; \\
u_{(3,m)} & =\theta v_1 +\sum_{r=2+m}^{n}v_r;
\end{aligned}
\right. \stepcounter{equation}\tag{\theequation} \label{orb}  \\
& \left\{ 
\begin{aligned}
u_{(4,1)} & =v_2+v_3;\\
u_{(4,2)} & =\theta v_1+v_2+v_4;\\
u_{(4,2+r)} & =v_2+v_{4+r}\phantom{;} \text{ for } r \in \{1, \ldots, m-3\};\\
u_{(4,m)} & =v_2 +\sum_{r=2+m}^{n}v_r;
\end{aligned}
\right. \\
& \left\{ 
\begin{aligned}
u_{(5,1)} & =v_2+v_4;\\
u_{(5,2)} & =\theta v_1+v_2+v_3;\\
u_{(5,2+r)} & =v_2+v_{4+r}\phantom{;} \text{ for } r \in \{1, \ldots, m-3\};\\
u_{(5,m)} & =v_2 +\sum_{r=2+m}^{n}v_r.
\end{aligned}
\right.  
\end{align*}
\endgroup

Recall that $\theta$ is a generator of $\mathbb{F}_q^*.$ Such $z_i$ always exist since $m<n$ and $u_{(i,1)}, \ldots, u_{(i,m)}$ are linearly independent  for every $i =1 ,\ldots, 5$. 
Consider $\varphi \in (S \cap S^x) \cap (S \cap S^x)^y \cap S^{z_1}$, so $\varphi=\phi^jg$ as in \eqref{phidiagG} with $g=\diag({\alpha_1, \ldots, \alpha_n})$ for $\alpha_i \in \mathbb{F}_q^*.$ Notice that $\varphi$ stabilises the subspace $Uz_1.$ 
Therefore, 
$$(u_{(1,1)})\varphi =
 \theta^{p^j} \alpha_1 v_1 + \alpha_2 v_2=
 \mu_1 u_{(1,1)} + \ldots + \mu_m u_{(1,m)}
$$ for $\mu_i \in \mathbb{F}_q^*.$ The first expression does not contain $v_i$ for $i>2$, so $\mu_i=0$ for $i>1$ and $(u_{(1,1)})\varphi=\mu_1 u_{(1,1)}= \mu_1(\theta v_1 +v_2).$ Therefore, $\mu_1= \alpha_2.$  Assume  that $\varphi \notin \GL_n(q)$, so, by the arguments before \eqref{orb}, $\alpha_1 =\alpha_2$,  $\theta^{p^j-1}=1$ and $j=0$ which is a contradiction. Thus, we can assume $\varphi=g \in \GL_n(q).$
 The same arguments as in the proof of Lemma \ref{diag} show that if $j\in \{1,\ldots, m\}$ then $$(u_{(1,j)} )g = \delta_j u_{(1,j)}$$ for some $\delta_j \in \mathbb{F}_q$. Since all $u_{(1,j)}$ have $v_1$ in the decomposition \eqref{orb}, all $\delta_j$ are equal and $g$ is a scalar matrix, so $\omega_1$ is a regular point in $\Omega^5.$ It is routine to check that every $\omega_i \in \Omega^5$ is regular. 

Assume that $\omega_1$ and $\omega_2$ lie in the same orbit in $\Omega^5$, so there exists $\varphi \in \GaL_n(q)$ such that $\omega_1 \varphi =\omega_2$. This implies $$\varphi \in (S \cap S^x) \cap (S \cap S^x)^y \cap z_1^{-1}Sz_2.$$
In particular, $\varphi= \phi^j g$ with $g =\diag(\alpha_1, \ldots, \alpha_n) \in D(\GL_n(q))$ and $g=z_1^{-1}\psi z_2$, $\psi \in S$. Consider $(u_{(1,1)})\varphi$.  Firstly, 
\begin{equation}\label{orr}
(u_{(1,1)})\varphi=(\theta v_1+v_2)\varphi= \theta^{p^j} \alpha_1 v_1 + \alpha_2 v_2;
\end{equation}
on the other hand,
$$(u_{(1,1)})\varphi =(u_{(1,1)})z_1^{-1}\psi z_2=(w_{1})\psi z_2=(\eta_{1} w_{1} + \ldots +\eta_{m} w_{m})z_2 \in Uz_2,$$
for some $\eta_i \in \mathbb{F}_q$. So $(u_{(1,1)})\varphi= \delta u_{(2,2)}$ for some $\delta \in \mathbb{F}_q$ since there are no $v_i$ for $i>2$ in decomposition \eqref{orr}. However, 
$$\theta^{p^j} \alpha_1 v_1 + \alpha_2 v_2=\delta u_{(2,2)}$$
if and only if $(u_{(1,1)})\varphi=0$, which contradicts the fact that $\varphi$ is invertible. Hence  there is no such $\varphi$, so points $\omega_1$ and $\omega_2$ are in  distinct regular orbits on $\Omega^5.$ The same arguments show that all $\omega_i$ lie in  distinct regular orbits on $\Omega^5.$   

\medskip

{\bf Case (1.2).} If $n-m=m=1$, then $n=2$ and  Theorem \ref{lem41} follows by Proposition \ref{n3GL}.

\medskip

{\bf Case (1.3).} Let $m=1$ and $n-m \ge 2$, so $n \ge 3.$ 
If $n=3$, then Theorem \ref{lem41} follows by Proposition \ref{n3GL}, so we assume that $m=1$ and $n \ge 4.$ If $k\ge 4$ in \eqref{stup}, then $U = \langle v_{n_1+n_2+1}, \ldots, v_n \rangle$ is an $S$-invariant subspace with $m=n_3 + \ldots + n_k \ge 2$ and $n-m=n_1 +n_2 \ge 2$, so the proof as in {\bf Case (1.1)} using \eqref{orb} works.

 Assume that $k=3.$ If $n_3 \ge 2$, then    $U = \langle v_{n_3+1}, \ldots, v_n \rangle$ is an $S$-invariant subspace with $m=n_1 + n_2 \ge 2$ and $n-m=n_3 \ge 2$, so the proof as in {\bf Case (1.1)} using \eqref{orb} also works.

Let $n_3=1.$ Since $m=1$, $n_1=1.$ If $n_2=2$ and $q\in \{2,3,5\}$, then Theorem~\ref{lem41} is verified by computation. Otherwise, by Theorem \ref{irred} and Lemma \ref{scfield}, there exist $\tilde{x}_1,\tilde{x}_2 \in \SL_{n_2} (q)$  such that $$\gamma_2(S) \cap (\gamma_2(S))^{\tilde{x}_1} \cap (\gamma_2(S))^{\tilde{x}_2} \le \langle \phi^i \rangle Z(\GL_{n_2}(q))$$ for some $i \in \{0, 1, \ldots, f-1\}$. Let $x_1, x_2 \in \GL_n(q)$ be the matrices $\diag[1,\tilde{x}_1,1]$ and $\diag[1,\tilde{x}_2,1]$ respectively.  Let $\tilde{x}= \diag(\sgn(\sigma),1 \ldots, 1) \cdot \per(\sigma) \in \SL_n(q)$ with $\sigma=(1,n)$. Calculations show that $(S \cap S^{x_1} \cap S^{x_2 \tilde{x}}) \cap \GL_n(q) \le D(\GL_n(q)).$ So, with respect to a basis $\beta$ as in \eqref{basGLG},  $\varphi \in S \cap S^{x_1} \cap S^{x_2 \tilde{x}}$ has shape
$$\phi^j \diag(\alpha_1, \alpha_2, \ldots, \alpha_2, \alpha_3)$$ with $j \in \{0,1, \ldots, f-1\}$ and $\alpha_1, \alpha_2, \alpha_3 \in \mathbb{F}_q^*.$ Let 
$$y=
\begin{pmatrix}
1& 0      & 0\ldots  & 0 &0 \\
0& 1      & 0 \ldots  & 0 &0\\
&       &\ddots &  &  &\\
&       & & \ddots &  &\\
0& 0      & 0& \ldots & 1 &0 \\
1& \theta & 1& \ldots & 1 &1
\end{pmatrix}.$$
Calculations show that $S \cap S^{x_1} \cap S^{x_2 \tilde{x}} \cap S^y \le Z(\GL_n(q)).$ 

Now let $k=2$, so $n_2=n-m=n-1$ and $n_2>2$. If $(n,q)$ is $(4,2)$ or $(5,3)$, then Theorem \ref{lem41} is verified by  computation.  Otherwise,  by Theorem \ref{irred} and Lemma \ref{scfield},
there exists $x \in \SL_n(q)$ such that $S \cap S^x$ consists of elements of  shape (with respect to a basis $\beta$ as in \eqref{basGLG})
\begin{equation*} 
\phi^j
  \begin{pmatrix}
\alpha_1 &0 & \dots    & 0 & \delta_1  \\
0 & \alpha_1 & \dots  & 0 & \delta_2 \\
 &  & \ddots  &  &  \\
 0& \dots & 0  & \alpha_{1} & \delta_{n-1} \\
0 & 0 & \dots  & 0 & \alpha_2 \\
\end{pmatrix}
\end{equation*}
where $\alpha_i, \delta_i \in \mathbb{F}_q$ and $j \in \{0,1, \ldots, f-1\}.$
Let $\sigma =(1, n)(2, n - 1) \ldots ([n/2], [n/2 + 3/2]) \in \Sym(n).$ Let $$y = \per(\sigma) \cdot \diag(\sgn(\sigma), 1, \ldots, 1, \theta, \theta^{-1}).$$ Calculations show that $(S \cap S^x) \cap (S \cap S^x)^y \le Z(\GL_n(q)).$
Thus, $b_S(G)\le 4$ and $\Reg_{S}(G,5) \ge 5.$

\medskip

{\bf Case (1.4).} Now let $n-m=1$  and $m \ge 2$,  so $n\ge 3.$   Let $\iota$ be the inverse-transpose map on $\GL_n(q)$ and  consider the group $\GaL_n(q) \rtimes \langle \iota \rangle$. It is easy to see that {\bf Case (1.3)} holds for $S^{\iota}$. Hence, by taking $x,y,z \in \SL_n(q)$ for $S^{\iota}$  as in {\bf Case (1.3)}, we obtain
$$(S\cap S^{x^{\iota}}) \cap (S\cap S^{x^{\iota}})^{y^{\iota}} \le Z(\GL_n(q)).$$ Thus, $b_S(G)\le 4$ and $\Reg_{S}(G,5) \ge 5.$ This concludes the proof of Proposition \ref{case1prop}.
\end{proof}

\bigskip

Recall that $f=1$ for {\bf Cases 2 -- 5}, so $\GaL_n(q)=\GL_n(q)$ and $S=M.$ Therefore, $\beta$ and $\gamma_i$ are as in Lemma \ref{supreduce} and matrices from $S$ have shape \eqref{stup}. Denote $\gamma_i(S)$ by $S_i$.

\begin{Prop}
\label{case5prop}
Theorem $\ref{lem41}$ holds in {\bf Cases 2} -- {\bf 5}.
\end{Prop}
\begin{proof}
As mentioned before Proposition \ref{ni1}, we only provide a detailed proof for {\bf Case 5}.

 Let $i \in \{1, \ldots, k\}$  be  such that $n_i=2$. Our proof  splits into three subcases:
\begin{description}[before={\renewcommand\makelabel[1]{\bfseries ##1}}]
\item[{\bf Case (5.1)}] $n\ge 9$ and $\gamma_i(g)$ appears in rows  $\{l,l+1\}$ of $g \in S$ where $l>n/2$ if $n$ is even and $l>(n+1)/2$ if $n$ is odd;
\item[{\bf Case (5.2)}] $n \ge 9$ and $\gamma_i(g)$ appears in rows $\{l-1,l\},$ where $l \le n/2$ if $n$ is even and $l \le (n-1)/2$ if $n$ is odd;
\item[{\bf Case (5.3)}] $n<9$.
\end{description} 

{\bf Case (5.1).} Consider the case when  the only $(2\times 2)$ block is in  rows $\{l,l+1\},$ where $l>n/2$ if $n$ is even and $l>(n+1)/2$ if $n$ is odd. Denote $s:=n-l$. Let $x= \diag[x_k, \ldots, x_1]$, where $x_i\in \SL_{n_i}(q)$ is such that $S_i \cap S_i^{x_i} \le RT(\GL_{n_i}(q))$ if $n_i \ne 2$ and $x_i$ is the identity matrix if $n_i=2.$ If $y$ is as in Lemma \ref{irrtog}, then  calculations show that  
$$(S \cap S^x) \cap (S \cap S^x)^y$$ consists of matrices of shape 
\begin{equation}\label{ugol} 
\diag[\alpha_1, \ldots, \alpha_{s-1}, \begin{pmatrix} \alpha_s & \beta_s \\ 0 & \alpha_{s+1} \end{pmatrix}, \alpha_{s+1}, \ldots, \alpha_{l-1},\begin{pmatrix} \alpha_l & 0 \\ \beta_{l+1} & \alpha_{l+1} \end{pmatrix}, \alpha_{l+2}, \ldots, \alpha_n ]
\end{equation}

Assume that $n \ge 9$. We take as the $S$-invariant subspace $U$ the subspace with  basis $\{v_{l}, \ldots, v_n\}$, so
$m\ge 2$ and $n-m\ge [n/2] \ge 4.$  Let us rename some basis vectors for convenience, so denote vectors 
$$v_1, \ldots, v_{s-1}, v_{s+2}, \ldots, v_{l-1}, v_{l+2}, \ldots, v_{n}$$ by $$w_1, w_2, \ldots, w_{n-4}$$ respectively.
Let $z_1, \ldots, z_5 \in \SL_n(q)$ be such that $$(v_{n-m+j})z_i=u_{(i,j)}$$ for $i=1, \ldots, 5$ and $j=1, \ldots, m,$ where
\begin{gather}\label{orb2t1}
\left\{ 
\begin{aligned}
u_{(1,1)} & = v_{s} + v_{l+1};\\
u_{(1,2)} & = v_{s}+ v_{s+1}+v_{l}+w_1;\\
u_{(1,2+r)} & =v_{s+1}+w_{1+r}\phantom{;} &\text{ for } r \in \{1, \ldots, m-3\};\\
u_{(1,m)} & =v_{s+1}+\sum_{r=m-1}^{n-4}w_r;
\end{aligned}
\right.
\end{gather}
\begin{gather}
\left\{ 
\begin{aligned}
u_{(2,1)} & = v_{s} + v_{l+1};\\
u_{(2,2)} & = v_{s}+v_{s+1}+v_{l}+w_2;\\
u_{(2,2+r)} & =v_{s+1}+w_{(2,r)}\phantom{;} &\text{ for } r \in \{1, \ldots, m-3\};\\
u_{(2,m)} & =v_{s+1}+\sum_{r=m-2}^{n-5}w_{(2,r)}.
\end{aligned}
\right. 
\end{gather}
Here $w_{(2,1)}, \ldots, w_{(2,n-5)}$ are equal to $w_1, w_3, \ldots, w_{n-4}$ respectively.  
\begin{gather}
\left\{ 
\begin{aligned}
u_{(3,1)} & = v_{s} + v_{l+1};\\
u_{(3,2)} & = v_{s}+ v_{s+1}+v_{l}+w_3;\\
u_{(3,2+r)} & =v_{s+1}+w_{(3,r)}\phantom{;} &\text{ for } r \in \{1, \ldots, m-3\};\\
u_{(3,m)} & =v_{s+1}+\sum_{r=m-2}^{n-5}w_{(3,r)}.
\end{aligned}
\right.
\end{gather} 
Here $w_{(3,1)}, \ldots, w_{(3,n-5)}$ are equal to $w_1, w_2, w_4 \ldots, w_{n-4}$ respectively.  
\begin{gather} 
\left\{ 
\begin{aligned}
u_{(4,1)} & = v_{s} + v_{l+1};\\
u_{(4,2)} & = v_{s}+ v_{s+1}+v_{l}+w_4;\\
u_{(4,2+r)} & =v_{s+1}+w_{(4,r)}\phantom{;} &\text{ for } r \in \{1, \ldots, m-3\};\\
u_{(4,m)} & =v_{s+1}+\sum_{r=m-2}^{n-5}w_{(4,r)}.
\end{aligned}
\right. 
\end{gather}
Here $w_{(4,1)}, \ldots, w_{(4,n-5)}$ are equal to $w_1, w_2, w_3, w_5 \ldots, w_{n-4}$ respectively.  
\begin{gather} 
\left\{ 
\begin{aligned}\label{orb2t3}
u_{(5,1)} & = v_{s} + v_{l+1};\\
u_{(5,2)} & = v_{s}+ v_{s+1}+v_{l}+w_5;\\
u_{(5,2+r)} & =v_{s+1}+w_{(5,r)}\phantom{;} &\text{ for } r \in \{1, \ldots, m-3\};\\
u_{(5,m)} & =v_{s+1}+\sum_{r=m-2}^{n-5}w_{(5,r)}.
\end{aligned}
\right. 
\end{gather}
Here $w_{(5,1)}, \ldots, w_{(5,n-5)}$ are equal to $w_1, w_2, w_3, w_4, w_6, \ldots, w_{n-4}$ respectively.

 Notice that $n \ge 9$ and $n-m \ge 3$ is necessary for such a definition of $u_{(i,j)}$. 

Let $$\omega_i=(S,Sx,Sy,Sxy,Sz_i).$$ We first show that the $\omega_i$ are regular points of $\Omega^5$. Consider $\omega_1$.  The regularity of the remaining points can be shown using the same arguments. Let $t \in (S \cap S^x) \cap (S \cap S^x)^y \cap S^{z_1}$, so it takes shape \eqref{ugol} for some $\alpha_i, \beta_i \in \mathbb{F}_q$ and stabilises the subspace $Uz_1.$ Thus
\begin{equation}\label{b2t1}
(u_{(1,1)})t=\alpha_{s}v_s +\beta_s v_{s+1} + \alpha_{l+1} v_{l+1} +\beta_{l+1}v_l,
\end{equation}
since $t \in (S \cap S^x) \cap (S \cap S^x)^y$ and 
\begin{equation}\label{b2t2}
(u_{(1,1)})t=\eta_1 u_{(1,1)} +\eta_2 u_{(1,2)} + \ldots + \eta_m u_{(1,m)},
\end{equation}
since $t$ stabilises  $Uz_1.$ There is no $w_i$ for $i \ge 1$ in decomposition \eqref{b2t1}, so in \eqref{b2t2} we obtain
$$(u_{(1,1)})t=\eta_1 u_{(1,1)}=\alpha_s u_{(1,1)},$$
in particular $\beta_{s}=\beta_{l+1}=0$, so $t$ is diagonal. Therefore,
\begin{equation}\label{2b2t2}
(u_{(1,2)})t=\alpha_{s}v_s +\alpha_{s+1}v_{s+1}+ \alpha_{l} v_{l} +\alpha_{1}w_1,
\end{equation}
must also lie in $\langle u_{(1,1)}, \ldots, u_{(1,m)} \rangle$. There are no $v_{l+1}$ and $w_i$ for $i \ge 2$ in \eqref{2b2t2}, so 
$$(u_{(1,2)})t= \alpha_s u_{(1,1)}.$$ If $i>2$ then
 the same arguments show that 
$$(u_{(1,i)})t=\alpha_{s+1} u_{(1,i)}=\alpha_s u_{(1,i)},$$
for the remaining $i \in \{1, \ldots , m\},$ so $t$ is scalar. Therefore, $\omega_1$ is regular.

Now we claim that  the $\omega_i$ lie in distinct orbits of $\Omega^5.$ Here we prove that $\omega_1$ and $\omega_2$  lie in distinct orbits;  the remaining cases are similar.  Assume that $\omega_1 g =\omega_2$ for $g \in \GL_n(q),$ so 
$$g \in (S \cap S^x) \cap (S \cap S^x)^y \cap z_1^{-1}Sz_2.$$
Therefore, $g$ has shape \eqref{ugol} and $g=z_1^{-1}hz_2$ for $h \in S,$ so 
\begin{equation}\label{b2t3}
(u_{(1,2)})g=\alpha_{s}v_s +\beta_s v_{s+1} + \alpha_{s+1} v_{s+1} +\alpha_{l}v_l+\alpha_1 w_1,
\end{equation}
and 
\begin{equation*}
(u_{(1,2)})g=(u_{(1,2)})z_1^{-1}hz_2=(v_{n-m+2})hz_2 = \delta_1 u_{(2,1)} + \ldots + \delta_m u_{(2,m)}.
\end{equation*}
Since there is no $w_2$ in \eqref{b2t3}, $\delta_2$ is zero. Therefore, there must be no $v_l$ in \eqref{b2t3}, so $\alpha_l$ must be zero, which contradicts the existence of such an invertible $g.$ Thus, $\omega_1$ and $\omega_2$ lie in  distinct orbits. 

\medskip

{\bf Case (5.2).} Consider the case when $n \ge 9$ and the only $(2\times 2)$ block is in  rows $\{l-1,l\},$ where $l \le n/2$ if $n$ is even and $l \le (n-1)/2$ if $n$ is odd. We take as the $S$-invariant subspace $U$ the subspace with  basis $\{v_{l+1}, \ldots, v_n\}$, so
$m\ge [n/2]$ and $n-m \ge 2.$  If $n-m \ge 3$, then the proof is  as in {\bf Case (5.1).} 

 Consider the case  $n-m=2,$ so $n_k=2$. Let $x= \diag[x_k, \ldots, x_1]$, where $x_i\in \SL_{n_i}(q)$ is such that $S_i \cap S_i^{x_i} \le RT(\GL_{n_i}(q))$ if $n_i \ne 2$ and $x_i$ is the identity matrix if $n_i=2.$ Let $y$ be as in Lemma \ref{irrtog}. If $k=2$ in \eqref{stup}, then there exists $x_1 \in \SL_{n_1}(q)$ such that $S_1 \cap S_1^{x_1} \le D(\GL_{n_1}(q))$, by 
Theorem \ref{irred}, since $n_1=n-2\ge 7.$ Thus, $(S\cap S^x) \cap (S\cap S^x)^y \le D(\GL_n(q))$ and the rest of the proof is as in {\bf Case 1}.

Let $k>2$ and $n_1 \ne 1$. So $n_1 >2$, since the only $(2\times 2)$ block is in  rows $\{l-1,l\},$ where $l \le n/2$. 
We can take $U$ to be the subspace with  basis $\{v_{n -n_1+1}, \ldots, v_n\}$. Therefore, $n-m\ge 3$  and the proof is the same as the one using \eqref{orb2t1} -- \eqref{orb2t3}. 

Let $k>2$ and $n_1=1.$ Denote $\diag(-1,1\ldots,1) \cdot \per((2,n))$ by $\tilde{x}$. Calculations show that  $S \cap S^{x \tilde{x}} \le RT(\GL_n(q))$, so $(S \cap S^{x \tilde{x}}) \cap (S \cap S^{x \tilde{x}})^y\le D(\GL_n(q))$. The rest of the proof is  as in {\bf Case 1}, since we can take $U=V_{k-1}=\langle v_3, \ldots, v_n \rangle,$ so $n-m=2,$ $m\ge 2.$

\medskip

{\bf Case (5.3).}  Now let $n<9.$ If $n=3,$ then Theorem~\ref{lem41} follows by Proposition \ref{n3GL}, so $n \in \{4,5,6,7,8\}$.

Assume that $n_i=1$ for some $i\le k$. So there is a $(1\times 1)$ block in the row $j \le n$ of $g \in S$. Let $\tilde{x}$ be $\diag(\sgn(\sigma), 1, \ldots, 1) \cdot \per(\sigma)$ with $\sigma=(s+1,j)$ if $j>s+1$ and  $\sigma=(j,s,s+1)$ if $j<s.$ It is easy to see that 
$$S \cap S^{x \tilde x}\le RT(\GL_n(q)),$$
so the rest of proof is  as in {\bf Case 1}.

If we exclude the cases previously resolved, then $k \in \{2,3\}$ and it remains to handle the following list of possibilities:
\begin{enumerate}
\item[$(a)$] $k=2$ and $(n,n_1,n_2)$ is one of the following:
$$(5,2,3), (5,3,2), (6,2,4), (6,4,2), (7,2,5), (7,5,2), (8,2,6), (8,6,2)$$
\item[$(b)$] $k=3$ and $(n, n_1, n_2, n_3)$ is $(8,2,3,3)$ or $(8,3,3,2).$
\end{enumerate}
If $(n,q) \ne (6,3),$ then, by Theorem \ref{irred}, for all $i$ such that $n_i\ne 2$  there exist $x_i \in \SL_{n_i}(q)$ such that $S_i \cap S_i^{x_i} \le D(\GL_{n_i}(q))$. Let $x=\diag[x_k, \ldots, x_1]$, where $x_i$ is the identity matrix  if $n_i=2$. It is easy to check directly that $$(S \cap S^x) \cap (S \cap S^x)^y \le D(\GL_n(q))$$
for all cases above, so the rest of the proof is  as in {\bf Case 1}.

  For $(n,q)=(6,3)$ the result is established by computation. This concludes the proof of Proposition \ref{case5prop}.
\end{proof}
  
 Theorem \ref{lem41}  now follows from     Propositions \ref{case1prop} and \ref{case5prop}.
\end{proof}

\begin{Th}
\label{lem42}
If $k>1$,  $q=2$ and  there exists  $i \in \{1, \ldots, k\}$ such that $\gamma_i(S)$ is the normaliser of a Singer cycle of $\GL_3(2),$ then Theorem {\rm \ref{theorem}} holds.
\end{Th}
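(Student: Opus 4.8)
The plan is to reduce \textbf{Theorem \ref{lem42}} to a variant of the previous argument, isolating the ``bad'' composition factor $\gamma_i(S)=N_{\GL_3(2)}(T_3)$ where $T_3$ is a Singer cycle of $\GL_3(2)$, and treating it by hand using part \ref{lrtdf3} of Lemma \ref{lemRTdiagfield}. First I would observe that $|N_{\GL_3(2)}(T_3)|=21$ and that, by Theorem \ref{irred}\ref{irred14}, $b_{S_i}(S_i\cdot \SL_3(2))=3$; more precisely, Lemma \ref{lemRTdiagfield}\ref{lrtdf3} gives an explicit $x\in\SL_3(2)$ with $S_i\cap S_i^{x}$ a cyclic group of order $3$ (call it $R_i$) which moreover is \emph{not} upper-triangular in the basis $\beta$ --- this is exactly the obstruction that forces a separate treatment. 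The key point is that $R_i$ is an irreducible cyclic subgroup of $\GL_3(2)$, so a third conjugate will be needed to kill it: since $R_i$ is abelian (in fact cyclic), and its image in $\PGL_3(2)$ is still abelian, one more application of Zenkov's Theorem \ref{zenab} (or a direct third conjugating element as in Proposition \ref{ni1}) will bring $R_i\cap R_i^{\,?}$ into $Z(\GL_3(2))=1$.

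The main body of the proof would then proceed as in the proof of Theorem \ref{lem41}, working with the chain \eqref{Gchain} and the block-triangular shape \eqref{stup} with respect to a basis $\beta$ as in \eqref{basGLG}. Set $M=S\cap\GL_n(q)=S$ (recall $q=2$, so $f=1$ and $\GaL_n(2)=\GL_n(2)$). For each $j\in\{1,\dots,k\}$ with $\gamma_j(S)$ \emph{not} a Singer normaliser of $\GL_3(2)$, pick $x_j\in\SL_{n_j}(2)$ with $\gamma_j(S)\cap\gamma_j(S)^{x_j}\le RT(\GL_{n_j}(2))$ (possible by Theorem \ref{irred}, noting that the exceptional cases \ref{irred11}--\ref{irred13} of Theorem \ref{irred} do not occur for $q=2$, and case \ref{irred15} needs $q=3$); for each $j$ with $\gamma_j(S)=N_{\GL_3(2)}(T_3)$, pick $x_j$ from Lemma \ref{lemRTdiagfield}\ref{lrtdf3}, so $\gamma_j(S)\cap\gamma_j(S)^{x_j}=R_j\cong C_3$. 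Taking $x=\diag[x_k,\dots,x_1]\in\SL_n(2)$ and $y$ as in Lemma \ref{irrtog}, the intersection $(S\cap S^x)\cap(S\cap S^x)^y$ consists of block-diagonal matrices which are scalar (here trivial, as $Z(\GL_{n_j}(2))=1$) on the blocks coming from non-exceptional $\gamma_j$, and lie in $R_j\cong C_3$ on each exceptional $3\times 3$ block. Denote this group by $D^\ast\le\GL_n(2)$; it is abelian, being a direct product of copies of $C_3$ and trivial factors.

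Next I would split into cases exactly parallel to {\bf Cases 1--5} of Theorem \ref{lem41}, according to how many blocks have $n_j=2$ and where they sit --- but the only genuinely new feature is the presence of the $C_3$-blocks, and since $D^\ast$ is abelian, each of these cases collapses: either there is an $S$-invariant subspace $U$ with $2\le\dim U<n$ and $n-\dim U\ge 2$, in which case the vectors $u_{(i,j)}$ from \eqref{orb} (or \eqref{orb2t1}--\eqref{orb2t3}) produce five $z_i\in\SL_n(2)$ with $(S,Sx,Sy,Sxy,Sz_i)$ in distinct regular orbits --- the verification that $D^\ast\cap S^{z_i}=1$ goes through because the only nonzero eigenspaces of a nontrivial element of $R_j$ on $\overline V$ have codimension $\ge 2$, so the ``decomposition'' argument of Lemma \ref{diag} still forces scalarity --- or $n$ is small ($n\le 3$, or one of the small exceptional shapes), where one invokes Propositions \ref{ni1}, \ref{n3GL}, or a direct computation. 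In the boundary subcases where $D^\ast$ is not already trivial but $b_S(G)\le 4$ can be shown (e.g.\ $R_j$ cyclic abelian, so Theorem \ref{zenab} gives a fourth conjugate wiping it out), Lemma \ref{base4} then yields $\Reg_S(G,5)\ge 5$.

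The main obstacle I anticipate is bookkeeping rather than a conceptual gap: one must verify that the explicit $R_j\cong C_3$ blocks, which are \emph{irreducible} on their $3$-dimensional summand and hence genuinely non-triangular, still get annihilated by the ``anti-symmetrising'' conjugates $z_i$ built from the vectors $u_{(i,j)}$ --- i.e.\ that a nontrivial element of $R_j$ cannot stabilise $Uz_i$. This requires checking that no nontrivial power of the order-$3$ generator in Lemma \ref{lemRTdiagfield}\ref{lrtdf3} has an eigenvector of the special form $v_s+v_{l+1}$ etc.\ appearing in \eqref{orb}/\eqref{orb2t1}; since such an element has minimal polynomial of degree $3$ over $\mathbb{F}_2$ on its block (no eigenvalue $1$ except on a $1$-space, and that $1$-space is spanned by a fixed vector not of the prescribed form after relabelling $\beta$), this can be arranged, but the case analysis for where the $C_3$-block and the (at most one) $2\times2$-block sit relative to the chosen $U$ will need to be done carefully, case by case, exactly mirroring {\bf Cases 1--5}; for the finitely many small $n$ where the generic construction does not apply, the result is verified by computation as described in Section \ref{gapsec}.
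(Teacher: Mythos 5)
The heart of your plan is the claim that the conjugating elements $z_i$ built from \eqref{orb} (or \eqref{orb2t1}--\eqref{orb2t3}) already annihilate the residual $C_3$-blocks that sit in $D^\ast=(S\cap S^x)\cap(S\cap S^x)^y$. That claim is false, and the failure is precisely what forces the paper to modify $y$ instead of relying on the $z_i$. Take $q=2$ (so $\theta=1$) and suppose $g\in D^\ast$ is the identity outside a $C_3$-block located at rows $\{r,r+1,r+2\}$ with $5\le r\le m-1$ and $r+2\le n-m$, the block value being $g'=\left(\begin{smallmatrix}1&1&1\\0&1&0\\1&0&0\end{smallmatrix}\right)$. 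Reading off \eqref{orb} with $\theta=1$, the vectors $v_r$, $v_{r+1}$, $v_{r+2}$ occur in $u_{(1,r-2)}$, $u_{(1,r-1)}$, $u_{(1,r)}$ respectively, and a direct computation over $\mathbb{F}_2$ gives $(u_{(1,r-2)})g=v_1+v_r+v_{r+1}+v_{r+2}=u_{(1,r-2)}+u_{(1,r-1)}+u_{(1,r)}$, $(u_{(1,r-1)})g=u_{(1,r-1)}$, $(u_{(1,r)})g=u_{(1,r-2)}$, and $(u_{(1,j)})g=u_{(1,j)}$ for all remaining $j$. Hence $g$ stabilises $Uz_1$ and survives the $z_1$-step of Proposition \ref{case1prop}; the same happens with each $z_i$. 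The obstruction has nothing to do with where the $1$-eigenvector of $g'$ lies: the $C_3$-action simply permutes the $u_{(1,j)}$ among themselves (up to sums over $\mathbb{F}_2$), so the decomposition argument of Lemma \ref{diag}, which depends essentially on $g$ being diagonal, no longer applies.

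The paper's proof kills the $C_3$-blocks \emph{before} the $z_i$-step. It fixes a $(3\times 3)$ block carrying $P$ and splits on how the flip $y$ aligns it with the blocks of $(S\cap S^x)^y$: if it straddles two or more blocks, the forced zero in position $(1,3)$ already reduces $P$ to $\{I_3\}$; if it lands inside a bigger block, that block is already trivial; and in the only genuinely bad case --- exact alignment with another $P$-block --- the paper inserts a further conjugating element $y_0$ (which exists because $b_{S_0}(\GL_3(2))=3$) via $\tilde y=\diag[1,\dots,1,y_0,1,\dots,1]$, so that $(S\cap S^x)\cap(S\cap S^x)^{y\tilde y}$ is diagonal apart from possible $(2\times 2)$ blocks, and only then invokes the arguments of Theorem \ref{lem41}. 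Your outline omits any analogue of $\tilde y$.

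The fallback via Zenkov's Theorem \ref{zenab} is also under-specified: to conclude $b_S(G)\le 4$ you would need the intersection of \emph{three} conjugates, say $(S\cap S^x\cap S^y)/Z(\GL_n(q))$, to be abelian --- which is not established and is not obviously true, since that intersection still has nontrivial off-block-diagonal entries and $P$-blocks --- whereas knowing only that $D^\ast$ (four conjugates) is abelian gives a fifth conjugate with trivial intersection, hence $b_S(G)\le 5$, which does not trigger Lemma \ref{base4}.
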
 
\begin{proof}
Notice that $\GL_n(2)=\SL_n(2),$ so $S \cdot \SL_n(2)=\GL_n(2).$  

Since $k>1,$ $S$ stabilises a nontrivial invariant subspace $U$ of dimension $m<n$. Assume that matrices in $S$ take shape \eqref{stup} in the  basis $$\{v_1, v_2, \ldots, v_{n-m+1}, v_{n-m+2}, \ldots, v_n \}.$$

The main difference from the case $q>2$ is  that if $S_0$ is the normaliser of a Singer cycle of $\GL_3(2)$, then there is no $x, y \in \GL_3(2)$ such that $S_0^{x} \cap S_0^{y}$ is contained in $RT(\GL_3(2))$ (see Theorem \ref{irred}). Since all Singer cycles are conjugate in $\GL_3(2),$ we assume that $S_0$ is the normaliser of $$
\left\langle
\begin{pmatrix}
0&0&1\\
1&0&0\\
0&1&1
\end{pmatrix}
 \right\rangle.
$$ By Lemma \ref{lemRTdiagfield}, if $$x_0=\begin{pmatrix}
0&1&0\\
1&0&1\\
0&0&1
\end{pmatrix},
$$ then 
\begin{equation*}
S_0 \cap S_0^{x_0} = P =  \left\{
\begin{pmatrix}
1 & 1& 1\\
0& 1 & 0\\
1 & 0 & 0
\end{pmatrix},
\begin{pmatrix}
0 & 0& 1\\
0& 1 & 0\\
1 & 1 & 1
\end{pmatrix},
\begin{pmatrix}
1 & 0& 0\\
0& 1 & 0\\
0 & 0 & 1
\end{pmatrix} \right\}. 
\end{equation*}

By Theorem \ref{irred}, for $n_i \ge 3$   there exist $x_i\in \GL_{n_i}(2)$ such that $S_i \cap S_i^{x_i} \le Z(\GL_{n_i}(2))$ if $S_i$ is not conjugate to $S_0$ and $S_i \cap S_i^{x_0} \le P$ if $S_i$ is conjugate to $S_0$. Notice that $Z(\GL_{n_i}(2))=1.$  Let $x=\diag[x_k, \ldots, x_1]$, where $x_i$ is an identity matrix if $n_i = 2.$  Therefore, matrices in $S \cap S^x$ are upper triangular except for $(2 \times 2)$ and  $(3 \times 3)$ blocks on the diagonal. Let $y$ be the permutation matrix corresponding to the permutation $$(1,n)(2, n-1) \ldots (n/2, n/2+1)$$ if $n$ is even, and $$(1,n)(2, n-1) \ldots ((n-1)/2, (n+1)/2)$$ if $n$ is odd.

 Fix some $(3\times 3)$ block on the diagonal of matrices in $S$ such that $S_i=S_0$ is in this block, so $P$ is in this block in $S \cap S^x$.
 This block intersects  one, two or three blocks in $(S \cap S^x)^y.$  
If it intersects  at least two blocks, then the  matrices in $(S \cap S^x) \cap (S \cap S^x)^y$ have the following restriction  to the chosen $(3 \times 3)$ block:
$$
\begin{pmatrix}
*&*&0\\
*&*&*\\
*&*&*
\end{pmatrix}. 
$$ 
Since  the only such matrix in $P$ is the identity, every matrix in $(S \cap S^x) \cap (S \cap S^x)^y$ has the identity submatrix in this block.  

If the chosen $(3 \times 3)$ block intersects  a bigger block in $(S \cap S^x)^y$, then it must lie in the block with  $S_i \cap S_i^{x_i}$ for some $i$ such that $n_i>3$ and  all such matrices are scalar.

Let the chosen $(3 \times 3)$ block intersect another $(3 \times 3)$ block in $(S \cap S^x)^y$ which consists of matrices in $P$. By Theorem \ref{irred}, $b_{S_0}(\GL_3(2))=3,$ so there exists $y_0$ such that $$S_0 \cap S_0^{x_0} \cap S_0^{y_0} =1.$$ Let $\tilde{y}=\diag[1, \ldots, 1, y_0, 1, \ldots, 1]$ where $y_0$ is in the chosen block. Therefore,
 $$(S \cap S^x) \cap (S \cap S^x)^{y\tilde{y}}$$
consists of matrices which are diagonal except, possibly, for $(2 \times 2)$ blocks.

The rest of the proof is  as in Theorem \ref{lem41}. 
\end{proof}

Theorem \ref{theorem} now follows by Theorems \ref{lem41} and \ref{lem42}.

\section{Solvable subgroups not contained in $\GaL_n(q)$}
\label{sec412}

Recall that $V=\mathbb{F}_q^n$ and let $\beta=\{v_1, \ldots, v_n\}$ be a basis of $V$. Let $\Gamma= \GaL _n(q)= \GL_n(q) \rtimes \langle  \phi_{\beta} \rangle$
and $A=A(V)=A(n,q):=\Gamma \rtimes \langle \iota_{\beta} \rangle$ where $\iota_{\beta}$ is the inverse-transpose  map of $\GL_n(q)$ with respect to $\beta.$  Our goal is to prove the following theorem.

\begin{T4}
Let $n\ge 3.$ If $S$ is a maximal solvable subgroup of $A(n,q)$ not contained in $\Gamma,$ then one of the following holds:
\begin{enumerate}
\item[$(1)$] $b_S(S \cdot \SL_n(q))\le 4$;
\item[$(2)$] $n=4,$ $q \in \{2,3\}$, $S$ is the normaliser in $A(n,q)$ of the stabiliser in $\GaL_n(q)$ of a $2$-dimensional subspace of $V$, $b_S(S \cdot \SL_4(q))=5$ and $\Reg_S(S \cdot \SL_4(q),5)\ge 5.$
\end{enumerate}
\end{T4} 

Before we start the proof, let us discuss the structure of a maximal solvable subgroup
$S$ of $A(n,q)$ and fix some notation. In this section, we assume  $S$ is not contained in $\Gamma.$

Consider the action of $\Gamma$ on the set $\Omega_1$ of subspaces of $V$ of dimension $m<n.$ This action is transitive and equivalent to the action of $\Gamma$ on the set
$$\Omega_1'=\{\mathrm{Stab}_{\Gamma}(U) \mid U<V, \dim U= m\}$$ by conjugation. Let $\Omega_2$ be the set of subspaces of $V$ of dimension $n-m$ and let  $$\Omega_2'=\{\mathrm{Stab}_{\Gamma}(W) \mid W<V, \dim W= n-m\}.$$ It is easy to see that $\Gamma$ acts on $\Omega=\Omega_1 \cup \Omega_2$ with orbits $\Omega_1$ and $\Omega_2$ (respectively, on $\Omega'=\Omega_1' \cup \Omega_2'$ with orbits $\Omega_1'$ and $\Omega_2'$). Notice that if $m=n/2$, then $\Omega_1=\Omega_2$ and there is only one orbit. We can extend this action to an action of $A$ on $\Omega'$ by conjugation which is equivalent to the following action of $A$ on $\Omega$:  for $U,W \in \Omega \text{ and } \varphi \in A$,
$$ U\varphi =W \text{ if and only if } 
(\mathrm{Stab}_{\Gamma}(U))^{\varphi}=\mathrm{Stab}_{\Gamma}(W).$$ 
In particular, if $U= \langle v_{n-m+1}, v_{n-m+2}, \ldots, v_n \rangle$, then $U\iota_{\beta}=\langle v_1, \ldots, v_{n-m}\rangle:=U'$ of dimension $n-m.$ Moreover, if $\varphi=\iota_{\beta}g$ with $g \in \Gamma$ and $h \in \Gamma,$ then 
$$(Uh)\varphi=U\iota_{\beta}h^{\iota_{\beta}}g=U'h^{\iota_{\beta}}g.$$  So elements of $A\backslash \Gamma$ interchange $\Omega_1$ and $\Omega_2.$

Let us now define the action of $A$ on the pairs of subspaces $(U,W)$ of $V$ with $\dim U=m \le n/2$ and $\dim W=n-m$ where either
\begin{itemize}
\item  $U<W$, or
\item $U\cap W= \{0\}.$
\end{itemize}
 Here we let $(U,W) \varphi =(U\varphi, W \varphi).$ The pair $(U, W)$ is not ordered, but for convenience we usually list first the subspace of smaller dimension.  Notice that this action is equivalent to the action of $A$ by conjugation on 
 \begin{itemize}
\item {$\{\mathrm{Stab}_{\Gamma}((U,W)) \mid U \le W < V, \dim U=m, \dim W=n-m\}$};
\item $\{\mathrm{Stab}_{\Gamma}((U,W)) \mid U , W < V, U \cap W = \{0\}, \dim U=m, \dim W=n-m\}$ 
 \end{itemize}
respectively
where $\mathrm{Stab}_{\Gamma}((U,W))=\mathrm{Stab}_{\Gamma}(U) \cap \mathrm{Stab}_{\Gamma}(W).$

\begin{Def}
\label{deftypepm}
 Let $M$ be the stabiliser in $A$ of a pair $(U,W)$ of subspaces of $V$ where $\dim U=m \le n/2$, $\dim W=n-m$. 
\begin{itemize}
\item If $U \le W$, then $M$ is a subgroup \emph{of type} $P_{m,n-m};$
\item If $U\cap W=\{0\}$, then $M$ is a subgroup \emph{of type} $\GL_m(q) \oplus \GL_{n-m}(q).$
\end{itemize}
We denote the class of subgroups of $A$ as above by $\mathcal{C}_1^*.$
\end{Def}
We follow \cite[\S 4.1]{kleidlieb} for the definition of ``type''. Let $S$  be a maximal solvable subgroup of $A$ not contained in $\Gamma$ such that $S \le M$ where $M$ is a maximal subgroup of $A$ contained in Aschbacher's class $\mathcal{C}_1$ (here we use the definition of $\mathcal{C}_1$ in \cite{kleidlieb}, while Aschbacher \cite{asch} uses the notation $\mathcal{C}_1'$ to include such subgroups). By \cite[\S 4.1]{kleidlieb}, $M$ is as in Definition \ref{deftypepm}.  Notice that  $\mathcal{C}_1$ does not contain the subgroups of type $\GL_{n/2}(q) \oplus \GL_{n/2}(q)$ when $n$ is even as such a subgroup lies in the maximal subgroup stabilising the decomposition $V=U \oplus W$ which is in Aschbacher's class $\mathcal{C}_2.$ We define $\mathcal{C}_1^*$ to include such subgroups as it suits our purposes better.
In \cite[\S 4.1]{kleidlieb}, the type $P_{m,n-m}$ is used only when $m<n/2;$ when $m=n/2$ such $M$ are labelled $P_m$ (since they are parabolic subgroups stabilising an $m$-subspace). We let $m \le n/2$ and use the label $P_{m,n-m}$ since it allows us to present more uniform statements.

Let  $M$ be as in Definition \ref{deftypepm} and let $\beta=\{v_1, \ldots, v_n\}$ be a basis of $V$ such that $U=\langle v_{n-m+1}, \ldots, v_n \rangle$ and 
$$W= \begin{cases}
\langle v_{m+1}, \ldots, v_n\rangle &\text{ if } M \text{ is of type } P_{m, n-m};\\
\langle v_1, \ldots, v_{n-m}\rangle &\text{ if } M \text{ is of type } \GL_{m}(q) \oplus \GL_{n-m}(q).
\end{cases} 
$$ 
For such $\beta$ we say that it is \emph{associated} with $(U,W).$
 By Definition \ref{deftypepm}, $M$ is the normaliser of $\mathrm{Stab}_{\Gamma}(U,W)$ in $A$. Therefore, with respect to $\beta$,
\begin{equation}
\label{Mstr}
M =\begin{cases}
\mathrm{Stab}_{\Gamma}(U,W) \rtimes \langle \iota_{\beta}a(n,m) \rangle & \text{ if } M \text{ is of type } P_{m, n-m};\\
\mathrm{Stab}_{\Gamma}(U,W) \rtimes \langle \iota_{\beta} \rangle & \text{ if } M \text{ is of type } \GL_{m}(q) \oplus \GL_{n-m}(q)
\end{cases}
\end{equation}
where $a(n,m) \in \GL_n(q)$ is 
\begin{equation}
\label{adef}
\begin{pmatrix}
0 & 0 & I_{m} \\
0 & I_{n-2m} & 0 \\
I_{m} & 0 & 0
\end{pmatrix}.
\end{equation}

\begin{Lem}
\label{GRmmim2}
Let $n\ge 3$ and let $S \le A$ be a maximal solvable subgroup that is not contained in $\Gamma$. If $m$ is the least integer such that $S$ lies in $M$ as in \eqref{Mstr}, then 
 $\tilde{S}=S \cap \Gamma$ acts  on $U$ irreducibly.
\end{Lem}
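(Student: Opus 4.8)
The plan is to argue by contradiction, exploiting the minimality of $m$. Suppose $\tilde S = S \cap \Gamma$ does not act irreducibly on $U$. Since $S \le M$ and $M$ is of type $P_{m,n-m}$ or $\GL_m(q) \oplus \GL_{n-m}(q)$, the group $\tilde S$ stabilises $U$ (indeed $\tilde S \le \mathrm{Stab}_\Gamma(U,W)$), so $\tilde S$ acts semilinearly on $U$. Because $\tilde S$ is reducible on $U$, there is a nonzero proper $\tilde S$-invariant subspace $U_0 < U$; choose one of minimal dimension $m_0$, so that $\tilde S$ acts irreducibly on $U_0$ and $0 < m_0 < m \le n/2$. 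The goal is to promote $U_0$ to an $S$-invariant (as opposed to merely $\tilde S$-invariant) object of the shape required by \eqref{Mstr}, thereby contradicting the minimality of $m$.

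The first key step is to understand how the element of $S \setminus \Gamma$ (which exists since $S \not\le \Gamma$) moves $U_0$. Write $\varphi = \iota_\beta g$ or $\varphi = \iota_\beta a(n,m) g$ for suitable $g \in \Gamma$, according to the type of $M$ in \eqref{Mstr}; in either case $\varphi$ interchanges the two $\Gamma$-orbits on subspaces of dimensions $m$ and $n-m$ (as discussed before Definition~\ref{deftypepm}), and more precisely the inverse-transpose sends a subspace to (the annihilator of) a subspace of complementary dimension. Thus $U_0\varphi$ is a subspace of dimension $n - m_0$. Since $S = \tilde S\langle \varphi\rangle$ and $U_0$ is $\tilde S$-invariant, $U_0\varphi$ is also $\tilde S$-invariant: for $t \in \tilde S$ we have $U_0\varphi t = U_0 (\varphi t \varphi^{-1})\varphi$, and $\varphi t \varphi^{-1} \in \tilde S$ because $\tilde S = S \cap \Gamma \trianglelefteq S$. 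Hence the pair $(U_0, U_0\varphi)$ is preserved (as an unordered pair, possibly with the two members swapped) by all of $S$. One then checks, using that $\varphi^2 \in \tilde S$ stabilises $U_0$, that the pair $(U_0, W_0)$ with $W_0 := U_0\varphi$ is genuinely $S$-invariant.

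The second key step is to verify that $(U_0, W_0)$ is of one of the two admissible configurations — either $U_0 \le W_0$, or $U_0 \cap W_0 = \{0\}$ — so that its stabiliser in $A$ is a subgroup of type $P_{m_0, n-m_0}$ or $\GL_{m_0}(q)\oplus\GL_{n-m_0}(q)$ as in Definition~\ref{deftypepm}. Here one uses that $\dim U_0 + \dim W_0 = m_0 + (n - m_0) = n$, so $U_0 \cap W_0 = \{0\}$ is equivalent to $U_0 \oplus W_0 = V$; if instead the intersection is nonzero, its $\tilde S$-invariance together with the irreducibility of $\tilde S$ on $U_0$ forces $U_0 \subseteq W_0$ (taking the minimal-dimension $\tilde S$-submodule of $U_0\cap W_0$ and comparing with $U_0$). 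In both cases $S$ lies inside a maximal subgroup $M_0 \in \mathcal C_1^*$ built from $(U_0,W_0)$ with parameter $m_0 < m$, contradicting the choice of $m$ as least. Therefore $\tilde S$ acts irreducibly on $U$, as claimed.

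I expect the main obstacle to be the bookkeeping in the second step: ruling out the ``partial overlap'' case $0 < \dim(U_0 \cap W_0) < m_0$ cleanly, and checking that the stabiliser of $(U_0,W_0)$ in $A$ really is of the form \eqref{Mstr} with the correct normaliser structure (in particular that it is not contained in $\Gamma$, which is automatic since it contains $S \not\le \Gamma$). One must also be careful that $m_0 \le n/2$ so that Definition~\ref{deftypepm} applies with $U_0$ playing the role of the smaller subspace; this is immediate since $m_0 < m \le n/2$. The remaining verifications are routine linear algebra with the inverse-transpose map.
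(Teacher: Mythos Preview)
Your argument is correct and follows essentially the same route as the paper's proof: assume a minimal $\tilde S$-invariant $U_0<U$, set $W_0=U_0\varphi$, use normality of $\tilde S$ in $S$ and $\varphi^2\in\tilde S$ to see that $S$ stabilises the unordered pair $(U_0,W_0)$, and conclude that $S$ lies in a $\mathcal C_1^*$-subgroup with parameter $m_0<m$. The only difference is in your ``second key step'': the paper simply asserts that $U_1<W_1$ when $M$ has type $P_{m,n-m}$ and $U_1\cap W_1=0$ otherwise (the first of these is immediate from $U_1\le U\le W\le W_1$), whereas you give the cleaner uniform argument that $U_0\cap W_0$ is a $\tilde S$-submodule of the irreducible module $U_0$ and hence equals $0$ or $U_0$. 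Both reach the same contradiction.
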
 
\begin{proof}
Let $\beta$ and $M$ be as in \eqref{Mstr} and let $P$ be $\mathrm{Stab}_{\Gamma}(U,W),$ so 
$$M=P \rtimes \langle \iota_{\beta} a\rangle$$
where $a$ is $a(n,m)$ if $M$ is of type $P_{m,n-m}$ and $a=I_n$ otherwise.

Assume that $\tilde{S}$ acts  reducibly on $U$, so  $\tilde{S}$ stabilises $U_1 <U$ of dimension $s<m.$  Assume that $U_1$ is a minimal such subspace, so $\tilde{S}$ stabilises no proper non-zero subspace of $U_1.$ Let $\varphi \in S \backslash \tilde{S}$ be such that
$S=\langle \tilde{S}, \varphi \rangle$, so $\varphi=\iota_{\beta}a \cdot g$ with $g \in P.$  Hence $\tilde{S}$ stabilises $W_1=U_1 \varphi$ of dimension  $n-s$. Notice that $U_1<W_1$ if $M$ is of type $P_{m,n-m}$ and $W_1 \cap U_1=0$ otherwise.
Since $\varphi^2 \in \Gamma,$ we obtain $\varphi^2 \in \tilde{S}$ and   $$W_1\varphi=U_1\varphi^2=U_1.$$
Therefore, $S$ normalises the stabiliser of $(U_1, W_1)$ in $\Gamma,$ so $S$ lies in a  subgroup of $A$ of type $P_{s, n-s}$ or $\GL_{s}(q) \oplus \GL_{n-s}(q)$  which contradicts the assumption of the lemma. 
\end{proof}

Notice that if we take $U=W=V$, then the proof of Lemma \ref{GRmmim2} implies the following statement.
\begin{Lem}
\label{GRirred2}
If $S \le A$ is not contained  in a  subgroup of $A$ from the class $\mathcal{C}_1^*$, then $\tilde{S}=S \cap \Gamma$ acts irreducibly on $V$.
\end{Lem}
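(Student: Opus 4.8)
The plan is to deduce Lemma \ref{GRirred2} directly from the proof of Lemma \ref{GRmmim2} by specialising the pair of subspaces. First I would observe that the argument of Lemma \ref{GRmmim2} only used two features of the pair $(U,W)$: that $S$ normalises $\mathrm{Stab}_\Gamma(U,W)$ (equivalently, $S$ lies in the corresponding $\mathcal{C}_1^*$-subgroup $M$), and that $\varphi^2 \in \tilde S$ where $\varphi$ generates $S$ over $\tilde S$. The latter holds for any maximal solvable $S$ not contained in $\Gamma$, since $A/\Gamma \cong \langle \iota_\beta \rangle$ has order $2$, so $S/\tilde S$ has order $2$ and $\varphi^2 \in \tilde S$ automatically. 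Hence the only thing that needs adapting is the choice of $U$.

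The key step is to take $U = W = V$ and run the contrapositive. Suppose $\tilde S = S \cap \Gamma$ is reducible on $V$; then $\tilde S$ stabilises some subspace $U_1 < V$ with $0 < \dim U_1 = s < n$, and we may take $U_1$ minimal, so $\tilde S$ has no proper nonzero invariant subspace inside $U_1$. Writing $S = \langle \tilde S, \varphi\rangle$ with $\varphi \in S \setminus \tilde S$, the subspace $W_1 = U_1\varphi$ is again $\tilde S$-invariant (because $\tilde S \trianglelefteq S$), of dimension $n - s$ (as $\varphi$ is a non-singular semilinear map, possibly composed with $\iota_\beta$, it sends a subspace of dimension $s$ to one of dimension $s$ or $n-s$; since $\varphi \in A \setminus \Gamma$ the inverse-transpose is in play and the dimension becomes $n-s$ — if $s = n-s$ there is nothing to check). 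From $\varphi^2 \in \tilde S$ we get $W_1\varphi = U_1\varphi^2 = U_1$, so $S$ normalises $\mathrm{Stab}_\Gamma((U_1,W_1))$, i.e. $S$ lies in a subgroup of $A$ of type $P_{s,n-s}$ or $\GL_s(q)\oplus\GL_{n-s}(q)$ according to whether $U_1 < W_1$ or $U_1 \cap W_1 = 0$. Either way $S$ lies in a member of $\mathcal{C}_1^*$, contradicting the hypothesis. Therefore $\tilde S$ is irreducible on $V$.

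One subtlety I would make sure to address cleanly: when $\dim U_1 = n - \dim U_1$, i.e. $n$ is even and $s = n/2$, the pair $(U_1, W_1)$ still determines a $\mathcal{C}_1^*$-subgroup — this is precisely the case $P_m$ with $m = n/2$, or $\GL_{n/2}\oplus\GL_{n/2}$, both of which were explicitly included in Definition \ref{deftypepm}. Also one must rule out $W_1 = U_1$ forcing $\varphi$ to act as an element of $\Gamma$: this cannot happen because $\varphi \in A\setminus\Gamma$ and the case distinction $U_1 \subseteq W_1$ versus $U_1 \cap W_1 = 0$ is exhaustive for a $\tilde S$-invariant pair (if neither held, $U_1 \cap W_1$ and $U_1 + W_1$ would be proper nonzero $\tilde S$-invariant subspaces of $U_1$, respectively strictly between, contradicting minimality of $U_1$). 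I do not expect any serious obstacle here — the whole point is that Lemma \ref{GRmmim2}'s proof was written so as to make this specialisation immediate, and indeed the paper flags exactly this ("if we take $U = W = V$, then the proof of Lemma \ref{GRmmim2} implies the following"). The main care needed is bookkeeping around dimensions and the degenerate $s = n/2$ case, which is routine.
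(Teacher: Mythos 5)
Your proof is correct and takes the same route as the paper, which simply notes that the argument of Lemma \ref{GRmmim2} applies with $U=W=V$. The additional care you take over the degenerate $s=n/2$ case and over the exhaustiveness of the dichotomy $U_1 \le W_1$ versus $U_1 \cap W_1 = 0$ is precisely the bookkeeping that the paper's one-line proof leaves implicit.
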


\begin{Th}
Theorem {\rm \ref{theoremGR}} holds if $S$ is not contained  in a subgroup of $A$ from the class $\mathcal{C}_1^*$.
\end{Th}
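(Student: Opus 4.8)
The plan is to reduce to the irreducible case already handled in Section~\ref{ch2}. By Lemma~\ref{GRirred2}, if $S$ is not contained in a subgroup of $A$ from $\mathcal{C}_1^*$, then $\tilde{S}=S\cap\Gamma$ acts irreducibly on $V$. Write $\tilde{M}=\tilde{S}\cap\GL_n(q)$. First I would observe that $\tilde{M}\trianglelefteq S$ and that $S/\tilde{M}$ is a cyclic group whose generator involves the inverse-transpose map $\iota$, so $|S:\tilde{S}|\le 2$ and $S/\tilde{M}$ embeds into $\langle\phi\rangle\times\langle\iota\rangle$ modulo scalars. The key point is that $\tilde{M}$, being a normal subgroup of the irreducible solvable group $\tilde{S}$, is completely reducible (a Clifford-type argument as in the proof of Lemma~\ref{GammairGL}); since $S$ is maximal solvable in $A$, one expects $\tilde{M}$ to be (conjugate to) an irreducible maximal solvable subgroup of $\GL_n(q)$, or at worst a subgroup of one, so Theorem~\ref{irred} applies to it.

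The main step is then a bootstrapping argument analogous to the one carried out throughout Section~\ref{sec411}. By Theorem~\ref{irred}, unless $(n,q)$ and $\tilde{M}$ fall into the short list of exceptions \ref{irred11}--\ref{irred15}, there exists $x\in\SL_n(q)$ with $\tilde{M}\cap\tilde{M}^x\le Z(\GL_n(q))$. Passing to $H=S/Z(\GL_n(q))\le\PGL_n(q)\rtimes\langle\iota\rangle$ and writing $\overline{x}$ for the image of $x$, the group $H\cap H^{\overline{x}}$ then has a cyclic (hence abelian) normal subgroup of index at most $4$ — its intersection with $\PGL_n(q)$ is cyclic, and the quotient is generated by images of $\phi^j$ and $\iota$. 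This is small enough that I would argue $H\cap H^{\overline{x}}$ is itself metabelian, and more importantly that one further well-chosen conjugate kills it: either $H\cap H^{\overline{x}}$ is already abelian, in which case Theorem~\ref{zenab} (Zenkov) gives $\overline{y}$ with $(H\cap H^{\overline{x}})\cap(H\cap H^{\overline{x}})^{\overline{y}}=1$ and hence $b_S(G)\le 3$; or else I would split off the $\iota$-part by a dimension/eigenspace argument using Lemma~\ref{scfield} (applied to the field-automorphism part) to arrange, after one more conjugation by an element of $\SL_n(q)$, that the surviving intersection is cyclic, then invoke Zenkov once more to obtain $b_S(G)\le 4$. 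In either case $b_S(S\cdot\SL_n(q))\le 4$, so conclusion (1) of Theorem~\ref{theoremGR} holds, and $\Reg_S(G,5)\ge 5$ by Lemma~\ref{base4}.

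It remains to dispose of the cases where $\tilde{M}$ is one of the exceptional groups in \ref{irred11}--\ref{irred15}. In those cases $n\in\{2,3,4\}$ and $q$ is small; since Theorem~\ref{theoremGR} assumes $n\ge 3$, only $n=3$ with $q=2$ (the Singer normaliser of $\GL_3(2)$) and $n=4$ with $q=3$ (the group $\GL_2(3)\wr\Sym(2)$) are relevant, together with a handful of small $q$ for $n=4$ coming from $\mathcal{C}_6$-type primitive groups of degree $4$. For each of these finitely many $(n,q)$ I would construct $S$ explicitly as the normaliser in $A(n,q)$ of $\tilde{M}$ and verify $b_S(S\cdot\SL_n(q))\le 4$ (or find five regular orbits if the base size equals $4$) by the computational procedures of Section~\ref{gapsec}; Lemma~\ref{lemRTdiagfield}\ref{lrtdf1}--\ref{lrtdf3} supplies the needed conjugating elements in the Singer-normaliser subcases, after which one more conjugate handles the $\iota$-extension. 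The hard part will be organising the ``one or two further conjugates'' argument uniformly over all the ways the field part $\langle\phi^j\rangle$ and the graph part $\langle\iota\rangle$ can survive into $H\cap H^{\overline{x}}$; this is exactly the kind of eigenspace/fixed-subspace bookkeeping that made Section~\ref{sec411} lengthy, and the same techniques (Lemmas~\ref{irrtog}, \ref{diag}, \ref{scfield} and Zenkov's Theorem~\ref{zenab}) are the tools I would use.
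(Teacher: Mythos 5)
Your overall route matches the paper's: use Lemma~\ref{GRirred2} to see that $\tilde{S}=S\cap\Gamma$ acts irreducibly, combine Lemma~\ref{GammairGL} with Theorem~\ref{irred} to produce a single $x\in\SL_n(q)$ with $\hat{S}\cap\hat{S}^x\le Z(\GL_n(q))$ where $\hat{S}=S\cap\GL_n(q)$ (your $\tilde{M}$ equals $\hat{S}$), handle the two relevant exceptional cases ((\ref{irred14}) and (\ref{irred15})) by computation, and finish with Zenkov's Theorem~\ref{zenab}.

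But there is a real gap in the middle, where you hedge. You claim only that $H\cap H^{\overline{x}}$ has a cyclic normal subgroup of index at most~$4$, with ``its intersection with $\PGL_n(q)$ is cyclic,'' and then split into an ``either abelian, or else do something with Lemma~\ref{scfield}'' dichotomy. The crucial point you are missing is that the intersection with $\PGL_n(q)$ is not merely cyclic — it is \emph{trivial}. Since $x\in\SL_n(q)\le\GL_n(q)$, we have $(S\cap S^x)\cap\GL_n(q)=\hat{S}\cap\hat{S}^x\le Z(\GL_n(q))$, so $(H\cap H^{\overline{x}})\cap\PGL_n(q)=1$ and $H\cap H^{\overline{x}}$ embeds into $\Aut(\PSL_n(q))/\PGL_n(q)$, which for $n\ge 3$ is isomorphic to $\mathbb{Z}_f\times\mathbb{Z}_2$ and hence \emph{abelian}. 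So $H\cap H^{\overline{x}}$ is always abelian, your second branch never arises, and the vague ``split off the $\iota$-part with Lemma~\ref{scfield}'' fallback is unnecessary. Two further small points: Zenkov's theorem applied to $H\cap H^{\overline{x}}$ yields four conjugates of $H$ (and of $S$) with trivial intersection, so the conclusion is $b_S(S\cdot\SL_n(q))\le 4$, not $\le 3$ as you write; and the ``handful of small $q$ for $n=4$ from $\mathcal{C}_6$-type primitive groups'' you worry about are already covered by Theorem~\ref{irred} — for $n=4$ the only exception there is case~(\ref{irred15}), so no extra computations are needed.
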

\begin{proof}
Let $\hat{S}:=S \cap \GL_n(q).$ Lemmas \ref{GammairGL}  and \ref{GRirred2} and Theorem \ref{irred} imply that either there exists $x \in \SL_n(q)$ such that $\hat{S} \cap \hat{S}^x \le Z(\GL_n(q))$, or $\hat{S}$ lies in one of the groups in \ref{irred14}, \ref{irred15} of Theorem \ref{irred}. In the latter case the statement is verified by computation. 

Otherwise, $\overline{S} \cap \overline{S}^{\overline{x}}$ is an abelian subgroup of $G=(S \cdot \SL_n(q))/Z(\GL_n(q))$ where $\overline{\phantom{G}}:S \cdot \SL_n(q) \to G$ is the natural homomorphism.  By Theorem \ref{zenab}, there exist $\overline{y} \in G$ such that $\overline{S} \cap \overline{S}^{\overline{x}} \cap (\overline{S} \cap \overline{S}^{\overline{x}})^{\overline{y}}=1.$ So $b_S(S \cdot \SL_n(q)) \le 4$ and the statement follows. 
\end{proof}

For the rest of the section we assume that $S$ lies in a subgroup of $A$ from the class $\mathcal{C}_1^*$.

 Let $m_1$ be minimal such that $S$ lies in a  subgroup $M_1<A$ of type $P_{m_1, n-m_1}$ or $\GL_{m_1}(q) \oplus \GL_{n-m_1}(q)$ stabilising subspaces $(U_1,W_1)$. Let $V_1$ be $V$;  let $V_2=W_1/U_1$ and $n_2=\dim V_2$. Let $\beta$ be a basis associated with $(U_1,W_1).$ Notice that elements from $\mathrm{Stab}_{\Gamma}(U_1,W_1)$ induce semilinear transformations on $V_2$, and $\iota_{\beta}$ induces the inverse-transpose map on $\GL(V_2).$  Hence there exists a homomorphism 
$$\psi: S \to A(V_2) $$ mapping $x \in S$ to the element it induces on $V_2.$ Denote $\psi(S)$ by $S|_{_{V_2}}.$ Let $m_2$ be minimal such that $S|_{_{V_2}}$ lies in a  subgroup $M_2<A(V_2)$ of type $P_{m_2, n_2-m_2}$, or $\GL_{m_2}(q) \oplus \GL_{n_2-m_2}(q)$ stabilising subspaces $(U_2/U_1,W_2/U_1)$ of $V_2$.

 Repeating the arguments above we obtain a chain of subspaces 
\begin{equation}
\label{GRUs}
 0=U_0<U_1< \ldots < U_k<V \text { with } \dim U_i/U_{i-1} =m_i \text{ for } i \in\{1 \ldots, k\},
\end{equation} 
 subspaces $W_i$  for $i \in \{0, 1, \ldots, k\}$ where $W_0=V$, and groups $S|_{V_i}$ where $V_i=W_{i-1}/U_{i-1}$ for $i \in \{1, \ldots, k+1\}.$ Here $S|_{V_{i}}$ stabilises $(U_{i}/U_{i-1}, W_{i}/U_{i-1})$ for $i\in \{1, \ldots, k\}$ and $S|_{V_{k+1}}$ stabilises no subspace of $V_{k+1}.$ If $U_k=W_k,$ then $V_{k+1}$ is a zero space and  $S|_{V_{k+1}}$ is trivial.  Let $\beta=\{v_1, \ldots, v_n\}$ be a basis of $V$ such that  $\beta_i=\{v_1 +U_{i-1}, \ldots, v_n +U_{i-1}\} \cap V_i$ is a basis associated with $(U_{i}/U_{i-1}, W_{i}/U_{i-1})$ for $i\in \{1, \ldots, k\}.$

Let $\varphi \in S$. Hence $\varphi|_{_{V_i}}$ lies in $M_i$ for $i=1, \ldots, k$, so $$\varphi|_{_{V_i}}= (\iota_{\beta_i} a_i)^l \cdot g_i \text{ with } l\in \{0,1\} \text{ and } g_i \in \mathrm{Stab}_{\GaL (V_i)}((U_{i}/U_{i-1}, W_{i}/U_{i-1})),$$
where $a_i$ is $a(n_i,m_i)$  as in \eqref{adef} if $M_i$ is of type $P_{m_i, n_i-m_i}$ and $a_i$ is $I_{n_i}$ otherwise. Let $a \in \GL_n(q)$ be such that  $a|_{_{V_k}}=a_k$ and 
\begin{equation}
\label{adefGR}
a|_{_{V_i}}= \begin{cases}
\begin{pmatrix}
a|_{_{V_{i+1}}} & 0\\
0 & I_{m_{i}}
\end{pmatrix} &\text{ if } M_i \text{ is of type } \GL_{m_i}(q) \oplus \GL_{n_i-m_i}(q)\\
\begin{pmatrix}
0 & 0 & I_{m_i}\\
0 & a|_{_{V_{i+1}}} & 0\\
I_{m_i} & 0 & 0
\end{pmatrix} &\text{ if } M_i \text{ is of type } P_{m_i, n_i -m_i}
\end{cases}
\end{equation}
 for $i \in \{1, \ldots, k-1\}.$  Therefore, 
\begin{equation}
\label{GRvarphishape}
\varphi = (\iota_{\beta}a)^l \cdot (\phi_{\beta})^j \cdot g,
\end{equation}
where $g \in \GL_n(q)$ and $g|_{_{V_i}}$ stabilises $(U_{i}/U_{i-1}, W_{i}/U_{i-1}).$ More specifically, let $s \le k$ be the number of $i \in \{1, \ldots, k\}$ such that $M_i$ is of type $P_{m_i, n_i-m_i}$ and let $i_1 < \ldots < i_s$ be the corresponding $i$-s. Therefore,
\begin{equation}
\label{GRguppdiag}
g= \begin{pmatrix}
g_{i_1} '&* &\ldots &\ldots &\ldots & &* \\
 & \ddots &* &\ldots &\ldots& \ldots &* \\
 & &g_{i_s}' &* &\ldots & &* \\
 & & &g_{k+1} &* & &* \\
 & & & & g_k &* & *\\
 & & & & & \ddots &*\\
& & & & & & g_1
\end{pmatrix}
\end{equation} 
where $g_i, g_i' \in \GL_{m_i}(q)$ and $g_{k+1} \in \GL_{n_{k+1}}(q).$

\begin{example}
Let $k=3,$ and let $M_1$ and $M_3$ be of types $P_{m_i, n_i-m_i}$ for $i=1$ and $i=3$ respectively. Let $M_2$ be of type $\GL_{m_2}(q) \oplus \GL_{n_2-m_2}(q).$ Then 
$$
a=\begin{pmatrix}
 & & & & &  I_{m_1} \\
 &{\cellcolor{gray!50}} &{\cellcolor{gray!50}} &{\cellcolor{gray!50}} I_{m_3} &{\cellcolor{gray!20}} & \\
 & {\cellcolor{gray!50}} &{\cellcolor{gray!50}} I_{n_3-2m_3} &{\cellcolor{gray!50}} &{\cellcolor{gray!20}} & \\
 &{\cellcolor{gray!50}} I_{m_3} &{\cellcolor{gray!50}} & {\cellcolor{gray!50}} &{\cellcolor{gray!20}} & \\
 & {\cellcolor{gray!20}} & {\cellcolor{gray!20}} &{\cellcolor{gray!20}} &{\cellcolor{gray!20}} I_{m_2} & \\
I_{m_1} & & & & & 
\end{pmatrix}
$$
and $$\varphi = (\iota_{\beta}a)^l \cdot (\phi_{\beta})^j \cdot \begin{pmatrix}
 g_1 '&* &* &* &* &* \\
 & g_3' &* &* &0 &* \\
 & &g_4 &* &0 &* \\
 & & &g_3 &0 &* \\
 & & & & g_2 &* \\
 & & & & & g_1
\end{pmatrix}, $$
where $g_1, g_1' \in \GL_{m_1}(q),$ $g_2 \in \GL_{m_2}(q),$ $g_3, g_3' \in \GL_{m_3}(q),$ $g_4 \in \GL_{n_3-2m_3}(q).$
\end{example}

\begin{Lem}
\label{GRdiag}
Let $n \ge 3$. Let  $S$ be a maximal solvable subgroup of $A$. Assume that $S$ is contained in a subgroup of $A$ of type $P_{m,n-m}$ or $\GL_m(q) \oplus \GL_{n-m}(q)$ for some $m \le n/2.$ Let $\beta$ be as described after \eqref{GRUs}.
\begin{enumerate}
\item[$(1)$] If none of $(m_i, q)$ and $(n_i,q)$ lies in $\{(2,2), (2,3), (2,5)\},$ then there exist $x, y \in \SL_n(q)$ such that if $\varphi \in (S \cap S^x \cap S^y) \cap \Gamma,$ then 
\begin{equation}
\label{GRvarphdiagc1}
\varphi=(\phi_{\beta})^j \cdot \diag[g_{i_1}, \ldots,g_{i_s}, g_{k+1}, g_k, \ldots,  g_1]=(\phi_{\beta})^j \cdot \diag(\alpha_1, \ldots, \alpha_n)
\end{equation}
 where $\alpha_i \in \mathbb{F}_q^*$ and $j \in \{0,1, \ldots, f-1\}.$ Moreover, $g_i, g_i' \in Z(\GL_{m_i}(q))$ for $i \in \{1, \ldots, k\}$ and $g_{k+1} \in Z(\GL_{n_{k+1}}(q))$.
\item[$(2)$] If $q \in \{2,3,5\}$ and at least one of $m_i$ or $n_i$ is $2$, then there exist $x,y \in \SL_n(q)$ such that if $g \in S \cap S^x \cap S^y \cap \GL_n(q)$, then $$g=\diag[g_{i_1}, \ldots,g_{i_s}, g_{k+1}, g_k, \ldots,  g_1]$$ with $g_i, g_i' \in \GL_{m_i}(q)$ for $i \in \{1, \ldots, k\},$ $g_{k+1} \in \GL_{n_{k+1}}(q)$, and one of the following holds:
\begin{enumerate} 
\item[$(2a)$]  for each $i \in \{1, \ldots, k+1\}$, either  $g_i$, $g_i'$ are  scalar matrices over $\mathbb{F}_q$ or $m_i=2$ and  $g_i$, $g_i'$ are upper-triangular matrices in $\GL_2(q);$
\item[$(2b)$] there exist exactly one $j \in \{1, \ldots, k+1\} \backslash \{i_1, \ldots, i_s\}$ such that $g_j \in \GL_2(q),$ and $g_i$, $g_i'$ are scalar for $i \in \{1, \ldots, t\} \backslash \{j\}.$
\end{enumerate}
\end{enumerate}
\end{Lem}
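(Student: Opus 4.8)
The plan is to prove Lemma~\ref{GRdiag} by constructing explicit conjugating elements $x,y \in \SL_n(q)$ that "collapse" the block structure of elements $\varphi \in S$ described in \eqref{GRvarphishape}--\eqref{GRguppdiag}. The overall strategy mirrors the constructive approach used for $\GaL_n(q)$ in Section~\ref{sec411}, with the extra complication that $S$ contains elements of $A \backslash \Gamma$, so $\varphi$ can involve the factor $(\iota_\beta a)^l$.

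First I would handle the action on each $V_i = W_{i-1}/U_{i-1}$ separately. By Lemma~\ref{GRmmim2} and the minimality in the choice of the chain \eqref{GRUs}, $S|_{V_i}$ restricted to the relevant subspace acts irreducibly (as a linear group, after removing the graph/field automorphism parts), so Theorem~\ref{irred} applies to $S|_{V_i} \cap \GL_{m_i}(q)$ and to $S|_{V_{k+1}} \cap \GL_{n_{k+1}}(q)$. The hypothesis in part $(1)$ that none of $(m_i,q)$, $(n_i,q)$ lies in $\{(2,2),(2,3),(2,5)\}$ is precisely what is needed to avoid the Singer-normaliser exception \ref{irred11}--\ref{irred12} (the $q>5$ case gives $S \cap S^x \le D$, not merely $RT$), the $\GL_2(3)$-type exceptions \ref{irred135}--\ref{irred15}, and the absolutely irreducible $2^2.\Sp_2(2)$ case \ref{irred13}. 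Combined with Lemma~\ref{scfield} to strip off the field automorphism $\phi_\beta$ and Lemma~\ref{c6small}/Theorem~\ref{sec311lem}, this gives, for each block, an element conjugating $S|_{V_i}$ so that the intersection with a conjugate lies in $\langle \phi^{j}\rangle Z$. Then, exactly as in the proof of Lemma~\ref{irrtog}, I would take $x = \diag[x_{i_1},\dots]$ assembled from these per-block elements together with the "anti-diagonal" permutation matrix (adjusted by $a$ as in \eqref{adefGR} so that it is compatible with the $P_{m_i,n_i-m_i}$ blocks), and $y$ the reversal permutation with sign correction. The key point: $x$ reverses nothing unsafe, and $(S\cap S^x) \cap (S\cap S^x)^y$ forces the upper-triangular part of $\varphi|_{V_i}$ to meet its lower-triangular transpose, hence $\varphi$ becomes diagonal; the $\iota_\beta a$ factor must vanish because $\varphi$ then acts linearly on the nonzero space $V_1=V$, so $l=0$; and $\phi_\beta^j$ is handled by Lemma~\ref{scfield} exactly as in Proposition~\ref{clm44}, forcing consistency of $j$ across blocks. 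This yields \eqref{GRvarphdiagc1} with the scalar assertions on $g_i, g_i', g_{k+1}$.

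For part $(2)$, when $q \in \{2,3,5\}$ and some $m_i$ or $n_i$ equals $2$, Theorem~\ref{irred} no longer gives $S|_{V_i}\cap S|_{V_i}^{x_i} \le Z$ for the $2$-dimensional blocks: the best one gets is $\le RT(\GL_2(q))$ in cases \ref{irred12}, \ref{irred135}, or for $q=5$ one only gets $b=4$ (case \ref{irred13}) so one must spend a third conjugate. This is why part $(2)$ allows a third conjugate $y$ and splits into $(2a)$ and $(2b)$: case $(2a)$ is the situation where every $2$-dimensional block can be brought to $RT(\GL_2(q))$ simultaneously (the generic $q=2,3$ Singer-normaliser or $\GL_2(3)$ situation, via Lemma~\ref{lemRTdiagfield} and the computations there), while $(2b)$ is the $q=5$ anomaly with the $2^2.\Sp_2(2)$ block where one residual $\GL_2(5)$ factor survives and must be dealt with later by a fourth conjugate. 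I would argue exactly as in part $(1)$ but use the $RT$-conclusions of Theorem~\ref{irred}\ref{irred12}, Lemma~\ref{lemRTdiagfield}, and Lemma~\ref{scfield}, and then invoke $b_{S_1}(S_1\cdot\SL_2(5)) = 4$ for the exceptional block.

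The main obstacle I anticipate is bookkeeping the interaction between the graph automorphism factor $(\iota_\beta a)^l$ and the per-block conjugating elements: the matrix $a$ of \eqref{adefGR} nontrivially permutes the blocks associated with $P_{m_i,n_i-m_i}$-type stabilisers (swapping the $g_i'$ row with the $g_i$ row), so the conjugating element $x$ must be chosen to be $a$-compatible, i.e. the per-block element $x_i$ used for the "primed" copy of a $P_{m_i,n_i-m_i}$ block must be related by $\iota$ (inverse-transpose) to the one used for the unprimed copy, so that $\iota_\beta a$ still normalises the relevant structure after conjugation. Verifying that such a coherent choice of $x$ exists — and that the resulting intersection genuinely becomes block-diagonal rather than merely block-triangular — is the technical heart of the argument; once the $\iota$-part is shown to disappear (because a diagonalised element acts linearly on $V$), the remaining analysis of $\phi_\beta^j$ is routine via Lemma~\ref{scfield}. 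For the small cases $(m_i,q)$ or $(n_i,q)$ excluded in part $(1)$ but arising in $(2)$, where a clean theoretical argument is unavailable, I would fall back on the computational verification described in Section~\ref{gapsec}, as is done elsewhere in the paper.
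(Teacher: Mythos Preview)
Your overall strategy is right in spirit, but there are two concrete gaps and one misreading.

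\textbf{Misreading of the statement.} You spend effort arguing that ``the $\iota_\beta a$ factor must vanish because $\varphi$ then acts linearly on $V$, so $l=0$''. But the lemma does \emph{not} assert this: the hypothesis in part~(1) is already $\varphi \in (S\cap S^x\cap S^y)\cap\Gamma$, and in part~(2) it is $g\in S\cap S^x\cap S^y\cap\GL_n(q)$. You only have to describe those elements of the triple intersection that happen to lie in $\Gamma$; killing the graph-automorphism component is done later, in the proof of Theorem~\ref{theoremGR}, using a \emph{fourth} conjugate $z$. So this part of your plan is unnecessary and, as stated (``acts linearly on $V_1=V$''), the argument is circular.

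\textbf{Three conjugates, not four.} You propose to follow Lemma~\ref{irrtog} and consider $(S\cap S^x)\cap(S\cap S^x)^y$. That is four conjugates $S,S^x,S^y,S^{xy}$, whereas the lemma asserts the existence of $x,y$ with the conclusion for $S\cap S^x\cap S^y$. The paper achieves this by making $y$ do double duty: $x=\diag[x_1',x_2,x_1]$ is block-diagonal, while $y$ is block-\emph{anti}-diagonal (built from $t_1y_1',\,y_2,\,t_2y_1$), so that $S^y$ already has the block-lower-triangular shape. Thus $S\cap S^y$ is block-diagonal and $S\cap S^x\cap S^y$ picks up the per-block intersections. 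The key technical point you correctly anticipated---that the primed and unprimed blocks are related by inverse-transpose---enters here in the paper via $x_1'=((x_1^{-1})^\top)^{\phi^{j_g}g_3}$ and via the elements $t_1,t_2$ conjugating $T$ to $T'$ in the exceptional cases of Theorem~\ref{irred}. The paper also runs this inductively on $n$ (handling the middle quotient $V_2$ by the inductive hypothesis), which you do not mention; this is what makes the construction clean.

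\textbf{The $(2a)$/$(2b)$ split.} Your reading of~(2b) as ``the $q=5$ anomaly with the $2^2.\Sp_2(2)$ block'' is not what the paper does. For $q\in\{2,3,5\}$ one has $f=1$, so $\Gamma=\GL_n(q)$ and there is no field automorphism to strip. The dichotomy is purely combinatorial: let $r$ be the number of diagonal blocks $G_i$ (and $G_i'$) equal to $\GL_2(q)$. If $r\ge 2$ one uses a cyclic permutation $\tilde{x}$ on the $r$ pairs of rows carrying the $2\times2$ blocks (exactly as in \textbf{Case~1} of Proposition~\ref{case1prop}) to force those blocks to be upper-triangular, giving~(2a). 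If $r=1$, that single $\GL_2$ block cannot be reduced this way and survives intact, giving~(2b). The $q=5$ case \ref{irred13} of Theorem~\ref{irred} plays no special role here beyond contributing to the list of $(m_i,q)$ excluded in part~(1).
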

\begin{proof}
{\bf(1)} We start with the proof of $(1),$ so neither $(m_1,q)$ nor $(n_2,q)$ lies in $\{(2,2), (2,3),(2,5)\}.$ 
Let us fix $q$ and assume that $n$ is minimal such that there exists $S\le A$ for which the statement of the lemma does not hold. Let $K$ be $S|_{_{U_1}} \le A(U_1)$ and let $R$ be $S|_{_{V_2}} \le A(V_2).$

We claim that there exist $x_2, y_2 \in \SL_{n_2}(q)$ such that $(1)$ holds for $R \le A(V_2)$. Indeed, if $k>1$, then $R$ stabilises $(U_2,W_2)$ and $R$ is not a counterexample to the lemma since $n_2<n.$ If $k=1$, then $\tilde{R}=R \cap \Gamma$ acts irreducibly on $V_2$ by Lemma \ref{GRirred2}.  Hence, by  Lemma \ref{GammairGL} and Theorem \ref{irred},   there exist $x_2,y_2 \in \SL(V_2)$ such that $\tilde{R} \cap \tilde{R}^{x_2} \cap \tilde{R}^{y_2} \cap \GL(V_2)= Z(\GL(V_2)).$ Now the claim follows by Lemma \ref{scfield}.

By the same argument, $\tilde{K}=K \cap \GaL_{m_1}(q)$ acts irreducibly on $U_1$, and, by Lemma \ref{GammairGL},   $K \cap \GL_{m_1}(q)$ lies in a subgroup $T$ of $\GL_{m_1}(q)$ such that  either $T$ is an  irreducible maximal solvable subgroup  or there exists $x_1 \in \SL_{m_1}(q)$ such that $T \cap T^{x_1} \le Z(\GL_{m_1}(q))$.

\medskip

  Further, our proof of $(1)$ splits into two cases: when $U_1 \le W_1$ and  $U_1\cap W_1=\{0\}$ respectively. We provide the proof for the first case, the proof for the second case is analogous. 
\medskip

 Let $U_1 \le W_1,$ so $n_2=n-2m_1$ and $S$ lies in a  subgroup $M_1$ of $A$ of type $P_{m_1, n-m_1}.$ Recall that, by \eqref{Mstr}, elements in $M_1$ have shape 
\begin{equation}
\label{GRPmnmdiag}
(\iota_{\beta} a)^l \cdot (\phi_{\beta})^j \cdot 
\begin{pmatrix}
h_1' & * & *\\
 0   & h_2 & *\\
 0   & 0 & h_1
\end{pmatrix}
\end{equation}
with $l \in \{0,1\}$, $j \in \{0, 1, \ldots, f-1\}$, $h_2 \in \GL_{n_2}(q)$ and $h_1, h_1' \in \GL_{m_1}(q).$ Let $K^\dagger$ be the restriction of $S$ on $V/W_1$ and let $\hat{K}$ and $\hat{K}^\dagger$ be $K \cap \GL_{m_1}(q)$ and $K^\dagger \cap \GL_{m_1}(q)$ respectively.

 Recall that $S=\langle \tilde{S}, \varphi \rangle,$ where $\varphi = \iota_{\beta} a \cdot g$ with $g \in \tilde{M_1}=M_1 \cap \GaL_n(q),$ so 
$$g= (\phi_{\beta})^{j_g} \cdot 
\begin{pmatrix}
g_3 & * & * \\
0  & g_2 & *\\
0 & 0 & g_1 \\
\end{pmatrix}
$$ with $g_1, g_3 \in \GL_{m_1}(q),$ $g_2 \in \GL_{n_2}(q)$ and $j_g \in \{0,1, \ldots, f-1\}.$ 

Since $(S \cap \GL_n(q))^{\varphi}=S \cap \GL_n(q),$  $$\hat{K}^\dagger=(\hat{K}^{\top})^{\phi^{j_g} g_3}.$$ Recall that $\hat{K}$ lies in a subgroup $T$ of $\GL_{m_1}(q)$ as described in the third paragraph of the proof.  Let $T^\dagger=(T^{\top})^{\phi^{j_g} g_3},$ so $\hat{K}^\dagger\le T^\dagger.$ 

By Lemma \ref{GammairGL} and Theorem \ref{irred}, there exist $x_1, y_1 \in \SL_{m_1}(q)$ such that $$T \cap T^{x_1} \cap T^{y_1}\le Z(\GL_{m_1}(q)).$$ Let $x_1'= ((x_1^{-1})^{\top})^{\phi^{j_g} g_3}$ and let $y_1'= ((y_1^{-1})^{\top})^{\phi^{j_g} g_3}$,  so $$T^\dagger \cap (T^\dagger)^{x_1'} \cap (T^\dagger)^{x_1'} \le Z(\GL_{m_1}(q)).$$  If $T$ is not as $S$ in  \ref{irred11} -- \ref{irred15} of Theorem \ref{irred}, then we assume $y_1=y_1'=I_{m_1}.$ So, by Lemma \ref{scfield}, we can assume that 
\begin{align*}
K \cap K^{x_1} \cap K^{y_1} \cap \GaL_{m_1}(q) \le \langle \phi_{\beta_1} \rangle Z(\GL_{m_1}(q));\\
K^\dagger \cap (K^\dagger)^{x_1} \cap (K^\dagger)^{y_1} \cap \GaL_{m_1}(q) \le \langle \phi_{\beta_3} \rangle Z(\GL_{m_1}(q)),
\end{align*}
 where $\beta_1$ is the basis of $U_1$ consisting of the last $m_1$ vectors of $\beta$ and $\beta_{3}=\{v_1 +W_1, \ldots, v_{m_1} + W_1\}$ is a basis of $V/W_1.$

If $T$ is as $S$ in  \ref{irred11} -- \ref{irred15} of Theorem \ref{irred}, then we claim that $T$ and $T^\dagger$ are conjugate by an element of $\GL_{m_1}(q).$ Indeed, if $T$ is as $S$ in \ref{irred11}, \ref{irred12}, \ref{irred14}  then both $T$ and $T^\dagger$ are normalisers of Singer cycles which are conjugate by Lemma \ref{singconj}. If $T$ is as $S$ in \ref{irred135} or \ref{irred13},  then $T$ and $T^\dagger$ are conjugate by \cite[\S 21, Theorem 6 $(1)$]{sup}.  If $T$ is as $S$ in  $(5),$ then $T=(T^{\top})^{\phi^{j_g}},$ so $T^\dagger=T^{g_3}$ with $g_3 \in \GL_{m_1}(q).$ Notice that $\Det(T)=\mathbb{F}_q^*$ if $T$ is not as $S$ in \ref{irred135}:  it is easy to check directly. If $T$ is as $S$ in \ref{irred135}, then $\Det(T)$ consists of all squares in $\mathbb{F}_q^*,$ in particular $\pm 1 \in \Det(T).$ Hence, for $\alpha = \pm 1$ and a given  $t_1 \in \GL_{m_1}(q)$ such that  $T^\dagger=T^{t_1}$ there exists $t_2 \in \GL_{m_1}(q)$ such that  $(T^\dagger)^{t_2}=T$ and $\det(t_1)\det(t_2)=\alpha$: we can take $t_2=\alpha t_1^{-1}.$

Let $x= \diag[x_1', x_2, x_1]$, let 
$$y=
\begin{pmatrix}
0 & 0 & t_2 y_1 \\
0 & y_2 & 0 \\
t_1 y_1'& 0 & 0
\end{pmatrix}$$
where $t_1, t_2 \in \GL_{m_1}(q)$ are such that $\det(y)=1$, and 
$$T^{t_1}=T^\dagger \text{ and } (T^\dagger)^{t_2}=T \text{ for } T \text{ as } S \text{ in \ref{irred11} -- \ref{irred15} of Theorem \ref{irred}.}$$
 So $x,y \in \SL_n(q).$ Such $t_1,t_2$ exist by the previous paragraph and since $\det(y_1)= \det(y_2)=1.$

Consider $h \in S \cap S^x \cap S^y \cap \Gamma.$ Using \eqref{GRPmnmdiag}, we obtain that elements in $S^y$ have shape 
\begin{equation}
\label{GRSy}
(\iota_{\beta} a)^l \cdot (\phi_{\beta})^j \cdot 
\begin{pmatrix}
h_1' & 0 & 0\\
 *   & h_2 & 0\\
 *   & * & h_1
\end{pmatrix}
\end{equation}
with $l \in \{0,1\}$, $j \in \{0, 1, \ldots, f-1\}$, $h_2 \in \GL_{n_2}(q)$ and $h_1, h_1' \in \GL_{m_1}(q).$ Therefore,  
by \eqref{GRPmnmdiag},
$$h= (\phi_{\beta})^j \cdot \diag[h_1',h_2,h_1],$$ so
\begin{align*}
h|_{_{V_2}} & =(\phi_{\beta_2})^j \cdot h_2 \le (R \cap R^{x_2} \cap R^{y_2}) \cap \GaL_{n_2}(q);\\  
h|_{_{U_1}} & =(\phi_{\beta_1})^j \cdot h_1 \le (K \cap K^{x_1} \cap K^{y_1}) \cap \GaL_{m_1}(q);\\
h|_{_{V/W_1}} & =(\phi_{\beta_3})^j \cdot h_1' \le (K^\dagger \cap (K^\dagger)^{x_1'} \cap (K^\dagger)^{y_1'}) \cap \GaL_{m_1}(q)
\end{align*} 
where $\beta_2=\{v_{m_1+1} +U_1, \ldots, v_{n-m_1}+U_1\}$ is a basis of $V_2.$ Therefore, $h_1$ and $h_1'$ are scalar matrices, and  $h_2$ is as $\varphi$ in \eqref{GRvarphdiagc1}, so $h$ has the shape claimed by the lemma. Hence $S$ is not a counterexample which contradicts the assumption in the beginning of the proof.
 This concludes the proof of part $(1)$ of Lemma \ref{GRdiag}.

\bigskip

{\bf (2)} Now we start the proof of part $(2)$ of Lemma \ref{GRdiag}. So $q \in \{2,3,5\}$ and at least one of $m_i$ and $n_i$ is $2$ for $i \in \{1, \ldots, k\}$. Notice that $f=1$, so $\phi_{\beta}$ is trivial and $\Gamma =\GL_n(q)$. Therefore, matrices in $S \cap \Gamma$ are block-upper-triangular and have shape \eqref{GRguppdiag} where 
\begin{align*}
g_i \in S_i & :=S|_{_{U_i/U_{i-1}}} \cap \GL_{m_i} (q)  &&\text{ for } i\in \{1, \ldots, k\};\\
g_{k+1} \in S_k & := S|_{_{V_{n_{k+1}}}} \cap \GL_{n_{k+1}} (q); &&\\
g_i \in S_i^\dagger & :=S|_{_{V_i/W_{i}}} \cap \GL_{m_i} (q) && \text{ for } i\in \{i_1, \ldots, i_s\}.
 \end{align*}
Denote $\GL_{m_i}(q)$ by $G_i$ for $i \in \{1, \ldots, k\}$ and $\GL_{n_{k+1}}(q)$ by $G_{k+1}.$ By Lemma \ref{GRirred2}, $S_i$ and $S_i^\dagger$ are irreducible solvable subgroup of $G_i$ for $i \in \{1, \ldots, k+1\}$. By Theorem \ref{irred}, for $i \in \{1, \ldots, k+1\}$ one of the following holds:
\begin{enumerate}[label=(\roman*)]
\item $G_i=\GL_2(q);$ \label{RGlistA} 
\item there exists $x_i \in G_i$ of determinant $1$ such that $S_i \cap S_i^{x_i} \le Z(G_i);$ \label{RGlistB}
\item $(G_i, S_i) \in \{(\GL_4(3),\GL_2(3) \wr \Sym(2)), (\GL_3(2), N_{\GL_3(2)}(T))\}$ where $T$ is a Singer cycle of $\GL_3(2)$ and there exists $x_i, z_i \in G_i$ such that $S_i \cap S_i^{x_i} \cap S_i^{z_i} \le Z(G_i)$.\label{RGlistC}
\end{enumerate}
The same statement is true for $S_i^\dagger$ and we denote corresponding conjugating elements by $x_i'$ and $z_i'$. If $G_i=\GL_2(q)$, then we take $x_i=x_i'=I_2.$

 If condition \ref{RGlistC}  holds for $S_i,$ then, by Theorem \ref{irred}, as discussed in the proof of part $(1)$, $S_i^\dagger$ is conjugate to $S_i$ in $G_i$. 

Let us define $y_i, y_i' \in G_i$ for $i \in \{1, \ldots, k+1\}.$ 
 If $M_i$ is of type $\GL_{m_i}(q) \oplus \GL_{n_i-m_i}(q)$ or $i=k+1$ then let $y_i \in G_i$ be such that $S_i \cap S_i^{x_i} \cap S_i^{y_i} \le Z(G_i).$ If $M_i$ is of type $P_{m_i, n_i-m_i},$ then let $y_i, y_i' \in G_i$ be such that 
\begin{align*}
S_i \cap S_i^{x_i} \cap (S_i^\dagger)^{y_i'} & \le Z(G_i)\\
S_i^\dagger \cap (S_i^\dagger)^{x_i'} \cap (S_i)^{y_i} & \le Z(G_i).
\end{align*} 

If conditions \ref{RGlistA} or \ref{RGlistB} hold for $S_i$ (respectively $S_i^\dagger$), then we take $y_i$ and $y_i'$ to be the identity matrix in $G_i.$ Let 
$$y=\diag[y_{i_1}', \ldots, y_{i_s}', y_{k+1}, \ldots, y_k] \cdot a.$$
If $\det (y) \ne 1$, then $q=3$ and $\det(y)=-1.$ If so, then we pick  $i \in \{1, \ldots, k+1\}$  such that $G_i=\GL_2(3)$ and change $y_i$ to be $\diag(-1,1)$ which is equivalent to multiplying a row of $y$ by $-1.$  Therefore, we can assume that $y \in \SL_n(q).$   

Let $r$ be the number of $G_i$ and $G_i^\dagger$ equal to $\GL_2(q)$. Our proof of $(2)$ splits into two cases: 
when $r \ge 2$
and $r=1$ respectively.

\medskip

{\bf Case (2.1).}  
  Let    $(2 \times 2)$ blocks (corresponding to the $S_i$ and the $S_i^\dagger$ lying in $\GL_2(q)$) on the diagonal in matrices of $\tilde{S}=S \cap \Gamma$ occur in the rows $$(j_1, j_1+1), (j_2, j_2 +1), \ldots, (j_r, j_r+1).$$ 
Let $\tilde{x}=\diag(\sgn(\sigma), 1, \ldots, 1) \cdot {\rm perm}(\sigma) \in \SL_n(q),$ where  $$\sigma=(j_1, j_1+1, j_2, j_2 +1, \ldots, j_r, j_r+1)(j_1, j_1+1).$$  
Let $x= \diag[x_{i_1}',\ldots, x_{i_s}', x_{k+1},  \ldots, x_1]\tilde{x}$. Notice that $\det x=1.$ Calculations show that if $h \in \tilde{S} \cap \tilde{S}^x \cap \tilde{S}^y$, then 
$$h=\diag[h'_{i_1}, \ldots, h'_{i_s}, h_{k+1}, \ldots, h_k]$$
where
\begin{itemize}
\item $h_i \in S_i \cap S_i^{x_i} \le Z(G_i)$ (respectively $h_i' \in S_i^\dagger \cap (S_i^\dagger)^{x_i} \le Z(G_i)$) if condition \ref{RGlistB} holds for $S_i$ (respectively $S_i^\dagger$);
\item $h_i \in S_i \cap S_i^{x_i} \cap (S_i^\dagger)^{y_i'} \le Z(G_i)$ (respectively $h_i' \in S_i^\dagger \cap (S_i^\dagger)^{x'_i} \cap (S_i)^{y_i} \le Z(G_i)$) if condition \ref{RGlistC} holds for $S_i$ (respectively $S_i^\dagger$);
\item $h_i \in \GL_2(q)$ (respectively $h_i' \in \GL_2(q)$) is upper-triangular if condition \ref{RGlistA} holds
$S_i$ (respectively $S_i^\dagger$).
\end{itemize}
 Therefore, $h_i$ and $h_i'$ are either scalar or upper-triangular, so  $(2a)$   of Lemma \ref{GRdiag} holds.

\medskip

{\bf Case (2.2).} Assume that the number $r$ of $G_i$ and $G_i^\dagger$ equal to $\GL_2(q)$ is $1$. Notice that if $G_i^\dagger=\GL_2(q)$, then $G_i=\GL_2(q)$ and $r \ge 2.$ Hence there exists unique $j \in \{1, \ldots, k+1\} \backslash \{i_1, \ldots, i_s\}$ such that $G_j$ is $\GL_2(q)$.  Let $y_j=I_2$  and let $x_i, x'_i, y_i, y'_i$ for $i \in \{1, \ldots, k\} \backslash \{j\}$  be as defined before {\bf Case (2.1)}. Let $x=\diag[x_{i_1}',\ldots, x_{i_s}', x_{k+1}  \ldots, x_1]$ and $y=\diag[y_{i_1}', \ldots, y_{i_s}', y_{k+1}, \ldots, y_k] \cdot a$. It is easy to see that $(2b)$   of Lemma \ref{GRdiag} holds.  
\end{proof}

Now we prove Theorem \ref{theoremGR}.

\begin{proof}[Proof of Theorem {\rm \ref{theoremGR}}]
Let $U=U_1$, $W=W_1$ and $m=m_1.$ If  $Q \le V$ has dimension $r$, then we write 
\begin{equation} \label{Qnotation}
Q= \left \langle 
\begin{pmatrix}
u_1 \\
 \vdots \\
u_r
\end{pmatrix}
\right \rangle,
\end{equation}
where $u_1, \ldots, u_r \in V$ form a basis of $Q$.

The proof splits into two cases: when $(1)$ and $(2)$ of Lemma \ref{GRdiag} holds respectively.

\medskip

{\bf Case 1.} Assume that $(1)$ of Lemma \ref{GRdiag} holds. We study two subcases:
\begin{description}[before={\renewcommand\makelabel[1]{\bfseries ##1}}]
\item[{\bf Case (1.1)}] $m_i=1$ for $i \in \{1, \ldots, k\}$ and $n_{k+1} \in \{0,1\}$;
\item[{\bf Case (1.2)}] $m_i\ge 2$ for some $i \in \{1, \ldots, k\}$ or $n_{k+1}\ge 2.$
\end{description}

\medskip

{\bf Case (1.1).} Assume that $m_i=1$ for $i \in \{1, \ldots, k\}$ and $n_{k+1} \in \{0,1\}.$ Let $x=a$ where $a$ is as in \eqref{adefGR}, so $\det x = \pm 1$. Notice that if $\varphi \in S^x,$ then $\varphi$  has shape \eqref{GRvarphishape} with 
\begin{equation}
\label{GRginsy}
g= \begin{pmatrix}
g_{i_1}' & & & & & & \\
* & \ddots & & & & &\\
* & *&g_{i_s}' &  & & &\\
* & &* &g_{i_{k+1}} & & & \\
* &\ldots & &* & g_{k} & & \\
* &\ldots &\ldots& \ldots &*  & \ddots &\\
* &\ldots &\ldots &\ldots & &* &  g_{1}
\end{pmatrix}.
\end{equation}
So, if $\varphi \in S \cap S^x,$ then $g$ is diagonal.

 Let $y ,z \in \SL_n(q)$ be as in the proof of Proposition \ref{ni1}. Let us show that if $\varphi \in S \cap S^x \cap S^y$, then $\varphi \in S \cap \Gamma.$ Assume that $\varphi \notin \Gamma,$ so $l=1$ in  \eqref{GRvarphishape}.  Since $S$ stabilises $(U,W),$ $S^y$ stabilises $(U,W)y$. Therefore, $$(U,W)y \varphi =(U,W)y$$
and $Uy=Wy\varphi$ since $\dim Uy= \dim Wy\varphi.$ With respect to $\beta$,
$$Uy= \left \langle 
\begin{pmatrix}
1, \ldots, 1 \\
\end{pmatrix}
\right \rangle.$$ On the other hand, 
$$Uy=Wy \varphi =W y (\iota_{\beta} a) \cdot (\phi_{\beta})^j \cdot g =W \iota_{\beta} y^{\iota_{\beta}}a\cdot (\phi_{\beta})^j \cdot g= W'y^{\iota_{\beta}}a\cdot (\phi_{\beta})^j \cdot g,$$
where $W'$ is spanned by $\beta \backslash (W \cap \beta).$ 

 If $U \cap W = \{0\},$ then $W'=U.$ It is now easy to see that 
$$W'y^{\iota_{\beta}}a\cdot (\phi_{\beta})^j \cdot g = 
\begin{cases} \left \langle 
\begin{pmatrix}
1,0, \ldots, 0 \\
\end{pmatrix}
\right \rangle \text{ if } M_2 \text{ is of type } \GL_1(q) \oplus \GL_{n_2-1}(q); \\
\left \langle 
\begin{pmatrix}
0, \ldots,1, 0 \\
\end{pmatrix}
\right \rangle \text{ if } M_2 \text{ is of type } P_{1, n_2-1}
\end{cases} $$
since $g$ is diagonal. So, $Uy \ne Wy \varphi$ which is a contradiction. Hence $\varphi \in \Gamma.$

If $U\le W$, then $W'= \left \langle 
\begin{pmatrix}
1, 0, \ldots, 0 \\
\end{pmatrix}
\right \rangle$, $W'y^{\iota_{\beta}}= \left \langle 
\begin{pmatrix}
1, 0, \ldots, -1 \\
\end{pmatrix}
\right \rangle$ and  $$W'y^{\iota_{\beta}}a\cdot (\phi_{\beta})^j \cdot g=\left \langle 
\begin{pmatrix}
\alpha_1, 0, \ldots,0, \alpha_2 \\
\end{pmatrix}
\right \rangle$$ for some $\alpha_1, \alpha_2 \in \mathbb{F}_q^*.$ So, $Uy \ne Wy \varphi$ which is a contradiction. Hence $\varphi \in \Gamma.$

Therefore, $S\cap S^x \cap S^y \cap S^z = \tilde{S}\cap \tilde{S}^x \cap \tilde{S}^y \cap \tilde{S}^z \le Z(\GL_n(q))$ by Proposition \ref{ni1}. Notice that if $\det a=-1$, then we can take $x= \diag(-1, 1, \ldots, 1) \cdot a$ and the argument above still works, so we can assume $x,y,z \in \SL_n(q).$

\medskip

{\bf Case (1.2).}  Let $x,y$ be as in $(1)$ of Lemma \ref{GRdiag}. Assume that  $m_i\ge 2$ for some $i \in \{1, \ldots, k\}$ or $n_{k+1}\ge 2.$ So, if $\varphi \in (S \cap S^x \cap S^y) \cap  \Gamma$, then  there exists $r \in \{1, \ldots, n\}$ such that  $\alpha_r =\alpha_{r+1}$ in \eqref{GRvarphdiagc1}. We choose $r$ to be minimal such that $\alpha_r =\alpha_{r+1}$ for all such $\varphi.$

 By  \eqref{GRPmnmdiag}, \eqref{GRSy} and similar arguments for the situation when $U \cap W = \{0\}$ , if $\varphi \in S \cap S^y$, then 
\begin{equation}
\label{GRvarphisy}
\varphi = (\iota_{\beta} a )^l \cdot (\phi_{\beta}^j) \cdot g
\end{equation}
where $l \in \{0,1\},$ $j \in \{0, 1, \ldots, f-1\},$
$$a = \begin{cases}
I_n  &  \text{ if } M_1 \text{ is of type } \GL_m(q) \oplus \GL_{n-m(q)};\\
a(n,m) & \text{ if } M_1 \text{ is of type } P_{m, n-m}

\end{cases}
$$
and
$$g = 
\begin{cases}
\diag[g_2, g_1] &  \text{ if } M_1 \text{ is of type } \GL_m(q) \oplus \GL_{n-m(q)};\\
\diag[g_1', g_2, g_1] & \text{ if } M_1 \text{ is of type } P_{m, n-m}.
\end{cases}$$
Here $g_1 \in \GL_m(q)$, $g_2 \in \GL_{n-m}(q)$ in the first option and  $g_1, g_1' \in \GL_m(q)$, ${g_2 \in \GL_{n-2m}(q)}$ in the second. Our consideration of {\bf Case (1.2)} splits into two subcases:
when  $U \ne W$
and  $U=W$ respectively.

\medskip

{\bf Case (1.2.1).} Assume that $U \ne W$. Let $\theta$ be a generator of $\mathbb{F}_q^*$ and let $z \in \SL_n(q)$ be defined as follows:
\begin{equation}
\label{GRzdef1}
\begin{aligned}
(v_i)z & = v_i &&\text{ for } i \in \{1, \ldots, n-m\}; \\
(v_{n-m+1})z & =  \sum_{i=1}^{n-m} v_i -v_r + \theta v_{r} + v_{n-m+1}; && \text{ if } r \le n-m\\
(v_{n-m+1})z & =  \sum_{i=1}^{n-m-1} v_i  + \theta v_{n-m} + v_{n-m+1}; && \text{ if } r \ge n-m+1\\
(v_i)z & =  v_{n-m} + v_{i} && \text{ for } i \in \{n-m+2, \ldots, n\}.
\end{aligned} 
\end{equation}

Let $\varphi \in S \cap S^x \cap S^y \cap S^z$, so $\varphi$ has shape \eqref{GRvarphisy}.   Assume that $\varphi \notin \Gamma,$ so $l=1.$ Since $\varphi \in S^z$, it stabilises $(U,W)z,$ so $(U,W)z \varphi =(U,W)z$ and, therefore, 
$$Uz=Wz \varphi = W'z^{\iota_{\beta}}a\cdot (\phi_{\beta})^j \cdot g,$$
where $W'$ is spanned by $\beta \backslash (W \cap \beta).$  

 With respect to $\beta$, using the notation \eqref{Qnotation},
$$Uz= \left \langle 
\left(\begin{array}{ccccccc|ccccc}
1 & \ldots & 1 & \theta &1 & \ldots & 1     &             1 & & &  \\
1 & \ldots & 1 &  1    &1 & \ldots & 1       &           & 1 & &  \\
\vdots &   &   &       &  &        &\vdots    &          & &\ddots & \\
1 & \ldots & 1 &  1    &1 & \ldots & 1         &         & & & 1 
\end{array}\right)
\right \rangle$$
where $\theta$ in the first line is either in the $r$-th or $(m-n)$-th column, and the part after the vertical line forms $I_m.$

 If $U \cap W=\{0\},$ then $W'=U$ and it is easy to see that $Wz \varphi = W'z^{\iota_{\beta}}a\cdot (\phi_{\beta})^j \cdot g= U,$ since $g$ stabilises $U$ by \eqref{GRvarphisy}. So, $Uz \ne Wz \varphi$ which is a contradiction. Hence $\varphi \in \Gamma.$

If $U \le W$, then $W'= \langle (I_m \mid 0_{ m \times (n-m)}) \rangle$, so 
$$W'z^{\iota_{\beta}}a= \left \langle 
(A \mid 0_{n \times (n-2m)} \mid I_n)
\right \rangle$$
where $A$ is $m \times m$ matrix with entries $-1$ or $- \theta$, and $-\theta$ can occur at most once.
Therefore, $W'z^{\iota_{\beta}}a \cdot g = \langle (A g_1'\mid0_{m \times (n-2m)}\mid g_1) \rangle$. Notice that $n-2m\ge 1$ since $U \ne W.$ So, $Uz \ne Wz \varphi$ which is a contradiction. Hence $\varphi \in \Gamma.$

Therefore, $\varphi = (\phi_{\beta})^j \cdot \diag(\alpha_1, \ldots, \alpha_n)$ as in $(1)$ of Lemma
\ref{GRdiag}. Since $\varphi \in S \cap S^z \cap \Gamma,$ it stabilises $U$ and $Uz.$  

Consider $((v_{n-m+1})z) \varphi.$ First, let $r\le n-m,$ so 
\begin{equation*}((v_{n-m+1})z) \varphi=
\begin{cases}
\left(\sum_{\substack{i \in \{1, \ldots, n-m\}\\ i \ne r}} \alpha_i v_i \right) + \alpha_r \theta^{p^j} v_r + \alpha_{n-m+1} v_{n-m+1};\\
\sum_{i=1}^m \delta_i (v_{n-m+i})z
\end{cases}
\end{equation*}
for some $\delta_i \in \mathbb{F}_q.$ Since $((v_{n-m+1})z) \varphi$ has no $v_i$ for $i \in \{n-m+2, \ldots, n\}$ in the decomposition with respect to $\beta$, 
$$\delta_2 = \ldots = \delta_m =0$$
and $((v_{n-m+1})z) \varphi= \delta_1 ((v_{n-m+1})z).$ Hence 
$$\delta_1 = \alpha_r \theta^{p^j-1}=\alpha_{r+1} = \alpha_i \text{ for } i \in \{1, \ldots, n-m+1\} \backslash \{r,r+1\},$$
so $j=0$ and $\varphi \in Z(\GL_n(q)).$

Now let $r\ge n-m+1,$ so $m\ge 2$ and 
\begin{equation*}((v_{n-m+1})z) \varphi=
\begin{cases}
\left(\sum_{i=1}^{n-m-1} \alpha_i v_i \right) + \alpha_{n-m} \theta^{p^j} v_{n-m} + \alpha_{n-m+1} v_{n-m+1};\\
\sum_{i=1}^m \delta_i (v_{n-m+i})z
\end{cases}
\end{equation*}
for some $\delta_i \in \mathbb{F}_q.$ Since $((v_{n-m+1})z) \varphi$ has no $v_i$ for $i \in \{n-m+2, \ldots, n\}$ in the decomposition with respect to $\beta$, 
$$\delta_2 = \ldots = \delta_m =0$$
and $((v_{n-m+1})z) \varphi= \delta_1 ((v_{n-m+1})z).$ Hence 
$$\delta_1 = \alpha_{n-m} \theta^{p^j-1}=\alpha_{n-m+1} = \alpha_i \text{ for } i \in \{1, \ldots, n-m-1\}.$$
The same arguments for $((v_{n-m+2})z) \varphi$ show that $\alpha_{n-m+2}=\alpha_{n-m},$ so, since $\alpha_{n-m+1}=\alpha_{n-m+2}$ by $(1)$ of Lemma \ref{GRdiag}, we obtain $\theta^{p^j-1}=1.$ Hence $j=0$ and $\varphi \in Z(\GL_n(q)).$

\medskip
{\bf Case (1.2.2).} Assume that $U=W$, so $m=n/2$ and $M$ is of type $P_{n/2,n/2}.$ In particular, $n \ge 4,$ since $n \ge 3$ by the assumption of the theorem.  Let $\theta$ be a generator of $\mathbb{F}_q^*$ and let $z \in \SL_n(q)$ be defined as follows: 
\begin{equation}
\label{GRzdef2}
\begin{aligned}
(v_i)z= &v_i &&\text{ for } i \in \{1, \ldots, n-m\}; \\
(v_{m-n+1})z= & \theta v_{n-m} + v_{m-n+1}; &&\\
(v_i)z= & v_{n-m} + v_{i} && \text{ for } i \in \{n-m+2, \ldots, n\}.
\end{aligned} 
\end{equation}
Arguments, analogous to  ones in {\bf Case (1.2.1)},  show that  $S \cap S^x \cap S^y \cap S^z \le Z(\GL_n(q))$.
\bigskip

{\bf Case 2.}  Assume that $(2)$ of Lemma \ref{GRdiag} holds.
 For $n \in \{3,4\}$ the theorem follows by computation, so  we may assume that $n \ge 5$.

We adopt notation from the proof of Lemma \ref{GRdiag}, in particular $x,y$, $S_i$, $G_i$.
Let $(2 \times 2)$ blocks (corresponding to the $S_i$ and the $S_i^\dagger$ lying in $\GL_2(q)$) on the diagonal in matrices of $\tilde{S}=S \cap \Gamma$ occur in the rows $$(j_1, j_1+1), (j_2, j_2 +1), \ldots, (j_r, j_r+1).$$ Let $\Lambda=\Lambda_1 \cup \Lambda_2$ where 
$\Lambda_1=\{j_1, j_2, \ldots, j_r\}$ and $\Lambda_2=\{j_1+1, j_2+1, \ldots, j_r+1\}.$
 Let $U=U_1$, $W=W_1$ and $m=m_1.$  
 Notice, that if $\varphi \in S^y$, then it has shape \eqref{GRvarphishape} where $g$ has shape \eqref{GRginsy} with $g_i, g_i' \in \GL_{m_i}$ for $i \in \{1, \ldots, k\}$ and $g_{k+1} \in \GL_{n_{k+1}}(q).$ So, if $\varphi \in S \cap S^y$, then it has shape \eqref{GRvarphishape} with 
 \begin{equation}
 \label{GRgdiagcase2}
 g = \diag[g_{i_1}', \ldots, g_{i_s}', g_{k+1}, g_k, \ldots, g_1].
 \end{equation}
 
 Our consideration of {\bf Case 2} splits into two subcases: when $(2a)$ and $(2b)$ of Lemma \ref{GRdiag} holds respectively.

\medskip

{\bf Case (2.1).} Assume that $(2a)$ of Lemma \ref{GRdiag} holds. We consider two subcases: when $m \ge 2$ and $m=1$.

{\bf Case (2.1.1).} Assume that $m \ge 2.$ Let $z \in \SL_n(q)$ be defined as follows
\begin{equation}
\label{GRzdefq23}
\begin{aligned}
(v_i)z & =  v_i &&\text{ for } i \in \{1, \ldots, n-m\}; \\
(v_{n-m+1})z & =  \left(\underset{i \in \{1, \ldots, n-m\} \backslash \{j_1\}}{\sum} v_i \right) + v_{n-m+1};\\
(v_{n-m+2})z & =  \left(\underset{i \in \{1, \ldots, n-m\} \backslash \Lambda_2}{\sum} v_i \right)   + v_{n-m+2};\\
(v_i)z & =  \sum_{j=1}^{n-m} v_{j} + v_{i} && \text{ for } i \in \{n-m+3, \ldots, n\}.
\end{aligned} 
\end{equation}

Let $\varphi \in S \cap S^x \cap S^y \cap S^z$, so $\varphi$ has shape \eqref{GRvarphishape} where $g$ has shape \eqref{GRgdiagcase2}.   Assume that $\varphi \notin \Gamma,$ so $l=1.$ Since $\varphi \in S^z$, it stabilises $(U,W)z,$ so $(U,W)z \varphi =(U,W)z$ and, therefore, 
$$Uz=Wz \varphi = W'z^{\iota_{\beta}}a \cdot g,$$
where $W'$ is spanned by $\beta \backslash (W \cap \beta).$  

 With respect to $\beta$,
$$Uz= \left \langle 
\left(\begin{array}{ccc|ccccc}
\lambda_1 & \ldots & \lambda_{n-m}   &           1 & & & & \\
\mu_1     & \ldots & \mu_{n-m}       &           & 1 & & & \\
  1       & \ldots &  1              &           &   &1 & &  \\
\vdots    &        &\vdots           &           &  & &\ddots & \\
1         & \ldots &  1              &           &  & & & 1 
\end{array}\right)
\right \rangle$$
where $\lambda_i, \mu_i \in \{0,1\}$ according to \eqref{GRzdefq23}, so for each $i \in \{1, \ldots, n-m\}$ at least one of $\lambda_i$ and $\mu_i$ is $1$,  and the part after the vertical line forms $I_m.$

 If $U \cap W=\{0\},$ then $W'=U$ and it is easy to see that $Wz \varphi = W'z^{\iota_{\beta}}a\cdot (\phi_{\beta})^j \cdot g= U,$ since $g$ stabilises $U$ by \eqref{GRvarphishape}. So, $Uz \ne Wz \varphi$ which is a contradiction. Hence $\varphi \in \Gamma.$

If $U \le W$, then $W'= \langle (I_m  \mid  0_{(m \times n-m)}) \rangle$, so 
$$W'z^{\iota_{\beta}}a= \left \langle 
(A \mid 0_{m \times (n-2m)} \mid I_n)
\right \rangle$$
where $A$ is $m \times m$ matrix with entries $-1$ ,$-\lambda_i$ and $- \mu_i$.
Therefore, $$W'z^{\iota_{\beta}}a \cdot g = \langle (A g_1'\mid 0_{m \times (n-2m)}\mid g_1) \rangle.$$ Notice that $n-2m\ge 1,$ since otherwise  $U = W,$ so $m=2$ and $n=4.$ Thus, $Uz \ne Wz \varphi$ which is a contradiction. Hence $\varphi \in \Gamma.$

Therefore, $\varphi = g= \diag[g_{i_1}, \ldots,g_{i_s}, g_{k+1}, g_k, \ldots,  g_1]$ as in $(a)$ of $(2)$ of Lemma
\ref{GRdiag}. Specifically, let $(v_i)\varphi =\alpha_i v_i$ for $i \in \{1, \ldots, n\} \backslash \Lambda_1$ and let $(v_i)\varphi = \alpha_i v_i + \gamma_i v_{i+1}$ for $i \in \Lambda_1$ with $\alpha_i, \gamma_i \in \mathbb{F}_q.$

 Since $\varphi \in S \cap S^z \cap \Gamma,$ it stabilises $U$ and $Uz.$  
Therefore, $((v_{n-m+2})z) \varphi$ is
\begin{equation*}
\label{vnm2zph}
\left(\underset{i \in \{1, \ldots, n-m\} \backslash \Lambda_2}{\sum} \alpha_i v_i \right) +
\left(\underset{i \in \{1, \ldots, n-m\} \cap \Lambda_2}{\sum} \gamma_{i-1} v_i \right) + \alpha_{n-m+2} v_{n-m+2},
\end{equation*}
and
$$((v_{n-m+2})z) \varphi= \sum_{i=1}^m \delta_i (v_{n-m+i})z$$
for some $\delta_i \in \mathbb{F}_q.$ Since $((v_{n-m+2})z) \varphi$ does not contain $v_i$ for $i \in \{n-m+1,n-m+3, \ldots, n\}$ in the decomposition with respect to $\beta$, 
$$\delta_1= \delta_3 = \ldots = \delta_m =0$$
and $((v_{n-m+1})z) \varphi= \delta_1 ((v_{n-m+1})z).$ Hence 
$$\gamma_{j_1} = \ldots = \gamma_{j_r}=0,$$
so $\varphi= \diag(\alpha_1, \ldots, \alpha_n)$ where 
\begin{equation}
\label{GRalphasvnm2}
\alpha_i=\alpha_{n-m+2} \text{ for } i \in \{1, \ldots, n-m\} \backslash \Lambda_2.
\end{equation} 

Consider 
\begin{equation*}((v_{n-m+1})z) \varphi=
\begin{cases}
\underset{i \in \{1, \ldots, n-m\} \backslash \{j_1\}}{\sum} \alpha_i v_i + \alpha_{n-m+1} v_{n-m+1} +\delta_{2,m}{\gamma_{n-m+1} v_{n-m+1}}\\
\sum_{i=1}^m \delta_i (v_{n-m+i})z
\end{cases}
\end{equation*}
for some $\delta_i \in \mathbb{F}_q.$ 
 Since $((v_{n-m+1})z) \varphi$ contains neither $v_{j_1}$ (notice that $j_1>n-m$ since if $(2a)$ of Lemma \ref{GRdiag} holds, then $r \ge 2$) nor $v_i$ for $i \in \{n-m+3, \ldots, n\}$ in the decomposition with respect to $\beta$, 
$$\delta_2 = \ldots = \delta_m =0.$$
Here $\delta_2=0$ since $((v_{n-m+2})z) $ contains $v_{j_1}$ in the decomposition with respect to $\beta$ and $((v_{n-m+1})z)\varphi$ does not.
Thus $((v_{n-m+1})z) \varphi= \delta_1 ((v_{n-m+1})z)$ and 
$$\alpha_i = \alpha_{n-m+1} \text{ for } i \in \{1, \ldots, n-m\} \backslash \{j_1\}.$$
Combined with \eqref{GRalphasvnm2}, it implies $\varphi \in Z(\GL_n(q)).$ 

\medskip

{\bf Case (2.1.2).} Assume that $m_1=1$ and let $t$ be the smallest $i \in \{2, \ldots, k\}$ such that $m_i\ge 2.$ Such $t$ exists since otherwise $m_i=1$ for $i \in \{1, \ldots, k\}$ and $n_{k+1}=2,$ so $(2b)$ of Lemma \ref{GRdiag} holds. Let $d_i=\sum_{j=1}^i m_j$ for $i \in \{1, \ldots, k\}$. Let $z \in \SL_n(q)$ be defined as follows:
\begin{equation}
\label{GRzdefq23mr}
\begin{aligned}
(v_i)z & =  v_i &&\text{ for } i \in \{1, \ldots, n-d_t\}; \\
(v_{n-d_t+1})z & =  \left(\underset{i \in \{1, \ldots, n-d_t\} \backslash \{j_1\}}{\sum} v_i \right) + v_{n-d_t+1};\\
(v_{n-d_t+2})z & =  \left(\underset{i \in \{1, \ldots, n-d_t\} \backslash \Lambda_2}{\sum} v_i \right)   + v_{n-d_t+2};\\
(v_i)z & =  v_i && \text{ for } i \in \{n-d_{t}+3, \ldots, n-1\};\\
(v_n)z & =  \sum_{i=1}^n v_i.
\end{aligned} 
\end{equation}
Arguments, analogous to  ones in {\bf Case (2.1.1)},  show that  $S \cap S^x \cap S^y \cap S^z \le Z(\GL_n(q))$.

\bigskip

{\bf Case (2.2).} Finally, to complete the proof, let us assume that $(2b)$ of Lemma \ref{GRdiag} holds. We consider two subcases: when $m=2$ and $m \ne 2$.

\medskip

{\bf Case (2.2.1).} Assume that $m=2,$ so if $g \in S \cap S^x \cap S^y \cap \GL_n(q),$ then $g_1 \in \GL_2(q)$ and $g_i$, $g_i'$ are scalar for $i \in \{2, \ldots, k+1\}.$ We may assume $U \cap W=0$ since otherwise the number of $G_i$ and $G_i^\dagger$ equal to $\GL_2(q)$ is at least 2 and $(2a)$  of Lemma \ref{GRdiag} holds. Let $z \in \SL_n(q)$ be defined as follows:
\begin{equation}
\label{GRzdefcase221}
\begin{aligned}
(v_i)z & =  v_i &&\text{ for } i \in \{1, \ldots, n-2\}; \\
(v_{n-1})z & =  \sum_{i=2}^{n-2} v_i + v_{n-1};\\
(v_{n})z & =  v_1+ \sum_{i=3}^{n-2} v_i   + v_{n}.\\
\end{aligned} 
\end{equation}
Let $\varphi \in S \cap S^x \cap S^y \cap S^z$, so $\varphi= (\iota_{\beta}a)^l \cdot g$ where $g=\diag[g_{i_1}',\ldots, g_{i_s}', g_{k+1}, \ldots, g_1]$ where $g_i$ and $g_i'$ are as in \eqref{GRguppdiag}.  
 Assume that $\varphi \notin \Gamma,$ so $l=1.$ Since $\varphi \in S^z$, it stabilises $(U,W)z,$ so $(U,W)z \varphi =(U,W)z$ and, therefore, 
$$Uz=Wz \varphi = W'z^{\iota_{\beta}}a \cdot g,$$
where $W'$ is spanned by $\beta \backslash (W \cap \beta).$  

 With respect to $\beta$,
$$Uz= \left \langle 
\left(\begin{array}{ccccc|cc}
0  & 1& 1 & \ldots & 1   &           1 &0  \\
1  & 0& 1     & \ldots & 1       &    0       & 1 
\end{array}\right)
\right \rangle.$$
 Since $U \cap W=\{0\},$ we obtain $W'=U$ and it is easy to see that $Wz \varphi = W'z^{\iota_{\beta}}a\cdot g= U,$ since $g$ stabilises $U$. So, $Uz \ne Wz \varphi$ which is a contradiction. Hence $\varphi =g\in \Gamma.$ Therefore, $g=\diag[A,g_1]$ where $A=\diag(\alpha_1, \ldots, \alpha_{n-2})$ for some $\alpha_i \in \mathbb{F}_q^*$ and $$g_1 = \left( \begin{matrix} \delta_1 & \delta_2 \\ \delta_3 & \delta_4  \end{matrix} \right) \in \GL_2(q).$$ Since $g \in S^z$, it stabilises $Uz.$
 
 Consider 
\begin{equation}
\label{GRc221vn1z}
 ((v_{n-1})z)g =
 \begin{cases}
 \sum_{i=2}^{n-2} \alpha_i v_i + \delta_1 v_{n-1} + \delta_2 v_n;\\
 \lambda_1 (v_{n-1})z + \lambda_2 (v_{n})z
 \end{cases}
 \end{equation}
 for some $\lambda_{i} \in \mathbb{F}_q.$ Since there is no $v_1$ in the first line of \eqref{GRc221vn1z}, $\lambda_2=0,$ so
 $$\alpha_2 = \ldots = \alpha_{n-2}=\delta_1 \text{ and } \delta_2=0.$$
 The same arguments for $((v_{n})z)g$ show that $\delta_3=0$ and, since $n \ge 5$,
 $$\alpha_1=\alpha_{n-2}=\delta_4.$$
 Hence $g$ is scalar and $\varphi \in Z(\GL_n(q)).$
 
 \medskip
 
 {\bf Case (2.2.2).} Assume $m\ne 2.$  
  We may assume that $n_{k+1}\ne 2$. Indeed, if $n_{k+1}=2$, then  $n_{k}=3$ since $m_k\le n_k/2$ and there is only one $G_i$ equal to $\GL_2(q)$ for $i \in \{1, \ldots, k+1\}.$ Therefore, $$S_k=S|_{_{V_k}}\cap \GL(V_k)=\GL_2(q) \times \GL_1(q) \le \GL(V_k)=\GL_3(q)$$ and, using computation, we obtain that  there are $x_k, y_k \in \SL_n(q)$ such that $S_k \cap S_k^{x_k} \cap S_k^{y_k} \le Z(\GL_3(q))$, so the conclusion of  $(1)$ of Lemma \ref{GRdiag} holds and the theorem  holds by {\bf Case  1}. 
  
  Therefore, $j_1\ge 3$ where $j_1$ is as defined in the beginning of {\bf Case  2}, so the $(2 \times 2)$ block (corresponding to the $S_i$ lying in $\GL_2(q)$) on the diagonal in matrices of $\tilde{S}=S \cap \Gamma$ occurs in the rows $(j_1, j_1+1)$. Recall that if $h \in  S \cap S^x \cap S^y \cap  \GL_n(q)$, then $$h= \diag[\alpha_1, \ldots, \alpha_{j_1-1},A, \alpha_{j_1+2}, \ldots, \alpha_n]$$ where $\alpha_i \in \mathbb{F}_q^*$ and $A= \left( \begin{smallmatrix} \delta_1 & \delta_2 \\ \delta_3 & \delta_4  \end{smallmatrix} \right) \in \GL_2(q).$ Let $z \in \SL_n(q)$ be defined as follows:
\begin{equation}
\label{GRzdefcase222}
\begin{aligned}
(v_i)z & =  v_i &&\text{ for } i \in \{1, \ldots, n\} \backslash \{j_1, j_1+1, n\}; \\
(v_{j_1})z & =  v_1 + v_{j_1};\\
(v_{j_1+1})z & =  v_2 + v_{j_1+1};\\
(v_{n})z & =   \sum_{i=1}^{n-m} v_i   + v_{n}.\\
\end{aligned} 
\end{equation}
Let $\varphi \in S \cap S^x \cap S^y \cap S^z$, so $\varphi= (\iota_{\beta}a)^l \cdot g$ with $g=\diag[g_{i_1},\ldots, g_{i_s}, g_{k+1}, \ldots, g_1]$ where $g_i$ and $g_i'$ are as in \eqref{GRguppdiag}.  
 Assume that $\varphi \notin \Gamma,$ so $l=1.$ Since $\varphi \in S^z$, it stabilises $(U,W)z,$ so $(U,W)z \varphi =(U,W)z$ and, therefore, 
$$Uz=Wz \varphi = W'z^{\iota_{\beta}}a \cdot g,$$
where $W'$ is spanned by $\beta \backslash (W \cap \beta).$  

 With respect to $\beta$,
$$Uz= \left \langle 
\left(\begin{array}{ccc|ccccc}
 0        & \ldots & 0               &           1 & & &  \\
\vdots    &        &\vdots           &           & \ddots  & &  \\
0         & \ldots & 0               &            & &\ddots &  \\
1         & \ldots &  1              &           &  & & 1 
\end{array}\right)
\right \rangle$$
where the part after the vertical line forms $I_m.$

 If $U \cap W=\{0\},$ then $W'=U$ and it is easy to see that $Wz \varphi = W'z^{\iota_{\beta}}a \cdot g= U,$ since $g$ stabilises $U$ by \eqref{GRvarphisy}. So $Uz \ne Wz \varphi$ which is a contradiction. Hence $\varphi \in \Gamma.$
 
 If $U \le W,$ then 
 \begingroup
\allowdisplaybreaks
 \begin{align*}
 Wz\varphi & = W'z^{\iota_{\beta}}ag \\ & = 
 \left\langle I_m\mid 0_{m \times (n-m)} \right\rangle \, z^{\iota_{\beta}}ag \\
& =\Scale[0.95]{\left \langle \left(\begin{array}{ccccc|cccccc|cccc}
1 &  &   &        &          & 0 & \ldots 0 &-1 & 0 & 0 & \ldots 0 &       0  & 0 & \ldots &  0 \\         
  &1 &   &        &          & 0 & \ldots 0 & 0 &-1 & 0 & \ldots 0 &       0  & 0 & \ldots &  0 \\
  &  &1  &        &          & 0 & \ldots 0 & 0 &0  & 0 & \ldots 0 &       -1 & 0 & \ldots &  0 \\
  &  &   & \ddots &          & \vdots &     &   &   &   &          &   \vdots &   &        &    \\
  &  &   &        &1         & 0 & \ldots 0 & 0 &0  & 0 & \ldots 0 &       -1 & 0 & \ldots &  0 \\ 
\end{array}\right)
\right \rangle}\, a g\\
& = \Scale[0.95]{ \left \langle \left(\begin{array}{cccc|cccccc|ccccc}
 0  & 0 & \ldots &  0          & 0 & \ldots 0 &-1 & 0 & 0 & \ldots 0 &       1 &  &   &        & \\         
 0  & 0 & \ldots &  0        & 0 & \ldots 0 & 0 &-1 & 0 & \ldots 0 &         &1 &   &        &   \\
 -1 & 0 & \ldots &  0         & 0 & \ldots 0 & 0 &0  & 0 & \ldots 0 &         &  &1  &        &  \\
  &   \vdots &   &        &            & \vdots &     &   &   &   &           &  &   & \ddots &  \\
  -1 & 0 & \ldots &  0        & 0 & \ldots 0 & 0 &0  & 0 & \ldots 0 &        &  &   &        &1  \\ 
\end{array}\right)
\right \rangle}\, g
 \end{align*}
 \endgroup
 where $-1$ in the first row is in the $j_1$ entry, and $-1$ in the second row is in the $j_1+1$ entry. The result of the action of $g$ on the first two rows is 
 $$\langle 0_{2\times 1}, \ldots, 0_{2\times 1}, -g_t, 0_{2\times 1}, \ldots, 0_{2\times 1}, g_1^{(1,2)} \rangle $$ where $t \in \{1, \ldots, k+1\}$ is such that $G_t=\GL_2(q)$ and $g_1^{(1,2)}$ is the matrix formed by the first two rows of $g_1.$ It is easy to see that  two such vectors cannot lie in $Uz$, so  $Uz \ne Wz \varphi$ which is a contradiction. Hence $\varphi =g\in \Gamma.$ 
 
 Therefore $g=\diag[A,g_t, B]$ where $$A=\diag(\alpha_1, \ldots, \alpha_{j_1-1}), \text{ }B=\diag(\alpha_{j_1+2}, \ldots, \alpha_{n}),$$  for some $\alpha_i \in \mathbb{F}_q^*$ and $$g_t = \left( \begin{matrix} \delta_1 & \delta_2 \\ \delta_3 & \delta_4  \end{matrix} \right) \in \GL_2(q).$$ Since $g \in S^z$, it stabilises $Uz.$
 Notice that $S \cap \GL_n(q)$ also stabilises $\langle v_{j_1}, \ldots, v_n \rangle,$ so $g \in S^z \cap \GL_n(q)$ stabilises $\langle v_{j_1}, \ldots, v_n \rangle z$. 
 
 Consider 
\begin{equation}
\label{GR222vj1}
((v_{j_1})z)g= 
 \begin{cases}
 \alpha_1 v_1 + \delta_1 v_{j_1} + \delta_2 v_{j_1+1};\\
 \sum_{i=j_1}^n \lambda_i v_i
 \end{cases}
\end{equation}
 for some $\lambda_i \in \mathbb{F}_q.$ Since the first line of \eqref{GR222vj1} contains no terms with $v_2$ and $v_i$ for $i \ge j_1+2$, we obtain   
 $((v_{j_1})z)g= \alpha (v_{j_1})z $ for some $\alpha \in \mathbb{F}_q^*.$ Therefore, $\delta_2=0$ and $\alpha= \alpha_1 = \delta_1.$ The same arguments applied to $(v_{j_1+1})z$ show that
 $\delta_4=0$ and $\alpha_2=\delta_3.$ 
 
   The same arguments applied to $(v_{n})z$ show that
$\alpha= \alpha_1 = \ldots =\alpha_n,$ so $g$ is scalar and $\varphi \in Z(\GL_n(q)).$
\end{proof}

\section*{Acknowledgements}
Much of the work presented in this paper was done while I was a PhD student at the University of Auckland. I thank my supervisors Eamonn O'Brien and Jianbei An for all their guidance and inspiration. I also thank Professor Timothy  Burness and Professor Peter Cameron, who were  examiners of my PhD thesis, and the anonymous referees of this paper for their constructive comments and corrections.

\bibliographystyle{abbrv}
\bibliography{SolLin.bib}

\end{document}